\newtheorem{theorem}{Theorem}[section] 
\newtheorem{lemma}[theorem]{Lemma}   
\newtheorem{corollary}[theorem]{Corollary}
\newtheorem{proposition}[theorem]{Proposition}
\newtheorem{definition}[theorem]{Definition}
\newtheorem{main-theorem}[theorem]{Theorem}
\newtheorem*{problem*}{Problem}
\theoremstyle{definition}
\newtheorem*{question*}{Question}
\newtheorem{example}[theorem]{Example}
\renewcommand{\mod}{\operatorname{mod}}
\newcommand{\alg}{\operatorname{alg}}
\newcommand{\rad}{\operatorname{rad}}
\newcommand{\soc}{\operatorname{soc}}
\newcommand{\topp}{\operatorname{top}}
\newcommand{\umod}{\operatorname{\underline{mod}}}
\newcommand{\Ext}{\operatorname{Ext}}
\newcommand{\Hom}{\operatorname{Hom}}
\newcommand{\Ker}{\operatorname{Ker}}
\newcommand{\T}{\operatorname{T}}
\newcommand{\Tr}{\operatorname{Tr}}
\newcommand{\GL}{\operatorname{GL}}
\newcommand{\op}{\operatorname{op}}
\newcommand{\intt}{\operatorname{int}}
\renewcommand{\Im}{\operatorname{Im}}
\newcommand{\bA}{\mathbb{A}}
\newcommand{\bD}{\mathbb{D}}
\newcommand{\bK}{\mathbb{K}}
\newcommand{\bN}{\mathbb{N}}
\newcommand{\bP}{\mathbb{P}}
\newcommand{\bR}{\mathbb{R}}
\newcommand{\bS}{\mathbb{S}}
\newcommand{\bT}{\mathbb{T}}
\newcommand{\bZ}{\mathbb{Z}}
\newcommand{\cO}{\mathcal{O}}
\newcommand{\cC}{\mathcal{C}}
\newcommand{\cD}{\mathcal{D}}
\newcommand{\cT}{\mathcal{T}}
\newcommand{\cB}{\mathcal{B}}
\newcommand{\overbar}[1]{\mkern 5mu\overline{\mkern-5mu#1\mkern-5mu}\mkern 5mu}
\newcommand{\tikzAngleOfLine}{\tikz@AngleOfLine}
\def\tikz@AngleOfLine(#1)(#2)#3{%
\pgfmathanglebetweenpoints{%
\pgfpointanchor{#1}{center}}{%
\pgfpointanchor{#2}{center}}
\pgfmathsetmacro{#3}{\pgfmathresult}%
}
\begin{document}

\title{Weighted surface algebras}

{\def\thefootnote{}
\footnote{The research was supported by the research grant
DEC-2011/02/A/ST1/00216 of the National Science Center Poland.}
}

\author[K. Edrmann]{Karin Erdmann}
\address[Karin Erdmann]{Mathematical Institute,
   Oxford University,
   ROQ, Oxford OX2 6GG,
   United Kingdom}
\email{erdmann@maths.ox.ac.uk}

\author[A. Skowro\'nski]{Andrzej Skowro\'nski}
\address[Andrzej Skowro\'nski]{Faculty of Mathematics and Computer Science,
   Nicolaus Copernicus University,
   Chopina~12/18,
   87-100 Toru\'n,
   Poland}
\email{skowron@mat.uni.torun.pl}

\begin{abstract}
A finite-dimensional algebra $A$ over an algebraically
closed field $K$ is called periodic if it is periodic under the action
of the syzygy operator in the category of $A$-$A$-bimodules.
The periodic algebras are self-injective and occurred naturally
in the study of tame blocks of group algebras, actions of finite
groups on spheres, hypersurface singularities of finite
Cohen-Macaulay type,
and Jacobian algebras of quivers with potentials.
Recently, the tame periodic algebras of polynomial growth
have been classified and it is natural to attempt to classify
all tame periodic algebras.
We introduce the weighted surface algebras of triangulated surfaces
with arbitrarily oriented triangles and
describe their basic properties.
In particular, we prove that all these algebras,
except the singular tetrahedral algebras,
are symmetric tame periodic algebras of period $4$.
Moreover, we describe the socle deformations of the weighted
surface algebras and prove that all these algebras
are also symmetric tame periodic algebras of period $4$.
The main results of this paper
form an important step towards
a classification of all periodic
symmetric tame algebras of non-polynomial growth,
and lead to a complete description of all algebras
of generalized quaternion type
with $2$-regular Gabriel quivers \cite{ESk5}.

\bigskip

\noindent
\textit{Keywords:}
Syzygy, Periodic algebra, Self-injective algebra, Surface algebra, Tame algebra
 
\noindent
\textit{2010 MSC:}
16D50, 16E30, 16G20, 16G60, 16G70

\subjclass[2010]{16D50, 16E30, 16G20, 16G60, 16G70}
\end{abstract}


\maketitle

\begin{center}
\vspace*{-5mm}
\textit{Dedicated to Idun  Reiten on the occasion of her 75th birthday}
\vspace*{5mm}
\end{center}

\section{Introduction and the main results}\label{sec:intro}

Throughout this paper, $K$ will denote a fixed algebraically closed field.
By an algebra we mean an associative finite-dimensional $K$-algebra
with an identity.
For an algebra $A$, we denote by $\mod A$ the category of
finite-dimensional right $A$-modules and by $D$ the standard
duality $\Hom_K(-,K)$ on $\mod A$.
An algebra $A$ is called \emph{self-injective}
if $A_A$ is injective in $\mod A$, or equivalently,
the projective modules in $\mod A$ are injective.
A prominent class of self-injective algebras is formed
by the \emph{symmetric algebras} $A$ for which there exists
an associative, non-degenerate symmetric $K$-bilinear form
$(-,-): A \times A \to K$.
Classical examples of symmetric algebras are provided
by the blocks of group algebras of finite groups and
the Hecke algebras of finite Coxeter groups.
In fact, any algebra $A$ is the quotient algebra
of its trivial extension algebra $\T(A) = A \ltimes D(A)$,
which is a symmetric algebra.
Two self-injective algebras $A$ and $\Lambda$ are said
to be \emph{socle equivalent} if the quotient algebras
$A/\soc(A)$ and $\Lambda/\soc(\Lambda)$ are isomorphic.

From the remarkable Tame and Wild Theorem of Drozd
(see \cite{CB1,Dr})
the class of algebras over $K$ may be divided
into two disjoint classes.
The first class consists of the \emph{tame algebras}
for which the indecomposable modules occur in each dimension $d$
in a finite number of discrete and a finite number of one-parameter
families.
The second class is formed by the \emph{wild algebras}
whose representation theory comprises
the representation theories of all
algebras over $K$.
Accordingly, we may realistically hope to classify
the indecomposable finite-dimensional modules only
for the tame algebras.
Among the tame algebras we may distinguish the
\emph{representation-finite algebras}, having
only finitely many isomorphism classes of indecomposable
modules, for which the representation theory is rather
well understood.
On the other hand, the representation theory of arbitrary
tame algebras is still only emerging.
The most accessible ones
amongst the tame algebras are 
algebras of polynomial growth \cite{Sk0} for which the number
of one-parameter families
of indecomposable modules in each dimension $d$ is bounded by $d^m$,
for some positive integer $m$ (depending only on the algebra).

Let $A$ be an algebra.
Given a module $M$ in  $\mod A$, its \emph{syzygy}
is defined to be the kernel $\Omega_A(M)$ of a minimal
projective cover of $M$ in $\mod A$.
The syzygy operator $\Omega_A$ is a very important tool
to construct modules in $\mod A$ and relate them.
For $A$ self-injective, it induces an equivalence
of the stable module category $\umod A$,
and its inverse is the shift of a triangulated structure
on $\umod A$ \cite{Ha1}.
A module $M$ in $\mod A$ is said to be \emph{periodic}
if $\Omega_A^n(M) \cong M$ for some $n \geq 1$, and if so
the minimal such $n$ is called the \emph{period} of $M$.
The action of $\Omega_A$ on $\mod A$ can effect
the algebra structure of $A$.
For example, if all simple modules in $\mod A$ are periodic,
then $A$ is a self-injective algebra.
Sometimes one can even recover the algebra $A$ and its
module category from the action of $\Omega_A$.
For example, the self-injective Nakayama algebras
are precisely the algebras $A$ for which $\Omega_A^2$ permutes
the isomorphism classes of simple modules in $\mod A$.
An algebra $A$ is defined to be \emph{periodic} if it is periodic
viewed as a module over the enveloping algebra
$A^e = A^{\op} \otimes_K A$, or equivalently,
as an $A$-$A$-bimodule.
It is known that if $A$ is a periodic algebra of period $n$ then
for any indecomposable non-projective module $M$
in $\mod A$  the syzygy $\Omega_A^n(M)$ is isomorphic to $M$.

Finding or possibly classifying periodic algebras
is an important problem. It is very interesting because of
connections
with group theory, topology, singularity theory
and cluster algebras.
Periodicity of an algebra, and its period,
are invariant under derived equivalences
\cite{Ric2} (see also \cite{ESk3}).
Therefore, to study periodic algebras 
we may assume that the algebras are basic
and indecomposable.

Preprojective algebras of Dynkin type are periodic
and their periods divide 6 (see \cite{AR3,ESn}).
They belong to  a larger class of periodic algebras, 
 the deformed preprojective algebras of generalized
Dynkin type (see \cite{BES1,ESk3}).
With the exception of few small cases, all these
algebras are wild (see \cite{ESk2}).
Preprojective algebras
of Dynkin type occur in other contexts, in 
particular they are the stable Auslander algebras
of the categories of maximal Cohen-Macaulay
of the Kleinian $2$-dimensional hypersurface
singularities
(see \cite{AR1,AR2}).
We refer to
\cite{AR3,Bu,Du2}
for periodicity results on the stable Auslander algebras
of arbitrary
hypersurface singularities of finite Cohen-Macaulay type.
It would be interesting to understand
connections
between the stable Auslander algebras
of hypersurface singularities
of finite Cohen-Macaulay type
and the deformed mesh algebras of generalized
Dynkin type introduced in \cite{ESk3}.
For the simple plane curve singularities of Dynkin type
$\mathbb{A}_n$ this was clarified in \cite{BES2,BES3}.

In \cite{Du1} Dugas proved that every representation-finite
self-injective algebra, without simple blocks, is a periodic
algebra, this extended partial results from 
\cite{BBK,EH,EHS,ESk3} to the general case.
We note that, by general theory (see \cite[Section~3]{Sk2}),
a basic, indecomposable, non-simple, symmetric algebra $A$
is representation-finite if and only if $A$ is socle equivalent
to an algebra $\T(B)^G$ of invariants of the trivial extension
algebra $\T(B)$ of a tilted algebra $B$ of Dynkin type
with respect to free action of a finite cyclic group $G$.
Moreover, there are representation-finite indecomposable
symmetric algebras of arbitrary large period (see \cite{BBK}).
Recently, the representation-infinite, indecomposable,
periodic algebras of polynomial growth were classified
by Bia\l kowski, Erdmann and Skowro\'nski in \cite{BES4}
(see also \cite{Sk1,Sk2}).
In particular, it follows from \cite{BES4} (see also
\cite{BHS,BS1,BS2,Sk1} 
and
\cite[Section~5]{Sk2}) that every basic, indecomposable,
representation-infinite periodic tame symmetric algebra
of polynomial growth is socle
equivalent to an algebra $\T(B)^G$
of invariants of the trivial extension algebra $\T(B)$
of a tubular algebra $B$ of tubular type
$(2,2,2,2)$, $(3,3,3)$, $(2,4,4)$, $(2,3,6)$
(introduced by Ringel \cite{R}) with respect to free
action of a finite cyclic group $G$.
Then one knows that there is a common bound of the periods
of all representation-infinite indecomposable symmetric
algebras of polynomial growth (see \cite{BES4}).

It would be interesting to classify all indecomposable
periodic symmetric tame algebras of non-polynomial growth.
We ask whether the following might hold.

\begin{problem*}
Let $A$ be an indecomposable symmetric tame algebra 
of non-polynomial growth for which  all simple modules in $\mod A$ are periodic.
Is it  true that  $A$ is a periodic algebra of period $4$?
\end{problem*}

Motivated by known properties  of blocks with generalized
quaternion defect groups in the group algebras of finite
groups, Erdmann introduced and investigated in
\cite{E1,E2,E3} the
\emph{algebras of quaternion type},
being the indecomposable, representation-infinite tame
symmetric algebras $A$ with non-singular Cartan matrix $C_A$
for which every indecomposable non-projective
module in $\mod A$ is periodic of period dividing $4$.
In particular, Erdmann proved that every algebra $A$
of quaternion type has at most $3$ non-isomorphic
simple modules and its basic algebra is isomorphic
to an algebra belonging to 12 families of symmetric
representation-infinite algebras defined by quivers
and relations.
Subsequently it has been proved in \cite{Ho} (see also
\cite{NeS} for the polynomial growth cases) that all these
algebras are tame, and are in fact periodic of period $4$
(see \cite{BES4,ESk2}). 
In particular it shows that a  finite group $G$
is periodic with respect to the group cohomology
$H^*(G,\mathbb{Z})$ if and only if all blocks
with non-trivial defect groups of its group algebras
$K G$  over an arbitrary algebraically closed
field $K$ are periodic algebras.
By the famous result of Swan \cite{Sw}
 periodic groups can be characterized as the
finite groups acting freely on finite CW-complexes
homotopically equivalent to spheres
(see \cite[Section~4]{ESk3} for more details).
Some of the algebras
of quaternion type occur as  endomorphism
algebras of cluster tilting objects in the stable
categories of maximal Cohen-Macaulay modules over
odd-dimensional isolated hypersurface singularities
(see \cite[Section~7]{BIKR}).

New interesting families of tame symmetric algebras
with all indecomposable non-projective finite-dimensional
modules periodic of period dividing $4$ appeared surprisingly in
the theory of cluster algebras.
In  \cite{DWZ1,DWZ2},
Derksen, Weyman and Zelevinsky introduced quivers
with potentials and the associated Jacobian algebras,
and established links between the theory of cluster
algebras (invented by Fomin and Zelevinsky \cite{FZ})
and the representation theory of algebras.
On the other hand, in the beautiful paper \cite{FoST},
Fomin, Shapiro and Thurston associated to each bordered
surface with marked points a cluster algebra, each of
whose exchange matrices is defined in terms of the signed
adjacencies between the arcs of an ideal triangulation
of the surface, and such that the flips of triangulations
correspond to the mutations of the associated skew-symmetric
matrices (equivalently, mutations of the associated
quivers).
In particular, a wide class of $2$-acyclic quivers
of finite mutation type has been exhibited in \cite{FoST}.
Moreover, Felikson, Shapiro and Tumarkin proved in
\cite{FeST} that there are only $11$ mutation
equivalence classes of $2$-acyclic quivers of finite
mutation type not coming from triangulations of marked
surfaces.
Further, in \cite{LF} Labardini-Fragoso associated
a quiver with potential to any ideal triangulation
of a surface with marked points in such a way that
flips of triangulations correspond to mutations
of the associated quivers with potentials.
Finally, Ladkani proved in \cite{La1} that the
Jacobian algebras associated to ideal triangulations
of surfaces with empty boundary and punctures and
Labardini-Fragoso potentials are finite-dimensional
tame symmetric algebras with singular Cartan matrices.
Moreover, Valdivieso-D\'iaz proved in \cite{VD}
that the stable Auslander-Reiten quivers of these
Jacobian algebras consist of stable tubes of
ranks $1$ and $2$.
In particular, this showed that the list of tame symmetric 
algebras with periodic module categories announced
in Theorem~6.2 of our article \cite{ESk3} (and hence also in
\cite[Theorem~8.7]{Sk2}) 
is not complete.
In fact, this omission was pointed to us first by S.~Ladkani.

The aim of this  paper is to introduce a more general class
of algebras, called weighted surface algebras,
and describe their basic properties.
In this paper, by a surface we mean a connected,
compact, $2$-dimensional real manifold $S$,
orientable or non-orientable,
with or without boundary.
Then $S$ admits a structure of a finite
$2$-dimensional triangular cell complex,
and hence a triangulation.
We say that 
$(S,\vec{T})$ is a directed triangulated surface if $S$ is a surface,
$T$ is a triangulation of $S$ with at least $3$
pairwise different edges,
and $\vec{T}$ is an arbitrary choice
of orientations of the triangles in $T$.
To such $(S,\vec{T})$
we associate 
a triangulation quiver $(Q(S,\vec{T}),f)$,
where $Q(S,\vec{T})$ is a $2$-regular quiver, that is
every vertex is a source and target of exactly
two arrows. The 
vertices of this quiver are the edges of $T$,
and $f$ is a permutation of the arrows in $Q(S,\vec{T})$
reflecting the orientation $\vec{T}$ of triangles in $T$.
Since $Q(S,\vec{T})$ is 2-regular there is a
 second permutation, denoted by $g$, of the arrows of $Q(S,\vec{T})$.
If  $\cO(g)$ is the set of $g$-orbits of arrows in $Q(S,\vec{T})$, we 
will define 
two functions
$m_{\bullet} : \cO(g) \to \bN^*$
and
$c_{\bullet} : \cO(g) \to K^*$,
called weight and parameter functions.
Then the weighted surface algebra
$\Lambda(S,\vec{T},m_{\bullet},c_{\bullet})$
will be defined as  a quotient algebra
$K Q(S,\vec{T}) / I(S,\vec{T},m_{\bullet},c_{\bullet})$
of the path algebra $K Q(S,\vec{T})$ of $Q(S,\vec{T})$
over $K$  by an admissible ideal
$I(S,\vec{T},m_{\bullet},c_{\bullet})$
of $K Q(S,\vec{T})$.
Certain algebras of this form which are defined  via
 the tetrahedral triangulation of the sphere,
play a special role, we call these  tetrahedral algebras.

The following two theorems describe  basic properties
of the weighted surface algebras.

\begin{main-theorem}
\label{th:main1}
Let $\Lambda = \Lambda(S,\vec{T},m_{\bullet},c_{\bullet})$
be a weighted surface algebra over an algebraically
closed field $K$.
Then the following statements hold:
\begin{enumerate}[(i)]
 \item
  $\Lambda$ is a representation-infinite tame symmetric algebra.
 \item
  $\Lambda$ is of polynomial growth if and only if
  $\Lambda$ is a non-singular tetrahedral algebra.
\end{enumerate}
\end{main-theorem}

\begin{main-theorem}
\label{th:main2}
Let $\Lambda = \Lambda(S,\vec{T},m_{\bullet},c_{\bullet})$
be a weighted surface algebra over an algebraically
closed field $K$.
Then the following statements are equivalent:
\begin{enumerate}[(i)]
 \item
  All simple modules in $\mod \Lambda$ are periodic of period $4$.
 \item
  $\Lambda$ is a periodic algebra of period $4$.
 \item
  $\Lambda$ is not a singular tetrahedral algebra.
\end{enumerate}
\end{main-theorem}

We would like to mention that 
the periodicity of module categories $\mod \Lambda$
of weighted surface (triangulation) algebras different 
from a singular tetrahedral algebra, was established
independently using cluster theory methods
(see \cite{L4,VD}).
But we want stress that the periodicity of algebras
established in the above theorem is a much
stronger property that the periodicity of its
module category.
For example, the periodicity of an algebra
implies the periodicity of its Hochschild cohomology.
Moreover, we provide a self-contained proof of the
above theorem, presenting explicit constructions
of periodic bimodule resolutions of the considered
algebras (see Section~\ref{sec:periodicity}).

We obtain the following direct consequence of the above theorems
and the main result of \cite{E0} (see also Theorem~\ref{th:2.5}).

\begin{corollary}
Let $\Lambda = \Lambda(S,\vec{T},m_{\bullet},c_{\bullet})$
be a weighted surface algebra over an algebraically
closed field $K$, with the Grothendieck group $K_0(\Lambda)$
of rank at least $4$.
Then the Cartan matrix $C_{\Lambda}$ of $\Lambda$ is singular.
\end{corollary}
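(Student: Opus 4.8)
The plan is to prove the contrapositive and reduce to Erdmann's classification of algebras of quaternion type. So I would assume that $C_{\Lambda}$ is non-singular while the rank of $K_0(\Lambda)$ is at least $4$, and derive a contradiction. Since this rank equals the number of pairwise non-isomorphic simple modules in $\mod \Lambda$, the hypothesis says that $\Lambda$ has at least four simple modules. By Theorem~\ref{th:main1}(i), $\Lambda$ is already known to be a basic, indecomposable, representation-infinite tame symmetric algebra, so three of the four defining conditions of an algebra of quaternion type are in place; the remaining one is that every indecomposable non-projective module be periodic of period dividing $4$.

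To secure that last condition I would pass through Theorem~\ref{th:main2}. By that theorem the only weighted surface algebras which fail to be periodic of period $4$ are the singular tetrahedral algebras, so the task reduces to excluding this single case under the assumption $\det C_{\Lambda} \neq 0$. Here I would use that the singular tetrahedral algebra is one explicitly presented algebra, for which a direct computation of its Cartan matrix shows that $C_{\Lambda}$ is singular; granting this, the non-singularity hypothesis forces $\Lambda$ not to be a singular tetrahedral algebra. Then Theorem~\ref{th:main2} gives that $\Lambda$ is periodic of period $4$, and, as recalled in the introduction, for a periodic algebra of period $n$ every indecomposable non-projective module $M$ satisfies $\Omega_{\Lambda}^{n}(M) \cong M$; taking $n = 4$ shows that all indecomposable non-projective modules in $\mod \Lambda$ are periodic of period dividing $4$.

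At this point $\Lambda$ meets all the requirements of an \emph{algebra of quaternion type}. I would then invoke the main result of \cite{E0} (see Theorem~\ref{th:2.5}), to the effect that such an algebra has at most three pairwise non-isomorphic simple modules; hence the rank of $K_0(\Lambda)$ is at most $3$, contradicting the standing hypothesis. Therefore $C_{\Lambda}$ must be singular.

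I expect the only real obstacle to be the exclusion of the singular tetrahedral algebra in the second paragraph: it is precisely the weighted surface algebra that Theorem~\ref{th:main2} leaves outside the periodic-of-period-$4$ class, so the quaternion-type reduction does not reach it, and one must verify directly that its Cartan matrix is singular. Once that single explicit check is made, the statement follows by straightforwardly chaining the two main theorems with Erdmann's classification, which is why it is a direct consequence of these results.
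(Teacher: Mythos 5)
Your proof is correct and follows essentially the same route as the paper: the corollary is obtained by chaining Theorem~\ref{th:main1}(i) and Theorem~\ref{th:main2} with Theorem~\ref{th:2.5} (the main result of \cite{E0}), exactly as you describe. Your explicit exclusion of the singular tetrahedral algebra via a direct Cartan-matrix computation fills a step the paper leaves implicit --- it is genuinely needed, since Theorem~\ref{th:main2} does not make that algebra periodic, and it does check out (all tetrahedral algebras share the same Cartan matrix, namely $J+I-P$ for $P$ the opposite-edge involution, which has a nontrivial kernel).
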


Let $(S,\vec{T})$ be a directed triangulated surface, and
$m_{\bullet}$, $c_{\bullet}$ weight and parameter functions of
$(Q(S,\vec{T}),f)$.
Assume that the boundary $\partial S$ of $S$ is not empty.
Then we may consider
a border function $b_{\bullet} : \partial(Q(S,\vec{T}),f) \to K$
on the set $\partial(Q(S,\vec{T}),f)$
of vertices of $Q(S,\vec{T})$ corresponding
to the boundary edges of the triangulation $T$ of $S$,
and the associated socle deformed weighted surface algebra
$
  \Lambda(S,\vec{T},m_{\bullet},c_{\bullet},b_{\bullet})
   = K Q(S,\vec{T}) / I (S,\vec{T},m_{\bullet},c_{\bullet},b_{\bullet}),
$
where $I (S,\vec{T},m_{\bullet},c_{\bullet},b_{\bullet})$
is an admissible ideal of $K Q(S,\vec{T})$
such that
$\Lambda(S,\vec{T},m_{\bullet},c_{\bullet},b_{\bullet})$
is socle equivalent to
$\Lambda(S,\vec{T},m_{\bullet},c_{\bullet})$.

The following theorem is the third main result of the paper.

\begin{main-theorem}
\label{th:main4}
Let $A$ be a basic, indecomposable, symmetric algebra
over an algebraically closed field $K$.
Assume that $A$ is socle equivalent but not isomorphic to
a weighted surface algebra
$\Lambda(S,\vec{T},m_{\bullet},c_{\bullet})$.
Then the following statements hold:
\begin{enumerate}[(i)]
 \item
  The surface $S$ has non-empty boundary.
 \item
  $K$ is of characteristic $2$.
 \item
  $A$ is isomorphic to a socle deformed weighted surface algebra
  $\Lambda(S,\vec{T},m_{\bullet},c_{\bullet},b_{\bullet})$.
 \item
  The Cartan matrix $C_A$ of $A$ is singular.
 \item
  $A$ is a tame algebra of non-polynomial growth.
 \item
  $A$ is a periodic algebra of period $4$.
\end{enumerate}
\end{main-theorem}

In Section~\ref{sec:socldeform} we will provide
explicit constructions of periodic bimodule resolutions
of the socle deformed weighted surface algebras.

The above theorems
are the key new results towards classifications
of distinguished classes of tame symmetric algebras.
As the continuation \cite{ESk5} of this paper
we classify basic, indecomposable, representation-infinite,
tame symmetric algebras $A$ with
$2$-regular Gabriel quiver
having at least $3$ vertices and where all simple modules are
periodic of period $4$ (called algebras of generalized quaternion type). 
These are the algebras  socle equivalent
to the weighted surface algebras
$\Lambda(S,\vec{T},m_{\bullet},c_{\bullet})$,
different from the singular tetrahedral algebra,
and the higher tetrahedral algebras investigated in \cite{ESk-HTA}.

Further, the orbit closures of the weighted surface algebras
(and their socle deformations)
in the affine varieties of associative $K$-algebra structures
contain new wide classes of tame symmetric algebras related
to algebras  of dihedral and semidihedral types,
which occurred in the study of blocks of group algebras
with dihedral and semidihedral defect groups.
We refer to \cite{ESk6,ES7} for a classification of algebras of
generalized dihedral type and a characterization of Brauer
graph algebras, using biserial weighted surface algebras.

This paper is organized as follows.
Section~\ref{sec:pre} contains  some known preliminary results
on algebras and modules.
In Section~\ref{sec:bimodule} we describe our general approach and
results for constructing a minimal projective bimodule
resolution of an algebra with periodic simple modules.
Section~\ref{sec:triangulation} introduces triangulation quivers
and shows that they arise naturally from orientations of triangles
of triangulated surfaces.
In Section~\ref{sec:weightsurfalg} we define weighted surface algebras
of directed triangulated surfaces and prove that they are
tame symmetric algebras.
Section~\ref{sec:tetrahedral} is devoted to distinguished properties
of a family of algebras given by the tetrahedral triangulation
of the sphere.
In Section~\ref{sec:periodicity} we discuss the periodicity
of arbitrary weighted surface algebras.
Section~\ref{sec:socldeform} deals with socle deformations
of weighted surface algebras
of directed triangulated surfaces with boundary and
their properties.
In Section~\ref{sec:deformperidic} we prove that
all these algebras are periodic algebras of period $4$.
In Section~\ref{sec:reptype} we
discuss the representation type of the weighted surface algebras
and their socle deformations.

For general background on the relevant representation theory
we refer to the books
\cite{ASS,E3,SS,SY}.

\section{Preliminary results}\label{sec:pre}

A \emph{quiver} is a quadruple $Q = (Q_0, Q_1, s, t)$
consisting of a finite set $Q_0$ of vertices,
a finite set $Q_1$ of arrows,
and two maps $s,t : Q_1 \to Q_0$ which associate
to each arrow $\alpha \in Q_1$ its source $s(\alpha) \in Q_0$
and  its target $t(\alpha) \in Q_0$.
We denote by $K Q$ the path algebra of $Q$ over $K$
whose underlying $K$-vector space has as its basis
the set of all paths in $Q$ of length $\geq 0$, and
by $R_Q$ the arrow ideal of $K Q$ generated by all paths $Q$
of length $\geq 1$.
An ideal $I$ in $K Q$ is said to be admissible
if there exists $m \geq 2$ such that
$R_Q^m \subseteq I \subseteq R_Q^2$.
If $I$ is an admissible ideal in $K Q$, then
the quotient algebra $K Q/I$ is called
a bound quiver algebra, and is a finite-dimensional
basic $K$-algebra.
Moreover, $K Q/I$ is indecomposable if and only if
$Q$ is connected.
Every basic, indecomposable, finite-dimensional
$K$-algebra $A$ has a bound quiver presentation
$A \cong K Q/I$, where $Q = Q_A$ is the \emph{Gabriel
quiver} of $A$ and $I$ is an admissible ideal in $K Q$.
For a bound quiver algebra $A = KQ/I$, we denote by $e_i$,
$i \in Q_0$, the associated complete set of pairwise
orthogonal primitive idempotents of $A$, and by
$S_i = e_i A/e_i \rad A$ (respectively, $P_i = e_i A$),
$i \in Q_0$, the associated complete family of pairwise
non-isomorphic simple modules (respectively, indecomposable
projective modules) in $\mod A$.

Following \cite{SW}, an algebra $A$ is said to be
\emph{special biserial} if $A$ is isomorphic
to a bound quiver algebra $K Q/I$, where the bound
quiver $(Q,I)$ satisfies the following conditions:
\begin{enumerate}[(a)]
 \item
  each vertex of $Q$ is a source and target of at most two arrows,
 \item
  for any arrow $\alpha$ in $Q$ there are at most
  one arrow $\beta$ and at most one arrow $\gamma$
  with $\alpha \beta \notin I$ and $\gamma \alpha \notin I$.
\end{enumerate}
Moreover, if in addition $I$ is generated by paths of $Q$, then
$A = K Q/I$ is said to be a \emph{string algebra} \cite{BR}.
It was  proved in \cite{PS} that the class
of special biserial algebras coincides with the class
of biserial algebras (indecomposable projective modules
have biserial structure) which admit simply connected
Galois coverings.
Furthermore, by \cite[Theorem~1.4]{WW}
we know that  every special biserial agebra is a quotient algebra
of a symmetric special biserial algebra.
We also mention that, if $A$ is a self-injective
special biserial algebra, then $A/\soc(A)$ is a  string algebra.

The following  has been proved by Wald and Waschb\"usch
in \cite{WW} (see also \cite{BR,DS} for alternative
proofs).

\begin{proposition}
\label{prop:2.1}
Every special biserial algebra is tame.
\end{proposition}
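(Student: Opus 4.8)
The plan is to reduce the statement to the case of string algebras, for which the indecomposable modules admit an explicit combinatorial classification, and then to read off tameness from that classification. So let $A = KQ/I$ be special biserial. The first step is to pass to an associated string algebra. A finite number of the indecomposable projective $A$-modules are non-uniserial and projective-injective, each with a one-dimensional socle spanned by an element that is simultaneously the end of the two maximal paths through the top; let $J$ be the two-sided ideal of $A$ generated by these socle elements. Then $B = A/J$ is a string algebra, since its defining ideal is now generated by paths, and by construction every indecomposable $A$-module is either one of the finitely many projective-injective modules affected by the passage to $B$, or else is annihilated by $J$ and hence is an indecomposable $B$-module. Since the excluded projectives form a finite set independent of the dimension, it suffices to prove that $B$ is tame.

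The second step is the Butler--Ringel classification of indecomposable $B$-modules. Using the two partial permutations on arrows afforded by conditions (a) and (b), one forms \emph{strings} (reduced walks in $Q$ avoiding the relations) and \emph{bands} (cyclic strings that are not proper powers). To each string $w$ one associates a \emph{string module} $M(w)$, with one basis vector per vertex visited by $w$ and the arrows acting along the walk; to each band $b$, each scalar $\lambda \in K^*$, and each $n \geq 1$ one associates a \emph{band module} $M(b,\lambda,n)$, built by wrapping an indecomposable $K[x]/(x-\lambda)^n$-module around $b$. The content of the classification, proved via functorial filtrations in \cite{BR} (see also \cite{WW,DS}), is that $\{M(w)\} \cup \{M(b,\lambda,n)\}$ is, up to isomorphism, a complete irredundant list of the indecomposable $B$-modules.

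The final step is the counting that yields tameness. For a fixed dimension $d$, only finitely many strings $w$ satisfy $\dim_K M(w) = d$, so the string modules contribute finitely many discrete indecomposables. A band module $M(b,\lambda,n)$ has dimension $n\,\ell(b)$, where $\ell(b)$ is the length of $b$; hence for fixed $d$ there are only finitely many pairs $(b,n)$ with $n\,\ell(b) = d$, and for each such pair the modules $M(b,\lambda,n)$ form a single one-parameter family indexed by $\lambda \in K^*$. Thus in each dimension $d$ the indecomposable $B$-modules fall into finitely many discrete modules together with finitely many one-parameter families, which is precisely the definition of a tame algebra; by the first step the same conclusion holds for $A$.

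I expect the main obstacle to be the second step: establishing that the string and band modules \emph{exhaust} the indecomposables, rather than merely producing them. This completeness is the substantive part of \cite{BR}, requiring the functorial-filtration machinery (or, alternatively, a simply connected Galois covering together with covering theory, as in \cite{PS}); the reduction in the first step and the dimension count in the third are comparatively routine once the classification is in hand.
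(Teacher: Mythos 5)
Your argument is correct, but note that the paper offers no proof of this statement at all: it is quoted as a known theorem of Wald--Waschb\"usch, with \cite{WW} (and \cite{BR,DS}) cited for the proof. Your outline --- killing the socles of the non-uniserial projective-injective modules to reduce to a string algebra, invoking the Butler--Ringel string/band classification, and counting indecomposables in each dimension --- is exactly the standard argument of those references, with the genuinely hard step (completeness of the list of string and band modules) correctly identified and delegated to \cite{BR}.
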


For a positive integer $d$, we denote by $\alg_d(K)$ the affine
variety of associative $K$-algebra structures with identity on
the affine space $K^d$.
Then the general linear group $\GL_d(K)$ acts on $\alg_d(K)$
by transport of the structures, and the $\GL_d(K)$-orbits in
$\alg_d(K)$ correspond to the isomorphism classes of $d$-dimensional
algebras (see \cite{Kr} for details). We identify a $d$-dimensional
algebra $A$ with the point of $\alg_d(K)$ corresponding to it.
For two $d$-dimensional algebras $A$ and $B$, we say that $B$
is a \emph{degeneration} of $A$ ($A$ is a \emph{deformation} of $B$)
if $B$ belongs to the closure of the $\GL_d(K)$-orbit
of $A$ in the Zariski topology of $\alg_d(K)$.

Geiss' Theorem \cite{Ge} shows that if $A$ and $B$ are two
$d$-dimensional algebras, $A$ degenerates to $B$ and $B$ is a tame
algebra, then $A$ is also a tame algebra (see also \cite{CB2}).
We will apply this theorem in the following special situation.

\begin{proposition}
\label{prop:2.2}
Let $d$ be a positive integer, and $A(t)$, $t \in K$,
be an algebraic family in $\alg_d(K)$ such that $A(t) \cong A(1)$
for all $t \in K \setminus \{0\}$.
Then $A(1)$ degenerates to $A(0)$.
In particular, if $A(0)$ is tame, then $A(1)$ is tame.
\end{proposition}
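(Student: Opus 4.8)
The goal is to show that $A(1)$ degenerates to $A(0)$, and to deduce tameness of $A(1)$ from tameness of $A(0)$ via Geiss' Theorem.

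\smallskip

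\emph{The plan is to exhibit $A(0)$ explicitly in the closure of the $\GL_d(K)$-orbit of $A(1)$.} The phrase ``algebraic family in $\alg_d(K)$'' should mean that the structure constants of $A(t)$ depend polynomially (or at least regularly) on the parameter $t$, so that $t \mapsto A(t)$ is a morphism from the affine line $\mathbb{A}^1 = K$ into the variety $\alg_d(K)$. Under this reading the image point $A(0)$ is by construction a limit, as $t \to 0$, of the points $A(t)$ with $t \neq 0$; since each such $A(t)$ is isomorphic to $A(1)$, every $A(t)$ for $t \in K \setminus \{0\}$ lies in the single $\GL_d(K)$-orbit $\mathcal{O}(A(1))$ of $A(1)$. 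Hence the punctured line $\{A(t) : t \neq 0\}$ is contained in $\mathcal{O}(A(1))$, and its Zariski closure is contained in $\overline{\mathcal{O}(A(1))}$.

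\smallskip

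\emph{First I would make precise that $A(0)$ lies in this closure.} The curve $c : \mathbb{A}^1 \to \alg_d(K)$, $c(t) = A(t)$, is continuous for the Zariski topology, so $c(0) = A(0)$ lies in the closure of $c(K \setminus \{0\})$; concretely one checks that any polynomial function on $\alg_d(K)$ vanishing on all $A(t)$ with $t \neq 0$ is a polynomial in $t$ that vanishes for all nonzero $t$, hence vanishes at $t = 0$ as well, so it vanishes at $A(0)$. This gives $A(0) \in \overline{\{A(t) : t \neq 0\}} \subseteq \overline{\mathcal{O}(A(1))}$, which is exactly the definition of ``$A(1)$ degenerates to $A(0)$'' recorded earlier in the excerpt. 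The second assertion is then immediate: if $A(0)$ is tame, then by Geiss' Theorem (quoted just above the statement), any algebra degenerating to the tame algebra $A(0)$ is itself tame, and since $A(1)$ degenerates to $A(0)$, the algebra $A(1)$ is tame.

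\smallskip

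\emph{The main obstacle is purely a matter of unwinding definitions rather than of genuine difficulty.} The one point requiring care is the interplay between the \emph{abstract} isomorphisms $A(t) \cong A(1)$ and the \emph{geometric} statement that the points $A(t)$ all lie in one $\GL_d(K)$-orbit: this is precisely the content of the identification, recalled above, of $\GL_d(K)$-orbits in $\alg_d(K)$ with isomorphism classes of $d$-dimensional algebras, so the abstract isomorphisms translate directly into membership in a common orbit. Beyond this, one only needs the elementary topological fact that the image of a connected curve has connected, hence irreducible-closure-compatible, behaviour at the limit point $t = 0$; no estimates or constructions are involved, so I expect the proof to be short once the notion of ``algebraic family'' is pinned down.
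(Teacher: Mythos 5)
Your argument is correct and is exactly the standard proof of this statement, which the paper itself states without proof as a direct consequence of the definition of degeneration and Geiss' Theorem. The key step — that a regular function on $\alg_d(K)$ vanishing on $\{A(t): t\neq 0\}$ pulls back to a polynomial in $t$ vanishing on the infinite set $K\setminus\{0\}$, hence at $t=0$, so that $A(0)\in\overline{\{A(t):t\neq 0\}}\subseteq\overline{\GL_d(K)\cdot A(1)}$ — is precisely the intended reasoning.
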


A family of algebras $A(t)$, $t \in K$, in $\alg_d(K)$
is said to be \emph{algebraic} if the induced map
$A(-) : K \to \alg_d(K)$ is a regular map of affine varieties.

An important combinatorial and homological invariant
of the module category $\mod A$ of an algebra $A$
is its Auslander-Reiten quiver $\Gamma_A$.
Recall that $\Gamma_A$ is the translation quiver whose
vertices are the isomorphism classes of indecomposable
modules in $\mod A$, the arrows correspond
to irreducible homomorphisms, and the translation
is the Auslander-Reiten translation $\tau_A = D \Tr$.
For $A$ self-injective, we denote by $\Gamma_A^s$
the stable Auslander-Reiten quiver of $A$, obtained
from $\Gamma_A$ by removing the isomorphism classes
of projective modules and the arrows attached to them.
By a stable tube we mean a translation quiver $\Gamma$
of the form $\mathbb{Z} \mathbb{A}_{\infty}/(\tau^r)$,
for some $r \geq 1$, and we call $r$ the rank of $\Gamma$.
We note that, for a symmetric algebra $A$, we have
$\tau_A = \Omega_A^2$ (see \cite[Corollary~IV.8.6]{SY}).
In particular, we have the following equivalence.

\begin{proposition}
\label{prop:2.3}
Let $A$ be an indecomposable, representation-infinite
symmetric algebra.
The following statements are equivalent:
\begin{enumerate}[(i)]
 \item
  $\Gamma_A^s$ consists of stable tubes.
 \item
  All indecomposable non-projective modules in $\mod A$
  are periodic.
\end{enumerate}
\end{proposition}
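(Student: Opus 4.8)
The plan is to translate both conditions into the language of the stable Auslander--Reiten quiver $\Gamma_A^s$ and then invoke the structure theory of stable translation quivers. The starting point is the identity $\tau_A = \Omega_A^2$ recorded above: since $\Omega_A$ induces a self-equivalence of $\umod A$, a module $M$ satisfies $\Omega_A^n(M) \cong M$ for some $n \geq 1$ if and only if $\tau_A^k(M) \cong M$ for some $k \geq 1$; that is, $M$ is $\Omega_A$-periodic precisely when it is $\tau_A$-periodic. Because every indecomposable non-projective module in $\mod A$ occurs as a vertex of $\Gamma_A^s$, statement (ii) says exactly that every vertex of $\Gamma_A^s$ is $\tau_A$-periodic. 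Thus it suffices to prove the purely combinatorial equivalence: $\Gamma_A^s$ consists of stable tubes if and only if all of its vertices are $\tau_A$-periodic.

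The implication (i) $\Rightarrow$ (ii) is immediate: in a stable tube $\mathbb{Z}\mathbb{A}_{\infty}/(\tau^r)$ every vertex lies on a $\tau$-orbit of length $r$, hence is $\tau_A$-periodic and therefore $\Omega_A$-periodic. So every indecomposable non-projective module, being a vertex of some tube, is periodic.

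For (ii) $\Rightarrow$ (i) I would argue component by component. First, since $A$ is connected and representation-infinite, no component of $\Gamma_A^s$ is finite; here I appeal to the classical fact that a finite component of the Auslander--Reiten quiver forces representation-finiteness (see \cite{SY,E3}). Fix an infinite component $\mathcal{C}$ of $\Gamma_A^s$. By Riedtmann's structure theorem $\mathcal{C} \cong \mathbb{Z}\Delta/G$ for a directed tree $\Delta$ (the tree class) and an admissible automorphism group $G$. By hypothesis every vertex of $\mathcal{C}$ is $\tau_A$-periodic; the theorem of Happel, Preiser and Ringel on stable translation quivers with periodic vertices (a subadditive-function argument resting on Vinberg's characterization of Dynkin diagrams) then shows that an infinite such component must have tree class $\mathbb{A}_{\infty}$ and group $G = \langle \tau^r \rangle$, i.e. $\mathcal{C} \cong \mathbb{Z}\mathbb{A}_{\infty}/(\tau^r)$ is a stable tube. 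As $\mathcal{C}$ was arbitrary, (i) follows.

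I expect the main obstacle to be the hard direction (ii) $\Rightarrow$ (i), and within it the step that rules out the remaining infinite tree classes (notably $\mathbb{A}_{\infty}^{\infty}$ and $\mathbb{D}_{\infty}$) and pins down $G = \langle \tau^r \rangle$, so that $\mathcal{C}$ is genuinely a tube rather than merely a periodic infinite component of some other shape. This is exactly where the Happel--Preiser--Ringel machinery is indispensable rather than any soft argument, and it is the reason the periodicity of \emph{all} vertices, not just of one, is decisive; the complementary input, that a connected representation-infinite self-injective algebra has no finite stable component, is what lets me discard the Dynkin (finite) case and conclude that \emph{every} component is an infinite tube.
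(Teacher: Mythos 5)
Your argument is correct. Note, however, that the paper does not actually prove Proposition~\ref{prop:2.3}: it records the statement as a known fact, the only ingredient it makes explicit being the identity $\tau_A=\Omega_A^2$ for symmetric algebras (citing \cite{SY}), and leaves the rest to the literature. What you have written is precisely the standard justification. The translation between $\Omega_A$-periodicity and $\tau_A$-periodicity is immediate from $\tau_A=\Omega_A^2$ (one direction by squaring, the other trivially), the easy implication follows since every $\tau$-orbit in $\bZ\bA_{\infty}/(\tau^r)$ is finite, and the hard implication is exactly the Happel--Preiser--Ringel theorem: a connected stable translation quiver all of whose vertices are periodic is either of the form $\bZ\Delta/G$ with $\Delta$ Dynkin (hence finite) or a stable tube. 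You correctly identify where the hypotheses ``indecomposable'' and ``representation-infinite'' enter, namely to exclude the finite (Dynkin) alternative via the classical fact that a finite component of the Auslander--Reiten quiver of a connected algebra forces representation-finiteness. One small streamlining: you do not need to invoke Riedtmann's structure theorem separately and then rule out the tree classes $\bA_{\infty}^{\infty}$ and $\bD_{\infty}$ by hand --- the Happel--Preiser--Ringel dichotomy already delivers the conclusion that an infinite periodic component is a tube, so the subadditive-function machinery does all of that work at once.
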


Therefore, we conclude that, if $A$ is an indecomposable,
representation-infinite, symmetric, periodic algebra (of period $4$)
then $\Gamma_A^s$ consists of stable tubes (of ranks $1$ and $2$).
We also note that, if $A$ is a representation-infinite special
biserial symmetric algebra, then $\Gamma_A^s$ admits
an acyclic component (see \cite{ESk1}),
and consequently $A$ is not a periodic algebra.

Let $A$ be an algebra over $K$ and $\sigma$ a $K$-algebra
automorphism of $A$. Then for any $A$-$A$-bimodule $M$
we denote by ${}_1M_{\sigma}$ the $A$-$A$-bimodule with
the underlying $K$-vector space $M$ and action defined
as $a m b = a m \sigma(b)$ for all $a, b \in A$ and $m \in M$.

The following has been proved in \cite[Theorem~1.4]{GSS}.

\begin{theorem}
\label{th:2.4}
Let $A$ be an algebra over $K$ and $d$ a positive integer.
Then the following statements are equivalent:
\begin{enumerate}[(i)]
 \item
  $\Omega_A^d(S) \cong S$ in $\mod A$ for every simple
  module $S$ in $\mod A$.
 \item
  $\Omega_{A^e}^d(S) \cong {}_1A_{\sigma}$ in $\mod A^e$ for
  some $K$-algebra automorphism $\sigma$ of $A$ such that
  $\sigma(e) A \cong e A$ for any primitive idempotent
  $e$ of $A$.
\end{enumerate}
Moreover, if $A$ satisfies these conditions, then $A$ is self-injective.
\end{theorem}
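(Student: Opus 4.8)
The plan is to build a bridge between bimodule syzygies and one-sided syzygies by tensoring a minimal projective bimodule resolution with simple modules, and then to read off the structure of $\Omega_{A^e}^d(A)$ from the periodicity hypothesis. First I would record the standard tensoring lemma. Let $Q_\bullet \to A \to 0$ be a minimal projective resolution of $A$ in $\mod A^e$. Each $Q_n$ is a summand of a free bimodule $A \otimes_K A$, hence projective, and in particular flat, as a left and as a right $A$-module. Since ${}_AA$ is projective, the augmented complex $Q_\bullet \to A \to 0$ is an exact, bounded-below complex of left-projective modules with left-projective augmentation, so it is contractible as a complex of left $A$-modules. Consequently, for every right module $X$ the complex $X \otimes_A Q_\bullet \to X \to 0$ is exact, and since $X \otimes_A (A \otimes_K A) \cong X \otimes_K A$ is free, it is a projective resolution of $X$; minimality is preserved because the differentials of $Q_\bullet$ have entries in $\rad A^e$, which tensor into $\rad A$. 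Hence $X \otimes_A \Omega_{A^e}^n(A) \cong \Omega_A^n(X)$ for all $n$, and symmetrically $\Omega_{A^e}^n(A) \otimes_A Y \cong \Omega_A^n(Y)$ for left modules $Y$.

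Write $M = \Omega_{A^e}^d(A)$. The implication (ii) $\Rightarrow$ (i) is then immediate: if $M \cong {}_1A_{\sigma}$, then for each simple right module $S_i$ one has $\Omega_A^d(S_i) \cong S_i \otimes_A {}_1A_{\sigma} \cong (S_i)^{\sigma}$, the module $S_i$ with action twisted by $\sigma$; the hypothesis $\sigma(e)A \cong eA$ says exactly that $\sigma$ fixes the isomorphism class of each simple, so $(S_i)^{\sigma} \cong S_i$ and (i) holds.

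For (i) $\Rightarrow$ (ii) I would first invoke self-injectivity: periodicity of every simple makes each $S_i \cong \Omega_A(\Omega_A^{d-1}S_i)$ a first syzygy, hence it embeds in a projective, and $A$ is self-injective (as already noted in the introduction). Self-injectivity is the key structural input, since it forces $A^e$ to be self-injective and, crucially, makes the bimodule syzygies $\Omega_{A^e}^n(A)$ projective on each side: restricting the minimal bimodule resolution to one side gives a resolution of the projective module ${}_AA$, which therefore splits, exhibiting each $\Omega_{A^e}^n(A)$ as a summand of a left-projective (and likewise right-projective) module. Applying the tensoring lemma with hypothesis (i) gives $S_i \otimes_A M \cong \Omega_A^d(S_i) \cong S_i$, so $\topp({}_AM) \cong \bigoplus_i S_i \cong A/\rad A$; a left-projective module with this top is free of rank one, that is ${}_AM \cong {}_AA$, and symmetrically $M_A \cong A_A$. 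A bimodule that is free of rank one on the left is classified by a unique algebra endomorphism $\sigma$ via $u a = \sigma(a) u$ for a fixed generator $u$, and freeness of rank one on the right forces $\sigma$ to be an automorphism; thus $M \cong {}_1A_{\sigma}$. Finally, reading $S_i \otimes_A M \cong S_i$ through this identification shows $(S_i)^{\sigma} \cong S_i$ for every $i$, which is equivalent to $\sigma(e)A \cong eA$ for every primitive idempotent $e$.

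The two places requiring genuine care are the following. First, one must check that tensoring the minimal bimodule resolution with a simple module again yields the minimal one-sided resolution, so that the syzygy identities hold exactly and not merely up to projective summands; this rests on the radical-entry argument for the differentials. Second, and more substantially, one needs the one-sided projectivity of $\Omega_{A^e}^n(A)$, which is where self-injectivity is truly used and without which $M$ need not be free of rank one. The classification of rank-one bimodules as twisted bimodules ${}_1A_{\sigma}$ is then routine once freeness on both sides has been established.
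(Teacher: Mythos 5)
First, a point of reference: the paper does not prove Theorem~\ref{th:2.4} at all --- it is quoted from \cite[Theorem~1.4]{GSS} --- so there is no internal proof to compare against. Your overall architecture (tensor the minimal projective bimodule resolution with the simples, use one-sided projectivity of the bimodule syzygies, and identify a bimodule that is projective with the correct top on each side with a twist ${}_1A_{\sigma}$) is the standard route, and your direction (ii)~$\Rightarrow$~(i) is sound. However, there are two genuine gaps.

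The first concerns self-injectivity. You deduce it from ``every simple is a first syzygy, hence embeds in a projective.'' That implication is false: for $A = K[x,y]/(x^2,xy,y^2)$ the unique simple embeds in $A_A$ (the socle of $A_A$ is non-zero), yet $A$ is local with two-dimensional socle and hence not self-injective. So embeddability into a projective cannot be the reason; the statement ``all simple modules $\Omega_A$-periodic implies $A$ self-injective'' is part of the theorem being proved and needs its own argument (it cannot be quoted from the introduction, which is itself relying on this theorem). Relatedly, you locate the use of self-injectivity in the one-sided projectivity of $\Omega_{A^e}^n(A)$; in fact that holds for \emph{every} algebra, since the restriction of the bimodule resolution to one side resolves the projective module ${}_AA$ and therefore splits --- this is exactly Lemma~\ref{lem:3.2} of the paper. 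So self-injectivity is doing no work where you have placed it, and is unproven where it is needed.

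The second gap is the assertion ``and symmetrically $M_A \cong A_A$'' for $M = \Omega_{A^e}^d(A)$. There is no symmetry available: hypothesis (i) concerns right simple modules only, and computing $\topp(M_A)$ requires $M \otimes_A T \cong \Omega_{A^{\op}}^d(T) \cong T$ for the simple \emph{left} modules $T$, which is not among your hypotheses. You must either first establish self-injectivity and transfer periodicity to left simples via the duality $D$, or, after writing $M = Au$ and extracting the algebra endomorphism $\sigma$ with $u a = \sigma(a) u$, argue directly that $\sigma$ is injective (equivalently, that the projective right module $M_A$ is faithful). Without one of these steps, $\sigma$ is only an endomorphism and $M \cong {}_1A_{\sigma}$ is not yet of the required form. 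A minor further point: your tensoring lemma overstates minimality --- $X \otimes_A Q_\bullet$ is not minimal for general $X$ (take $X = A$); it is minimal for semisimple $X$ because the left-radical part of the differential is annihilated. Since you only apply the lemma to simples, this last issue is cosmetic.
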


The \emph{Cartan matrix} $C_A$ of an algebra $A$ is the matrix
$(\dim_K \Hom_A(P_i,P_j))_{1 \leq i,j \leq n}$
for a complete family $P_1,\dots,P_n$
of a pairwise non-isomorphic indecomposable
projective modules in $\mod A$.
The following main result from \cite{E0}
shows why the original class of algebras of quaternion type is very
restricted compared with the algebras which we will study in this paper.

\begin{theorem}
\label{th:2.5}
Let $A$ be an indecomposable, representation-infinite tame
symmetric algebra
with non-singular Cartan matrix
such that every non-projective
indecomposable module in $\mod A$
is periodic of period dividing $4$.
Then $\mod A$ has at most three
pairwise non-isomorphic simple modules.
\end{theorem}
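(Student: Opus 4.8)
The plan is to combine the homological rigidity forced by the non-singular Cartan matrix with the structural restrictions coming from tameness and the self-injectivity supplied by the periodicity hypothesis. Since every simple module is $\Omega_A$-periodic, $A$ is self-injective by Theorem~\ref{th:2.4}; being symmetric it satisfies $\tau_A = \Omega_A^2$, so the period-dividing-$4$ hypothesis gives $\tau_A^2 M \cong M$ for every non-projective indecomposable $M$. Hence all such $M$ are periodic and, by Proposition~\ref{prop:2.3}, $\Gamma_A^s$ consists of stable tubes; the relation $\tau_A^2 = \mathrm{id}$ forces each tube to have rank $1$ or $2$. Write $A = KQ/I$ with $Q = Q_A$ the Gabriel quiver on vertex set $\{1,\dots,n\}$; the goal is to bound $n$ by $3$.

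The first concrete step is to extract a numerical identity from the non-singular Cartan matrix. For each simple $S_i$ the minimal projective resolution is periodic of period dividing $4$, so truncating at $\Omega_A^4 S_i \cong S_i$ yields an exact sequence
\[
 0 \to S_i \to P_3^{(i)} \to P_2^{(i)} \to P_1^{(i)} \to P_0^{(i)} \to S_i \to 0 .
\]
Encoding each projective term $P_j^{(i)}$ by its multiplicity vector $\mathbf{p}_j^{(i)} \in \mathbb{Z}^n$, so that $\underline{\dim}\,P_j^{(i)} = C_A\,\mathbf{p}_j^{(i)}$, the alternating sum of dimension vectors vanishes (the two copies of $\underline{\dim}\,S_i$ cancel), giving $C_A(\mathbf{p}_0^{(i)} - \mathbf{p}_1^{(i)} + \mathbf{p}_2^{(i)} - \mathbf{p}_3^{(i)}) = 0$. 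Because $C_A$ is non-singular this forces $\mathbf{p}_0^{(i)} + \mathbf{p}_2^{(i)} = \mathbf{p}_1^{(i)} + \mathbf{p}_3^{(i)}$ in $\mathbb{Z}^n$. Here $\mathbf{p}_0^{(i)} = \mathbf{e}_i$; and since $A$ is symmetric the injective envelope of $S_i$ is $P_i$ with $\soc P_i \cong S_i$, so $\Omega_A^{-1}S_i = P_i/\soc P_i$ has top $S_i$, whence $\Omega_A^3 S_i \cong \Omega_A^{-1}S_i$ again has projective cover $P_i$ and $\mathbf{p}_3^{(i)} = \mathbf{e}_i = \mathbf{p}_0^{(i)}$. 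Therefore $\mathbf{p}_1^{(i)} = \mathbf{p}_2^{(i)}$ for every $i$.

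Next I would translate this into quiver combinatorics. One identifies $\mathbf{p}_1^{(i)}$ with the $i$-th row of the adjacency matrix $N$ of $Q$, since the multiplicity of $P_j$ in the projective cover of $\Omega_A S_i = \rad P_i$ is the number of arrows $i\to j$; dually, reading off the socle structure of $\Omega_A^{-2}S_i \cong \Omega_A^2 S_i$ (using that projectives are injective) identifies $\mathbf{p}_2^{(i)}$ with the number of arrows into $i$. The equalities $\mathbf{p}_1^{(i)} = \mathbf{p}_2^{(i)}$ then say $N = N^{\mathsf T}$, i.e. the Gabriel quiver is symmetric, and the period-$4$ condition further constrains the number of arrows at each vertex. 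Thus $Q$ is a small symmetric, degree-bounded quiver.

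Finally one invokes tameness to cap $n$. A representation-infinite tame symmetric algebra whose stable Auslander--Reiten quiver consists only of tubes of ranks $1$ and $2$, carried by a symmetric degree-bounded Gabriel quiver, has an associated (Euler/Tits) quadratic form that must be at worst tame; analyzing the admissible symmetric quivers with this property should rule out $n \ge 4$ and leave only configurations with $n \le 3$. I expect this last step to be the main obstacle. The homological input only pins down the quiver up to the symmetry $N = N^{\mathsf T}$ and a degree bound, and extracting the sharp bound $n \le 3$ requires either a case-by-case elimination of the remaining symmetric quivers --- showing that each admissible one with $n \ge 4$ either forces $\det C_A = 0$ or produces a non-tube (hence non-periodic) component in $\Gamma_A^s$, contradicting the hypotheses --- or a global growth/covering argument that converts tameness directly into the numerical bound.
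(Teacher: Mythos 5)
This statement is quoted by the paper from \cite{E0}; the paper gives no proof of it, so your attempt has to stand on its own. It does not: the argument stops exactly where the theorem begins.

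Your homological preliminaries are essentially sound. Splicing the $4$-periodic resolution of $S_i$ and using non-singularity of $C_A$ to pass from the vanishing alternating sum of dimension vectors to the identity $\mathbf{p}_0^{(i)}+\mathbf{p}_2^{(i)}=\mathbf{p}_1^{(i)}+\mathbf{p}_3^{(i)}$ of multiplicity vectors is correct, as is $\mathbf{p}_3^{(i)}=\mathbf{e}_i$ via $\Omega_A^3 S_i\cong\Omega_A^{-1}S_i=P_i/\soc P_i$. The identification of $\mathbf{p}_2^{(i)}$ with the arrows into $i$ needs more care than ``reading off the socle'': $\mathbf{p}_2^{(i)}$ records $\dim_K\Ext_A^2(S_i,S_j)$, i.e.\ minimal relations, and the equality $\dim_K\Ext_A^2(S_i,S_j)=\dim_K\Ext_A^1(S_j,S_i)$ has to be extracted from the Auslander--Reiten formula $D\underline{\Hom}(M,N)\cong\Ext_A^1(N,\Omega_A^2 M)$ for symmetric algebras together with $\Omega_A^4 S_i\cong S_i$. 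That can be repaired. What cannot be waved away is the last paragraph.

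The genuine gap is the passage from ``$N=N^{\mathsf T}$, degree-bounded, tame, all stable components tubes of rank $1$ or $2$'' to ``at most three vertices.'' You explicitly defer this to an unspecified case analysis or growth argument, but this implication \emph{is} the theorem; everything before it is routine bookkeeping. Moreover, the intermediate conclusion you reach is nowhere near strong enough by itself: the weighted surface algebras constructed in this very paper are indecomposable, representation-infinite, tame, symmetric, with every non-projective indecomposable periodic of period dividing $4$ and with arbitrarily many simple modules --- they escape the theorem only because their Cartan matrices are singular (Corollary 7.5). So non-singularity must be exploited far beyond the single identity $\mathbf{p}_1^{(i)}=\mathbf{p}_2^{(i)}$; in Erdmann's original argument it is combined with tameness through a careful analysis of the resulting constraints on $C_A$ itself (parity and positivity conditions on its entries) before the bound on the number of simples drops out. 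Without that analysis, or some substitute for it, you have not proved the statement.
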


\section{Bimodule resolutions of self-injective algebras}\label{sec:bimodule}

In this section we describe a general approach
for proving that an algebra $A$ with periodic simple modules
is a periodic algebra.

Let $A = K Q/I$ be a bound quiver algebra,
and $e_i$, $i \in Q$, be the primitive idempotents of $A$
associated to the vertices of $Q$.
Then $e_i \otimes e_j$, $i,j \in Q_0$, form a set
of pairwise orthogonal primitive idempotents of
the enveloping algebra $A^e = A^{\op} \otimes_K A$
whose sum is the identity of $A^e$.
Hence,
$P(i,j) = (e_i \otimes e_j) A^e = A e_i \otimes e_j A$,
for $i,j \in Q_0$, form a complete set of pairwise non-isomorphic
indecomposable projective modules in $\mod A^e$
(see \cite[Proposition~IV.11.3]{SY}).

The following result by Happel \cite[Lemma~1.5]{Ha2} describes
the terms of a minimal projective resolution of $A$ in $\mod A^e$.

\begin{proposition}
\label{prop:3.1}
Let $A = K Q/I$ be a bound quiver algebra.
Then there is in $\mod A^e$ a minimal projective resolution of $A$
of the form
\[
  \cdots \rightarrow
  \bP_n \xrightarrow{d_n}
  \bP_{n-1} \xrightarrow{ }
  \cdots \rightarrow
  \bP_1 \xrightarrow{d_1}
  \bP_0 \xrightarrow{d_0}
  A \rightarrow 0,
\]
where
\[
  \bP_n = \bigoplus_{i,j \in Q_0}
          P(i,j)^{\dim_K \Ext_A^n(S_i,S_j)}
\]
for any $n \in \bN$.
\end{proposition}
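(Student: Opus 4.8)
The plan is to establish Happel's description of a minimal projective resolution of $A$ over its enveloping algebra by reducing the bimodule computation to a computation in $\mod A$ of ordinary $\Ext$-groups between simple modules. The key structural fact I would use is that $\mod A^e$ is equivalent to the category of $A$-$A$-bimodules, and that the indecomposable projectives in $\mod A^e$ are precisely the modules $P(i,j) = Ae_i \otimes e_j A$ for $i,j \in Q_0$, as recorded just before the statement via \cite[Proposition~IV.11.3]{SY}. My goal is to count the multiplicity of each $P(i,j)$ appearing in the $n$-th term $\bP_n$ of a minimal resolution, and to show it equals $\dim_K \Ext_A^n(S_i,S_j)$.

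First I would recall that for a projective $A^e$-module resolution to be minimal, the differentials must have entries in the radical of $A^e$; equivalently, applying the functor that strips the resolution down to its top yields a complex with zero differentials. The multiplicity of $P(i,j)$ in $\bP_n$ is then computed by applying a suitable simple-annihilating functor and reading off dimensions. Concretely, I would apply the functor $S_i \otimes_A - \otimes_A S_j^{\op}$ (or, what amounts to the same thing, $e_i(-)e_j$ combined with passing to tops), which sends $P(k,l)$ to a one-dimensional space exactly when $(k,l)=(i,j)$ and to zero otherwise. Because the resolution is minimal, the induced differentials vanish, so the homology of the resulting complex is just the complex itself term by term.

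The central identification is then that applying $S_i \otimes_A - \otimes_A S_j$ to the minimal bimodule resolution of $A$ computes $\operatorname{Tor}^{A^e}_n(S_i, S_j)$, which by standard homological algebra over the enveloping algebra coincides with $\Ext_A^n(S_i, S_j)$ (this is the bimodule-to-one-sided comparison: a projective bimodule resolution of $A$, tensored appropriately, computes Hochschild-type invariants that specialize to one-sided $\Ext$ between simples). Therefore the dimension of the degree-$n$ term contributed by $P(i,j)$ is exactly $\dim_K \Ext_A^n(S_i,S_j)$, giving the claimed formula
\[
  \bP_n = \bigoplus_{i,j \in Q_0} P(i,j)^{\dim_K \Ext_A^n(S_i,S_j)}.
\]
Since each $\Ext_A^n(S_i,S_j)$ is finite-dimensional, each $\bP_n$ is a finitely generated projective $A^e$-module, and the existence of such a minimal resolution follows from the fact that $A^e$ is a finite-dimensional algebra, over which minimal projective resolutions always exist and are unique up to isomorphism.

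The main obstacle I anticipate is making the comparison isomorphism $\operatorname{Tor}^{A^e}_n(S_i,S_j) \cong \Ext_A^n(S_i,S_j)$ precise and correctly bookkept, since one must be careful about left versus right module structures, the role of $\sigma$-twists of the kind appearing in Theorem~\ref{th:2.4}, and the identification of tops of bimodule projectives. Rather than reprove this from scratch, I would invoke Happel's \cite[Lemma~1.5]{Ha2} directly for the counting formula, and use it only to verify minimality and finite generation in our setting; the real work in the later sections is not this proposition itself but the explicit computation of the $\Ext_A^n(S_i,S_j)$ for weighted surface algebras, which is what pins down the periodicity of period $4$.
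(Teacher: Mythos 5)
The paper offers no proof of this statement at all: it is quoted as Happel's \cite[Lemma~1.5]{Ha2}, which is exactly what you fall back on in your last paragraph, so in that respect your proposal matches the paper. Your sketch of the argument behind the citation is also correct in substance: the multiplicity of $P(i,j)$ in $\bP_n$ is read off from the top of a minimal resolution, and minimality forces the induced differentials to vanish after the radical is killed. The one place where your bookkeeping is genuinely loose is the expression $\operatorname{Tor}^{A^e}_n(S_i,S_j)$: the simples $S_i$ and $S_j$ are one-sided $A$-modules, not $A^e$-modules, so that Tor group is not literally defined, and patching it up (via $D(S_j)\otimes_K S_i$ as an $A^e$-module, or via duality) is exactly the left/right headache you anticipate. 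The cleanest route avoids Tor entirely and is the one implicit in Lemma~\ref{lem:3.2}: since each $P(k,l)=Ae_k\otimes_K e_lA$ is projective as a left $A$-module, the functor $S_i\otimes_A(-)$ is exact on the resolution and carries it to a projective resolution of $S_i$ in $\mod A$, with $S_i\otimes_A P(k,l)\cong S_ie_k\otimes_K e_lA$, which is $e_lA$ if $k=i$ and $0$ otherwise; moreover $\rad(A^e)=\rad A^{\op}\otimes A+A^{\op}\otimes\rad A$ shows this induced resolution is again minimal. Applying $\Hom_A(-,S_j)$ then gives $\dim_K\Ext^n_A(S_i,S_j)=\dim_K\Hom_A(S_i\otimes_A\bP_n,S_j)$, which is precisely the multiplicity of $P(i,j)$ in $\bP_n$. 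With that adjustment your outline is a complete proof.
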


The syzygy modules have an important property, a proof for the next
Lemma may be found in \cite[Lemma~IV.11.16]{SY}.

\begin{lemma}
\label{lem:3.2}
Let $A$ be an algebra.
For any positive integer $n$, the module  $\Omega_{A^e}^n(A)$
is  projective as a left $A$-module and also as a right
$A$-module.
\end{lemma}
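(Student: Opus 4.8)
The final statement is Lemma~\ref{lem:3.2}: for any positive integer $n$, the bimodule $\Omega_{A^e}^n(A)$ is projective both as a left $A$-module and as a right $A$-module. Let me sketch a proof plan.

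\medskip

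The plan is to proceed by induction on $n$, exploiting the fact that the terms $\bP_n$ of the minimal bimodule resolution from Proposition~\ref{prop:3.1} are direct sums of the projectives $P(i,j) = A e_i \otimes_K e_j A$, and that each such $P(i,j)$ is free (hence projective) when restricted to either one-sided module structure. First I would observe that, as a left $A$-module, $A e_i \otimes_K e_j A \cong (A e_i)^{\dim_K e_j A}$, which is a direct sum of copies of the projective left module $A e_i$; symmetrically, as a right $A$-module it is a direct sum of copies of $e_j A$. Thus every term $\bP_n$ in the resolution is projective on each side, and in particular $\bP_0$ is. This handles the base of the induction and sets up the inductive step.

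\medskip

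For the inductive step I would use the short exact sequences of bimodules extracted from the resolution, namely
\[
  0 \to \Omega_{A^e}^{n}(A) \to \bP_{n-1} \to \Omega_{A^e}^{n-1}(A) \to 0,
\]
where $\Omega_{A^e}^{0}(A) = A$. Restricting this sequence to the category of left $A$-modules, I would argue that $\Omega_{A^e}^{n-1}(A)$ is projective as a left $A$-module by the inductive hypothesis (for $n=1$ this is the claim that $A$ itself is projective as a left module, which is immediate). Since $A$ is projective over $K$ and all the bimodules involved are finitely generated, the restriction functor is exact, so the displayed sequence remains exact as a sequence of left $A$-modules. Because $\Omega_{A^e}^{n-1}(A)$ is left-projective, this left-module sequence splits, exhibiting $\Omega_{A^e}^{n}(A)$ as a direct summand of the left-projective module $\bP_{n-1}$. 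A direct summand of a projective module is projective, so $\Omega_{A^e}^{n}(A)$ is projective as a left $A$-module. The identical argument applied to the right $A$-module structure gives right-projectivity.

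\medskip

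The only subtle point, and the step I would treat most carefully, is the exactness of the one-sided restriction: one must know that the bimodule short exact sequences stay exact after forgetting to one-sided modules. This is automatic here because restriction of bimodules to one side is just restriction along the algebra inclusion $A \hookrightarrow A^e$ and is an exact functor on the underlying $K$-vector spaces; since $K$ is a field every module is flat, so no higher $\Tr$ obstructions arise. With exactness in hand, the splitting of a short exact sequence with projective cokernel is standard, and the induction closes. I would record explicitly that the argument is symmetric in the two sides, so that both conclusions follow from one computation with the roles of $e_i$ and $e_j$ interchanged.
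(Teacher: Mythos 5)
Your argument is correct: the terms of the bimodule resolution are sums of $A e_i \otimes_K e_j A$, hence projective on each side, and the inductive splitting of the one-sided restrictions of the short exact sequences $0 \to \Omega_{A^e}^{n}(A) \to \bP_{n-1} \to \Omega_{A^e}^{n-1}(A) \to 0$ does the rest. The paper does not prove this lemma itself but cites \cite[Lemma~IV.11.16]{SY}, and your proof is essentially the standard argument given there; the only cosmetic remark is that exactness of the one-sided restriction needs no appeal to projectivity over $K$ --- restriction along $A \to A^e$ leaves the underlying vector spaces unchanged, so it is exact for free.
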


There is no general recipe for
the differentials $d_n$ in Proposition~\ref{prop:3.1}, except for the first three which we will now describe.
We have
\[
  \bP_0 = \bigoplus_{i \in Q_0} P(i,i)
        = \bigoplus_{i \in Q_0} A e_i \otimes e_i A .
\]
The homomorphism $d_0 : \bP_0 \to A$ in $\mod A^e$ defined by
$d_0 (e_i \otimes e_i) = e_i$ for all $i \in Q_0$
is a minimal projective cover of $A$ in $\mod A^e$.
Recall that, for two vertices $i$ and $j$ in $Q$,
the number of arrows from $i$ to $j$ in $Q$ is equal
to $\dim_K \Ext_A^1(S_i,S_j)$
(see \cite[Lemma~III.2.12]{ASS}).
Hence we have
\[
  \bP_1 = \bigoplus_{\alpha \in Q_1} P\big(s(\alpha),t(\alpha)\big)
        = \bigoplus_{\alpha \in Q_1} A e_{s(\alpha)} \otimes e_{t(\alpha)} A
        .
\]
Then we have the following known fact (see \cite[Lemma~3.3]{BES4}
for a proof).

\begin{lemma}
\label{lem:3.3}
Let $A = K Q/I$ be a bound quiver algebra, and
$d_1 : \bP_1 \to \bP_0$ the homomorphism in $\mod A^e$
defined by
\[
 d_1(e_{s(\alpha)} \otimes e_{t(\alpha)}) =
   \alpha \otimes e_{t(\alpha)} - e_{s(\alpha)} \otimes \alpha
\]
for any arrow $\alpha$ in $Q$.
Then $d_1$ induces a minimal projective cover
$d_1 : \bP_1 \to \Omega_{A^e}^1(A)$ of
$\Omega_{A^e}^1(A) = \Ker d_0$ in $\mod A^e$.
In particular, we have
$\Omega_{A^e}^2(A) \cong \Ker d_1$ in $\mod A^e$.
\end{lemma}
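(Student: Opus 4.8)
The plan is to regard $d_0$ and $d_1$ as homomorphisms of $A^e$-modules, i.e.\ of $A$-$A$-bimodules, and to establish three things in turn: that $d_1$ lands in $\Ker d_0$, that it maps $\bP_1$ \emph{onto} $\Ker d_0 = \Omega_{A^e}^1(A)$, and that this surjection is a projective cover. First I would record that $d_1$ is a well-defined bimodule map: since $\alpha = e_{s(\alpha)}\alpha e_{t(\alpha)}$, the element $\alpha \otimes e_{t(\alpha)}$ lies in $P(t(\alpha),t(\alpha))$ and $e_{s(\alpha)} \otimes \alpha$ lies in $P(s(\alpha),s(\alpha))$, so the prescribed value of $d_1$ on the generator $e_{s(\alpha)} \otimes e_{t(\alpha)}$ of $P(s(\alpha),t(\alpha))$ makes sense. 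Writing $d_0$ as the multiplication map $u \otimes v \mapsto uv$ on each summand, a one-line computation gives $d_0 d_1(e_{s(\alpha)} \otimes e_{t(\alpha)}) = \alpha e_{t(\alpha)} - e_{s(\alpha)}\alpha = \alpha - \alpha = 0$, so $\Im d_1 \subseteq \Ker d_0$ and $d_1$ factors through $\Omega^1_{A^e}(A)$.

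For surjectivity I would work in the quotient $\bP_0 / \Im d_1$. Since $d_1$ is a bimodule map, for $a \in Ae_{s(\alpha)}$ and $b \in e_{t(\alpha)}A$ one has $d_1(a \otimes b) = a\alpha \otimes b - a \otimes \alpha b$, so in $\bP_0/\Im d_1$ the sliding relation $\overline{a\alpha \otimes b} = \overline{a \otimes \alpha b}$ holds, with all products computed in $A$. Applying this repeatedly to a basis path $p = \gamma_1 \cdots \gamma_r$ ending at a vertex $i$ moves the arrows one at a time across the tensor symbol and yields the normal form $\overline{p \otimes q} = \overline{e_{s(p)} \otimes pq}$. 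Hence every element of $\bP_0/\Im d_1$ is congruent to one of the form $\sum_i \overline{e_i \otimes c_i}$ with $c_i \in e_i A$, so the $K$-linear map $\phi \colon A \to \bP_0/\Im d_1$, $a \mapsto \sum_i \overline{e_i \otimes e_i a}$, is surjective. Since $d_0$ induces $\overline{d_0} \colon \bP_0/\Im d_1 \to A$ with $\overline{d_0}\,\phi = \operatorname{id}_A$, the map $\phi$ is also injective, so $\overline{d_0}$ is an isomorphism; this is precisely the statement $\Ker d_0 = \Im d_1$.

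For minimality I would show that the elements $x_\alpha := \alpha \otimes e_{t(\alpha)} - e_{s(\alpha)} \otimes \alpha$, $\alpha \in Q_1$, form a minimal generating set of $\Omega_{A^e}^1(A)$. Because $d_0$ is a minimal projective cover we already have $\Omega^1_{A^e}(A) \subseteq \rad \bP_0 = \rad(A^e)\bP_0$, whence $\rad(A^e)\,\Omega^1_{A^e}(A) \subseteq \rad(A^e)^2 \bP_0$; so it suffices to prove that the images of the $x_\alpha$ are linearly independent in $\rad(A^e)\bP_0 / \rad(A^e)^2\bP_0$. Decomposing this second radical layer as $\bigoplus_i \rad P(i,i)/\rad^2 P(i,i)$, I would identify the basis given by the classes $\beta \otimes e_{t(\beta)}$ and $e_{s(\gamma)} \otimes \gamma$ for $\beta, \gamma \in Q_1$. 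The delicate point is that $I$ need not be homogeneous, so a path-length grading is unavailable; here I would invoke $I \subseteq R_Q^2$ to guarantee that relations only interfere in radical degree $\geq 2$, so these length-one classes survive and remain independent modulo $\rad(A^e)^2\bP_0$. In this basis $x_\alpha$ contributes $+1$ to $\alpha \otimes e_{t(\alpha)}$ and $-1$ to $e_{s(\alpha)} \otimes \alpha$, and distinct arrows occupy disjoint coordinates, so the $x_\alpha$ are independent. Since $e_{s(\alpha)} x_\alpha e_{t(\alpha)} = x_\alpha$, each $x_\alpha$ generates a cyclic bimodule with top $\topp P(s(\alpha),t(\alpha))$; hence the minimal projective cover of $\Omega^1_{A^e}(A)$ is exactly $\bigoplus_\alpha P(s(\alpha),t(\alpha)) = \bP_1$, realized by $d_1$, and consequently $\Omega^2_{A^e}(A) \cong \Ker d_1$.

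I expect the surjectivity argument to be the main obstacle: one must verify that the sliding relations genuinely suffice to reduce an \emph{arbitrary} element to normal form and thereby force $\overline{d_0}$ to be an isomorphism. The secondary subtlety lies in the minimality step, where the inclusion $I \subseteq R_Q^2$ has to replace the absent homogeneous grading.
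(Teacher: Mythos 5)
Your proof is correct. Note that the paper does not actually prove this lemma: it is quoted as a known fact with a pointer to \cite[Lemma~3.3]{BES4}, so there is no internal argument to compare against, and your write-up supplies a complete, self-contained proof of the standard kind. All three steps are sound: the one-line computation $d_0d_1=0$; the sliding relation $\overline{a\alpha\otimes b}=\overline{a\otimes \alpha b}$ reducing every class in $\bP_0/\Im d_1$ to normal form, so that $\overline{d_0}$ is inverse to $a\mapsto \sum_i \overline{e_i\otimes e_i a}$ and hence $\Ker d_0=\Im d_1$; and the linear independence of the classes of the $x_\alpha$ in $\rad \bP_0/\rad^2\bP_0$, which, combined with $x_\alpha=e_{s(\alpha)}x_\alpha e_{t(\alpha)}$, identifies $\topp\Omega^1_{A^e}(A)$ with $\topp\bP_1$. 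Two points are worth making explicit. First, the decomposition $\rad P(i,i)=\rad A\, e_i\otimes e_iA + Ae_i\otimes e_i\rad A$ that underlies your basis of the second radical layer rests on $\rad(A^e)=\rad A^{\op}\otimes A + A^{\op}\otimes\rad A$, which is where algebraic closedness of $K$ (separability of $A/\rad A$) enters; the paper records this as \cite[Corollary~IV.11.4]{SY}. Second, your appeal to $I\subseteq R_Q^2$ is exactly right: it guarantees that the arrows form a basis of $\rad A/\rad^2 A$, so the length-one classes $\beta\otimes e_{t(\beta)}$ and $e_{s(\gamma)}\otimes\gamma$ survive and stay independent. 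A shorter route to minimality, more in the spirit of how the paper uses Proposition~\ref{prop:3.1}, is to observe that the number of arrows $i\to j$ equals $\dim_K\Ext^1_A(S_i,S_j)$, so the minimal cover of $\Omega^1_{A^e}(A)$ is abstractly isomorphic to $\bP_1$, and any surjection from a projective isomorphic to the minimal cover is itself a cover; your direct radical-layer computation avoids invoking Happel's formula and is, if anything, more elementary.
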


We will denote  the  homomorphism
$d_1 : \bP_1 \to \bP_0$ by $d$.
For the algebras $A$ we will consider, the kernel
$\Omega_{A^e}^2(A)$ of $d$ will be generated,
as an $A$-$A$-bimodule, by some elements of $\bP_1$
associated to a set of relations generating the
admissible ideal $I$.
Recall that a relation in the path algebra $KQ$
is an element of the form
\[
  \mu = \sum_{r=1}^n c_r \mu_r
  ,
\]
where $c_1, \dots, c_r$ are non-zero elements of $K$ and
$\mu_r = \alpha_1^{(r)} \alpha_2^{(r)} \dots \alpha_{m_r}^{(r)}$
are paths in $Q$ of length $m_r \geq 2$, $r \in \{1,\dots,n\}$,
having a common source and a common target.
The admissible ideal $I$ can be generated by a finite set
of relations in $K Q$ (see \cite[Corollary~II.2.9]{ASS}).
In particular, the bound quiver algebra $A = K Q/I$ is given
by the path algebra $K Q$ and a finite number of identities
$\sum_{r=1}^n c_r \mu_r = 0$ given by a finite set of generators of
the ideal $I$.
Consider the $K$-linear homomorphism $\varrho : K Q \to \bP_1$
which assigns to a path $\alpha_1 \alpha_2 \dots \alpha_m$ in $Q$
the element
\[
  \varrho(\alpha_1 \alpha_2 \dots \alpha_m)
   = \sum_{k=1}^m \alpha_1 \alpha_2 \dots \alpha_{k-1}
                  \otimes \alpha_{k+1} \dots \alpha_m
\]
in $\bP_1$, where $\alpha_0 = e_{s(\alpha_1)}$
and $\alpha_{m+1} = e_{t(\alpha_m)}$.
Observe that
$\varrho(\alpha_1 \alpha_2 \dots \alpha_m) \in e_{s(\alpha_1)} \bP_1 e_{t(\alpha_m)}$.
Then, for a relation $\mu = \sum_{r=1}^n c_r \mu_r$
in $K Q$ lying in $I$, we have an element
\[
  \varrho(\mu) = \sum_{r=1}^n c_r \varrho(\mu_r) \in e_i \bP_1 e_j ,
\]
where $i$ is the common source and $j$ is the common
target of the paths $\mu_1,\dots,\mu_r$.
The following lemma shows that relations always
produce elements in the kernel of $d_1$;  the proof 
is straightforward.

\begin{lemma}
\label{lem:3.4}
Let $A = K Q/I$ be a bound quiver algebra and
$d_1 : \bP_1 \to \bP_0$ the homomorphism in $\mod A^e$
defined in Lemma~\ref{lem:3.3}. 
Then for any relation $\mu$ in $K Q$ lying in $I$,
we have $d_1(\varrho(\mu)) = 0$.
\end{lemma}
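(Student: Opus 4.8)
The goal is to show that for any relation $\mu = \sum_{r=1}^n c_r \mu_r$ lying in $I$, the element $\varrho(\mu) = \sum_{r=1}^n c_r \varrho(\mu_r)$ lies in the kernel of $d_1$. Since $d_1$ and $\varrho$ are both $K$-linear, it suffices to understand what $d_1$ does to $\varrho$ applied to a single path and then sum over the terms of the relation. My plan is therefore to first verify the claim for a single path $\mu_r = \alpha_1 \alpha_2 \cdots \alpha_m$ and then argue that the contributions assemble correctly across the relation.

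For a single path $p = \alpha_1 \alpha_2 \cdots \alpha_m$, I would compute $d_1(\varrho(p))$ directly. By definition
\[
  \varrho(p) = \sum_{k=1}^m \alpha_1 \cdots \alpha_{k-1} \otimes \alpha_{k+1} \cdots \alpha_m,
\]
where each summand is viewed as an element of the appropriate $P(s(\alpha_k), t(\alpha_k))$-component of $\bP_1$, indexed by the arrow $\alpha_k$. Applying $d_1$ term by term using the formula $d_1(e_{s(\alpha_k)} \otimes e_{t(\alpha_k)}) = \alpha_k \otimes e_{t(\alpha_k)} - e_{s(\alpha_k)} \otimes \alpha_k$ and extending as a bimodule map, the $k$-th summand maps to
\[
  \alpha_1 \cdots \alpha_{k-1} \alpha_k \otimes \alpha_{k+1} \cdots \alpha_m
   - \alpha_1 \cdots \alpha_{k-1} \otimes \alpha_k \alpha_{k+1} \cdots \alpha_m.
\]
The key observation is that this is a telescoping sum: the first term for index $k$ cancels the second term for index $k+1$. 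After summing over $k = 1, \dots, m$, only the boundary terms survive, namely $\alpha_1 \cdots \alpha_m \otimes e_{t(\alpha_m)}$ from $k=m$ and $e_{s(\alpha_1)} \otimes \alpha_1 \cdots \alpha_m$ from $k=1$. Hence $d_1(\varrho(p)) = p \otimes e_{t(p)} - e_{s(p)} \otimes p$, which, under the identification of $\bP_0 = \bigoplus_i Ae_i \otimes e_i A$ inside $A \otimes A$, is exactly the expression $p \otimes 1 - 1 \otimes p$ localized at the relevant idempotents.

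Finally I would sum over the relation. Because all paths $\mu_1, \dots, \mu_n$ share a common source $i$ and common target $j$, each $\varrho(\mu_r)$ lands in $e_i \bP_1 e_j$ and the computation above gives
\[
  d_1(\varrho(\mu)) = \sum_{r=1}^n c_r \bigl( \mu_r \otimes e_j - e_i \otimes \mu_r \bigr)
   = \Bigl(\sum_{r=1}^n c_r \mu_r\Bigr) \otimes e_j - e_i \otimes \Bigl(\sum_{r=1}^n c_r \mu_r\Bigr).
\]
Since $\mu = \sum_r c_r \mu_r$ lies in $I$, its image in $A$ is zero, so both tensor factors vanish in $\bP_0 = \bigoplus_i Ae_i \otimes e_i A$, giving $d_1(\varrho(\mu)) = 0$. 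The only subtlety worth stating carefully is the bookkeeping that identifies each summand of $\varrho(p)$ with the correct indecomposable summand of $\bP_1$ (indexed by the arrow $\alpha_k$) so that the formula for $d_1$ from Lemma~\ref{lem:3.3} applies; once that identification is pinned down, the telescoping cancellation is purely formal. I expect no genuine obstacle here — as the statement itself notes, the proof is straightforward — so the main care is simply in keeping the idempotent labels and the left/right bimodule actions consistent throughout the telescoping step.
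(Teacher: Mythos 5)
Your proof is correct; the telescoping computation $d_1(\varrho(p)) = p \otimes e_{t(p)} - e_{s(p)} \otimes p$ followed by linearity and the vanishing of $\mu$ in $A$ is exactly the "straightforward" argument the paper alludes to (the paper omits the proof entirely, declaring it straightforward). Your care with the idempotent bookkeeping — noting that the surviving boundary terms land in $P(t(\alpha_m),t(\alpha_m))$ and $P(s(\alpha_1),s(\alpha_1))$ and that the cancelling pairs coincide because $t(\alpha_k)=s(\alpha_{k+1})$ — is precisely the only point requiring attention.
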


For an algebra $A = K Q/I$ in our context, we will see that
there exists a family of relations $\mu^{(1)},\dots,\mu^{(q)}$
generating the ideal $I$ such that the associated elements
$\varrho(\mu^{(1)}), \dots, \varrho(\mu^{(q)})$ generate
the $A$-$A$-bimodule $\Omega_{A^e}^2(A) = \Ker d_1$.
In fact, using Lemma~\ref{lem:3.2},
we will be able to show that
\[
  \bP_2 = \bigoplus_{j = 1}^q P\big(s(\mu^{(j)}),t(\mu^{(j)})\big)
        = \bigoplus_{j = 1}^q A e_{s(\mu^{(j)})} \otimes e_{t(\mu^{(j)})} A
        ,
\]
and the homomorphism $d_2 : \bP_2 \to \bP_1$ in $\mod A^e$ such that
\[
  d_2 \big(e_{s(\mu^{(j)})} \otimes e_{t(\mu^{(j)})}\big) = \varrho(\mu^{(j)})
  ,
\]
for $j \in \{1,\dots,q\}$, defines a projective cover
of $\Omega_{A^e}^2(A)$ in $\mod A^e$.
In particular, we have $\Omega_{A^e}^3(A) \cong \Ker d_2$ in $\mod A^e$.
We will denote  this homomorphism $d_2$ by $R$.

For the next map $d_3 : \bP_3 \to \bP_2$, which we will call $S:=d_3$
later,
we do not have a general recipe. To define it, 
we need a set of minimal generators for 
$\Omega_{A^e}^3(A)$, and 
Proposition~\ref{prop:3.1}  tells us where we should look for them.

\section{Triangulation quivers of surfaces}\label{sec:triangulation}

The aim of this section is to introduce triangulation quivers 
of directed triangulated surfaces and present several 
examples illustrating possible shapes of such
quivers.

In this paper, by a \emph{surface}
we mean a connected, compact, $2$-dimensional real
manifold $S$, orientable or non-orientable,
with  or without boundary.
It is well known that every surface $S$ admits
an additional structure of a finite
$2$-dimensional triangular cell complex,
and hence a triangulation by the deep Triangulation Theorem
(see for example \cite[Section~2.3]{Ca}).

For a natural number $n$, we denote by $D^n$ the unit disk
in the $n$-dimensional Euclidean space $\bR^n$, which consists
of all points of distance $\leq 1$ from the origin.
Then the boundary $\partial D^n$ of $D^n$ is the unit sphere
$S^{n-1}$ in $\bR^n$, formed by all points of distance $1$
from the origin.
Further, by an $n$-cell we mean a topological space
homeomorphic to the open disk $\intt D^n = D^n \setminus \partial D^n$.
In particular,
$D^0$ and $e^0$ consist of a single point,
and $S^0 = \partial D^1$ consists of two points.
A finite $m$-dimensional cell complex is a topological space $X = X^m$
constructed by the following procedure (see \cite{H}):
\begin{enumerate}[(1)]
 \item
  Start with a finite discrete set $X^0$, whose points are
  regarded as $0$-cells.
 \item
  Inductively, for $n \in \{1,\dots,m\}$,
  form the $n$-skeleton $X^n$
  from $X^{n-1}$ by attaching a finite number
  of $n$-cells $e_i^n$ via maps
  $\varphi_i^{n} : S^{n-1} \to X^{n-1}$.
  This means that $X^n$ is the quotient
  space of the disjoint union $X^{n-1} \coprod_i D_i^n$
  of $X^{n-1}$ and a finite collection
  of $n$-disks $D_i^n$ under the identification
  $x \sim \varphi_i^n (x)$ for $x \in \partial D_i^n$.
  The cell $e_i^n$ is the homeomorphic image
  of $\intt D_i^n = D_i^n \setminus \partial D_i^n$ under the quotient map.
  Hence, as a set $X^n$ is a disjoint union of $X^{n-1}$
  and all attached $n$-cells $e_i^n$.
\end{enumerate}
For each $n$-cell $e_i^n$ of $X$, the composition
of continuous maps
$D_i^n \hookrightarrow X^{n-1} \coprod_i D_i^n \to X^n \to X$
is denoted by $\phi_i^n$ and called the characteristic map
of $e_i^n$.
We also note that a subset $A \subset X$
is open (or closed) if and only if
$A \cap X^n$ is open (or closed) for any $n \in \{0,\dots,m\}$.

The following consequence of \cite[Proposition~A2]{H}
provides a convenient description of finite $m$-dimensional
cell complexes.

\begin{proposition}
\label{prop:4.1}
Let $m$ be a positive integer and $X$
a Hausdorff space.
Then a finite family of continuous maps
$\varphi_i^n : D_i^n \to X$,
with $n \in \{0,\dots,m\}$ and $D_i^n = D^n$,
is the family of characteristic maps of a finite
$m$-dimensional cell complex structure on $X$
if and only if the following conditions are satisfied:
\begin{enumerate}[(i)]
 \item
  Each $\varphi_i^n$ restricts to a homeomorphism from
  $\intt D_i^n$ into its image, a cell $e_i^n \subset X$,
  and these cells are all disjoint and their union is $X$.
 \item
  For each cell $e_i^n$, $\varphi_i^n(\partial D_i^n)$
  is contained in the union of a finite number of cells
  of smaller dimension than $n$.
\end{enumerate}
\end{proposition}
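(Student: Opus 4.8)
The plan is to prove the two implications separately, with essentially all of the content concentrated in the sufficiency (``if'') direction, where the finiteness and Hausdorff hypotheses do the real work.

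First I would dispatch the necessity (``only if'') direction by unwinding the inductive construction recalled just before the proposition. If $X = X^m$ is built by successively attaching cells with characteristic maps $\phi_i^n$, then by construction each $\phi_i^n$ restricts to a homeomorphism of $\intt D_i^n$ onto the cell $e_i^n$, distinct cells are disjoint, and every point of $X$ lies in exactly one cell; this is precisely (i). For (ii), the attaching map $\varphi_i^n$ of $e_i^n$ has image in the skeleton $X^{n-1}$, which is the union of the finitely many cells of dimension $< n$, so $\varphi_i^n(\partial D_i^n)$ lies in a finite union of cells of smaller dimension.

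The substance is the sufficiency direction. Assuming (i) and (ii), I would set $X^n = \bigcup_{k \leq n} \bigcup_i e_i^k$ and prove by induction on $n$ that $X^n$, endowed with the subspace topology from $X$, is a finite $n$-dimensional cell complex with the listed characteristic maps, and moreover that $X^n$ is compact. The base case $n = 0$ is immediate, since a finite subset of a Hausdorff space is discrete and compact. For the inductive step, condition (ii) guarantees that each $\varphi_i^n$ carries $\partial D_i^n$ into $X^{n-1}$, so the corestrictions $\varphi_i^n|_{\partial D_i^n} : \partial D_i^n \to X^{n-1}$ are continuous and serve as legitimate attaching maps. Forming the adjunction space $Y^n = \big(X^{n-1} \coprod_i D_i^n\big)/\!\sim$, the inclusion of $X^{n-1}$ together with the maps $\varphi_i^n$ induces a continuous map $q : Y^n \to X^n$, which condition (i) shows to be a bijection.

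The decisive step, and the one I expect to be the main obstacle, is to upgrade $q$ to a homeomorphism, that is, to verify that the given topology on $X^n$ coincides with the adjunction (weak) topology on $Y^n$. Here finiteness is exactly what is required: $X^{n-1}$ is compact by the inductive hypothesis and each $D_i^n$ is compact, so $Y^n$ is compact; since $X^n$ is Hausdorff as a subspace of $X$, the continuous bijection $q$ from a compact space to a Hausdorff space is automatically a homeomorphism. This identifies $X^n$ with the space obtained from $X^{n-1}$ by attaching the $n$-cells, completes the inductive step, and shows $X^n = q(Y^n)$ is compact. Taking $n = m$ and invoking $\bigcup_{i,n} e_i^n = X$ from (i) gives $X^m = X$, so $X$ carries the asserted finite $m$-dimensional cell complex structure. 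Without the finiteness assumption the weak topology may be strictly finer than the given one and this compactness shortcut fails, which is precisely why the hypothesis that the family of cells be finite cannot be dropped.
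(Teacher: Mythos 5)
Your proposal is correct. Note, however, that the paper does not actually prove this statement: it presents Proposition~\ref{prop:4.1} as a direct consequence of Hatcher's Proposition~A.2, which characterizes characteristic maps of arbitrary CW structures on a Hausdorff space by conditions (i), (ii) and a third condition requiring that a subset of $X$ be closed whenever it meets the closure of each cell in a closed set; for a finite family of cells that third condition holds automatically (a finite union of closed sets is closed), which is why it disappears from the finite statement. Your argument instead gives a self-contained proof tailored to the finite case, and it is essentially the correct specialization: the induction on skeleta, the observation that condition (ii) makes $\varphi_i^n|_{\partial D_i^n}$ a legitimate attaching map into $X^{n-1}$, the continuous bijection $q : Y^n \to X^n$ from the adjunction space, and the compact-to-Hausdorff upgrade to a homeomorphism are all sound, with finiteness entering exactly where you say it does (compactness of $Y^n$, which replaces the weak-topology verification needed in the infinite case). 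The only points worth making fully explicit are that $\varphi_i^n(D_i^n) \subseteq e_i^n \cup X^{n-1} \subseteq X^n$, so that $q$ really lands in $X^n$ and is surjective, and that injectivity of $q$ uses the disjointness of the cells from condition (i) together with the fact that the only identifications in $Y^n$ occur along $\partial D_i^n$. Both approaches are legitimate; the citation buys brevity and generality, while your direct argument makes visible why the Hausdorff and finiteness hypotheses cannot be dropped.
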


We refer to \cite[Appendix]{H} for some basic topological
facts about cell complexes.

Let $S$ be a surface.
In this paper, by a \emph{finite $2$-dimensional triangular cell complex structure} on $S$
we mean a finite family of continuous maps
$\varphi_i^n : D_i^n \to S$, with $n \in \{0,1,2\}$
and $D_i^n = D^n$,
satisfying the following conditions:
\begin{enumerate}[(1)]
 \item
  Each $\varphi_i^n$ restricts to a homeomorphism from
  $\intt D_i^n$ to the $n$-cell $e_i^n = \varphi_i^n(\intt D_i^n)$,
  and these cells are disjoint and their union is $S$.
 \item
  For each $2$-cell $e_i^2$, $\varphi_i^2(\partial D_i^2)$
  is contained in the union of $k$ $1$-cells  and $k$ $0$-cells,
  with $k \in \{2,3\}$. 
\end{enumerate}
Then the closures  $\varphi_i^2(D_i^2)$ of all $2$-cells $e_i^2$
are called \emph{triangles} of $S$,
and the closures  $\varphi_i^1(D_i^1)$ of all $1$-cells $e_i^1$
are called \emph{edges} of $S$.
The collection $T$ of all triangles $\varphi_i^2(D_i^2)$
is said to be a \emph{triangulation} of $S$.
We assume that such a triangulation $T$ of $S$
has at least three pairwise different edges,
or equivalently, there are at least three pairwise different
$1$-cells in the considered cell complex structure on $S$.
Then $T$ is a finite collection $T_1,\dots,T_n$ of triangles
of the form
\begin{gather*}
\qquad
\begin{tikzpicture}[auto]
\coordinate (a) at (0,1.4);
\coordinate (b) at (-1,0);
\coordinate (c) at (1,0);
\draw (a) to node {$b$} (c)
(c) to node {$c$} (b);
\draw (b) to node {$a$} (a);
\node (a) at (0,1.4) {$\bullet$};
\node (b) at (-1,0) {$\bullet$};
\node (c) at (1,0) {$\bullet$};
\end{tikzpicture}
\qquad
\raisebox{7ex}{\mbox{or}}
\qquad
\begin{tikzpicture}[auto]
\coordinate (a) at (0,1.4);
\coordinate (b) at (-1,0);
\coordinate (c) at (1,0);
\draw (c) to node {$b$} (b)
(b) to node {$a$} (a);
\draw (a) to node {$a$} (c);
\node (a) at (0,1.4) {$\bullet$};
\node (b) at (-1,0) {$\bullet$};
\node (c) at (1,0) {$\bullet$};
\end{tikzpicture}
%
%
\raisebox{7ex}{\LARGE =}
\ \,
\begin{tikzpicture}[scale=.7,auto]
\coordinate (c) at (0,0);
\coordinate (a) at (1,0);
\coordinate (b) at (0,-1);
\draw (c) to node {$a$} (a);
\draw (b) arc (-90:270:1) node [below] {$b$};
\node (a) at (1,0) {$\bullet$};
\node (c) at (0,0) {$\bullet$};
\end{tikzpicture}
%
\\
\mbox{$a,b,c$ pairwise different}
\qquad
\quad
\mbox{$a,b$ different (\emph{self-folded triangle})}
\end{gather*}
such that every edge of such a triangle in $T$ is either
the edge of exactly two triangles, or is the self-folded
edge, or lies on the boundary.
We note that a given surface $S$ admits many
finite $2$-dimensional cell structures, and hence
triangulations.
We refer to \cite{Ca,KC,Ki} for
general background on surfaces and
constructions of surfaces from plane models.

Let $S$ be a surface and
$T$ a triangulation $S$.
To each triangle $\Delta$ in $T$ we may associate an orientation
\[
\begin{tikzpicture}[auto]
\coordinate (a) at (0,1.5);
\coordinate (b) at (-1,0);
\coordinate (c) at (1,0);
\coordinate (d) at (-.08,.18);
\draw (a) to node {$b$} (c)
(c) to node {$c$} (b)
(b) to node {$a$} (a);
\draw[->] (d) arc (260:-80:.4);
\node (a) at (0,1.5) {$\bullet$};
\node (b) at (-1,0) {$\bullet$};
\node (c) at (1,0) {$\bullet$};
\end{tikzpicture}
\raisebox{7ex}{\!\!$=(abc)$}
\raisebox{7ex}{\quad or \ \ }
\begin{tikzpicture}[auto]
\coordinate (a) at (0,1.5);
\coordinate (b) at (-1,0);
\coordinate (c) at (1,0);
\coordinate (d) at (.08,.18);
\draw (a) to node {$b$} (c)
(c) to node {$c$} (b)
(b) to node {$a$} (a);
\draw[->] (d) arc (-80:260:.4);
\node (a) at (0,1.5) {$\bullet$};
\node (b) at (-1,0) {$\bullet$};
\node (c) at (1,0) {$\bullet$};
\end{tikzpicture}
\raisebox{7ex}{\!\!$=(cba)$,}
\]
if $\Delta$ has pairwise different edges $a,b,c$, and
\[
\begin{tikzpicture}[scale=.7,auto]
\coordinate (c) at (0,0);
\coordinate (a) at (1,0);
\coordinate (b) at (0,-1);
\coordinate (d) at (.38,-.08);
\draw (c) to node[above right] {\!\!$a$} (a);
\draw (b) arc (-90:270:1) node [below] {$b$};
\node (a) at (1,0) {$\bullet$};
\node (c) at (0,0) {$\bullet$};
\draw[->] (d) arc (-10:-350:.4);
\end{tikzpicture}
\raisebox{7ex}{$=(aab)=(aba)$,}
\]
if $\Delta$ is self-folded, with the self-folded edge $a$,
and the other edge $b$.
Fix an orientation of each triangle $\Delta$ of $T$,
and denote this choice by $\vec{T}$.
Then
the pair $(S,\vec{T})$ is said to be a
\emph{directed triangulated surface}.
To each directed triangulated surface $(S,\vec{T})$
we associate the quiver $Q(S,\vec{T})$ whose vertices
are the edges of $T$ and the arrows are defined as
follows:
\begin{enumerate}[(1)]
 \item
  for any oriented triangle $\Delta = (a b c)$ in $\vec{T}$
  with pairwise different edges $a,b,c$, we have the cycle
  \[
    \xymatrix@C=.8pc@R=1.5pc
      {a \ar[rr] && b \ar[ld] \\ & c \ar[lu]}
    \raisebox{-7ex}{,}
  \]
 \item
  for any self-folded triangle $\Delta = (a a b)$ in $\vec{T}$,
  we have the quiver
  \[
    \xymatrix{ a \ar@(dl,ul)[] \ar@/^1.5ex/[r] & b \ar@/^1.5ex/[l]} ,
  \]
 \item
  for any boundary edge $a$ in ${T}$,
  we have the loop
  \[
    {\xymatrix{ a \ar@(dl,ul)[]}} .
  \]
\end{enumerate}
Then $Q = Q(S,\vec{T})$ is a triangulation quiver in
the following sense (introduced independently by Ladkani
in \cite[Definition~2.4]{La3}
(see also \cite[Definition~3.12]{L4})).

\begin{definition}\label{def:Q,f}
A \emph{triangulation quiver} is a pair $(Q,f)$,
where $Q = (Q_0,Q_1,s,t)$ is a finite connected quiver
and $f : Q_1 \to Q_1$ is a permutation
on the set $Q_1$ of arrows of $Q$ satisfying
the following conditions:
\begin{enumerate}[(a)]
 \item
  every vertex $i \in Q_0$ is the source and target of exactly two
  arrows in $Q_1$,
 \item
  for each arrow $\alpha \in Q_1$, we have $s(f(\alpha)) = t(\alpha)$,
 \item
  $f^3$ is the identity on $Q_1$.
\end{enumerate}
\end{definition}

\bigskip

Let  $Q = Q(S,\vec{T})$ be the quiver associated to the
directed triangulated surface $(S,\vec{T})$.
The permutation $f$ on its set of arrows
is defined as follows:
\begin{enumerate}[(1)]
 \item
  \raisebox{3ex}%
  {
    \xymatrix@C=.8pc@R=1.5pc
   {a \ar[rr]^{\alpha} && b \ar[ld]^{\beta} \\ & c \ar[lu]^{\gamma}}}%
  \quad
  $f(\alpha) = \beta$,
  $f(\beta) = \gamma$,
  $f(\gamma) = \alpha$,

  for an oriented triangle $\Delta = (a b c)$ in $\vec{T}$,
  with pairwise different edges $a,b,c$,
 \item
  \raisebox{0ex}%
  {\xymatrix{ a \ar@(dl,ul)[]^{\alpha} \ar@/^1.5ex/[r]^{\beta} & b \ar@/^1.5ex/[l]^{\gamma}}}
  \quad
  $f(\alpha) = \beta$,
  $f(\beta) = \gamma$,
  $f(\gamma) = \alpha$,

  for a self-folded triangle $\Delta = (a a b)$ in $\vec{T}$, and
 \item
  \raisebox{0ex}%
  {\xymatrix{ a \ar@(dl,ul)[]^{\alpha}}}
  \quad
  $f(\alpha) = \alpha$,

  for a boundary edge $a$ of ${T}$.
\end{enumerate}

We note that for such $(Q,f)$, 
$Q$ is  $2$-regular.
\emph{We will  consider only  triangulation
quivers with at least three vertices.}

We will see below that different directed triangulated
surfaces (even of different genus) may lead to the same
triangulation quiver (see Example~\ref{ex:4.4}).
 We also mention that a similar construction of the 
triangulation quiver associated to an orientable surface
was given in \cite[Proposition~2.5]{La3}
(see also \cite[Definition~4.1]{L4}).

Let $(Q,f)$ be a triangulation quiver.
Then we have the involution $\bar{}: Q_1 \to Q_1$
which assigns to an arrow $\alpha \in Q_1$
the arrow $\bar{\alpha}$ with $s(\alpha) = s(\bar{\alpha})$
and $\alpha \neq \bar{\alpha}$.
With this,  we obtain another permutation $g: Q_1 \to Q_1$
of the set $Q_1$ of arrows of $Q$ such that
$g(\alpha) = \overbar{f(\alpha)}$ for any $\alpha \in Q_1$.
We write $\cO(g)$ for the set of  $g$-orbits
in $Q_1$.

We will present now several examples of triangulation
quivers.
We will denote
by $\bS$ the sphere $S^2$,
by $\bT$ the torus and
by $\bP$ the projective plane.
For two surfaces $X$ and $Y$ we denote by $X \# Y$
the connected sum of $X$ and $Y$.

Recall that $X \# Y$ is the surface constructed by
the following steps:
\begin{enumerate}[(a)]
 \item
  Remove a small open $2$-disk from each of the spaces
  $X$ and $Y$, leaving the boundary $1$-disks
  on each of the surfaces.
 \item
  Glue together the boundary $1$-disks to form
  the connected sum.
\end{enumerate}

In the first three examples we describe all possible
triangulation quivers with exactly three vertices,
and related directed triangulated surfaces.

\begin{example}
\label{ex:4.3}
Let $S = T$ be the triangle
\[
\begin{tikzpicture}[auto]
\coordinate (a) at (0,1.73);
\coordinate (b) at (-1,0);
\coordinate (c) at (1,0);
\draw (a) to node {2} (c)
(c) to node {3} (b);
\draw (b) to node {1} (a);
\node (a) at (0,1.73) {$\bullet$};
\node (b) at (-1,0) {$\bullet$};
\node (c) at (1,0) {$\bullet$};
\end{tikzpicture}
\]
with the three pairwise different edges,
forming the boundary of $S$, and consider the clockwise orientation
$\vec{T}$ of $T$.
Then the triangulation quiver
$Q(S,\vec{T})$ is the quiver
\[
  \xymatrix@C=.8pc@R=1.5pc{
     1 \ar@(dl,ul)[]^{\varepsilon} \ar[rr]^{\alpha} &&
     2 \ar@(ur,dr)[]^{\eta} \ar[ld]^{\beta} \\
     & 3 \ar@(dr,dl)[]^{\mu} \ar[lu]^{\gamma}}
\]
with  $f$-orbits
$(\alpha\ \beta\ \gamma)$,
$(\varepsilon)$,
$(\eta)$,
$(\mu)$.
Observe that we have only one $g$-orbit
$(\alpha\ \eta\ \beta\ \mu\ \gamma\ \varepsilon)$
of arrows in $Q(S,\vec{T})$.
\end{example}

\begin{example}
\label{ex:4.4}
Let $S$  be the sphere $\bS$ with  triangulation $T$
\[
\begin{tikzpicture}[auto]
\coordinate (a) at (0,1.73);
\coordinate (b) at (-1,0);
\coordinate (c) at (1,0);
\draw (a) to node {2} (c)
(c) to node {3} (b);
\draw (b) to node {1} (a);
\node (a) at (0,1.73) {$\bullet$};
\node (b) at (-1,0) {$\bullet$};
\node (c) at (1,0) {$\bullet$};
\end{tikzpicture}
\]
given by two unfolded triangles.
There are two possible orientations
$\vec{T}$ of the triangles of $T$ (up to duality)
\[
\begin{tikzpicture}[auto]
\coordinate (a) at (0,1.73);
\coordinate (b) at (-1,0);
\coordinate (c) at (1,0);
\coordinate (d) at (-.08,.18);
\coordinate (aa) at (-.2,1.5);
\coordinate (bb) at (-.7,-.06);
\coordinate (cc) at (.7,-.06);
\draw (a) to node {2} (c)
(c) to node {3} (b)
(b) to node {1} (a);
\draw[->] (d) arc (260:-80:.4);
\node (a) at (0,1.73) {$\bullet$};
\node (b) at (-1,0) {$\bullet$};
\node (c) at (1,0) {$\bullet$};
\draw[->] (aa) arc (240:-60:.4);
\draw[->] (bb) arc (350:60:.4);
\draw[<-] (cc) arc (-170:120:.4);
\end{tikzpicture}
\qquad
\qquad
\qquad
\begin{tikzpicture}[auto]
\coordinate (a) at (0,1.73);
\coordinate (b) at (-1,0);
\coordinate (c) at (1,0);
\coordinate (d) at (-.08,.18);
\coordinate (aa) at (-.2,1.5);
\coordinate (bb) at (-.7,-.06);
\coordinate (cc) at (.7,-.06);
\draw (a) to node {2} (c)
(c) to node {3} (b)
(b) to node {1} (a);
\draw[->] (d) arc (260:-80:.4);
\node (a) at (0,1.73) {$\bullet$};
\node (b) at (-1,0) {$\bullet$};
\node (c) at (1,0) {$\bullet$};
\draw[<-] (aa) arc (240:-60:.4);
\draw[<-] (bb) arc (350:60:.4);
\draw[->] (cc) arc (-170:120:.4);
\end{tikzpicture}
\]
of $T$.
The associated triangulation quivers
$Q(S,\vec{T})$ are 
\[
 \begin{tabular}{c@{\qquad\quad}c}
  \xymatrix@R=3.pc@C=1.8pc{
    1
    \ar@<.35ex>[rr]^{\alpha_1}
    \ar@<-.35ex>[rr]_{\beta_1}
    && 2
    \ar@<.35ex>[ld]^{\alpha_2}
    \ar@<-.35ex>[ld]_{\beta_2}
    \\
    & 3
    \ar@<.35ex>[lu]^{\alpha_3}
    \ar@<-.35ex>[lu]_{\beta_3}
  }
 &
  \xymatrix@R=3.pc@C=1.8pc{
    1
    \ar@<.35ex>[rr]^{\alpha_1}
    \ar@<.35ex>[rd]^{\beta_3}
    && 2
    \ar@<.35ex>[ll]^{\beta_1}
    \ar@<.35ex>[ld]^{\alpha_2}
    \\
    & 3
    \ar@<.35ex>[lu]^{\alpha_3}
    \ar@<.35ex>[ru]^{\beta_2}
  }
  \\
  with $f$-orbits & with  $f$-orbits \\
  $(\alpha_1\ \alpha_2\ \alpha_3)$ and $(\beta_1\ \beta_2\ \beta_3)$ &
    $(\alpha_1\ \alpha_2\ \alpha_3)$ and $(\beta_1\ \beta_3\ \beta_2)$ \\
  and a unique $g$-orbit & and three $g$-orbits \\
  $(\alpha_1\ \beta_2\ \alpha_3\ \beta_1\ \alpha_2\ \beta_3)$ &
    $(\alpha_1\ \beta_1)$ $(\alpha_2\ \beta_2)$, $(\alpha_3\ \beta_3)$ .
 \end{tabular}
\]

Consider also the torus $\bT$ with the triangulation $T^*$
\[
\begin{tikzpicture}[auto]
\coordinate (a) at (0,1.5);
\coordinate (b) at (-1.5,0);
\coordinate (c) at (1.5,0);
\coordinate (d) at (0,-1.5);
\draw (a) to node {2} (c)
(c) to node {1} (d)
(d) to node {2} (b)
(b) to node {1} (a);
\draw (b) to node {3} (c);
\node (a) at (a) {$\bullet$};
\node (b) at (b) {$\bullet$};
\node (c) at (c) {$\bullet$};
\node (d) at (d) {$\bullet$};
\end{tikzpicture}
\]
and the two possible orientations
$\vec{T}^*$ of the triangles of $T^*$ (up to duality)
\[
\begin{tikzpicture}[auto]
\coordinate (o) at (0,0);
\coordinate (a) at (0,1.5);
\coordinate (b) at (-1.5,0);
\coordinate (c) at (1.5,0);
\coordinate (d) at (0,-1.5);
\coordinate (e) at (-.08,.25);
\coordinate (f) at (.08,-.25);
\draw (a) to node {2} (c)
(c) to node {1} (d)
(d) to node {2} (b)
(b) to node {1} (a);
\draw (b) to (c);
\draw (b) to node {3} (o);
\draw[->] (e) arc (260:-80:.4);
\draw[->] (f) arc (80:-260:.4);
\node (a) at (a) {$\bullet$};
\node (b) at (b) {$\bullet$};
\node (c) at (c) {$\bullet$};
\node (d) at (d) {$\bullet$};
\end{tikzpicture}
\qquad
\qquad
\qquad
\begin{tikzpicture}[auto]
\coordinate (o) at (0,0);
\coordinate (a) at (0,1.5);
\coordinate (b) at (-1.5,0);
\coordinate (c) at (1.5,0);
\coordinate (d) at (0,-1.5);
\coordinate (e) at (-.08,.25);
\coordinate (f) at (.08,-.25);
\draw (a) to node {2} (c)
(c) to node {1} (d)
(d) to node {2} (b)
(b) to node {1} (a);
\draw (b) to (c);
\draw (b) to node {3} (o);
\draw[->] (e) arc (260:-80:.4);
\draw[<-] (f) arc (80:-260:.4);
\node (a) at (a) {$\bullet$};
\node (b) at (b) {$\bullet$};
\node (c) at (c) {$\bullet$};
\node (d) at (d) {$\bullet$};
\end{tikzpicture}
\]
The associated triangulation quivers
$Q(\bT,\vec{T}^*)$ are
exactly the same as the triangulation quivers
$Q(S,\vec{T})$  above.
\end{example}

\begin{example}
\label{ex:4.5}
Let $S = \bP \# \bP$ be the connected sum
of two copies of the projective plane $\bP$.
Then $S$ admits the triangulation $T$ of the form
\[
\begin{tikzpicture}[auto]
\coordinate (a) at (0,1.5);
\coordinate (b) at (-1.5,0);
\coordinate (c) at (1.5,0);
\coordinate (d) at (0,-1.5);
\draw (a) to node {3} (c)
(c) to node {3} (d)
(d) to node {1} (b)
(b) to node {1} (a);
\draw (a) to node {2} (d);
\node (a) at (a) {$\bullet$};
\node (b) at (b) {$\bullet$};
\node (c) at (c) {$\bullet$};
\node (d) at (d) {$\bullet$};
\end{tikzpicture}
\]
given by two self-folded triangles sharing
a common edge.
Then we have
a
unique orientation $\vec{T}$
of these two triangles, and the associated
triangulation quiver
$Q(S,\vec{T})$ is of the form
\[
  \xymatrix{
    1
    \ar@(ld,ul)^{\alpha}[]
    \ar@<.5ex>[r]^{\beta}
    & 2
    \ar@<.5ex>[l]^{\gamma}
    \ar@<.5ex>[r]^{\delta}
    & 3
    \ar@<.5ex>[l]^{\sigma}
    \ar@(ru,dr)^{\varrho}[]
  } ,
\]
with the $f$-orbits
$(\alpha\ \beta\ \gamma)$
and
$(\varrho\ \sigma\ \delta)$.
Moreover, $\cO(g)$ consists
of the three $g$-orbits
$(\alpha)$,
$(\varrho)$,
$(\beta\ \delta\ \sigma\ \gamma)$.
We also mention that $\bP \# \bP$
is homeomorphic to the Klein bottle $\bK$
(see \cite[Example~3.8]{Ca}),
and consequently the above triangulation
quiver is also the quiver
$Q(\bK,\vec{T})$,
for the induced directed triangulated structure
on $\bK$.
\end{example}

In the next examples,
the shaded subquivers of a quiver $Q(S,\vec{T})$
define the $f$-orbits of arrows in $Q(S,\vec{T})$.
We note that for a loop, it is always clear from 
the context whether or not it is part of an $f$-orbit
of length $3$.


\begin{example}
Let $S = \bT \# \bP$, and let $T$  be the following triangulation of $S$
\[
\begin{tikzpicture}
[scale=1,auto]
\node (1) at (-2.0,0) {$\bullet$};
\node (2) at (-1,1.5) {$\bullet$};
\node (3) at (1,1.5) {$\bullet$};
\node (4) at (2.0,0) {$\bullet$};
\node (5) at (1,-1.5) {$\bullet$};
\node (6) at (-1,-1.5) {$\bullet$};
\coordinate (1) at (-2.0,0);
\coordinate (2) at (-1,1.5) ;
\coordinate (3) at (1,1.5) ;
\coordinate (4) at (2.0,0) ;
\coordinate (5) at (1,-1.5) ;
\coordinate (6) at (-1,-1.5) ;
\draw[thick]
(1) edge node {1} (2)
(1) edge node {4} (3)
(1) edge node {5} (4)
(1) edge node {6} (5)
(2) edge node {2} (3)
(3) edge node {1} (4)
(4) edge node {2} (5)
(5) edge node {3} (6)
(6) edge node {3} (1) ;
\end{tikzpicture}
\]
where  the edges $1,2$ correspond to $\bT$, and the edge $3$
corresponds to $\bP$.
Observe that $S$ has empty boundary.
We consider two orientations of the triangles of $T$ and the
associated quivers
\begin{gather*}
\begin{gathered}[b]
\begin{tikzpicture}
[->,scale=1.1,auto]
\coordinate (4) at (0,1.25);
\coordinate (4l) at (-0.15,1.25);
\coordinate (4p) at (0.15,1.25);
\coordinate (5) at (1.5,2);
\coordinate (1) at (0,2.75);
\coordinate (1l) at (-0.15,2.75);
\coordinate (1p) at (0.15,2.75);
\coordinate (2) at (-1.5,2);
\coordinate (6) at (0,0);
\fill[fill=gray!20] (.3,0) arc (-70:-2:2) -- (1.39,1.8) arc (-10:-170:1.4) -- (-1.6,1.8) arc (-178:-110:2) -- cycle;
\fill[fill=gray!20] (4p) -- (5) -- (1p) -- cycle;
\fill[fill=gray!20] (4l) -- (2) -- (1l) -- cycle;
\fill[fill=gray!20] (-.2,-.2) arc (135:225:0.7) -- (.2,-1.2) arc (-45:45:0.7) -- cycle;
\node [circle,minimum size=15](A) at (0,-1.4) { };
\node [circle,minimum size=32](B) at (0,-2.1) {};
\coordinate  (C) at (intersection 2 of A and B);
\coordinate  (D) at (intersection 1 of A and B);
 \tikzAngleOfLine(B)(D){\AngleStart}
 \tikzAngleOfLine(B)(C){\AngleEnd}
\fill[gray!20]%
   let \p1 = ($ (B) - (D) $), \n2 = {veclen(\x1,\y1)}
   in
     (D) arc (\AngleStart-360:\AngleEnd:\n2); 
\node [fill=white,circle,minimum size=1.5](A) at (0,-1.4) { };
\draw[thick,->]%
   let \p1 = ($ (B) - (D) $), \n2 = {veclen(\x1,\y1)}
   in
     (B) ++(60:\n2) node[right]{\footnotesize\ \ \ \raisebox{-7ex}{$\alpha = g \alpha$}}
     (D) arc (\AngleStart-360:\AngleEnd:\n2); 
\node (4) at (0,1.25) {4};
\node (4l) at (-0.15,1.25) {};
\node (4p) at (0.15,1.25) {};
\node (5) at (1.5,2) {5};
\node (1) at (0,2.75) {1};
\node (1l) at (-0.15,2.75) {};
\node (1p) at (0.15,2.75) {};
\node (2) at (-1.5,2) {2};
\node (6) at (0,0) {6};
\node (3) at (0,-1.4) {3};
\draw[thick,->] (.3,0) arc (-70:-36:2) node[right]{\footnotesize \raisebox{0ex}{$g^2 \beta$}} arc (-36:-2:2);
\draw[thick,->] (1.39,1.8) arc (-10:-90:1.4) node[below]{\footnotesize \raisebox{2ex}{$g^6 \beta$}} arc (-90:-170:1.4) ;
\draw[thick,->] (-1.6,1.8) arc (-178:-144:2) node[below]{\footnotesize \raisebox{-3ex}{$g^{10} \beta$}} arc (-144:-110:2);
\draw[thick,->] (1l) to node[above]{\footnotesize$g^9 \beta$} (2);
\draw[thick,->] (2) to node[below]{\footnotesize$g^7 \beta$} (4l);
\draw[thick,->] (4l) to node[left]{\footnotesize$g^4 \beta$\!} (1l);
\draw[thick,->] (1p) to node[above]{\footnotesize$g^5 \beta$} (5);
\draw[thick,->] (5) to node[below]{\footnotesize$g^3 \beta$} (4p);
\draw[thick,->] (4p) to node[right]{\footnotesize\!$g^8 \beta$} (1p);
\draw[thick,->] (-.2,-1.2) arc (225:180:0.7) node[left]{\footnotesize$g \beta$} arc (180:135:0.7);
\draw[thick,->] (.2,-.2) arc (45:0:0.7) node[right]{\footnotesize$\beta=g^{11}\beta$} arc (0:-45:0.7);
\end{tikzpicture}
\\
\mbox{$(S,\vec{T})$ for $\vec{T}$}
\\
\mbox{consisting of oriented triangles}
\\
\mbox{
   (1 2 4), (4 1 5), (5 2 6), (3 3 6)
}
\end{gathered}
\qquad
\quad
\begin{gathered}[b]
\begin{tikzpicture}
[->,scale=1.1,auto]
\coordinate (4) at (0,1.25);
\coordinate (4l) at (-0.15,1.25);
\coordinate (4p) at (0.15,1.25);
\coordinate (5) at (1.5,2);
\coordinate (1) at (0,2.75);
\coordinate (1l) at (-0.15,2.75);
\coordinate (1p) at (0.15,2.75);
\coordinate (2) at (-1.5,2);
\coordinate (6) at (0,0);
\fill[fill=gray!20] (.3,0) arc (-70:-2:2) -- (1.39,1.8) arc (-10:-170:1.4) -- (-1.6,1.8) arc (-178:-110:2) -- cycle;
\fill[fill=gray!20] (4p) -- (5) -- (1p) -- cycle;
\fill[fill=gray!20] (4l) -- (2) -- (1l) -- cycle;
\fill[fill=gray!20] (-.2,-.2) arc (135:225:0.7) -- (.2,-1.2) arc (-45:45:0.7) -- cycle;
\node [circle,minimum size=15](A) at (0,-1.4) { };
\node [circle,minimum size=32](B) at (0,-2.1) {};
\coordinate  (C) at (intersection 2 of A and B);
\coordinate  (D) at (intersection 1 of A and B);
 \tikzAngleOfLine(B)(D){\AngleStart}
 \tikzAngleOfLine(B)(C){\AngleEnd}
\fill[gray!20]%
   let \p1 = ($ (B) - (D) $), \n2 = {veclen(\x1,\y1)}
   in
     (D) arc (\AngleStart-360:\AngleEnd:\n2); 
\node [fill=white,circle,minimum size=1.5](A) at (0,-1.4) { };
\draw[thick,->]%
   let \p1 = ($ (B) - (D) $), \n2 = {veclen(\x1,\y1)}
   in
     (B) ++(60:\n2) node[right]{\footnotesize\ \ \ \raisebox{-7ex}{$\alpha = g \alpha$}}
     (D) arc (\AngleStart-360:\AngleEnd:\n2); 
\node (4) at (0,1.25) {4};
\node (4l) at (-0.15,1.25) {};
\node (4p) at (0.15,1.25) {};
\node (5) at (1.5,2) {5};
\node (1) at (0,2.75) {1};
\node (1l) at (-0.15,2.75) {};
\node (1p) at (0.15,2.75) {};
\node (2) at (-1.5,2) {2};
\node (6) at (0,0) {6};
\node (3) at (0,-1.4) {3};
\draw[thick,->] (.3,0) arc (-70:-36:2) node[below]{\footnotesize\ \raisebox{0ex}{$g^2 \beta$}} arc (-36:-2:2);
\draw[thick,->] (1.39,1.8) arc (-10:-90:1.4) node[below]{\footnotesize \raisebox{2ex}{$\gamma = g^3 \gamma$}} arc (-90:-170:1.4) ;
\draw[thick,->] (-1.6,1.8) arc (-178:-144:2) node[left]{\footnotesize\raisebox{0ex}{$g^{5} \beta$}} arc (-144:-110:2);
\draw[thick,->] (1l) to node[above]{\footnotesize$g^4 \beta$} (2);
\draw[thick,->] (2) to node[below]{\footnotesize$g \gamma$} (4l);
\draw[thick,->] (4l) to node[left]{\footnotesize$g \delta$\!} (1l);
\draw[thick,<-] (1p) to node[above]{\footnotesize$g^3 \beta$} (5);
\draw[thick,<-] (5) to node[below]{\footnotesize$g^2 \gamma$} (4p);
\draw[thick,<-] (4p) to node[right]{\footnotesize\!$\delta = g^2 \delta$} (1p);
\draw[thick,->] (-.2,-1.2) arc (225:180:0.7) node[left]{\footnotesize$g \beta$} arc (180:135:0.7);
\draw[thick,->] (.2,-.2) arc (45:0:0.7) node[right]{\footnotesize$\beta=g^{6}\beta$} arc (0:-45:0.7);
\end{tikzpicture}
\\
\mbox{$(S,\vec{T})$ for $\vec{T}$}
\\
\mbox{consisting of oriented triangles}
\\
\mbox{
  (1 2 4), (1 4 5), (5 2 6), (3 3 6)
}
\end{gathered}
\end{gather*}
Observe that  the first orientation
gives two $g$-orbits of arrows in $Q(S,\vec{T})$
(of lengths $1$ and $11$), while for the second orientation
there are four $g$-orbits
of arrows in $Q(S,\vec{T})$ (of lengths $1, 2, 3, 6$).
\end{example}

\begin{example}
Let $S$ be a once punctured triangle,
and let $T$ be the triangulation of $S$
\[
\begin{tikzpicture}
[scale=1]
\node (A) at (-2,0) {$\bullet$};
\node (B) at (2,0) {$\bullet$};
\node (C) at (0,1) {$\bullet$};
\node (D) at (0,3) {$\bullet$};
\coordinate (A) at (-2,0) ;
\coordinate (B) at (2,0) ;
\coordinate (C) at (0,1) ;
\coordinate (D) at (0,3) ;
\draw[thick]
(A) edge node [above] {1} (D)
(D) edge node [above] {2} (B)
(A) edge node [below] {6} (C)
(C) edge node [below] {5} (B)
(A) edge [bend left=45,distance=2.5cm] node [above] {4} (B)
(A) edge node [below] {3} (B) ;
\end{tikzpicture}
\]
such that the edges $1,2,3$ are on the boundary.
We consider two orientations  $\vec{T}$ of the triangles of $T$ and the
associated quivers
\begin{gather*}
\begin{gathered}[b]
\begin{tikzpicture}
[->,scale=.8]
\node (1) at (-2,1) {1};
\node (2) at (-2,-1) {2};
\node (3) at (2,0) {3};
\node (4) at (-1,0) {4};
\node (5) at (0,1) {5};
\node (6) at (0,-1) {6};
\node [circle,minimum size=.5cm](A) at (-2,1) {1};
\node [circle,minimum size=1.cm](B) at (-2.5,1.5) {};
\coordinate  (C1) at (intersection 2 of A and B);
\coordinate  (D1) at (intersection 1 of A and B);
 \tikzAngleOfLine(B)(D1){\AngleStart}
 \tikzAngleOfLine(B)(C1){\AngleEnd}
\fill[gray!20]%
   let \p1 = ($ (B) - (D1) $), \n2 = {veclen(\x1,\y1)}
   in
     (D1) arc (\AngleStart-360:\AngleEnd:\n2); 
\draw[thick,->]%
   let \p1 = ($ (B) - (D1) $), \n2 = {veclen(\x1,\y1)}
   in
     (B) ++(60:\n2) node[below]{\footnotesize\raisebox{-2.5ex}{$g^{10}\alpha = \alpha\qquad\qquad\qquad\qquad\quad\ $}}
     (D1) arc (\AngleStart-360:\AngleEnd:\n2); 
\node [circle,minimum size=.5cm](A) at (-2,-1) {2};
\node [circle,minimum size=1.cm](B) at (-2.5,-1.5) {};
\coordinate  (C) at (intersection 2 of A and B);
\coordinate  (D) at (intersection 1 of A and B);
 \tikzAngleOfLine(B)(D){\AngleStart}
 \tikzAngleOfLine(B)(C){\AngleEnd}
\fill[gray!20]%
   let \p1 = ($ (B) - (D) $), \n2 = {veclen(\x1,\y1)}
   in
     (D) arc (\AngleStart-360:\AngleEnd:\n2); 
\draw[thick,->]%
   let \p1 = ($ (B) - (D) $), \n2 = {veclen(\x1,\y1)}
   in
     (B) ++(60:\n2) node[left]{\footnotesize\raisebox{-7ex}{$g^2 \alpha\qquad\ $}}
     (D) arc (\AngleStart-360:\AngleEnd:\n2); 
\node [circle,minimum size=.5cm](A) at (2,0) {3};
\node [circle,minimum size=1.cm](B) at (2.7,0) {};
\coordinate  (C) at (intersection 2 of A and B);
\coordinate  (D) at (intersection 1 of A and B);
 \tikzAngleOfLine(B)(D){\AngleStart}
 \tikzAngleOfLine(B)(C){\AngleEnd}
\fill[gray!20]%
   let \p1 = ($ (B) - (D) $), \n2 = {veclen(\x1,\y1)}
   in
     (D) arc (\AngleStart-360:\AngleEnd:\n2); 
\draw[thick,->]%
   let \p1 = ($ (B) - (D) $), \n2 = {veclen(\x1,\y1)}
   in
     (B) ++(60:\n2) node[above]{\footnotesize\raisebox{-7ex}{$g^6 \alpha$}}
     (D) arc (\AngleStart-360:\AngleEnd:\n2); 
\fill[rounded corners=3mm,fill=gray!20] (-2,1) -- (-2,-1) -- (-1,0) -- cycle;
\fill[rounded corners=3mm,fill=gray!20] (-.1,.9) -- (-.1,-.9) -- (-1,0) -- cycle;
\fill[rounded corners=3mm,fill=gray!20] (.1,.9) -- (.1,-.9) -- (2,0) -- cycle;
\draw[thick,->] (-.1,.75) -- node[left]{\footnotesize$g \beta\!\!$} (-.1,-.75);
\draw[thick,->] (.1,-.75) -- node[right]{\footnotesize$\!\!\beta = g^2 \beta$} (.1,.75);
\draw[thick,->]
(1) edge node[left]{\footnotesize$g \alpha$} (2)
(2) edge node[below]{\footnotesize$\ g^3 \alpha$} (4)
(3) edge node[below]{\footnotesize$\ \ g^7 \alpha$} (6)
(4) edge node[above]{\footnotesize$\ \ g^9 \alpha$} (1)
(4) edge node[above]{\footnotesize$g^4 \alpha\ \ $} (5)
(5) edge node[above]{\footnotesize$\ \ g^5 \alpha$} (3)
(6) edge node[below]{\footnotesize$g^8 \alpha\ $} (4);
\end{tikzpicture}
\\
\mbox{$(S,\vec{T})$ for $\vec{T}$}
\\
\mbox{with oriented triangles}
\\
\mbox{
  (1 2 4), (4 5 6), (5 3 6)
}
\end{gathered}
\quad
\begin{gathered}[b]
\begin{tikzpicture}
[->,scale=.8]
\node (1) at (-2,1) {1};
\node (2) at (-2,-1) {2};
\node (3) at (1,0) {3};
\node (4) at (-1,0) {4};
\node (5) at (0,1) {5};
\node (6) at (0,-1) {6};
\node [circle,minimum size=.5cm](A) at (-2,1) {1};
\node [circle,minimum size=1.cm](B) at (-2.5,1.5) {};
\coordinate  (C1) at (intersection 2 of A and B);
\coordinate  (D1) at (intersection 1 of A and B);
 \tikzAngleOfLine(B)(D1){\AngleStart}
 \tikzAngleOfLine(B)(C1){\AngleEnd}
\fill[gray!20]%
   let \p1 = ($ (B) - (D1) $), \n2 = {veclen(\x1,\y1)}
   in
     (D1) arc (\AngleStart-360:\AngleEnd:\n2); 
\draw[thick,->]%
   let \p1 = ($ (B) - (D1) $), \n2 = {veclen(\x1,\y1)}
   in
     (B) ++(60:\n2) node[below]{\footnotesize\raisebox{-2.5ex}{$g^{8}\alpha = \alpha\qquad\qquad\qquad\qquad\quad$}}
     (D1) arc (\AngleStart-360:\AngleEnd:\n2); 
\node [circle,minimum size=.5cm](A) at (-2,-1) {2};
\node [circle,minimum size=1.cm](B) at (-2.5,-1.5) {};
\coordinate  (C) at (intersection 2 of A and B);
\coordinate  (D) at (intersection 1 of A and B);
 \tikzAngleOfLine(B)(D){\AngleStart}
 \tikzAngleOfLine(B)(C){\AngleEnd}
\fill[gray!20]%
   let \p1 = ($ (B) - (D) $), \n2 = {veclen(\x1,\y1)}
   in
     (D) arc (\AngleStart-360:\AngleEnd:\n2); 
\draw[thick,->]%
   let \p1 = ($ (B) - (D) $), \n2 = {veclen(\x1,\y1)}
   in
     (B) ++(60:\n2) node[left]{\footnotesize\raisebox{-7ex}{$g^2 \alpha\qquad\ $}}
     (D) arc (\AngleStart-360:\AngleEnd:\n2); 
\node [circle,minimum size=.5cm](A) at (1,0) {3};
\node [circle,minimum size=1.cm](B) at (1.7,0) {};
\coordinate  (C) at (intersection 2 of A and B);
\coordinate  (D) at (intersection 1 of A and B);
 \tikzAngleOfLine(B)(D){\AngleStart}
 \tikzAngleOfLine(B)(C){\AngleEnd}
\fill[gray!20]%
   let \p1 = ($ (B) - (D) $), \n2 = {veclen(\x1,\y1)}
   in
     (D) arc (\AngleStart-360:\AngleEnd:\n2); 
\draw[thick,->]%
   let \p1 = ($ (B) - (D) $), \n2 = {veclen(\x1,\y1)}
   in
     (B) ++(60:\n2) node[above]{\footnotesize\raisebox{-7ex}{$\!\!\!\!\!\!\beta = g^4 \beta$}}
     (D) arc (\AngleStart-360:\AngleEnd:\n2); 
\fill[rounded corners=3mm,fill=gray!20] (-2,1) -- (-2,-1) -- (-1,0) -- cycle;
\fill[rounded corners=3mm,fill=gray!20] (-.1,.9) -- (-.1,-.9) -- (-1,0) -- cycle;
\fill[rounded corners=3mm,fill=gray!20] (.1,.9) -- (.1,-.9) -- (1,0) -- cycle;
\draw[thick,->] (-.1,.75) -- node[left]{\footnotesize$g^2 \beta\!\!$} (-.1,-.75);
\draw[thick,->] (.1,.75) -- node[right]{\footnotesize$\!\!g^5 \alpha$} (.1,-.75);
\draw[thick,->]
(1) edge node[left]{\footnotesize$g \alpha$} (2)
(2) edge node[below]{\footnotesize$\ g^3 \alpha$} (4)
(6) edge node[below]{\footnotesize$\ \ g^3 \beta$} (3)
(4) edge node[above]{\footnotesize$\ \ g^7 \alpha$} (1)
(4) edge node[above]{\footnotesize$g^4 \alpha\ \ $} (5)
(3) edge node[above]{\footnotesize$\ \ g \beta$} (5)
(6) edge node[below]{\footnotesize$g^6 \alpha\ $} (4);
\end{tikzpicture}
\\
\mbox{$(S,\vec{T})$ for $\vec{T}$}
\\
\mbox{with oriented triangles}
\\
\mbox{
  (1 2 4), (4 5 6), (3 5 6)
}
\end{gathered}
\end{gather*}
In  both orientations $\vec{T}$, there are
 two $g$-orbits of arrows in $Q(S,\vec{T})$
but they have different length.
\end{example}

\begin{example}
Let $S = \bT \# \bT$,
and $T$ be the following triangulation of $S$
\[
\begin{tikzpicture}
[scale=2,auto]
\node (0) at (0,0) {$\bullet$};
\node (1) at (0,1) {$\bullet$};
\node (2) at (.7,.7) {$\bullet$};
\node (3) at (1,0) {$\bullet$};
\node (4) at (.7,-.7) {$\bullet$};
\node (5) at (0,-1) {$\bullet$};
\node (6) at (-.7,-.7) {$\bullet$};
\node (7) at (-1,0) {$\bullet$};
\node (8) at (-.7,.7) {$\bullet$};
\coordinate (0) at (0,0);
\coordinate (1) at (0,1) ;
\coordinate (2) at (.7,.7) ;
\coordinate (3) at (1,0) ;
\coordinate (4) at (.7,-.7) ;
\coordinate (5) at (0,-1) ;
\coordinate (6) at (-.7,-.7) ;
\coordinate (7) at (-1,0) ;
\coordinate (8) at (-.7,.7) ;
\draw[thick]
(1) edge node {1} (2)
(2) edge node {2} (3)
(3) edge node {3} (4)
(4) edge node {4} (5)
(5) edge node {3} (6)
(6) edge node {4} (7)
(7) edge node {1} (8)
(8) edge node {2} (1)
(1) edge node {5} (0)
(2) edge node [right] {6} (0)
(3) edge node {7} (0)
(4) edge node {8} (0)
(5) edge node {9} (0)
(6) edge node [left] {10} (0)
(7) edge node {11} (0)
(8) edge node {\!\!\!12} (0) ;
\end{tikzpicture}
\]
Consider the
orientation $\vec{T}$ of triangles in $T$
\[
\mbox{
   (1 6 5), (2 7 6), (7 3 8), (8 4 9),
   (9 3 10), (10 4 11), (11 1 12), (2 5 12).
}
\]
Then the quiver $Q(S,\vec{T})$ is of the form
\[
\begin{tikzpicture}
[->,scale=.9]
\coordinate (1) at (0,4.5);
\coordinate (2) at (0,0);
\coordinate (3) at (0,-4);
\coordinate (4) at (-2,0);
\coordinate (5) at (1,2);
\coordinate (6) at (2,2);
\coordinate (7) at (2.5,-1);
\coordinate (8) at (1,-1);
\coordinate (9) at (-1,-2);
\coordinate (10) at (-3,-3);
\coordinate (11) at (-3,1);
\coordinate (12) at (-1,2);
\fill[fill=gray!20] (1) -- (5) -- (6) -- cycle;
\fill[fill=gray!20] (1) -- (11) -- (12) -- cycle;
\fill[fill=gray!20] (2) -- (5) -- (12) -- cycle;
\fill[fill=gray!20] (2) -- (6) -- (7) -- cycle;
\fill[fill=gray!20] (3) -- (7) -- (8) -- cycle;
\fill[fill=gray!20] (3) -- (9) -- (10) -- cycle;
\fill[fill=gray!20] (4) -- (8) -- (9) -- cycle;
\fill[fill=gray!20] (4) -- (10) -- (11) -- cycle;
\node [fill=white,circle,minimum size=1.5] (1) at (0,4.5) { };
\node [fill=white,circle,minimum size=1.5] (2) at (0,0) { };
\node [fill=white,circle,minimum size=1.5] (3) at (0,-4) { };
\node [fill=white,circle,minimum size=1.5] (4) at (-2,0) { };
\node [fill=white,circle,minimum size=1.5] (5) at (1,2) { };
\node [fill=white,circle,minimum size=1.5] (6) at (2,2) { };
\node [fill=white,circle,minimum size=1.5] (7) at (2.5,-1) { };
\node [fill=white,circle,minimum size=1.5] (8) at (1,-1) { };
\node [fill=white,circle,minimum size=1.5] (9) at (-1,-2) { };
\node [fill=white,circle,minimum size=4.5] (10) at (-3,-3) {\ \quad};
\node [fill=white,circle,minimum size=4.5] (11) at (-3,1) {\ \quad};
\node [fill=white,circle,minimum size=4.5] (12) at (-1,2) {\ \quad};
\node (1) at (0,4.5) {1};
\node (2) at (0,0) {2};
\node (3) at (0,-4) {3};
\node (4) at (-2,0) {4};
\node (5) at (1,2) {5};
\node (6) at (2,2) {6};
\node (7) at (2.5,-1) {7};
\node (8) at (1,-1) {8};
\node (9) at (-1,-2) {9};
\node (10) at (-3,-3) {10};
\node (11) at (-3,1) {11};
\node (12) at (-1,2) {12};
\draw[thick,->]
(1) edge node[right]{\footnotesize$\alpha = g^{16} \alpha$} (6)
(1) edge node[right]{\footnotesize$\!g^4 \alpha$} (12)
(2) edge node[left]{\footnotesize$g^{2} \alpha\!$} (5)
(2) edge node[above]{\footnotesize$g^{6} \alpha$} (7)
(3) edge node[above]{\footnotesize$\!\!\!\!\!\!\!\!\!\!g^{12} \alpha$} (8)
(3) edge node[below]{\footnotesize$g^8 \alpha$} (10)
(4) edge node[right]{\footnotesize$g^{10} \alpha$} (9)
(4) edge node[right]{\footnotesize$g^{14} \alpha$} (11)
(5) edge node[below]{\footnotesize$\!\!\!\!\!\!g^3 \alpha$} (1)
(5) edge node[above]{\footnotesize\raisebox{0ex}{$\beta = g^8 \beta$}} (12)
(6) edge node[right]{\footnotesize$g \alpha$} (2)
(6) edge node[below]{\footnotesize$\!\!\!\!\!\!\!\!g^7 \beta$} (5)
(7) edge node[right]{\footnotesize$g^7 \alpha$} (3)
(7) edge node[right]{\footnotesize$g^6 \beta$} (6)
(8) edge node[above]{\footnotesize$g^{13} \alpha$} (4)
(8) edge node[below]{\footnotesize$\!\!\!\!\!\!g^5 \beta$} (7)
(9) edge node[above]{\footnotesize$g^{11} \alpha\!\!\!\!\!\!\!\!\!\!\!$} (3)
(9) edge node[above]{\footnotesize$g^{4} \beta$} (8)
(10) edge node[right]{\footnotesize$\!g^9 \alpha$} (4)
(10) edge node[above]{\footnotesize$g^{3} \beta$} (9)
(11) edge node[left]{\footnotesize$g^{15} \alpha$} (1)
(11) edge node[left]{\footnotesize$g^2 \beta$} (10)
(12) edge node[left]{\footnotesize$g^{5} \alpha\!$} (2)
(12) edge node[below]{\footnotesize$g \beta$} (11) ;
\end{tikzpicture}
\]
and $g$ has  two orbits
(of lengths $8$ and $16$).
\end{example}

\begin{example}
Let $S = \bT \# \bT$,
and $T$ be the following triangulation of $S$
\[
\begin{tikzpicture}
[scale=.8,auto]
\node (1) at (-1,2.4) {$\bullet$};
\node (2) at (1,2.4) {$\bullet$};
\node (3) at (2.4,1) {$\bullet$};
\node (4) at (2.4,-1) {$\bullet$};
\node (5) at (1,-2.4) {$\bullet$};
\node (6) at (-1,-2.4) {$\bullet$};
\node (7) at (-2.4,-1) {$\bullet$};
\node (8) at (-2.4,1) {$\bullet$};
\coordinate (1) at (-1,2.4);
\coordinate (2) at (1,2.4);
\coordinate (3) at (2.4,1);
\coordinate (4) at (2.4,-1);
\coordinate (5) at (1,-2.4);
\coordinate (6) at (-1,-2.4);
\coordinate (7) at (-2.4,-1);
\coordinate (8) at (-2.4,1);
\draw[thick]
(1) edge node {2} (2)
(2) edge node {3} (3)
(3) edge node {4} (4)
(4) edge node {3} (5)
(5) edge node {4} (6)
(6) edge node {1} (7)
(7) edge node {2} (8)
(8) edge node {1} (1)
(5) edge node {8} (2)
(5) edge node {9} (3)
(6) edge node {6} (1)
(6) edge node {7} (2)
(7) edge node {5} (1);
\end{tikzpicture}
\]
We note that $S$ has empty boundary.
We consider the following orientation $\vec{T}$ of triangles in $T$
\[
\mbox{
  (1 5 2), (5 6 1), (2 6 7),
  (8 7 4), (3 9 8), (4 3 9).
}
\]
Then the quiver $Q(S,\vec{T})$ is of the form
\[
\begin{tikzpicture}
[->,scale=.6]
\node (1) at (-6,0) {1};
\node (3) at (6,0) {3};
\node (5) at (-4,0) {5};
\node (7) at (0,0) {7};
\node (9) at (4,0) {9};
\node (2) at (-2,2) {2};
\node (6) at (-2,-2) {6};
\node (4) at (2,-2) {4};
\node (8) at (2,2) {8};
\fill[rounded corners=3mm,fill=gray!20] (-2,2) -- (-2,-2) -- (0,0) -- cycle;
\fill[rounded corners=3mm,fill=gray!20] (2,2) -- (2,-2) -- (0,0) -- cycle;
\fill[fill=gray!20] (-3.7,0.3) -- (-2.3,1.7) -- (-2.4,2.) arc (90:141:4.5) -- (-5.7,.2)  -- (-4.3,.2) -- cycle;
\draw[thick,->] (-2.4,2.) arc (90:120:4.5) node[above]{\footnotesize$g^{14} \alpha$} arc (120:141:4.5);
\draw[thick,->] (-5.7,.2)  -- node[above]{\footnotesize$g^{} \alpha\!\!$} (-4.3,.2) ;
\draw[thick,->] (-3.7,0.3) -- node[left]{\footnotesize$\!\!g^{16} \alpha$} (-2.3,1.7) ;
\fill[fill=gray!20] (-3.7,-0.3) -- (-2.3,-1.7) -- (-2.4,-2.) arc (270:219:4.5) -- (-5.7,-.2)  -- (-4.3,-.2) -- cycle;
\draw[thick,->] (-2.4,-2) arc (270:240:4.5) node[below]{\footnotesize$\!\!\!\!\!\!\alpha = g^{18} \alpha\ \ \ \ $} arc (240:219:4.5);
\draw[thick,->] (-5.7,-.2)  -- node[below]{\footnotesize$g^{15} \alpha\!\!\!\!\!\!\!$} (-4.3,-.2) ;
\draw[thick,->] (-3.7,-0.3) -- node[left]{\footnotesize$g^{2} \alpha$} (-2.3,-1.7) ;
\fill[fill=gray!20] (3.7,0.3) -- (2.3,1.7) -- (2.4,2.) arc (90:39:4.5) -- (5.7,.2)  -- (4.3,.2) -- cycle;
\draw[thick,->] (2.4,2.) arc (90:60:4.5) node[above]{\footnotesize$\ \ g^{9} \alpha\!\!$} arc (60:39:4.5);
\draw[thick,->] (5.7,.2)  --  node[above]{\footnotesize$\!\!\!\!g^{6} \alpha$} (4.3,.2) ;
\draw[thick,->] (3.7,0.3) -- node[right]{\footnotesize\raisebox{1ex}{$g^{11} \alpha$}} (2.3,1.7) ;
\fill[fill=gray!20] (3.7,-0.3) -- (2.3,-1.7) -- (2.4,-2.) arc (270:321:4.5) -- (5.7,-.2)  -- (4.3,-.2) -- cycle;
\draw[thick,->] (2.4,-2.) arc (270:300:4.5) node[below]{\footnotesize$\ \ g^{5} \alpha\!\!$} arc (300:321:4.5);
\draw[thick,->] (5.7,-.2)  -- node[below]{\footnotesize$\!\!\!\!\!\!g^{10} \alpha$} (4.3,-.2) ;
\draw[thick,->] (3.7,-0.3) -- node[right]{\footnotesize$g^{7} \alpha$\!\!} (2.3,-1.7) ;
\draw[thick,->]
(6) edge node[below]{\footnotesize$\ \ g^{3} \alpha\!\!$} (7)
(8) edge node[above]{\footnotesize$\!\!g^{12} \alpha\ \ $} (7)
(7) edge node[above]{\footnotesize$\ \ g^{13} \alpha\!\!\!\!$} (2)
(2) edge node[left]{\footnotesize$g^{17} \alpha$} (6)
(7) edge node[below]{\footnotesize$\!\!g^{4} \alpha\ \ $} (4)
(4) edge node[right]{\footnotesize$g^{8} \alpha$} (8);
\end{tikzpicture}
\]
There is only one $g$-orbit of arrows in $Q(S,\vec{T})$
(of length $18$).
\end{example}

\begin{example}
Let $S$ be obtained from $\bT \# \bP$ by creating one boundary component,
and $T$ the following triangulation of $S$
\[
\begin{tikzpicture}
    [scale=.8,auto]
\node (1) at (-1,2.4) {$\bullet$};
\node (2) at (1,2.4) {$\bullet$};
\node (3) at (2.4,1) {$\bullet$};
\node (4) at (2.4,-1) {$\bullet$};
\node (5) at (1,-2.4) {$\bullet$};
\node (6) at (-1,-2.4) {$\bullet$};
\node (7) at (-2.4,-1) {$\bullet$};
\node (8) at (-2.4,1) {$\bullet$};
\coordinate (1) at (-1,2.4) ;
\coordinate (2) at (1,2.4) ;
\coordinate (3) at (2.4,1) ;
\coordinate (4) at (2.4,-1) ;
\coordinate (5) at (1,-2.4) ;
\coordinate (6) at (-1,-2.4) ;
\coordinate (7) at (-2.4,-1) ;
\coordinate (8) at (-2.4,1) ;
\draw[thick]
(1) edge node {2} (2)
(2) edge node {1} (3)
(3) edge node {2} (4)
(4) edge node {4} (5)
(5) edge node {5} (6)
(6) edge node {3} (7)
(7) edge node {3} (8)
(8) edge node {1} (1)
(8) edge node {6} (2)
(8) edge node {7} (3)
(8) edge node {8} (4)
(8) edge node {9} (5)
(8) edge node {\!\!10} (6) ;
\end{tikzpicture}
\]
with two edges $4$ and $5$ on the boundary.
Consider the following orientation $\vec{T}$ of triangles in $T$
\[
\mbox{
  (1 2 6), (6 1 7), (7 2 8),
  (8 4 9), (9 5 10), (3 3 10).
}
\]
Then the quiver $Q(S,\vec{T})$ is of the form
\[
\begin{tikzpicture}
[->,scale=.9]
\coordinate (6) at (0,1.25);
\coordinate (6l) at (-0.15,1.25);
\coordinate (6p) at (0.15,1.25);
\coordinate (7) at (1.5,2);
\coordinate (1) at (0,2.75);
\coordinate (1l) at (-0.15,2.75);
\coordinate (1p) at (0.15,2.75);
\coordinate (2) at (-1.5,2);
\coordinate (8) at (0,0);
\coordinate (3) at (-3,-4) ;
\coordinate (4) at (1,-1.5) ;
\coordinate (5) at (0,-3) ;
\coordinate (9) at (-1,-1.5) ;
\coordinate (10) at (-2,-3) ;
\fill[fill=gray!20] (.3,0) arc (-70:-2:2) -- (1.39,1.8) arc (-10:-170:1.4) -- (-1.6,1.8) arc (-178:-110:2) -- cycle;
\fill[fill=gray!20] (6p) -- (7) -- (1p) -- cycle;
\fill[fill=gray!20] (6l) -- (2) -- (1l) -- cycle;
\fill[rounded corners=3mm,fill=gray!20] (4) -- (8) -- (9) -- cycle;
\fill[rounded corners=3mm,fill=gray!20] (5) -- (9) -- (10) -- cycle;
\fill[fill=gray!20] (-2.3,-3) arc (90:180:0.7) -- (-2.7,-4) arc (-90:0:0.7) -- cycle;
\node [circle,minimum size=1.5](A) at (-3,-4) { };
\node [circle,minimum size=1cm](B) at (-3.45,-4.45) {};
\coordinate  (C) at (intersection 2 of A and B);
\coordinate  (D) at (intersection 1 of A and B);
 \tikzAngleOfLine(B)(D){\AngleStart}
 \tikzAngleOfLine(B)(C){\AngleEnd}
\fill[gray!20]%
   let \p1 = ($ (B) - (D) $), \n2 = {veclen(\x1,\y1)}
   in
     (D) arc (\AngleStart-360:\AngleEnd:\n2); 
\node [fill=white,circle,minimum size=1.5](A) at (-3,-4) { };
\draw[thick,->]%
   let \p1 = ($ (B) - (D) $), \n2 = {veclen(\x1,\y1)}
   in
     (B) ++(60:\n2) node[right]{\footnotesize\ \ \ \raisebox{-10ex}{$\beta = g \beta$}}
     (D) arc (\AngleStart-360:\AngleEnd:\n2); 
\node [circle,minimum size=1.5](A) at (1,-1.5) { };
\node [circle,minimum size=1cm](B) at (1.45,-1.95) {};
\coordinate  (C) at (intersection 2 of A and B);
\coordinate  (D) at (intersection 1 of A and B);
 \tikzAngleOfLine(B)(D){\AngleStart}
 \tikzAngleOfLine(B)(C){\AngleEnd}
\fill[gray!20]%
   let \p1 = ($ (B) - (D) $), \n2 = {veclen(\x1,\y1)}
   in
     (D) arc (\AngleStart-360:\AngleEnd:\n2); 
\node [fill=white,circle,minimum size=1.5](A) at (1,-1.5) { };
\draw[thick,->]%
   let \p1 = ($ (B) - (D) $), \n2 = {veclen(\x1,\y1)}
   in
     (B) ++(60:\n2) node[right]{\footnotesize\ \ \ \raisebox{-7ex}{$g^3 \alpha$}}
     (D) arc (\AngleStart-360:\AngleEnd:\n2); 
\node [circle,minimum size=1.5](A) at (0,-3) { };
\node [circle,minimum size=1cm](B) at (0.45,-3.45) {};
\coordinate  (C) at (intersection 2 of A and B);
\coordinate  (D) at (intersection 1 of A and B);
 \tikzAngleOfLine(B)(D){\AngleStart}
 \tikzAngleOfLine(B)(C){\AngleEnd}
\fill[gray!20]%
   let \p1 = ($ (B) - (D) $), \n2 = {veclen(\x1,\y1)}
   in
     (D) arc (\AngleStart-360:\AngleEnd:\n2); 
\node [fill=white,circle,minimum size=1.5](A) at (0,-3) { };
\draw[thick,->]%
   let \p1 = ($ (B) - (D) $), \n2 = {veclen(\x1,\y1)}
   in
     (B) ++(60:\n2) node[right]{\footnotesize\ \ \ \raisebox{-7ex}{$g^6 \alpha$}}
     (D) arc (\AngleStart-360:\AngleEnd:\n2); 
\node (6) at (0,1.25) {6};
\node (6l) at (-0.15,1.25) {};
\node (6p) at (0.15,1.25) {};
\node[fill=white,circle,minimum size=2]  (7) at (1.5,2) { };
\node (7) at (1.5,2) {7};
\node (1) at (0,2.75) {1};
\node (1l) at (-0.15,2.75) {};
\node (1p) at (0.15,2.75) {};
\node[fill=white,circle,minimum size=2] (2) at (-1.5,2) { };
\node (2) at (-1.5,2) {2};
\node (8) at (0,0) {8};
\node (3) at (-3,-4) {3};
\node (4) at (1,-1.5) {4};
\node (5) at (0,-3) {5};
\node (9) at (-1,-1.5) {9};
\node (10) at (-2,-3) {10};
\draw[thick,->] (.3,0) arc (-70:-36:2) node[right]{\footnotesize \raisebox{0ex}{$g^{12} \alpha$}} arc (-36:-2:2);
\draw[thick,->] (1.39,1.8) arc (-10:-90:1.4) node[below]{\footnotesize \raisebox{2ex}{$g^{16} \alpha$}} arc (-90:-170:1.4) ;
\draw[thick,->] (-1.6,1.8) arc (-178:-144:2) node[left]{\footnotesize \raisebox{-3ex}{$g \alpha$}} arc (-144:-110:2);
\draw[thick,<-]
(6p) edge node[below]{\footnotesize$g^{13} \alpha$} (7)
(6l) edge node[below]{\footnotesize$g^{17} \alpha$} (2)
(7) edge node[above]{\footnotesize$g^{15} \alpha$} (1p)
(1l) edge node[left]{\footnotesize$g^{14} \alpha$\!} (6l)
(1p) edge node[right]{\footnotesize\!$g^{18} \alpha$} (6p)
(2) edge node[above]{\footnotesize$\!\!\!\!\!\!\!\!\!\!\!\!\!\!g^{19} \alpha = \alpha$} (1l)  ;
\draw[thick,->] (-2.3,-3) arc (90:135:0.7) node[left]{\footnotesize$g^{8} \alpha$} arc (135:180:0.7);
\draw[thick,->] (-2.7,-4) arc (-90:-45:0.7) node[right]{\footnotesize$g^{9} \alpha$} arc (-45:0:0.7);
\draw[thick,->]
(4) edge node[below]{\footnotesize$g^4 \alpha$} (9)
(9) edge node[left]{\footnotesize$g^{11} \alpha$} (8)
(8) edge node[right]{\footnotesize$g^2 \alpha$} (4)
(5) edge node[below]{\footnotesize$g^7 \alpha$} (10)
(10) edge node[left]{\footnotesize$g^{10} \alpha$} (9)
(9) edge node[right]{\footnotesize$g^5 \alpha$} (5)  ;
\end{tikzpicture}
\]
We note that there are two $g$-orbits of arrows in $Q(S,\vec{T})$,
of lengths $1$ and $19$.
\end{example}


We will now show that every triangulation quiver comes from a directed
triangulated surface.

\begin{theorem}
\label{th:4.11}
Let $(Q,f)$ be a triangulation quiver
with at least three vertices.
Then there exists a directed triangulated surface
$(S,\vec{T})$ such that $(Q,f) = Q(S,\vec{T})$.
\end{theorem}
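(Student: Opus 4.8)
The plan is to reverse the construction of Section~\ref{sec:triangulation}. Starting from the purely combinatorial datum $(Q,f)$, I will build a space $S$ by assembling one oriented triangle for each length-$3$ orbit of $f$, gluing them along their sides according to the $2$-regular structure of $Q$, and then verifying that the resulting pair $(S,\vec{T})$ is a directed triangulated surface whose associated triangulation quiver is precisely $(Q,f)$. The point is that the entire quiver datum will be intrinsic to the triangles and their prescribed orientations, while the topology of $S$ will depend on auxiliary gluing choices; this is exactly why $S$ need not be unique, as already Example~\ref{ex:4.4} shows.

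First I would record the shape of the orbits. Since $f^3$ is the identity, every $f$-orbit has length $1$ or $3$, and condition (b) forces a fixed point of $f$ to be a loop. Each length-$3$ orbit $O=\{\alpha,f\alpha,f^2\alpha\}$ I realize as a filled oriented triangle $\Delta_O$ whose three sides, read in the cyclic order of the orbit, carry the vertex-labels $s(\alpha),s(f\alpha),s(f^2\alpha)$, and whose three corners are labelled by the arrows themselves (the corner at $\alpha$ sitting between the sides $s(\alpha)$ and $t(\alpha)=s(f\alpha)$). If the three source-labels are distinct, this is a triangle with pairwise different edges; if two coincide — equivalently, if $O$ contains a loop not fixed by $f$ — it is a self-folded triangle. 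The fixed points of $f$ I set aside as markers of boundary edges.

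Next comes the gluing. By $2$-regularity each vertex $v$ of $Q$ is the source of exactly two arrows $\alpha$ and $\bar\alpha$, so the edge $v$ occurs in exactly two slots among the triangles. I identify these two slots by an arbitrary homeomorphism matching their endpoints and declare the image to be the edge $v$ of $T$. Three cases arise: if $\alpha,\bar\alpha$ lie in different length-$3$ orbits we obtain an interior edge shared by two triangles; if they lie in the same orbit we are in the self-folded case and two sides of one triangle are identified; and if one of the two arrows is a fixed point of $f$, then $v$ occurs in a single triangle and is left free, becoming a boundary edge. (Both arrows at $v$ cannot be fixed, as that would make $v$ an isolated one-vertex component, excluded once $Q$ is connected with at least three vertices.) Let $S$ be the resulting quotient of the finite disjoint union of the $\Delta_O$; it is compact, connected because $Q$ is connected, and it carries the tautological triangulation $T$ and orientation $\vec{T}$ coming from the $\Delta_O$, with at least three edges since $Q$ has at least three vertices.

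The heart of the argument, and the step I expect to be the main obstacle, is to check that $S$ is genuinely a surface and that $(T,\vec{T})$ meets the cell-complex conditions underlying Proposition~\ref{prop:4.1}. Interior points of triangles and of glued edges are immediate; the delicate points are the $0$-cells, where I must rule out pinching. The key observation is that each corner is bounded by exactly two sides and each side is glued to \emph{at most} one other side, so the relation ``adjacent across a shared side'' organizes the corners around each $0$-cell into a single cycle (interior vertex) or a single arc (boundary vertex), never a wedge of two cycles; the rotation realizing this is governed by the permutation $g$. This is the classical fact that identifying the edges of finitely many polygons in pairs yields a compact surface, and the work lies in confirming it survives the two degeneracies: at the tip of a self-folded triangle the two identified sides meet in an ordinary interior cone point, and along a free side the local model is a half-plane, so $\partial S$ is a $1$-manifold whose edges correspond exactly to the fixed points of $f$. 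Once $S$ is known to be a surface, I would compute $Q(S,\vec{T})$: its vertices are the edges of $T$, namely the pairs $\{\alpha,\bar\alpha\}$ and hence the vertices of $Q$; its arrows and its permutation $f$ are read off from the cyclic orders of the oriented $\Delta_O$, which were defined to be the $f$-orbits; and two arrows share a source precisely when their sides bound a common edge, i.e. for the pair $\alpha,\bar\alpha$. As all of this is independent of the freedom in the gluing homeomorphisms, it yields $(Q(S,\vec{T}),f)=(Q,f)$, completing the construction.
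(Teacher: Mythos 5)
Your construction is correct, but it takes a genuinely different route from the paper. The paper proves Theorem~\ref{th:4.11} by induction on the number $n(Q,f)$ of $f$-orbits of length $3$: the base cases are the three-vertex triangulation quivers classified in Examples~\ref{ex:4.3}, \ref{ex:4.4} and \ref{ex:4.5}, and the inductive step deletes one length-$3$ orbit (replacing its arrows by $f$-fixed loops), applies the induction hypothesis to the at most three resulting components, and then glues back either a bordered projective plane (when the removed orbit contains a loop) or a bare triangle (when its edges are pairwise distinct), invoking Proposition~\ref{prop:4.1} at each gluing. You instead build $S$ globally in one step, taking one oriented $2$-simplex per length-$3$ orbit, pairing sides via the $2$-regularity of $Q$, and verifying directly that the link of every $0$-cell of the quotient is a single circle or arc --- the classical polygon-identification argument, correctly adapted to the two degeneracies (the tip of a self-folded triangle, whose link is one corner arc with its endpoints identified, and free sides coming from $f$-fixed loops, which produce the boundary). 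Your approach avoids the bookkeeping of the paper's inductive step (the case distinctions over how many components survive and which are one-vertex quivers) and makes the local structure at the $0$-cells explicit; the paper's induction instead delegates all point-set topology to Proposition~\ref{prop:4.1} and to the explicitly exhibited three-vertex surfaces, so it never needs the polygon-identification lemma in the presence of self-folded triangles. Two small points you use implicitly are worth recording: at least one length-$3$ orbit exists (otherwise every vertex carries two $f$-fixed loops and $Q$ is disconnected with at least three vertices), and connectivity of $S$ follows because every arrow in a length-$3$ orbit joins two edges of the same triangle, so connectivity of $Q$ forces the triangle-adjacency graph to be connected.
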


\begin{proof}
Let $Q = (Q_0,Q_1,s,t)$.
We denote by $n(Q,f)$ the number of $f$-orbits
in $Q_1$ of length $3$.
We will prove the theorem by induction on $n(Q,f)$.
Observe that if $n(Q,f) = 1$ then $(Q,f)$
is the triangulation quiver described in Example~\ref{ex:4.3},
because $Q$ is a connected $2$-regular quiver with $|Q_0|\geq 3$.
Further, all possible triangulation quivers with three vertices
are described in Examples \ref{ex:4.3}, \ref{ex:4.4}, \ref{ex:4.5}.
Therefore, we may assume that $|Q_0|\geq 4$ and $n(Q,f) \geq 2$.
We shall consider two cases.

\smallskip

(1)
Assume that there is an $f$-orbit of length $3$
in $(Q,f)$ containing a loop.
Then $Q$ contains a subquiver
\[
  \xymatrix{ a \ar@(dl,ul)[]^{\alpha} \ar@/^1.5ex/[r]^{\beta} & b \ar@/^1.5ex/[l]^{\gamma}}
\]
with
$f(\alpha) = \beta$,
$f(\beta) = \gamma$,
$f(\gamma) = \alpha$.
Consider the quiver $Q' = (Q_0',Q_1',s',t')$
obtained from $Q$ by removing the vertex $a$,
the arrows $\alpha$, $\beta$, $\gamma$,
and adding a loop $\varepsilon$ at  vertex $b$.
Then we have the permutation $f' : Q_1' \to Q_1'$
such that $f'(\sigma) = f(\sigma)$ for any arrow
$\sigma \in Q_1 \setminus \{\alpha, \beta, \gamma\}$
and $f'(\varepsilon): = \varepsilon$.
Hence $(Q',f')$ is a triangulation quiver with
$|Q_0'| = |Q_0| - 1 \geq 3$.
By the inductive assumption,
there is a directed triangulated surface $(S',\vec{T}')$,
with $T'$ given by a finite $2$-dimensional cell complex
structure on $S'$, such that $(Q',f') = Q(S',\vec{T}')$.
Moreover, the loop $\varepsilon$ of $Q'$ is created
by a bordered edge $b$ of the triangulation $T'$ of $S'$.
Consider the surface $S''$ obtained from the projective
plane $\bP$ by creating one boundary component,
and its triangulation $T''$
\[
\begin{tikzpicture}[auto]
\coordinate (a) at (0,1.6);
\coordinate (b) at (-1,0);
\coordinate (c) at (1,0);
\draw (c) to node {$a$} (b)
(b) to node {$a$} (a);
\draw (a) to node {$b$} (c);
\node (a) at (0,1.6) {$\bullet$};
\node (b) at (-1,0) {$\bullet$};
\node (c) at (1,0) {$\bullet$};
\end{tikzpicture}
\]
with  boundary edge $b$ and  self-folded edge $a$.
Moreover, let $\vec{T}''$ be the orientation $(a a b)$ of $T''$.
Let $\phi'_b : D^1 \to S'$ be the characteristic map
of the cell complex structure defining $(S',T')$
whose image is the edge $b$,
and $\phi''_b : D^1 \to S''$ the characteristic map
of the cell complex structure defining $(S'',T'')$
whose image is the edge $b$.
Denote by $S$ the quotient space of the disjoint union
$S' \sqcup S''$ under the identification
$\phi'_b(x) \sim \phi''_b(x)$ for all $x \in D^1$.
Then we have on $S$ the cell complex structure
induced by the cell complex structures of $S'$ and $S''$
and the characteristic map $\phi_b : D^1 \to S$,
whose image is the edge $b$,
obtained by gluing the two edges $b$ in $S'$ and $S''$,
and replacing the characteristic maps
$\phi'_b$ and $\phi''_b$.
In particular, applying Proposition~\ref{prop:4.1},
we infer that $S$ is a surface with the triangulation
$T = T' \sqcup T''$, and the orientation
$\vec{T}$ of triangles in $T$ given by the orientations
$\vec{T}'$ and $\vec{T}''$ of triangles in $T'$ and $T''$.
Moreover, we have $(Q,f) = Q(S,\vec{T})$.

\smallskip

(2)
Assume that there is no loop in any $f$-orbit of length $3$ in $Q_1$.
Then $Q$ contains a subquiver
\[
  \xymatrix@C=.8pc@R=1.5pc
  {a \ar[rr]^{\alpha} && b \ar[ld]^{\beta} \\ & c \ar[lu]^{\gamma}}
\]
with
$f(\alpha) = \beta$,
$f(\beta) = \gamma$,
$f(\gamma) = \alpha$,
and $a,b,c$ are pairwise different vertices.
Consider the quiver $Q' = (Q_0',Q_1',s',t')$
obtained from $Q$ by removing
the arrows $\alpha$, $\beta$, $\gamma$,
and adding the loops
\[
  \xymatrix{ a \ar@(ur,dr)[]^{\varepsilon_a}}
  \qquad
  \qquad
  \xymatrix{ b \ar@(ur,dr)[]^{\varepsilon_b}}
  \qquad
  \qquad
  \xymatrix{ c \ar@(ur,dr)[]^{\varepsilon_c}}
\]
at the vertices $a,b,c$.
Then $Q'$ is a finite $2$-regular quiver with $Q_0' = Q_0$
and it has at most three connected components.
Moreover, there is the permutation $f' : Q_1' \to Q_1'$
such that $f'(\sigma) = f(\sigma)$ for any arrow
$\sigma \in Q_1 \setminus \{\alpha, \beta, \gamma\}$
and
$f'(\varepsilon_a) = \varepsilon_a$,
$f'(\varepsilon_b) = \varepsilon_b$,
$f'(\varepsilon_c) = \varepsilon_c$.
For each $i \in \{a,b,c\}$,
denote by $Q(i) = (Q(i)_0,Q(i)_1,s(i),t(i))$
the connected component of $Q'$ containing the vertex $i$,
and by $f_i : Q(i)_1 \to Q(i)_1$ the restriction
of $f'$ to $Q(i)_1$.
Observe that each $(Q(i), f_i)$ is a triangulation quiver
with $n(Q(i), f_i) \leq n(Q,f) - 1$.
Moreover, by the assumption imposed on the $f$-orbits
in $Q_1$,
we conclude that either $|Q(i)_0| \geq 3$ or $|Q(i)_0| = 1$.
Clearly, if $|Q(i)_0| = 1$ then $Q(i)$ is the loop
$\varepsilon_i$ at $i$.
Since $|Q_0| \geq 4$, we conclude that $|Q(i)_0| \geq 3$
for some $i \in \{a,b,c\}$.
We may assume that $|Q(a)_0| \geq 3$, and
$|Q(c)_0| = 1$, if $|Q(i)_0| = 1$ for some $i \in \{a,b,c\}$.
For each $i \in \{a,b,c\}$ with $|Q(i)_0| \geq 3$,
it follows from the inductive assumption that
$(Q(i),f_i) = Q(S(i),\vv{T(i)})$
for a directed triangulated surface $(S(i),\vv{T(i)})$.
Observe also that,
if $Q(i) = Q(j)$ for some $i \neq j$ in $\{a,b,c\}$,
then $|Q(i)_0| = |Q(j)_0| \geq 3$.
In such a case, we assume that
$(S(i),\vv{T(i)}) = (S(j),\vv{T(j)})$.
We may assume (without loss of generality)
that, if $Q'$ has at most two connected components,
then $Q(a) = Q(b)$.
We define the topological space $S'$ as follows:
\begin{itemize}
 \item
  $S' = S(a) \sqcup S(b) \sqcup S(c)$,
  if $Q(a)$, $Q(b)$, $Q(c)$ are pairwise different
  with
  $|Q(a)_0| \geq 3$,
  $|Q(b)_0| \geq 3$,
  $|Q(c)_0| \geq 3$;
 \item
  $S' = S(a) \sqcup S(b)$,
  if $Q(a)$, $Q(b)$, $Q(c)$ are pairwise different
  with
  $|Q(a)_0| \geq 3$,
  $|Q(b)_0| \geq 3$,
  $|Q(c)_0| = 1$;
 \item
  $S' = S(a)$,
  if $Q(a)$, $Q(b)$, $Q(c)$ are pairwise different
  with
  $|Q(a)_0| \geq 3$,
  $|Q(b)_0| = 1$,
  $|Q(c)_0| = 1$;
 \item
  $S' = S(a) \sqcup S(c)$,
  if $Q(a) = Q(b)$, different from $Q(c)$,
  and
  $|Q(c)_0| \geq 3$;
 \item
  $S' = S(a)$,
  if $Q(a) = Q(b)$, different from $Q(c)$,
  and
  $|Q(c)_0| = 1$;
 \item
  $S' = S(a)$,
  if $Q(a) = Q(b) = Q(c)$.
\end{itemize}
Observe that there is the finite $2$-dimensional
cell complex structure on $S'$,
given by the finite $2$-dimensional
cell complex structures on the surfaces $S(i)$,
defining the triangulations $T(i)$,
for $i \in \{a,b,c\}$ with $|Q(i)_0| \geq 3$,
and consequently the induced triangulation
$T'$ of $S'$.
We denote by $\vec{T}'$ the orientation of
triangles in $T'$ given by the orientations
$\vv{T(i)}$ of triangles of $T(i)$ in $S(i)$,
for all $i \in \{a,b,c\}$ with $|Q(i)_0| \geq 3$.
Moreover, for any $i \in \{a,b,c\}$ with $|Q(i)_0| \geq 3$,
we denote by $\phi'_i : D^1 \to S'$ the characteristic map
of the defined cell complex structure on $S'$
whose image is the edge $i$.

Consider now the triangle $S'' = T''$
\[
\begin{tikzpicture}[auto]
\coordinate (a) at (0,1.6);
\coordinate (b) at (-1,0);
\coordinate (c) at (1,0);
\draw (a) to node {$b$} (c)
(c) to node {$c$} (b);
\draw (b) to node {$a$} (a);
\node (a) at (0,1.6) {$\bullet$};
\node (b) at (-1,0) {$\bullet$};
\node (c) at (1,0) {$\bullet$};
\end{tikzpicture}
\]
with the three pairwise different edges,
forming the boundary of $S''$,
and the orientation
$\vec{T}'' = (a b c)$
(see Example~\ref{ex:4.3}).
Let
$\phi''_a : D^1 \to S''$,
$\phi''_b : D^1 \to S''$,
$\phi''_c : D^1 \to S''$
be the characteristic maps
of the $2$-dimensional cell complex
structure on $S'' = T''$
whose images are respectively the edges $a$, $b$, $c$.

Let $S$ be the quotient space of $S' \sqcup S''$
under the identification
$\phi'_i(x) \sim \phi''_i(x)$ for all $x \in D^1$
and $i \in \{a,b,c\}$ with $|Q(i)_0| \geq 3$.
Then, applying Proposition~\ref{prop:4.1} again,
we conclude that $S$ is a surface with
a $2$-dimensional cell complex structure
defining the triangulation
$T = T' \sqcup T''$, and the orientation
$\vec{T}$ of triangles in $T$ given by the orientations
$\vec{T}'$ and $\vec{T}''$ of triangles in $T'$ and $T''$.
It follows from the above construction that
$(Q,f) = Q(S,\vec{T})$.
\end{proof}


\begin{corollary}
\label{cor:4.12}
Let $(Q,f)$ be a triangulation quiver
with at least three vertices.
Then $Q$ contains a loop $\alpha$ with $f(\alpha) = \alpha$ 
if and only if 
$(Q,f) = Q(S,\vec{T})$
for a directed triangulated surface $(S,\vec{T})$ where $S$
has non-empty boundary.
\end{corollary}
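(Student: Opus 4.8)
The plan is to reduce both implications to a single structural observation about the construction $Q(S,\vec{T})$: the loops $\alpha$ with $f(\alpha) = \alpha$ are precisely the arrows coming from the boundary edges of the triangulation $T$. Granting this, the easy implication is immediate, and the converse follows by invoking Theorem~\ref{th:4.11} to produce a realizing surface and then reading off a boundary edge from the given fixed-point loop.

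First I would record the elementary remark that, by condition~(b) of Definition~\ref{def:Q,f}, any arrow $\alpha$ with $f(\alpha) = \alpha$ satisfies $s(\alpha) = s(f(\alpha)) = t(\alpha)$ and is therefore automatically a loop; and that, since $f^3 = \mathrm{id}$ by condition~(c), every $f$-orbit of arrows has length $1$ or $3$, so that $f(\alpha) = \alpha$ holds exactly when the $f$-orbit of $\alpha$ is a singleton. Thus ``fixed-point loop of $f$'' is synonymous with ``$f$-orbit of length $1$''.

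Next I would establish the key bijection by inspecting the three cases in the definition of the arrows of $Q(S,\vec{T})$ and of the permutation $f$. A triangle with pairwise different edges $(a\,b\,c)$ contributes a $3$-cycle of arrows forming a single $f$-orbit of length $3$ and creates no loops; a self-folded triangle $(a\,a\,b)$ contributes a loop together with two further arrows, but these again form one $f$-orbit of length $3$, so its loop $\alpha$ satisfies $f(\alpha) = \beta \neq \alpha$; and a boundary edge $a$ contributes a single loop $\alpha$ with $f(\alpha) = \alpha$. Consequently the assignment sending a boundary edge to its loop is a bijection from the boundary edges of $T$ onto the set of fixed-point loops of $f$ in $Q(S,\vec{T})$: it is injective because distinct edges are distinct vertices, and surjective because every length-$1$ $f$-orbit must, by the case analysis, have been produced by a boundary edge.

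With this in hand the proof concludes quickly. For the implication $(\Leftarrow)$, if $(Q,f) = Q(S,\vec{T})$ with $\partial S \neq \emptyset$, then $T$ has a boundary edge and hence $Q$ has a loop $\alpha$ with $f(\alpha) = \alpha$. For $(\Rightarrow)$, given such a loop in $(Q,f)$, Theorem~\ref{th:4.11} supplies a directed triangulated surface $(S,\vec{T})$ with $(Q,f) = Q(S,\vec{T})$; since the identification preserves $f$, the loop $\alpha$ is a fixed-point loop of $f$ in $Q(S,\vec{T})$, so by the bijection it corresponds to a boundary edge of $T$ and $\partial S \neq \emptyset$. The only point requiring care --- and the main (though modest) obstacle --- is the case analysis establishing the bijection, specifically the observation that the loop created by a self-folded triangle lies in a length-$3$ orbit and is therefore \emph{not} a fixed point of $f$; this is exactly what prevents the self-folded (interior) edges from being confused with the boundary edges.
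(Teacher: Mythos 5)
Your proof is correct and follows essentially the route the paper intends: the corollary is stated as an immediate consequence of Theorem~\ref{th:4.11} together with the observation, built into the definition of $Q(S,\vec{T})$ and of $f$, that the fixed-point loops of $f$ are exactly the loops attached to boundary edges (the loop of a self-folded triangle lying in a length-$3$ orbit). Your careful case analysis makes explicit precisely what the paper leaves implicit, so there is nothing to add.
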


We end this section with the comment that the setting
of directed triangulated surfaces proposed in this paper
is natural for the purposes of a self-contained representation
theory of symmetric tame algebras of non-polynomial growth
which we are currently developing.
In particular, the realization Theorem~\ref{th:4.11} gives
the option of changing orientation of any triangle independently.


\section{Weighted surface algebras}\label{sec:weightsurfalg}

In this section we define  weighted surface algebras
of directed triangulated surfaces
and describe their basic properties.

Let $(Q,f)$ be a triangulation quiver.
Then we have two permutations
$f : Q_1 \to Q_1$
and
$g : Q_1 \to Q_1$
on the set $Q_1$ of arrows of $Q$ such that $f^3$
is the identity on $Q_1$ and $g = \bar{f}$,
where $\bar{ } : Q_1 \to Q_1$
is the involution which assigns to an arrow
$\alpha \in Q_1$ the arrow $\bar{\alpha}$
with $s({\alpha}) = s(\bar{\alpha})$
and ${\alpha} \neq \bar{\alpha}$.
For each arrow $\alpha \in Q_1$, we denote by
$\cO(\alpha)$ the $g$-orbit of $\alpha$
in $Q_1$, and set
$n_{\alpha} = n_{\cO(\alpha)} = |\cO(\alpha)|$.
Recall that  $\cO(g)$ is the set
of all $g$-orbits in $Q_1$.
A function
\[
  m_{\bullet} : \cO(g) \to \bN^* = \bN \setminus \{0\}
\]
is said to be a \emph{weight function} of $(Q,f)$,
and a function
\[
  c_{\bullet} : \cO(g) \to K^* = K \setminus \{0\}
\]
is said to be a \emph{parameter function} of $(Q,f)$.
We write briefly $m_{\alpha} = m_{\cO(\alpha)}$
and $c_{\alpha} = c_{\cO(\alpha)}$ for $\alpha \in Q_1$.
\emph{In this paper, we will assume that $m_{\alpha} n_{\alpha} \geq 3$
for any arrow $\alpha \in Q_1$.}

\medskip

For any arrow $\alpha \in Q_1$, we consider the path
\begin{align*}
  A_{\alpha} &= \Big( \alpha g(\alpha) \dots g^{n_{\alpha}-1}(\alpha)\Big)^{m_{\alpha}-1}
             \alpha g(\alpha) \dots g^{n_{\alpha}-2}(\alpha) , \mbox{ if } n_{\alpha} \geq 2, \\
  A_{\alpha} &= \alpha^{m_{\alpha}-1} , \mbox{ if } n_{\alpha} = 1 ,
\end{align*}
in $Q$ of length $m_{\alpha} n_{\alpha} - 1$ from
$s(\alpha)$ to $t(g^{n_{\alpha}-2}(\alpha))$.
Moreover, for any arrow $\alpha \in Q_1$, we have
the oriented cycle
\[
  B_{\alpha} = \Big( \alpha g(\alpha) \dots g^{n_{\alpha}-1}(\alpha)\Big)^{m_{\alpha}}
\]
of length $m_{\alpha} n_{\alpha}$.

\medskip
\begin{definition}\label{def:WTalgebra}\normalfont
Let $(Q,f)$ be a triangulation quiver with weight and parameter functions
$m_{\bullet}$ and $c_{\bullet}$.
We define  the bound quiver algebra
\[
  \Lambda(Q,f,m_{\bullet},c_{\bullet})
   = K Q / I (Q,f,m_{\bullet},c_{\bullet}),
\]
where $I (Q,f,m_{\bullet},c_{\bullet})$
is the admissible ideal in the path algebra $KQ$ of $Q$ over $K$
generated by:
\begin{enumerate}[(1)]
 \item
  ${\alpha} f({\alpha}) - c_{\bar{\alpha}} A_{\bar{\alpha}}$,
  for all arrows $\alpha \in Q_1$,
 \item
  $\beta f(\beta) g(f(\beta))$,
  for all arrows $\beta \in Q_1$.
\end{enumerate}
Then $\Lambda(Q,f,m_{\bullet},c_{\bullet})$ is called a
\emph{weighted triangulation algebra} of $(Q,f)$.
\end{definition}

We note that 
$Q$ is the quiver of the algebra $\Lambda(Q,f,m_{\bullet},c_{\bullet})$, 
and
the ideal $I(Q,f,m_{\bullet},c_{\bullet})$
is an admissible ideal of $K Q$, by the 
assumption that $m_{\alpha} n_{\alpha} \geq 3$
for all arrows $\alpha \in Q_1$.
We also note that a weighted triangulation algebra defined above
is a triangulation algebra defined in \cite{La3,L4}
as a quotient of complete path algebra of the quiver
by a closed ideal 
(see \cite[Definition~2.10 and Theorem~1.1(a)]{La3}
or \cite[Definition~5.16 and Proposition~7.4]{L4}).

\begin{definition}\label{def:degWSA}
\normalfont Consider the bound quiver algebra
\[
  B(Q,f,m_{\bullet},c_{\bullet})
   = K Q / J(Q,f,m_{\bullet},c_{\bullet}),
\]
where $J(Q,f,m_{\bullet},c_{\bullet})$
is the admissible ideal in the path algebra $KQ$ of $Q$ over $K$
generated by:
\begin{enumerate}[(1)]
 \item
  $c_{\alpha} B_{\alpha}
   - c_{\bar{\alpha}} B_{\bar{\alpha}}$,
  for all arrows $\alpha \in Q_1$,
 \item
  $\beta f(\beta)$,
  for all arrows $\beta \in Q_1$.\\
   We call this algebra a
  \emph{biserial weighted triangulation algebra}.
\end{enumerate}
\end{definition}

We note that a biserial weighted triangulation algebra
is a Brauer graph algebra.
In fact, it is shown in \cite{ES7}
that the class of Brauer graph algebras coincides with
the class of indecomposable idempotent algebras
of biserial weighted triangulation algebras
(we refer to \cite{ES7} for related references and results).

Let $\Lambda = \Lambda(Q,f,m_{\bullet},c_{\bullet})$ be a weighted triangulation algebra.
In order to study modules in $\mod \Lambda$ and properties of $\Lambda$,
we  specify a suitable basis of the algebra $\Lambda$,
defined in terms of the permutations $f$ and $g$.
We will identify an element of $K Q$ with its
residue class in $\Lambda = KQ/I(Q,f,m_{\bullet},c_{\bullet})$.
We will need also an extra notation.
For each arrow $\alpha$ in $Q_1$, we denote by
$A'_{\alpha}$ the subpath of
$A_{\alpha}$ from $t(\alpha)$ to $t(g^{n_{\alpha}-2})$
of length $m_{\alpha} n_{\alpha} - 2$ such that
$\alpha A'_{\alpha} = A_{\alpha}$.
We note that $A'_{\alpha}$ is a path of length $\geq 1$
since we assume that  
$m_{\alpha} n_{\alpha} \geq 3$.

\begin{lemma}
\label{lem:5.3}
Let $\alpha$ be an arrow in $Q$.
We have in $\Lambda$ the equalities:
\begin{enumerate}[(i)]
 \item
  $f^2(\alpha) = g^{n_{\bar{\alpha}} - 1}(\bar{\alpha})$.
 \item
  $A_{\bar{\alpha}}  f^2(\alpha) = B_{\bar{\alpha}}$.
 \item
  $\alpha A_{g(\alpha)}  = B_{{\alpha}}$.
 \item
  $c_{{\alpha}} B_{{\alpha}}
   = \alpha  f(\alpha) f^2(\alpha)
   = \bar{\alpha}  f(\bar{\alpha}) f^2(\bar{\alpha})
   = c_{\bar{\alpha}} B_{\bar{\alpha}}$.
 \item
  $A'_{\alpha} f^2(\bar{\alpha}) = A_{g(\alpha)}$.
\end{enumerate}
\end{lemma}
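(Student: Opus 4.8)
The plan is to verify each identity (i)--(v) by working directly with the combinatorics of the permutations $f$ and $g$ on the arrow set $Q_1$, using the two defining relations of $\Lambda(Q,f,m_\bullet,c_\bullet)$: the relation $\alpha f(\alpha) = c_{\bar\alpha} A_{\bar\alpha}$ and the zero-relation $\beta f(\beta) g(f(\beta)) = 0$. The crucial structural fact I would exploit throughout is the relation $g(\alpha) = \overline{f(\alpha)}$ together with $f^3 = \mathrm{id}$ and the involution $\bar{\ }$; from these one reads off how $f^2$, $g$, and $\bar{\ }$ interact. In particular, since $g(\alpha)=\overline{f(\alpha)}$, applying $f$ to $\overline{f(\alpha)}$ and using that $\overline{f(\alpha)}$ is the other arrow out of $t(\alpha)=s(f(\alpha))$ should let me track how a power of $g$ applied to $\bar\alpha$ recovers $f^2(\alpha)$.

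\medskip\noindent\textbf{The order of attack.} I would prove (i) first, as it is purely a statement about the permutations and underlies the rest. The claim $f^2(\alpha)=g^{\,n_{\bar\alpha}-1}(\bar\alpha)$ says that iterating $g$ around the orbit of $\bar\alpha$ a full cycle (minus one step) lands on $f^2(\alpha)$; I would establish this by checking that $g(f^2(\alpha))=\bar\alpha$, i.e.\ that $f^2(\alpha)$ is the unique $g$-predecessor of $\bar\alpha$, which follows from $g=\overline{f(\cdot)}$, $f^3=\mathrm{id}$, and the defining property of the involution. Once (i) holds, identities (ii) and (iii) are length-$m_{\bar\alpha}n_{\bar\alpha}$ (resp.\ $m_\alpha n_\alpha$) path identities: $A_{\bar\alpha}$ is $B_{\bar\alpha}$ with its last arrow removed, and by (i) that missing last arrow is exactly $f^2(\alpha)$, giving (ii); symmetrically, $A_{g(\alpha)}$ together with a leading $\alpha$ rebuilds the full cycle $B_\alpha$, giving (iii). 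I would then derive (iv) by combining the defining relation $\alpha f(\alpha)=c_{\bar\alpha}A_{\bar\alpha}$ with (ii): right-multiplying by $f^2(\alpha)$ turns $\alpha f(\alpha)f^2(\alpha)$ into $c_{\bar\alpha}A_{\bar\alpha}f^2(\alpha)=c_{\bar\alpha}B_{\bar\alpha}$, and the symmetric computation with $\bar\alpha$ in place of $\alpha$ gives $c_\alpha B_\alpha$; the two must agree because $\overline{\bar\alpha}=\alpha$, which yields the chain of equalities. Finally (v), $A'_\alpha f^2(\bar\alpha)=A_{g(\alpha)}$, is again a path-bookkeeping identity relating the tail $A'_\alpha$ of $A_\alpha$ (obtained by deleting the initial $\alpha$) to the path $A_{g(\alpha)}$, with the terminal arrow supplied by (i) applied to $\bar\alpha$.

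\medskip\noindent\textbf{The main obstacle.} The genuinely delicate point is the careful index arithmetic in (i): one must confirm that the orbit length $n_{\bar\alpha}$, the exponent $m_{\bar\alpha}$, and the position $f^2(\alpha)$ inside the $g$-orbit of $\bar\alpha$ all line up correctly, so that exactly $n_{\bar\alpha}-1$ applications of $g$ to $\bar\alpha$ produce $f^2(\alpha)$. This hinges on the interplay $g(\beta)=\overline{f(\beta)}$ with $f^3=\mathrm{id}$, and on verifying that the $g$-orbit of $\bar\alpha$ is closed of size $n_{\bar\alpha}$ so that no overshoot occurs. Everything else reduces to recognizing the paths $A_\bullet$, $A'_\bullet$, and $B_\bullet$ as initial or terminal segments of the cyclic word $\big(\alpha g(\alpha)\cdots g^{n_\alpha-1}(\alpha)\big)^{m_\alpha}$ and splicing them accordingly. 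Once (i) is secured, (ii)--(v) are essentially formal, and (iv) is immediate from the defining relations. I would therefore concentrate the proof effort on (i) and then treat the remaining identities as short consequences, checking the two defining relations are being applied on paths of the correct length so that the computations take place genuinely in $\Lambda$ and not merely in $KQ$.
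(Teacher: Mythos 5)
Your treatment of (i), (ii), (iii) and (v) is exactly the paper's: (i) is proved by observing that $g(f^2(\alpha))=\overline{f(f^2(\alpha))}=\overline{f^3(\alpha)}=\bar{\alpha}$, so that $f^2(\alpha)$ is the $g$-predecessor of $\bar{\alpha}$ in its orbit and hence equals $g^{n_{\bar{\alpha}}-1}(\bar{\alpha})$; parts (ii), (iii) and (v) are then the path-splicing identities you describe, with (iii) holding by definition and (ii), (v) following from (i).

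The gap is in (iv). Your two computations give $\alpha f(\alpha)f^2(\alpha)=c_{\bar{\alpha}}A_{\bar{\alpha}}f^2(\alpha)=c_{\bar{\alpha}}B_{\bar{\alpha}}$ and, symmetrically, $\bar{\alpha}f(\bar{\alpha})f^2(\bar{\alpha})=c_{\alpha}B_{\alpha}$. These are statements about two \emph{different} products, and they do not combine to give the chain $c_{\alpha}B_{\alpha}=\alpha f(\alpha)f^2(\alpha)=\bar{\alpha}f(\bar{\alpha})f^2(\bar{\alpha})=c_{\bar{\alpha}}B_{\bar{\alpha}}$; the appeal to $\overline{\bar{\alpha}}=\alpha$ establishes nothing here, since $B_{\alpha}$ and $B_{\bar{\alpha}}$ are a priori cycles through different $g$-orbits carrying different scalars. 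The missing idea is to evaluate the \emph{same} product $\alpha f(\alpha)f^2(\alpha)$ with the other bracketing: applying the commutativity relation to the arrow $f(\alpha)$ gives $f(\alpha)f^2(\alpha)=c_{\overline{f(\alpha)}}A_{\overline{f(\alpha)}}$, and since $\overline{f(\alpha)}=g(\alpha)$ and $c_{\bullet}$ is constant on $g$-orbits, left-multiplying by $\alpha$ and invoking (iii) yields $\alpha f(\alpha)f^2(\alpha)=c_{\alpha}\,\alpha A_{g(\alpha)}=c_{\alpha}B_{\alpha}$. Comparing the two bracketings of this one product is what forces $c_{\alpha}B_{\alpha}=c_{\bar{\alpha}}B_{\bar{\alpha}}$, which is the substantive content of (iv) (it is what makes the socle of $e_i\Lambda$ one-dimensional) and is used repeatedly later in the paper; as written, your argument does not reach it.
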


\begin{proof}
(i) \ 
The arrow $g(f^2(\alpha))$ starts at $t(f^2(\alpha))=s(\alpha)$ and we have
$g(f^2(\alpha))\neq f(f^2(\alpha))=\alpha$. Hence we have
$g(f^2(\alpha))=\bar{\alpha} = g^{n_{\bar{\alpha}}}(\bar{\alpha})$ and therefore
$f^2(\alpha) = g^{n_{\bar{\alpha}}-1}(\bar{\alpha})$. 

Part (ii)
follows from (i), and part (iii) holds by definition.

(iv)
From the relations in $\Lambda$, (iii),
and since $c_{\bullet}$ is constant on $g$-orbits, we obtain
\[
  \alpha  f(\alpha) f^2(\alpha)
    = \alpha \big( f(\alpha) f^2(\alpha) \big)
    = \alpha c_{\overbar{f(\alpha)}} A_{\overbar{f(\alpha)}}
    = c_{g(\alpha)} \alpha A_{g(\alpha)}
    = c_{{\alpha}} B_{{\alpha}}
    .
\]
Similarly, we have
$\bar{\alpha}  f(\bar{\alpha}) f^2(\bar{\alpha})
    = c_{\bar{\alpha}} B_{\bar{\alpha}}$.
Then, by (ii), we obtain
\[
  \alpha  f(\alpha) f^2(\alpha)
    = \big( \alpha  f(\alpha) \big) f^2(\alpha)
    = c_{\bar{\alpha}} A_{\bar{\alpha}} f^2(\alpha)
    = c_{\bar{\alpha}} B_{\bar{\alpha}}
    = \bar{\alpha} f(\bar{\alpha}) f^2(\bar{\alpha})
    .
\]

(v)
By (i) we have that $f^2(\bar{\alpha}) = g^{n_{\alpha}-1}(\alpha)$,
and hence the required equality holds.
\end{proof}

\begin{lemma}
\label{lem:5.4}
Let $\alpha$ be an arrow in $Q$.
Then the following hold:
\begin{enumerate}[(i)]
 \item
  $B_{\alpha} \rad \Lambda = 0$.
 \item
  $B_{\alpha}$ is non-zero.
\end{enumerate}
\end{lemma}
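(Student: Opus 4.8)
For part~(i), since $\rad\Lambda$ is the arrow ideal, it suffices to show that $B_\alpha\delta = 0$ in $\Lambda$ for every arrow $\delta$; and because $B_\alpha$ is a cyclic path ending at $s(\alpha)$ while $Q$ is $2$-regular, the only arrows with source $s(\alpha)$ are $\alpha$ and $\bar\alpha$. First I would prove the general fact that $B_\mu\bar\mu = 0$ for every arrow $\mu$. Indeed, by Lemma~\ref{lem:5.3}(i) we have $g(f^2(\mu)) = g^{n_{\bar\mu}}(\bar\mu) = \bar\mu$, so the defining relation~(2) applied to $\beta = f(\mu)$ reads $f(\mu)f^2(\mu)\bar\mu = 0$; multiplying the identity $c_\mu B_\mu = \mu f(\mu) f^2(\mu)$ of Lemma~\ref{lem:5.3}(iv) on the right by $\bar\mu$ then gives $c_\mu B_\mu\bar\mu = \mu\bigl(f(\mu)f^2(\mu)\bar\mu\bigr) = 0$, whence $B_\mu\bar\mu = 0$ as $c_\mu\neq 0$. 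Taking $\mu = \bar\alpha$ yields $B_{\bar\alpha}\alpha = 0$, and combining this with the identity $c_\alpha B_\alpha = c_{\bar\alpha}B_{\bar\alpha}$ of Lemma~\ref{lem:5.3}(iv) gives $c_\alpha B_\alpha\alpha = c_{\bar\alpha}B_{\bar\alpha}\alpha = 0$, so $B_\alpha\alpha = 0$. Thus $B_\alpha$ annihilates both $\alpha$ and $\bar\alpha$, and hence all of $\rad\Lambda$.

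For part~(ii), the plan is to exhibit an explicit $K$-basis of $\Lambda$ in which $B_\alpha$ appears, so that $B_\alpha\neq 0$ becomes immediate. The candidate basis consists of the idempotents $e_i$ together with the residue classes of the initial subpaths $\alpha g(\alpha)\cdots g^{k-1}(\alpha)$ of the cyclic paths $B_\beta$, for $\beta\in Q_1$ and $1\le k\le m_\beta n_\beta$, where the two maximal paths $B_\alpha$ and $B_{\bar\alpha}$ sharing the source $s(\alpha)$ are identified through the proportionality $c_\alpha B_\alpha = c_{\bar\alpha}B_{\bar\alpha}$. I would first show this set spans $\Lambda$ by orienting the defining relations into a reduction system: relation~(1) rewrites any $f$-turn $\gamma f(\gamma)$ as the scalar multiple $c_{\bar\gamma}A_{\bar\gamma}$ of a pure $g$-path, while relation~(2) shows that an $f$-turn immediately followed by a $g$-step vanishes. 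Together these bring every path either to zero or to one of the listed $g$-paths, all of which may be taken of length at most $m_\beta n_\beta$, since any longer pure $g$-path equals $B_\beta$ times a further arrow and hence vanishes by part~(i).

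The main obstacle is the reverse direction, namely linear independence of these spanning elements, equivalently that the relations do not secretly force $B_\alpha = 0$. I would handle this by checking that the reduction system above is confluent, i.e. that every overlap ambiguity between relations~(1) and~(2) resolves to a common normal form; the standing hypothesis $m_\alpha n_\alpha \ge 3$ for all $\alpha$, which is exactly what makes $I(Q,f,m_\bullet,c_\bullet)$ admissible, is what guarantees termination of the rewriting and that $B_\alpha$ is itself a normal form, hence a genuine basis element. Equivalently, one may analyse each projective $e_i\Lambda$, whose canonical structure has two uniserial branches of lengths $m_\alpha n_\alpha$ and $m_{\bar\alpha}n_{\bar\alpha}$ meeting in the common socle class of $B_\alpha\sim B_{\bar\alpha}$; verifying that this socle class is nonzero is precisely the content of~(ii) and follows once the basis is established. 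This confluence (equivalently, dimension) verification is the only delicate step, since everything else is forced by Lemma~\ref{lem:5.3} and part~(i).
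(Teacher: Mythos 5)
Part (i) of your argument is correct and is essentially the paper's own proof: the paper likewise writes $c_\alpha B_\alpha \alpha = \bar\alpha f(\bar\alpha) f^2(\bar\alpha)\alpha$ and $c_\alpha B_\alpha \bar\alpha = \alpha f(\alpha) f^2(\alpha)\bar\alpha$ using Lemma~\ref{lem:5.3}(i),(iv), and kills both by the zero relation $\beta f(\beta) g(f(\beta))=0$ after observing $g(f^2(\bar\alpha))=\alpha$ and $g(f^2(\alpha))=\bar\alpha$. Your reformulation via the general identity $B_\mu\bar\mu=0$ is the same computation, and your remark that $2$-regularity reduces (i) to the two arrows $\alpha,\bar\alpha$ is the (implicit) justification the paper also relies on.

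For part (ii) the situation is different: the paper's entire proof is the sentence ``This follows from the relations defining $\Lambda$,'' so your proposal is not so much a different route as an attempt to supply the verification the paper omits. Your strategy (exhibit the $g$-subpaths as a spanning set via the rewriting rules, then prove linear independence by a confluence/diamond-lemma or dimension argument) is the standard and correct one, but as written it is a plan, not a proof: the confluence check, which you yourself identify as the only delicate step, is exactly the content of the statement $B_\alpha\neq 0$ and is left unexecuted. One sub-claim also needs repair: you assert that the hypothesis $m_\alpha n_\alpha\geq 3$ ``guarantees termination of the rewriting,'' but rule (1) replaces the length-$2$ word $\gamma f(\gamma)$ by $A_{\bar\gamma}$ of length $m_{\bar\gamma}n_{\bar\gamma}-1\geq 2$, so length does not decrease, and the substitution can create a new $f$-turn at the junction with the preceding arrow (when that junction was a $g$-step). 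Termination therefore requires a genuine argument -- e.g.\ that all sufficiently long paths vanish, or a suitable well-ordering -- rather than following from admissibility of the ideal. In short: (i) matches the paper; for (ii) you have correctly located the missing verification (which the paper also does not supply, deferring it in effect to Lemma~\ref{lem:5.5} and Corollary~\ref{cor:5.6}), but your proposal does not close it.
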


\begin{proof}
(i)
W must show that $B_{\alpha} \alpha = 0$
and $B_{{\alpha}} \bar{\alpha} = 0$
in $\Lambda$.
It follows from (i) and (iv) of Lemma~\ref{lem:5.3}
and the relations in $\Lambda$ that
\begin{align*}
 c_{{\alpha}} B_{{\alpha}} {\alpha}
  &= \bar{\alpha} f(\bar{\alpha}) f^2(\bar{\alpha}) {\alpha}
    = \bar{\alpha} \Big( f(\bar{\alpha}) f^2(\bar{\alpha}) g \big( f^2(\bar{\alpha}) \big) \Big)
    = 0 ,
  \\
 c_{{\alpha}} B_{{\alpha}} \bar{\alpha}
  &= \alpha f(\alpha) f^2(\alpha) \bar{\alpha}
    = \alpha \Big( f(\alpha) f^2(\alpha) g \big( f^2(\alpha) \big) \Big)
    = 0 ,
\end{align*}
and hence $B_{\alpha} \alpha = 0$
and $B_{{\alpha}} \bar{\alpha} = 0$,
because $c_{\alpha} \in K^{*}$.

(ii)
This follows from the relations defining $\Lambda$.
\end{proof}

It follows from
Lemmas \ref{lem:5.3} and \ref{lem:5.4}
that, for a vertex $i$ of $Q$ and the arrows
$\alpha$ and $\bar{\alpha}$ starting at $i$,
the element $c_{{\alpha}} B_{{\alpha}} = c_{\bar{\alpha}} B_{\bar{\alpha}}$
generates the socle of the projective module $e_i \Lambda$.
The next lemma shows that, for any vertex
$i$ of $Q$, the quotient
$e_i(\rad \Lambda)^2/\soc(e_i \Lambda)$ is a direct sum of
uniserial right $\Lambda$-modules, as well as gives
most of a basis for the indecomposable projective
module $e_i \Lambda$.

\begin{lemma}
\label{lem:5.5}
Let $\alpha$ be an arrow of $Q$.
Then the following hold:
\begin{enumerate}[(i)]
 \item
  $\alpha g(\alpha) f(g(\alpha)) = 0$ in $\Lambda$.
 \item
  $\alpha g(\alpha) \Lambda$ is a uniserial right
  $\Lambda$-module, with basis given by all
  initial subwords of $B_{\alpha}$ of length $\geq 2$.
  In particular,
  $\dim_K \alpha g(\alpha) \Lambda = m_{\alpha} n_{\alpha} - 1$.
\end{enumerate}
\end{lemma}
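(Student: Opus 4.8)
The plan is to prove (i) by a single rewriting with the two families of generators in Definition~\ref{def:WTalgebra}, and then to deduce (ii) by tracking how the generator $\alpha g(\alpha)$ behaves under right multiplication by arrows: this yields the radical filtration of $M := \alpha g(\alpha)\Lambda$ at once, and its exact length is forced by Lemma~\ref{lem:5.4}(ii) together with Nakayama's lemma.

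For (i), I would use that $\overline{g(\alpha)} = f(\alpha)$ (the bar is an involution and $g(\alpha) = \overline{f(\alpha)}$), so relation~(1) applied to the arrow $g(\alpha)$ gives $g(\alpha)f(g(\alpha)) = c_{f(\alpha)} A_{f(\alpha)}$. Left multiplication by $\alpha$ turns this into $c_{f(\alpha)}\,\alpha A_{f(\alpha)}$. Since $m_{\bullet} n_{\bullet} \geq 3$, the path $A_{f(\alpha)}$ has length $\geq 2$ and starts with $f(\alpha)\,g(f(\alpha))$ (with $g(f(\alpha)) = f(\alpha)$ when the orbit has length $1$); hence $\alpha A_{f(\alpha)}$ has the length-three prefix $\alpha f(\alpha) g(f(\alpha))$, which vanishes by relation~(2) for $\beta = \alpha$. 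Thus $\alpha g(\alpha) f(g(\alpha)) = 0$.

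For (ii), write $B_{\alpha} = \delta_1 \delta_2 \cdots \delta_{m_{\alpha}n_{\alpha}}$ with $\delta_1 = \alpha$ and $\delta_{j+1} = g(\delta_j)$, and set $P_k = \delta_1 \cdots \delta_k$, so $P_2 = \alpha g(\alpha)$ and $P_{m_\alpha n_\alpha} = B_\alpha$. As $Q$ is $2$-regular, the two arrows leaving $t(\delta_k)$ are $\delta_{k+1} = g(\delta_k)$ and $f(\delta_k) = \overline{\delta_{k+1}}$. I would verify $P_k\,\delta_{k+1} = P_{k+1}$ (by definition) and $P_k\,f(\delta_k) = 0$; the latter is the computation of (i) with $\delta_{k-1}$ in place of $\alpha$: relation~(1) on $\delta_k$ gives $\delta_k f(\delta_k) = c_{f(\delta_{k-1})} A_{f(\delta_{k-1})}$ (using $\overline{\delta_k} = f(\delta_{k-1})$), and relation~(2) on $\beta = \delta_{k-1}$ kills the prefix $\delta_{k-1} f(\delta_{k-1}) g(f(\delta_{k-1}))$ of $\delta_1 \cdots \delta_{k-1} A_{f(\delta_{k-1})}$. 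Combined with $B_\alpha \rad\Lambda = 0$ from Lemma~\ref{lem:5.4}(i), these rules show that $M$ is spanned by $P_2, \dots, P_{m_\alpha n_\alpha}$, that $M(\rad\Lambda)^j = \operatorname{span}_K\{P_{2+j}, \dots, P_{m_\alpha n_\alpha}\}$, and that each radical layer $M(\rad\Lambda)^j/M(\rad\Lambda)^{j+1}$ is at most one-dimensional.

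It remains to see that all $m_\alpha n_\alpha - 1$ layers are nonzero, which is the one genuinely nontrivial point and hence the main obstacle; the upper bound $\dim_K M \le m_\alpha n_\alpha - 1$ is mere spanning, but the matching lower bound is linear independence of the prefixes. Here I would invoke $B_\alpha = P_{m_\alpha n_\alpha} \neq 0$ from Lemma~\ref{lem:5.4}(ii): if some layer vanished, i.e.\ $M(\rad\Lambda)^{j+1} = M(\rad\Lambda)^{j}$ for some $j \le m_\alpha n_\alpha - 2$, then Nakayama's lemma (these are finite-dimensional modules) would force $M(\rad\Lambda)^{j} = 0$, whence $B_\alpha \in M(\rad\Lambda)^{m_\alpha n_\alpha - 2} = 0$, a contradiction. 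Therefore every layer is exactly one-dimensional, so $\dim_K M = m_\alpha n_\alpha - 1$, the prefixes $P_2, \dots, P_{m_\alpha n_\alpha}$ (the initial subwords of $B_\alpha$ of length $\geq 2$) form a basis, and the strictly decreasing radical filtration with simple one-dimensional layers exhibits $M$ as uniserial. The Nakayama step is precisely what upgrades the single nonvanishing fact $B_\alpha \neq 0$ to nonvanishing of every layer.
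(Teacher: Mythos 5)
Your proposal is correct and follows essentially the same route as the paper: part (i) is the identical rewriting via $\overbar{g(\alpha)}=f(\alpha)$ and the zero relation, and part (ii) is the paper's ``apply (i) repeatedly'' argument spelled out in full, with Lemma~\ref{lem:5.4} supplying both the truncation at $B_\alpha$ and the nonvanishing needed for linear independence. The only cosmetic difference is your Nakayama step, which could be replaced by the direct remark that $P_k=0$ would force $B_\alpha=P_k\delta_{k+1}\cdots\delta_{m_\alpha n_\alpha}=0$, contradicting Lemma~\ref{lem:5.4}(ii).
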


\begin{proof}
(i)
Since $\overbar{g(\alpha)} = f(\alpha)$
we obtain the equalities
\[
  \alpha g(\alpha) f\big(g(\alpha)\big)
   = \alpha c_{\overbar{g(\alpha)}} A_{\overbar{g(\alpha)}}
   = c_{f(\alpha)} \alpha A_{f(\alpha)}
   = c_{f(\alpha)} \alpha {f(\alpha)} g\big(f(\alpha)\big)
   \dots
   = 0 ,
\]
by the relations for the algebra $\Lambda$.

(ii)
If follows from (i) that the right $\Lambda$-module
$\alpha g(\alpha) \rad \Lambda$ is generated by
$\alpha g(\alpha) g^2(\alpha)$.
Then using (i) repeatedly we conclude
that $\alpha g(\alpha) \Lambda$ is a uniserial
right $\Lambda$-module with basis formed
by all initial subwords of $B_{\alpha}$
of length $\geq 2$.
Clearly, then
$\dim_K \alpha g(\alpha) \Lambda = m_{\alpha} n_{\alpha} - 1$.
\end{proof}

\begin{corollary}
\label{cor:5.6}
Let $i$ be a vertex of $Q$ and
$\alpha,\bar{\alpha}$ the two arrows in $Q$
with source $i$.
Then
$\dim_K e_i \Lambda =
  m_{\alpha} n_{\alpha}
  + m_{\bar{\alpha}} n_{\bar{\alpha}}$.
\end{corollary}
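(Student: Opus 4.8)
The plan is to compute $\dim_K e_i \Lambda$ through the radical filtration of $e_i \Lambda$, reducing everything to the two uniserial modules produced by Lemma~\ref{lem:5.5}. Write $\alpha, \bar{\alpha}$ for the two arrows with source $i$. Since $e_i \Lambda / e_i \rad \Lambda \cong S_i$ is one-dimensional and $i$ is the source of exactly the two arrows $\alpha, \bar{\alpha}$, the layer $e_i \rad \Lambda / e_i \rad^2 \Lambda$ is two-dimensional, spanned by the classes of $\alpha$ and $\bar{\alpha}$. Hence $\dim_K e_i \Lambda = 3 + \dim_K e_i \rad^2 \Lambda$, and the whole problem is to determine $\dim_K e_i \rad^2 \Lambda$.

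First I would identify $e_i \rad^2 \Lambda$ with $\alpha g(\alpha) \Lambda + \bar{\alpha} g(\bar{\alpha}) \Lambda$. The space $e_i \rad^2 \Lambda$ is spanned by the residues of paths of length $\geq 2$ starting at $i$, that is, paths beginning with one of $\alpha f(\alpha)$, $\alpha g(\alpha)$, $\bar{\alpha} f(\bar{\alpha})$, $\bar{\alpha} g(\bar{\alpha})$. The two $g$-beginnings already generate $\alpha g(\alpha) \Lambda$ and $\bar{\alpha} g(\bar{\alpha}) \Lambda$. For the $f$-beginnings, relation (1) of Definition~\ref{def:WTalgebra} gives $\alpha f(\alpha) = c_{\bar{\alpha}} A_{\bar{\alpha}}$ and $\bar{\alpha} f(\bar{\alpha}) = c_{\alpha} A_{\alpha}$; since $m_{\bar{\alpha}} n_{\bar{\alpha}} \geq 3$ the path $A_{\bar{\alpha}}$ begins with $\bar{\alpha} g(\bar{\alpha})$ and $A_{\alpha}$ begins with $\alpha g(\alpha)$, so every such path lies in $\bar{\alpha} g(\bar{\alpha}) \Lambda$, respectively $\alpha g(\alpha) \Lambda$. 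Thus $e_i \rad^2 \Lambda = \alpha g(\alpha) \Lambda + \bar{\alpha} g(\bar{\alpha}) \Lambda$.

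By Lemma~\ref{lem:5.5} each summand is uniserial, of dimension $m_{\alpha} n_{\alpha} - 1$, respectively $m_{\bar{\alpha}} n_{\bar{\alpha}} - 1$, with bases the initial subwords of $B_{\alpha}$, respectively $B_{\bar{\alpha}}$, of length $\geq 2$. I would then show that their intersection is exactly the common socle line. By Lemma~\ref{lem:5.3}(iv) we have $c_{\alpha} B_{\alpha} = c_{\bar{\alpha}} B_{\bar{\alpha}}$, and this element is nonzero by Lemma~\ref{lem:5.4}(ii) and annihilated by $\rad \Lambda$ by Lemma~\ref{lem:5.4}(i), so $K c_{\alpha} B_{\alpha} \subseteq \alpha g(\alpha) \Lambda \cap \bar{\alpha} g(\bar{\alpha}) \Lambda$. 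Conversely, the intersection is a submodule of each uniserial module, hence a uniserial tail of each; if it had length $\geq 2$ it would contain the two penultimate layers, spanned respectively by $A_{\alpha}$ and by $A_{\bar{\alpha}}$, forcing a nonzero relation between the distinct monomials $A_{\alpha}$ (beginning with $\alpha$) and $A_{\bar{\alpha}}$ (beginning with $\bar{\alpha}$) modulo the socle. Using $\dim_K(U+V) = \dim_K U + \dim_K V - \dim_K(U \cap V)$ this gives $\dim_K e_i \rad^2 \Lambda = (m_{\alpha} n_{\alpha} - 1) + (m_{\bar{\alpha}} n_{\bar{\alpha}} - 1) - 1$, whence $\dim_K e_i \Lambda = 3 + (m_{\alpha} n_{\alpha} - 1) + (m_{\bar{\alpha}} n_{\bar{\alpha}} - 1) - 1 = m_{\alpha} n_{\alpha} + m_{\bar{\alpha}} n_{\bar{\alpha}}$.

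The main obstacle is the last step: ruling out any hidden linear dependence, that is, verifying that the intersection of the two uniserial modules is no larger than the one-dimensional socle. This comes down to the linear independence in $\Lambda$ of the monomials $A_{\alpha}$ and $A_{\bar{\alpha}}$ modulo $\soc(e_i \Lambda)$, which I expect to deduce from the explicit monomial basis underlying Lemma~\ref{lem:5.5} together with the fact that paths beginning with $\alpha$ and paths beginning with $\bar{\alpha}$ can be identified in $\Lambda$ only through the socle relation of Lemma~\ref{lem:5.3}(iv). Everything else is the bookkeeping of the radical layers.
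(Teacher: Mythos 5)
Your proposal is correct and follows essentially the same route as the paper: the paper's proof simply reads off from Lemma~\ref{lem:5.5} that $\rad e_i\Lambda$ has basis the initial subwords of $A_{\alpha}$ and $A_{\bar{\alpha}}$ together with $B_{\alpha}$, which is exactly your count $3+(m_{\alpha}n_{\alpha}-1)+(m_{\bar{\alpha}}n_{\bar{\alpha}}-1)-1$ reorganized through the radical filtration and inclusion--exclusion on the two uniserial modules. The one point you flag as the main obstacle (that the two uniserial modules meet only in the socle, i.e.\ that $A_{\alpha}$ and $A_{\bar{\alpha}}$ are independent modulo $\soc(e_i\Lambda)$) is asserted at the same level of detail in the paper's own proof, so there is no gap relative to its standard.
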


\begin{proof}
It follows from the previous lemma,
that a basis of $\alpha g(\alpha) \Lambda$
is given by the set of initial subwords of
$B_{\alpha}$ of length $\geq 2$.
Then we also see that
$\rad e_i \Lambda$ has basis consisting of all
initial subwords of $A_{\alpha}$ and
$A_{\bar{\alpha}}$ together with $B_{\alpha}$.
This shows that
$\dim_K e_i \Lambda =
  m_{\alpha} n_{\alpha}
  + m_{\bar{\alpha}} n_{\bar{\alpha}}$.
\end{proof}

We present now basic properties of the algebras
$B(Q,f,m_{\bullet},c_{\bullet})$ and $\Lambda(Q,f,m_{\bullet},c_{\bullet})$.

\begin{proposition}
\label{prop:5.7}
Let $(Q,f)$ be a triangulation quiver,
$m_{\bullet}$ and $c_{\bullet}$
weight and parameter functions of $(Q,f)$,
and $B = B(Q,f,m_{\bullet},c_{\bullet})$.
Then the following statements hold:
\begin{enumerate}[(i)]
 \item
  $B$ is a finite-dimensional
  special biserial algebra with
  $\dim_K B = \sum_{\cO \in \cO(g)} m_{\cO} n_{\cO}^2$.
 \item
  $B$ is a symmetric algebra.
 \item
  $B$ is a tame algebra.
\end{enumerate}
\end{proposition}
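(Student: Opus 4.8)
The plan is to establish (i) and (ii) by first pinning down a basis of $B$, and then to read off (iii) for free. Since $(Q,f)$ is a triangulation quiver, $Q$ is $2$-regular, so condition (a) for special biseriality holds automatically. For condition (b), I would use the monomial relations $\beta f(\beta)$: after any arrow $\delta$ the two arrows leaving $t(\delta)$ are $f(\delta)$ and $g(\delta) = \overline{f(\delta)}$, and since $\delta f(\delta) = 0$ the only possibly non-zero continuation is $g(\delta)$; dually, the only arrow $\gamma$ with $\gamma\delta \neq 0$ is $f^2(\bar\delta)$. Thus each arrow has at most one non-zero continuation on each side, giving (b). So, once finite-dimensionality is verified, $B$ is special biserial.

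Next I would determine which paths survive in $B$. Starting from an arrow $\alpha$, the only way to extend is along its $g$-orbit, producing the initial subwords of $B_{\alpha} = \big(\alpha g(\alpha) \cdots g^{n_{\alpha}-1}(\alpha)\big)^{m_{\alpha}}$. From $\beta f(\beta) = 0$ one gets $B_{\alpha}\bar\alpha = 0$ directly (the last arrow of $B_\alpha$ is $g^{n_\alpha-1}(\alpha)$ and $f(g^{n_\alpha-1}(\alpha)) = \bar\alpha$), and then $B_{\alpha}\alpha = 0$ by combining the binomial relation $c_{\alpha}B_{\alpha} = c_{\bar\alpha}B_{\bar\alpha}$ with the analogous identity $B_{\bar\alpha}\alpha = 0$. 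Hence $B_{\alpha}\rad B = 0$ and every path of length $> m_{\alpha}n_{\alpha}$ vanishes, so $J$ is admissible and the initial subwords of the $B_{\alpha}$ span $B$. For linear independence I would pass to $B/\soc B$, which becomes the monomial string algebra $KQ/(\beta f(\beta),\, B_{\alpha})$; its non-zero paths are precisely the proper initial subwords of the $B_{\alpha}$, visibly linearly independent. Together with $\dim_K \soc B = |Q_0|$ (one generator $c_{\alpha}B_{\alpha} = c_{\bar\alpha}B_{\bar\alpha}$ per vertex) this yields $\dim_K e_i B = m_{\alpha}n_{\alpha} + m_{\bar\alpha}n_{\bar\alpha}$, exactly as in Corollary~\ref{cor:5.6}. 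Summing over vertices, each arrow contributes once through its source, so $\dim_K B = \sum_{\alpha \in Q_1} m_{\alpha}n_{\alpha}$; grouping arrows into $g$-orbits $\cO$ of length $n_{\cO}$ gives $\sum_{\cO} n_{\cO}\cdot m_{\cO}n_{\cO} = \sum_{\cO} m_{\cO}n_{\cO}^2$, proving (i).

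For (ii) I would exhibit a symmetrizing form $\varphi\colon B \to K$ vanishing on all $e_i$ and on every proper initial subword of the $B_{\alpha}$, and sending the socle generator $c_{\alpha}B_{\alpha}$ to $1$; this is well defined exactly because $c_{\alpha}B_{\alpha} = c_{\bar\alpha}B_{\bar\alpha}$. Associativity and symmetry reduce to checking $\varphi(pq) = \varphi(qp)$ on basis paths: $\varphi(pq) \neq 0$ precisely when $p$ and $q$ are complementary subwords of some cycle $B_{\alpha}$, and then $qp$ is the rotated cycle $B_{g^{\ell}(\alpha)}$ with $\ell$ the length of $p$, whence $\varphi(qp) = 1/c_{g^{\ell}(\alpha)} = 1/c_{\alpha}$ since $c_{\bullet}$ is constant on $g$-orbits; this orbit-invariance is the decisive point. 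Non-degeneracy follows because each basis path pairs with its complementary subword to hit a socle generator, so the Gram matrix is anti-triangular with non-zero anti-diagonal. Hence $B$ is symmetric.

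Finally, (iii) is immediate: $B$ is special biserial by (i), hence tame by Proposition~\ref{prop:2.1}. The one genuinely delicate step is the linear independence underlying the basis, equivalently that $B_{\alpha} \neq 0$ and that no collapses occur beyond those forced by the two relation families. I expect to settle this by a confluence (Diamond Lemma) check on the overlaps between the monomial relations $\beta f(\beta)$ and the binomial relations $c_{\alpha}B_{\alpha} - c_{\bar\alpha}B_{\bar\alpha}$, in parallel with the treatment of $\Lambda$ in Lemmas~\ref{lem:5.3}--\ref{lem:5.5}; alternatively, building $\varphi$ on the spanning set first and invoking its non-degeneracy already forces that spanning set to be a basis, closing the argument without circularity.
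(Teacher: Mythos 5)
Your proof is correct and follows essentially the same route as the paper: the same basis of $e_iB$ given by $e_i$, the initial subwords of $A_{\alpha}$ and $A_{\bar{\alpha}}$, and $B_{\alpha}$, the same symmetrizing form sending $B_{\alpha}$ to $c_{\alpha}^{-1}$, and tameness via Proposition~\ref{prop:2.1}. The only difference is that you supply the justification for the basis claim (special biseriality from $2$-regularity and the monomial relations, and linear independence via the monomial string quotient $B/\soc(B)$ or via non-degeneracy of the form), which the paper simply asserts.
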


\begin{proof}
We write 
$J = J(Q,f,m_{\bullet},c_{\bullet})$.

(i)
Let $i$ be a vertex of $Q$ and let $\alpha, \bar{\alpha}$ be
the two arrows in $Q$ with source $i$.
Then the indecomposable projective right $B$-module $P_i = e_i B$
 has  dimension equal to
$\dim_K P_i = m_{\alpha} n_{\alpha} + m_{\bar{\alpha}} n_{\bar{\alpha}}$.
Indeed, $P_i$ has a basis given by $e_i$, all initial subwords of
$A_{\alpha}$ and $A_{\bar{\alpha}}$, and $B_{\alpha}$.
Then we deduce that
\[
  \dim_K B = \sum_{\cO \in \cO(g)} m_{\cO} n_{\cO}^2 .
\]

(ii)
It is well known (see for example \cite[Theorem~IV.2.2]{SY})
that $B$ is a symmetric algebra if and only if it has a symmetrizing form.
That is, there exists
a $K$-linear form $\varphi : B \to K$ such that
$\varphi(a b) = \varphi(b a)$ for all $a,b \in B$
and $\Ker \varphi$ does not contain non-zero one-sided
ideal of $B$.
Let $i$ be a vertex of the quiver $Q$ and $\alpha, \bar{\alpha}$
be the  arrows with source $i$.
Then the element
$c_{\alpha} B_{\alpha} + J =
 c_{\bar{\alpha}} B_{\bar{\alpha}} + J$
generates the one-dimensional socle of the indecomposable
projective right $B$-module $P_i$ at the vertex $i$.
Clearly, we have also that
$\topp (P_i) = S_i = \soc (P_i)$.
We define a required $K$-linear form
$\varphi : B \to K$ by assigning to the coset $u+J$
of a path $u$ in $Q$ the following element in $K$
\[
  \varphi (u+J) = \left\{
   \begin{array}{cl}
    c^{-1}_{\alpha} & \mbox{if $u=B_{\alpha}$ for an arrow $\alpha \in Q_1$},\\
    0 & \mbox{otherwise},\\
   \end{array}
  \right.
\]
and extending to a $K$-linear form.

(iii)
Since $B$ is special biserial, it is tame,
by Proposition~\ref{prop:2.1}.
\end{proof}

We refer to Section~\ref{sec:tetrahedral} 
for the tetrahedral algebras and their properties.

\begin{proposition}
\label{prop:5.8}
Let $(Q,f)$ be a triangulation quiver,
$m_{\bullet}$ and $c_{\bullet}$
weight and parameter functions of $(Q,f)$,
and $\Lambda = \Lambda(Q,f,m_{\bullet},c_{\bullet})$.
Then the following statements hold:
\begin{enumerate}[(i)]
 \item
  $\Lambda$ is a finite-dimensional algebra with
  $\dim_K \Lambda = \sum_{\cO \in \cO(g)} m_{\cO} n_{\cO}^2$.
 \item
  $\Lambda$ is a symmetric algebra.
 \item
  $\Lambda$ degenerates to the algebra
  $B(Q,f,m_{\bullet},c_{\bullet})$,
  provided $\Lambda$ is not a tetrahedral algebra.
 \item
  $\Lambda$ is a tame algebra.
\end{enumerate}
\end{proposition}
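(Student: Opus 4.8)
The four parts should be established in the order given, since each relies on the structural work done earlier in the section. For part~(i), the dimension formula is essentially already proved: Corollary~\ref{cor:5.6} gives $\dim_K e_i \Lambda = m_{\alpha} n_{\alpha} + m_{\bar{\alpha}} n_{\bar{\alpha}}$ for the two arrows $\alpha, \bar{\alpha}$ with source $i$, and summing over all vertices $i \in Q_0$ and reorganizing the sum by $g$-orbits (each $g$-orbit $\cO$ of length $n_{\cO}$ contributes its weight $m_{\cO} n_{\cO}$ once for each of its $n_{\cO}$ arrows, hence $m_{\cO} n_{\cO}^2$ in total) yields $\dim_K \Lambda = \sum_{\cO \in \cO(g)} m_{\cO} n_{\cO}^2$. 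This matches the dimension of $B$ computed in Proposition~\ref{prop:5.7}(i), which is exactly what is needed for the degeneration argument in~(iii).

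For part~(ii), I would imitate the proof of Proposition~\ref{prop:5.7}(ii) verbatim in spirit. By Lemmas~\ref{lem:5.3} and~\ref{lem:5.4}, for each vertex $i$ the element $c_{\alpha} B_{\alpha} = c_{\bar{\alpha}} B_{\bar{\alpha}}$ generates the one-dimensional socle of $e_i \Lambda$, and $\topp(P_i) = S_i = \soc(P_i)$. One then defines the symmetrizing form $\varphi : \Lambda \to K$ by sending the coset of a path $u$ to $c_{\alpha}^{-1}$ if $u = B_{\alpha}$ for some arrow $\alpha$, and to $0$ otherwise, extending $K$-linearly. The verification that $\varphi(ab) = \varphi(ba)$ and that $\Ker\varphi$ contains no nonzero one-sided ideal goes through as before, the only new input being Lemma~\ref{lem:5.4}(i), which guarantees $B_{\alpha}\,\rad\Lambda = 0$ so that the socle behaves correctly.

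Part~(iii) is the crux and where I expect the main obstacle. The plan is to exhibit an explicit algebraic family $\Lambda(t)$, $t \in K$, in $\alg_d(K)$ with $d = \sum_{\cO} m_{\cO} n_{\cO}^2$, such that $\Lambda(t) \cong \Lambda$ for all $t \neq 0$ while $\Lambda(0) \cong B$, and then invoke Proposition~\ref{prop:2.2}. The natural device is to rescale the relations: replace the defining relation $\alpha f(\alpha) - c_{\bar\alpha} A_{\bar\alpha}$ of $\Lambda$ by $\alpha f(\alpha) - t\, c_{\bar\alpha} A_{\bar\alpha}$, keeping the zero relations $\beta f(\beta) g(f(\beta))$, so that at $t=0$ one recovers the relation $\alpha f(\alpha) = 0$ of the biserial algebra $B$ (after checking that the remaining relations of $B$, namely $c_{\alpha} B_{\alpha} - c_{\bar\alpha} B_{\bar\alpha}$, emerge in the limit from Lemma~\ref{lem:5.3}(iv)). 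The delicate point is twofold: first, one must verify that for every $t \neq 0$ the rescaled algebra is genuinely isomorphic to $\Lambda$, which should follow from rescaling the arrows by a suitable power of $t$ via a diagonal change of basis on $KQ$; second, one must confirm that the family is algebraic (a regular map $K \to \alg_d(K)$) and that the dimension stays constant at $d$ for all $t$, including $t = 0$, so that $\Lambda(0)$ really is $B$ and not some further degeneration. This is precisely where the hypothesis that $\Lambda$ is \emph{not} a tetrahedral algebra must enter: in the tetrahedral case the structure constants interact so that the naive rescaling either fails to preserve the dimension or produces a non-admissible limit, and this exceptional behavior must be isolated and excluded.

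Finally, part~(iv) is immediate from~(iii): by Proposition~\ref{prop:5.7}(iii) the algebra $B$ is tame, and Proposition~\ref{prop:2.2} states that if $A(0)$ is tame then $A(1)$ is tame, so $\Lambda = \Lambda(1)$ is tame. The one caveat is that~(iii) is proved only under the assumption that $\Lambda$ is not tetrahedral, so the tameness of the tetrahedral algebras must be handled separately; I would defer that to the dedicated treatment of tetrahedral algebras promised for Section~\ref{sec:tetrahedral}, citing it here rather than reproving it.
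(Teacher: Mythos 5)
Your proposal follows the paper's proof essentially verbatim: the dimension count via Corollary~\ref{cor:5.6}, the same symmetrizing form supported on the elements $B_{\alpha}$, the same one-parameter family $\Lambda(t)$ with relations $\alpha f(\alpha) - t\, c_{\bar{\alpha}} A_{\bar{\alpha}}$ degenerating to $B(Q,f,m_{\bullet},c_{\bullet})$, and tameness via Proposition~\ref{prop:2.2}, with the tetrahedral case handled separately by outside results (the paper cites the tubular-algebra and pg-critical literature directly rather than deferring wholesale to Section~\ref{sec:tetrahedral}). The one place your reading diverges is the role of the non-tetrahedral hypothesis in (iii): the limit $\Lambda(0)$ is unproblematic, and what actually fails in the tetrahedral case is the isomorphism $\Lambda(t) \cong \Lambda(1)$ for $t \neq 0$, because there $m_{\alpha} n_{\alpha} = 3$ for every arrow, so each relation equates two paths of length $2$ and no rescaling of the arrows can absorb $t$ (consistent with $\Lambda(\bS,1) \not\cong \Lambda(\bS,\lambda)$ for $\lambda \neq 0,1$ by Propositions~\ref{prop:6.3} and~\ref{prop:6.4}).
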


\begin{proof}
We abbreviate
$I = I(Q,f,m_{\bullet},c_{\bullet})$.

(i)
It follows from Corollary~\ref{cor:5.6} that,
for each vertex $i$ of $Q$,
the indecomposable projective right
$\Lambda$-module $P_i$ at the vertex $i$
has the dimension
$\dim_K P_i = m_{\alpha} n_{\alpha} + m_{\bar{\alpha}} n_{\bar{\alpha}}$,
where $\alpha, \bar{\alpha}$ are the two arrows in $Q$ with source $i$.
Then we get
\[
  \dim_K \Lambda = \sum_{\cO \in \cO(g)} m_{\cO} n_{\cO}^2 .
\]

(ii)
Similarly, as in the above Proposition, we define
a symmetrizing $K$-linear form $\varphi : \Lambda \to K$
by assigning to the coset $u+I$
of a path $u$ in $Q$ the following element in $K$
\[
  \varphi (u+I) = \left\{
   \begin{array}{cl}
    c^{-1}_{\alpha} & \mbox{if $u=B_{\alpha}$ for an arrow $\alpha \in Q_1$},\\
    0 & \mbox{otherwise},\\
   \end{array}
  \right.
\]
and extending to a $K$-linear form.

(iii)
Assume that $\Lambda$ is not a tetrahedral algebra.
For each $t \in K$, consider the bound quiver algebra
$\Lambda(t) = K Q / I^{(t)}$, where $I^{(t)}$
is the admissible ideal in the path algebra $K Q$
of $Q$ over $K$ generated by the elements:
\begin{enumerate}[(i)]
 \item
  ${\alpha} f({\alpha}) - t c_{\bar{\alpha}} A_{\bar{\alpha}}$,
  for all arrows $\alpha \in Q_1$,
 \item
  $\beta f(\beta) g(f(\beta))$,
  for all arrows $\beta \in Q_1$.
\end{enumerate}
Then 
a simple checking shows that
$\Lambda(t)$, $t \in K$, is an algebraic family
in the variety $\alg_d(K)$, with $d = \dim_K \Lambda$,
such that $\Lambda(t) \cong \Lambda(1) = \Lambda$
for all $t \in K\setminus\{0\}$ and
$\Lambda(0) = B = B(Q,f,m_{\bullet},c_{\bullet})$.
Then it follows from Proposition~\ref{prop:2.2}
that $\Lambda$ degenerates to $B$.
We refer also to \cite[Proposition~7.13]{L4} for
a different algebraic family of intermediate algebras
degenerating $\Lambda$ to $B$.

(iv)
If $\Lambda$ is not a tetrahedral algebra then
it follows from Propositions \ref{prop:2.2} and \ref{prop:5.7}
that $\Lambda$ is  tame.
Assume $\Lambda$ is a tetrahedral algebra.
If $\Lambda$ is non-singular then the tameness 
(even polynomial growth) of $\Lambda$ follows from the old
article \cite{NeS} where the representation theory  
of the trivial extensions of arbitrary tubular algebras
has been established.
If $\Lambda$ is singular, then the tameness of $\Lambda$
follows from \cite[Theorem]{DS0} and \cite[Theorem~A]{LS}.
\end{proof}

\begin{definition}\label{def:WSA}\normalfont
Let $(S,\vec{T})$ be a directed triangulated surface,
$(Q(S,\vec{T}),f)$ the associated triangulation
quiver, and let $m_{\bullet}$ and $c_{\bullet}$ be
weight and parameter functions of $(Q(S,\vec{T}),f)$.
Then the triangulation algebra
$\Lambda(Q(S,\vec{T}),f,m_{\bullet},c_{\bullet})$
will be
called a \emph{weighted surface algebra}.
\end{definition}

For further purposes, we would like to have two notions:
a weighted surface algebra
and
a weighted triangulation algebra on the grounds,
one of topological origin and the other purely algebraic.

We give now examples of weighted surface algebras,
using the triangulation quivers from
Examples \ref{ex:4.3}, \ref{ex:4.4}, \ref{ex:4.5}.

\begin{example}
\label{ex:5.10}
Let $(Q(S,\vec{T}),f)$ be
the triangulation quiver
\[
  \xymatrix@C=.8pc@R=1.5pc
  {
     1 \ar@(dl,ul)[]^{\varepsilon} \ar[rr]^{\alpha} &&
     2 \ar@(ur,dr)[]^{\eta} \ar[ld]^{\beta} \\
     & 3 \ar@(dr,dl)[]^{\mu} \ar[lu]^{\gamma}}
\]
with  $f$-orbits
$(\alpha\ \beta\ \gamma)$,
$(\varepsilon)$,
$(\eta)$,
$(\mu)$,
considered in Example \ref{ex:4.3}.
Then $g$ has only one orbit, 
$(\alpha\ \eta\ \beta\ \mu\ \gamma\ \varepsilon)$,
and hence a weight function
$m_{\bullet} : \cO(g) \to \bN^*$
and a parameter function
$c_{\bullet} : \cO(g) \to K^*$
are given by a positive integer $m$ and
a parameter $c \in K^*$.
The associated weighted surface algebra
$\Lambda = \Lambda(Q(S,\vec{T}),f,m_{\bullet},c_{\bullet})$
is given by the above quiver and the relations
\begin{align*}
 \alpha\beta &= c(\varepsilon\alpha\eta\beta\mu\gamma)^{m-1} \varepsilon\alpha\eta\beta\mu ,
 &
 \varepsilon^2 &= c(\alpha\eta\beta\mu\gamma\varepsilon)^{m-1} \alpha\eta\beta\mu\gamma,
 &
 \alpha\beta\mu &= 0,
 &
 \varepsilon^2 \alpha &= 0,
\\
 \beta\gamma &= c(\eta\beta\mu\gamma\varepsilon\alpha)^{m-1} \eta\beta\mu\gamma\varepsilon ,
 &
 \eta^2 &= c(\beta\mu\gamma\varepsilon\alpha\eta)^{m-1} \beta\mu\gamma\varepsilon\alpha,
 &
 \beta\gamma\varepsilon &= 0,
 &
 \eta^2 \beta &= 0,
\\
 \gamma\alpha &= c(\mu\gamma\varepsilon\alpha\eta\beta)^{m-1} \mu\gamma\varepsilon\alpha\eta,
 &
 \mu^2 &= c(\gamma\varepsilon\alpha\eta\beta\mu)^{m-1} \gamma\varepsilon\alpha\eta\beta,
 &
 \gamma\alpha\eta &= 0,
 &
 \mu^2 \gamma &= 0.
\end{align*}
Moreover, the Cartan matrix $C_{\Lambda}$ of $\Lambda$
is of the form
\[
  \begin{bmatrix}
   4m & 4m & 4m \\
   4m & 4m & 4m \\
   4m & 4m & 4m
  \end{bmatrix} ,
\]
and hence is singular.
\end{example}

\begin{example}
\label{ex:5.11}
Let $(Q(S,\vec{T}),f)$ be
the triangulation quiver
\[
  \xymatrix@R=3.pc@C=1.8pc{
    1
    \ar@<.35ex>[rr]^{\alpha_1}
    \ar@<-.35ex>[rr]_{\beta_1}
    && 2
    \ar@<.35ex>[ld]^{\alpha_2}
    \ar@<-.35ex>[ld]_{\beta_2}
    \\
    & 3
    \ar@<.35ex>[lu]^{\alpha_3}
    \ar@<-.35ex>[lu]_{\beta_3}
  }
\]
with  $f$-orbits
$(\alpha_1\ \alpha_2\ \alpha_3)$ and $(\beta_1\ \beta_2\ \beta_3)$,
considered in Example \ref{ex:4.4}.
Then $g$ has only one orbit, which is
$(\alpha_1\ \beta_2\ \alpha_3\ \beta_1\ \alpha_2\ \beta_3)$,
and hence
a weight function
$m_{\bullet} : \cO(g) \to \bN^*$
and
a parameter function
$c_{\bullet} : \cO(g) \to K^*$
are given by a positive integer $m$ and
a parameter $c \in K^*$.
The associated weighted surface algebra
$\Lambda = \Lambda(Q(S,\vec{T}),f,m_{\bullet},c_{\bullet})$
is given by the above quiver and the relations
\begin{align*}
 \alpha_1 \alpha_2 &= c(\beta_1 \alpha_2 \beta_3 \alpha_1 \beta_2 \alpha_3)^{m-1}  \beta_1 \alpha_2 \beta_3 \alpha_1 \beta_2,
 &
 \alpha_1 \alpha_2 \beta_3 &= 0,
\\
 \alpha_2 \alpha_3 &= c(\beta_2 \alpha_3 \beta_1 \alpha_2 \beta_3 \alpha_1)^{m-1}  \beta_2 \alpha_3 \beta_1 \alpha_2 \beta_3,
 &
 \alpha_2 \alpha_3 \beta_1 &= 0,
\\
 \alpha_3 \alpha_1 &= c(\beta_3 \alpha_1 \beta_2 \alpha_3 \beta_1 \alpha_2)^{m-1}  \beta_3 \alpha_1 \beta_2 \alpha_3 \beta_1,
 &
 \alpha_3 \alpha_1 \beta_2 &= 0,
\\
 \beta_1 \beta_2 &= c(\alpha_1 \beta_2 \alpha_3 \beta_1 \alpha_2 \beta_3)^{m-1}  \alpha_1 \beta_2 \alpha_3 \beta_1 \alpha_2,
 &
 \beta_1 \beta_2 \alpha_3 &= 0,
\\
 \beta_2 \beta_3 &= c(\alpha_2 \beta_3 \alpha_1 \beta_2 \alpha_3 \beta_1)^{m-1}  \alpha_2 \beta_3 \alpha_1 \beta_2 \alpha_3,
 &
 \beta_2 \beta_3 \alpha_1 &= 0,
\\
 \beta_3 \beta_1 &= c(\alpha_3 \beta_1 \alpha_2 \beta_3 \alpha_1 \beta_2)^{m-1}  \alpha_3 \beta_1 \alpha_2 \beta_3 \alpha_1,
 &
 \beta_3 \beta_1 \alpha_2 &= 0.
\end{align*}
Moreover, the Cartan matrix $C_{\Lambda}$ of $\Lambda$
is of the form
\[
  \begin{bmatrix}
   4m & 4m & 4m \\
   4m & 4m & 4m \\
   4m & 4m & 4m
  \end{bmatrix} ,
\]
and hence is singular.
\end{example}

\begin{example}
\label{ex:5.12}
Let $(Q(S,\vec{T}),f)$ be
the triangulation quiver
\[
  \xymatrix@R=3.pc@C=1.8pc{
    1
    \ar@<.35ex>[rr]^{\alpha_1}
    \ar@<.35ex>[rd]^{\beta_3}
    && 2
    \ar@<.35ex>[ll]^{\beta_1}
    \ar@<.35ex>[ld]^{\alpha_2}
    \\
    & 3
    \ar@<.35ex>[lu]^{\alpha_3}
    \ar@<.35ex>[ru]^{\beta_2}
  }
\]
with  $f$-orbits
$(\alpha_1\ \alpha_2\ \alpha_3)$ and $(\beta_1\ \beta_3\ \beta_2)$,
considered in Example \ref{ex:4.4}.
Then $\cO(g)$ consists of the three $g$-orbits
$(\alpha_1\ \beta_1)$ $(\alpha_2\ \beta_2)$, $(\alpha_3\ \beta_3)$
of length $2$.
Let $m_{\bullet} : \cO(g) \to \bN^*$
be a weight function
and
$m_1 = m_{\alpha_1}$,
$m_2 = m_{\alpha_2}$,
$m_3 = m_{\alpha_3}$.
By our assumption, we must take
$m_1 \geq 2$,
$m_2 \geq 2$,
$m_3 \geq 2$,
because
$|\cO(\alpha_1)| = 2$,
$|\cO(\alpha_2)| = 2$,
$|\cO(\alpha_3)| = 2$.
Let $c_{\bullet} : \cO(g) \to K^*$
be a parameter function
and
$c_1 = c_{\alpha_1}$,
$c_2 = c_{\alpha_2}$,
$c_3 = c_{\alpha_3}$.
Then the associated weighted surface algebra
$\Lambda = \Lambda(Q(S,\vec{T}),f,m_{\bullet},c_{\bullet})$
is given by the above quiver and the relations
\begin{align*}
 \alpha_1 \alpha_2 &= c_3(\beta_3 \alpha_3)^{m_3-1} \beta_3 ,
 &
 \alpha_2 \alpha_3 &= c_1(\beta_1 \alpha_1)^{m_1-1} \beta_1 ,
 &
 \alpha_3 \alpha_1 &= c_2(\beta_2 \alpha_2)^{m_2-1} \beta_2 ,
\\
 \beta_1 \beta_3 &= c_2(\alpha_2 \beta_2)^{m_2-1} \alpha_2 ,
 &
 \beta_3 \beta_2 &= c_1(\alpha_1 \beta_1)^{m_1-1} \alpha_1 ,
 &
 \beta_2 \beta_1 &= c_3(\alpha_3 \beta_3)^{m_3-1} \alpha_3 ,
\\
 \alpha_1 \alpha_2 \beta_2 &= 0,
 &
 \!\!\!\!\!\!\!\!\!\!\!\!\!\!\alpha_2 \alpha_3 \beta_3 &= 0,
 &
 \!\!\!\!\!\!\!\!\!\!\!\!\!\!\alpha_3 \alpha_1 \beta_1 &= 0,
\\
 \beta_1 \beta_3 \alpha_3 &= 0,
 &
 \!\!\!\!\!\!\!\!\!\!\!\!\!\!\beta_3 \beta_2 \alpha_2 &= 0,
 &
 \!\!\!\!\!\!\!\!\!\!\!\!\!\!\beta_2 \beta_1 \alpha_1 &= 0.
\end{align*}
Moreover, the Cartan matrix $C_{\Lambda}$ of $\Lambda$
is of the form
\[
  \begin{bmatrix}
   m_1+m_3 & m_1 & m_3 \\
   m_1 & m_1+m_2 & m_2 \\
   m_3 & m_2 & m_2+m_3
  \end{bmatrix} ,
\]
and $\det C_{\Lambda} = 4 m_1 m_2 m_3$.
Hence $C_{\Lambda}$ is non-singular.
\end{example}

\begin{example}
\label{ex:5.13}
Let $(Q(S,\vec{T}),f)$ be
the triangulation quiver
\[
  \xymatrix{
    1
    \ar@(ld,ul)^{\alpha}[]
    \ar@<.5ex>[r]^{\beta}
    & 2
    \ar@<.5ex>[l]^{\gamma}
    \ar@<.5ex>[r]^{\delta}
    & 3
    \ar@<.5ex>[l]^{\sigma}
    \ar@(ru,dr)^{\varrho}[]
  } ,
\]
with  $f$-orbits
$(\alpha\ \beta\ \gamma)$
and
$(\varrho\ \sigma\ \delta)$,
considered in Example \ref{ex:4.5}.
Then $\cO(g)$ consists of the $g$-orbits
$(\alpha)$,
$(\varrho)$,
$(\beta\ \delta\ \sigma\ \gamma)$.
Let $m_{\bullet} : \cO(g) \to \bN^*$
be a weight function
and
$m_{\alpha} = p$,
$m_{\varrho} = q$,
$m_{\beta} = r$.
By our assumption, we have
$p \geq 3$
and
$q \geq 3$,
because
$|\cO(\alpha)| = 1$ and $|\cO(\varrho)| = 1$.
Moreover, let $c_{\bullet} : \cO(g) \to K^*$
be a parameter function
and
$c_{\alpha} = a$,
$c_{\varrho} = b$,
$c_{\beta} = c$.
Then the associated weighted surface algebra
$\Lambda = \Lambda(Q(S,\vec{T}),f,m_{\bullet},c_{\bullet})$
is given by the above quiver and the relations
\begin{align*}
 \alpha\beta &= c(\beta\delta\sigma\gamma)^{r-1} \beta\delta\sigma,
 &
 \beta\gamma &= a \alpha^{p-1} ,
 &
 \gamma\alpha &= c(\delta\sigma\gamma\beta)^{r-1} \delta\sigma\gamma,
\\
 \varrho\sigma &= c(\sigma\gamma\beta\delta)^{r-1} \sigma\gamma\beta,
 &
 \sigma\delta &= b \varrho^{q-1} ,
 &
 \delta\varrho &= c(\gamma\beta\delta\sigma)^{r-1} \gamma\beta\delta,
\\
 \alpha\beta\delta &= 0,
\qquad
   \beta\gamma\beta = 0,
 &
   \!\!\!\!\gamma\alpha^2 &= 0,
\quad
 \varrho\sigma\gamma = 0,\!\!\!\!\!\!\!
 &
   \sigma\delta\sigma &= 0,
\quad
   \delta\varrho^2 = 0.
\end{align*}
Moreover, the Cartan matrix $C_{\Lambda}$ of $\Lambda$
is of the form
\[
  \begin{bmatrix}
   p+r & 2r & r \\
   2r & 4r & 2r \\
   r & 2r & q+r
  \end{bmatrix} ,
\]
and $\det C_{\Lambda} = 4 p q r$.
Hence $C_{\Lambda}$ is non-singular.
\end{example}

The class of
weighted surface algebras contains as a very
special subclass the class of Jacobian algebras
of surfaces with punctures.
Recall that a \emph{surface with punctures}
is a pair $(S,P)$, where $S$ is an
orientable surface with empty boundary, and $P$ is a finite
set of points in $S$, called
{punctures}.
Then an {ideal triangulation} (briefly, {triangulation})
of $(S,P)$ is any maximal collection $T$
of pairwise compatible arcs with the ends in $P$
whose relative interiors do not intersect each other
(see \cite[Section~2]{FoST}), and the triple $(S,P,T)$
is called a \emph{triangulated surface with punctures}.
Moreover, it is always assumed that
a triangulated surface with punctures
$(S,P,T)$ satisfies the following conditions:
\begin{itemize}
 \item
  if $S$ is a sphere then $|P| \geq 4$;
 \item
  there is no arc in $P$ starting and ending at the same puncture;
 \item
  for each puncture $p \in P$, there are at least $3$ arcs in $T$ incident to $p$.
\end{itemize}
A triangulated surface with punctures $(S,P,T)$
may be viewed as directed triangulated surface $(S,\vec{T})$,
where $\vec{T}$ is one of the two possible choices
of coherent orientations of triangles in $T$, using the
fact that $S$ is orientable. 
Then the quiver $Q(S,\vec{T})$ of $(S,\vec{T})$
is the adjacency quiver $Q(S,P,T)$ of $(S,P,T)$
defined by Fomin, Shapiro and Thurston \cite{FoST}.
Moreover, the quiver $Q(S,\vec{T}) = Q(S,P,T)$ has no loops
nor $2$-cycles
$\xymatrix@=1.5pc{ \bullet \ar@<.45ex>[r] & \bullet \ar@<.45ex>[l]}$.
Finally, the Jacobian algebra of $(S,P,T)$
with respects to the Labardini-Fragoso potential \cite{LF}
is the surface algebra $\Lambda(Q,f,m_{\bullet},c_{\bullet})$
of the directed triangulated surface $(S,\vec{T})$
given by $(S,P,T)$, and the weight function $m_{\bullet}$
taking only value $1$ (see  \cite{La1}).
For an arbitrary weight function $m_{\bullet}$ of $(S,\vec{T})$
we obtain a \emph{weighted Jacobian algebra} of $(S,P,T)$,
as investigated by Ladkani \cite{La2,La3}.

\section{Tetrahedral algebras}\label{sec:tetrahedral}

In this section we present a family of algebras given by
the tetrahedral triangulation of the sphere, which has exceptional
properties among all weighted surface algebras considered
in this paper.

\begin{example}
\label{ex:6.1}
Let $\bS=S^2$ be the sphere in $\bR^3$.
Consider the tetrahedral triangulation
$T$ of $\bS$
\[
\begin{tikzpicture}
[scale=1]
\node (A) at (-2,0) {$\bullet$};
\node (B) at (2,0) {$\bullet$};
\node (C) at (0,.85) {$\bullet$};
\node (D) at (0,2.8) {$\bullet$};
\coordinate (A) at (-2,0) ;
\coordinate (B) at (2,0) ;
\coordinate (C) at (0,.85) ;
\coordinate (D) at (0,2.8) ;
\draw[thick]
(A) edge node [left] {3} (D)
(D) edge node [right] {6} (B)
(D) edge node [below right] {2} (C)
(A) edge node [above] {5} (C)
(C) edge node [above] {4} (B)
(A) edge node [below] {1} (B) ;
\end{tikzpicture}
\]
and its coherent orientation $\vec{T}$
\[
\mbox{
  (1 5 4), (2 5 3), (2 6 4), (1 6 3).
}
\]
Then the associated quiver $Q(\bS,\vec{T})$
is of the form
\[
\begin{tikzpicture}
[scale=.85]
\node (1) at (0,1.72) {$1$};
\node (2) at (0,-1.72) {$2$};
\node (3) at (2,-1.72) {$3$};
\node (4) at (-1,0) {$4$};
\node (5) at (1,0) {$5$};
\node (6) at (-2,-1.72) {$6$};
\coordinate (1) at (0,1.72);
\coordinate (2) at (0,-1.72);
\coordinate (3) at (2,-1.72);
\coordinate (4) at (-1,0);
\coordinate (5) at (1,0);
\coordinate (6) at (-2,-1.72);
\fill[fill=gray!20]
      (0,2.22cm) arc [start angle=90, delta angle=-360, x radius=4cm, y radius=2.8cm]
 --  (0,1.72cm) arc [start angle=90, delta angle=360, radius=2.3cm]
     -- cycle;
\fill[fill=gray!20]
    (1) -- (4) -- (5) -- cycle;
\fill[fill=gray!20]
    (2) -- (4) -- (6) -- cycle;
\fill[fill=gray!20]
    (2) -- (3) -- (5) -- cycle;

\node (1) at (0,1.72) {$1$};
\node (2) at (0,-1.72) {$2$};
\node (3) at (2,-1.72) {$3$};
\node (4) at (-1,0) {$4$};
\node (5) at (1,0) {$5$};
\node (6) at (-2,-1.72) {$6$};
\draw[->,thick] (-.23,1.7) arc [start angle=96, delta angle=108, radius=2.3cm] node[midway,right] {$\nu$};
\draw[->,thick] (-1.87,-1.93) arc [start angle=-144, delta angle=108, radius=2.3cm] node[midway,above] {$\mu$};
\draw[->,thick] (2.11,-1.52) arc [start angle=-24, delta angle=108, radius=2.3cm] node[midway,left] {$\alpha$};
\draw[->,thick]
(1) edge node [right] {$\delta$} (5)
(2) edge node [left] {$\varepsilon$} (5)
(2) edge node [below] {$\varrho$} (6)
(3) edge node [below] {$\sigma$} (2)
(4) edge node [left] {$\gamma$} (1)
(4) edge node [right] {$\beta$} (2)
(5) edge node [right] {$\xi$} (3)
(5) edge node [below] {$\eta$} (4)
(6) edge node [left] {$\omega$} (4)
;
\end{tikzpicture}
\]
where the shaded subquivers denote the $f$-orbits.

In $Q(\bS,\vec{T})$ we have the four $g$-orbits
which are, written in cycle notation,
$$(\beta \ \varepsilon \ \eta), \ \
(\varrho \ \mu \ \sigma), \ \ 
(\gamma \ \nu \ \omega), \ \ (\alpha \ \delta \ \xi).
$$
Let $m_{\bullet} : \cO(g) \to \bN^* = \bN \setminus \{0\}$
be the weight function taking the value $1$
on each $g$-orbit.
Consider a parameter function
$c_{\bullet} : \cO(g) \to K^* = K \setminus \{0\}$,
and let
$c_{\cO(\beta)} = a$,
$c_{\cO(\varrho)} = b$,
$c_{\cO(\gamma)} = c$
and
$c_{\cO(\alpha)} = d$,
for elements $a,b,c,d \in K^*$.
Then the algebra
$\Lambda(\bS,a,b,c,d) = \Lambda(\bS,\vec{T},m_{\bullet},c_{\bullet})$
is given by the above quiver
$Q(\bS,\vec{T})$ and the relations
\begin{align*}
 \delta \eta &= c\nu \omega,
  &
 \eta \gamma &= d \xi \alpha,
  &
 \gamma \delta &= a \beta \varepsilon,
  &
 \delta \eta \beta &= 0,
  &
 \eta \gamma \nu &= 0,
  &
 \gamma \delta \xi &= 0,
    \\
 \varrho \omega &= a \varepsilon \eta,
  &
 \omega \beta &= b \mu \sigma,
  &
 \beta \varrho &= c \gamma \nu,
  &
 \varrho \omega \gamma &= 0,
  &
 \omega \beta \varepsilon &= 0,
  &
 \beta \varrho \mu &= 0,
    \\
 \sigma \varepsilon &= d \alpha \delta,
  &
 \varepsilon \xi &= b \varrho \mu,
  &
 \xi \sigma &= a \eta \beta,
  &
 \sigma \varepsilon \eta &= 0,
  &
 \varepsilon \xi \alpha &= 0,
  &
 \xi \sigma \varrho &= 0,
    \\
 \alpha \nu &= b \sigma \varrho,
  &
 \nu \mu &= d \delta \xi,
  &
 \mu \alpha &= c \omega \gamma,
  &
 \alpha \nu \omega &= 0,
  &
 \nu \mu \sigma &= 0,
  &
 \mu \alpha \delta &= 0,
\end{align*}
corresponding to the four $f$-orbits in $Q(\bS,\vec{T})$ where an orbit
is given by the arrows around a shaded triangle.
Moreover, a minimal set of relations defining
$\Lambda(\bS,a,b,c,d)$
is given by the above twelve commutativity relations and the six zero
relations
\begin{align*}
 \delta \eta \beta &= 0,
  &
 \varrho \omega \gamma &= 0,
  &
 \sigma \varepsilon \eta &= 0,
  &
 \beta \varrho \mu &= 0,
  &
 \eta \gamma \nu &= 0,
  &
 \omega \beta \varepsilon &= 0,
\end{align*}
so the remaining six of the above zero relations are superfluous.

We note now that the algebra
$\Lambda(\bS,a,b,c,d)$
is isomorphic to the algebra\linebreak
$\Lambda(\bS,a b c d,1,1,1)$.
Indeed, there is an isomorphism of algebras
$\varphi  : \Lambda(\bS,a b c d,1,1,1) \to \Lambda(\bS,a,b,c,d)$
given by
\begin{align*}
&&
 \varphi(\alpha) &= a \alpha,
  &
 \varphi(\mu) &= b \mu,
  &
 \varphi(\nu) &= c \nu,
\\
&&
 \varphi(\delta) &= b c d \delta,
  &
 \varphi(\omega) &= b c d \omega,
  &
 \varphi(\sigma) &= b c d \sigma,
\\
 \varphi(\xi) &= \xi,
  &
 \varphi(\varrho) &= \varrho,
  &
 \varphi(\gamma) &= \gamma,
  &
 \varphi(\eta) &= \eta,
  &
 \varphi(\varepsilon) &= \varepsilon,
  &
 \varphi(\beta) &= \beta.
\end{align*}

An algebra $\Lambda(\bS,a,b,c,d)$,
with $a,b,c,d \in K^{*}$,
is said to be a \emph{tetrahedral algebra}.
Moreover, the triangulation quiver $Q(\bS,\vec{T})$
of $\Lambda(\bS,a,b,c,d)$ is said to be
the \emph{tetrahedral triangulation quiver}.

For each $\lambda \in K^{*} = K \setminus \{0\}$,
we abbreviate
$\Lambda(\bS,\lambda) = \Lambda(\bS,\lambda,1,1,1)$.
We shall discuss now distinguished properties of the tetrahedral algebras.
\end{example}

Recall that the trivial 
extension algebra
$\T(B) = B \ltimes D(B)$
of an algebra $B$ by the injective cogenerator
$D(B) = \Hom_K(B,K)$ has underlying $K$-vector space 
$\T(B) = B \oplus D(B)$,  and the multiplication
in $\T(B)$ is given by
\[
 (b_1,f_1)
 (b_2,f_2)
  =
 (b_1 b_2, b_1 f_2 + f_1 b_2)
\]
for $b_1,b_2 \in B$
and $f_1,f_2 \in D(B)$.
Then there is a canonical associative,
non-degenerate, symmetric $K$-bilinear form
$(-,-) : \T(B) \times \T(B) \to K$
defined by
\[
 \big( (b_1,f_1), (b_2,f_2) \big)
  = f_1(b_2) + f_2(b_1)
\]
for $b_1,b_2 \in B$
and $f_1,f_2 \in D(B)$.

A prominent role in the representation theory of tame
symmetric algebras of polynomial growth is played
by the trivial extensions of the tubular algebras
(in the sense of Ringel \cite{R}), whose representation
theory was described by Nehring and Skowro\'nski in \cite{NeS}.
Moreover, the derived equivalence classification of these algebras 
follows from results established in \cite{HR,Ric2}.
We refer also to the article \cite{PS2} for the invariance
of the trivial extensions of tubular algebras under
stable equivalences.
It follows also from \cite[Example~3.3]{Sk1} that there are exactly four
families of the trivial extensions of tubular algebras of tubular
type $(2,2,2,2)$, and the tetrahedral algebras $\Lambda(\bS,\lambda)$
with $\lambda \in K \setminus \{0,1\}$ form one of these families.
For the purposes of this section, we will describe now
the identification of a tetrahedral algebra $\Lambda(\bS,\lambda)$
with the trivial extension algebra $\T(B(\lambda))$ of an alebra
$B(\lambda)$ of global dimension $2$, being for $K \setminus \{0,1\}$
a tubular algebra of tubular type $(2,2,2,2)$.

For each $\lambda \in K^{*}$,
we denote by $B(\lambda)$ the $K$-algebra
given by the quiver
\[
  \xymatrix@C=4.5pc@R=3pc{
    1 &
    3 \ar[l]_{\alpha} \ar[ld]^(.2){\sigma} &
    5 \ar[l]_{\xi} \ar[ld]^(.2){\eta}
    \\
    2 &
    4 \ar[l]^{\beta} \ar[lu]_(.2){\gamma} &
    6 \ar[l]^{\omega} \ar[lu]_(.2){\mu}
  }
\]
and the relations
\begin{align*}
 \eta \gamma &= \xi \alpha,
 &
 \xi \sigma &= \lambda \eta \beta,
 &
 \mu \alpha &= \omega \gamma,
 &
 \omega \beta &= \mu \sigma.
\end{align*}
We note that $B(\lambda)$ is the double
one-point extension algebra of the path algebra
$H = K \Delta$ of the quiver $\Delta$
\[
  \xymatrix@C=4.5pc@R=3pc{
    1 &
    3 \ar[l]_{\alpha} \ar[ld]^(.2){\sigma}
    \\
    2 &
    4 \ar[l]^{\beta} \ar[lu]_(.2){\gamma}
  }
\]
of Euclidean type $\widetilde{\bA}_3$
by two indecomposable modules
\[
  R_{\lambda} :
\vcenter{
  \xymatrix@C=4.5pc@R=3pc{
    K &
    K \ar[l]_{1} \ar[ld]^(.2){1}
    \\
    K &
    K \ar[l]^{\lambda} \ar[lu]_(.2){1}
  }
}
  \qquad
 \mbox{and}
 \qquad
  R_1 :
\vcenter{
  \xymatrix@C=4.5pc@R=3pc{
    K &
    K \ar[l]_{1} \ar[ld]^(.2){1}
    \\
    K &
    K \ar[l]^{1} \ar[lu]_(.2){1}
  }
}
\]
lying on the mouth of stable tubes of rank $1$ in $\Gamma_H$.
For $\lambda \in K \setminus \{0,1\}$,
the modules $R_{\lambda}$ and $R_1$ are not isomorphic,
and then $B(\lambda)$ is a tubular algebra of type
$(2,2,2,2)$ in the sense of \cite{R},
and consequently it is an algebra of polynomial growth.
On the other hand, $B(1)$ is the tame minimal
non-polynomial growth algebra $(30)$ from \cite{NoS}.
We also mention that all algebras
$B(\lambda)$, $\lambda \in K^{*}$,
are simply connected and of global dimension $2$.

\begin{lemma}
\label{lem:6.2}
For any $\lambda \in K^{*}$,
the algebras $\Lambda(\bS,\lambda)$
and $\T(B(\lambda))$ are isomorphic.
\end{lemma}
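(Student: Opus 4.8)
The plan is to recognize $\Lambda(\bS,\lambda)$ directly as a trivial extension by equipping it with a grading that exhibits a copy of $B(\lambda)$ in degree $0$ and its dual bimodule in degree $1$. First I would separate the twelve arrows of $Q(\bS,\vec{T})$ into the eight \emph{old} arrows $\alpha,\sigma,\xi,\eta,\beta,\gamma,\omega,\mu$ already present in the quiver of $B(\lambda)$ and the four \emph{new} arrows $\delta\colon 1\to 5$, $\varepsilon\colon 2\to 5$, $\nu\colon 1\to 6$, $\varrho\colon 2\to 6$ (these four correspond exactly to the four defining relations of $B(\lambda)$). Assigning degree $0$ to the old arrows and degree $1$ to the new ones, one checks that every defining relation in Example~\ref{ex:6.1} is homogeneous: the six zero relations are monomials, and in each commutativity relation both sides have equal degree (degree $0$ for $\eta\gamma=\xi\alpha$, $\omega\beta=\mu\sigma$, $\xi\sigma=\lambda\eta\beta$, $\mu\alpha=\omega\gamma$, and degree $1$ for the remaining eight, e.g.\ $\delta\eta=\nu\omega$). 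This gives a well-defined $\bZ$-grading $\Lambda=\Lambda_0\oplus\Lambda_1\oplus\cdots$.

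Using the basis of $\Lambda$ from Lemma~\ref{lem:5.5} and Corollary~\ref{cor:5.6}, I would then show $\Lambda_n=0$ for $n\ge 2$. Each new arrow ends at vertex $5$ or $6$, and the only arrows issuing from $5$ or $6$ are old; likewise every arrow into a vertex $1$ or $2$ (where the new arrows start) is old. Hence no nonzero path contains two new arrows, so the longest nonzero paths are the socle cycles $B_{\bullet}$ of length $3$, each containing exactly one new arrow. Thus $\Lambda=\Lambda_0\oplus\Lambda_1$ with $\Lambda_1\Lambda_1\subseteq\Lambda_2=0$, and the product formula $(x_0+x_1)(y_0+y_1)=x_0y_0+(x_0y_1+x_1y_0)$ identifies $\Lambda$ with the trivial extension $\Lambda_0\ltimes\Lambda_1$.

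It remains to identify the pieces. The degree-$0$ relations of $\Lambda$ are precisely the four relations defining $B(\lambda)$, so mapping vertices to idempotents and old arrows to old arrows yields a surjective algebra homomorphism $B(\lambda)\to\Lambda_0$; a direct count of paths modulo relations gives $\dim_K B(\lambda)=18$, while $\dim_K\Lambda=36$ by Proposition~\ref{prop:5.8}(i). Now I invoke the symmetrizing form $\varphi\colon\Lambda\to K$ of Proposition~\ref{prop:5.8}(ii): it is supported on the socle cycles $B_{\bullet}$, which all lie in $\Lambda_1$, so $\varphi$ vanishes on $\Lambda_0$ and on $\Lambda_1\Lambda_1=0$. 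Nondegeneracy of $\varphi$ then forces the induced pairing $\Lambda_0\times\Lambda_1\to K$, $(x,y)\mapsto\varphi(xy)$, to be perfect, whence $\dim_K\Lambda_0=\dim_K\Lambda_1=18$. Consequently the surjection $B(\lambda)\to\Lambda_0$ is an isomorphism, and the perfect pairing together with the trace identity $\varphi(uv)=\varphi(vu)$ produces an isomorphism of $\Lambda_0$-bimodules $\Lambda_1\xrightarrow{\sim}D(\Lambda_0)$ sending $y$ to the functional $x\mapsto\varphi(xy)$. Combining everything, $\Lambda(\bS,\lambda)\cong\Lambda_0\ltimes\Lambda_1\cong B(\lambda)\ltimes D(B(\lambda))=\T(B(\lambda))$.

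The main obstacle is the structural vanishing $\Lambda_n=0$ for $n\ge 2$ (equivalently $\Lambda_1\Lambda_1=0$): this is exactly what distinguishes the grading of a trivial extension from that of a longer complex, and it has to be read off from the explicit description of the nonzero paths rather than merely from homogeneity of the relations. Once this is secured, the remainder is the formal recognition of a graded symmetric algebra concentrated in degrees $0$ and $1$ as a trivial extension, and the parameter $\lambda$ requires no separate tracking since it already appears in the shared degree-$0$ relation $\xi\sigma=\lambda\eta\beta$.
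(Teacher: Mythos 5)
Your proof is correct, but it takes a genuinely different route from the paper's. The paper identifies $\T(B(\lambda))$ with the orbit algebra $\widehat{B(\lambda)}/(\nu_{\widehat{B(\lambda)}})$ of the repetitive category, writes out the quiver and relations of the full convex subcategory $B(\lambda)^{(2)}\subset\widehat{B(\lambda)}$, and reads off that the orbit algebra has exactly the presentation of $\Lambda(\bS,\lambda)$; this reuses machinery the paper needs anyway (the same covering is exploited in Proposition~\ref{prop:6.4} via the push-down functor). You instead recognize $\Lambda(\bS,\lambda)$ intrinsically as a trivial extension by exhibiting a grading concentrated in degrees $0$ and $1$, and your verifications all check out: the twelve commutativity relations are homogeneous (four of degree $0$, eight of degree $1$), $\dim_K B(\lambda)=18=\tfrac12\dim_K\Lambda$, the socle cycles $B_{\bullet}$ all have degree $1$ so $\varphi$ vanishes on $\Lambda_0$, and the resulting perfect pairing plus the trace identity gives the bimodule isomorphism $\Lambda_1\cong D(\Lambda_0)$. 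The one spot where your write-up is compressed is the vanishing $\Lambda_{\geq 2}=0$: the adjacency facts you cite rule out \emph{consecutive} new arrows, but to exclude a path of length $3$ with new arrows in positions $1$ and $3$ you also need that there is no arrow from $\{5,6\}$ to $\{1,2\}$, so that two new arrows in one path force at least two old arrows between them and hence total length $\geq 4$, which dies since $(\rad\Lambda)^4=0$; this is implicit in your observation that the longest nonzero paths are the socle cycles, each containing exactly one new arrow, but it deserves to be said. Your approach is more self-contained (no repetitive categories, only Proposition~\ref{prop:5.8} and an elementary path count), at the cost of having to verify homogeneity and the degree bound by hand; the paper's approach is shorter given the covering-theoretic background it already assumes.
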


\begin{proof}
By general theory (see \cite{Sk2}),
the trivial extension algebra $\T(B(\lambda))$
is isomorphic to the orbit algebra
$\widehat{B(\lambda)} / (\nu_{\widehat{B(\lambda)}})$
of the repetitive category $\widehat{B(\lambda)}$
of $B(\lambda)$ with respect to the infinite
cyclic group $(\nu_{\widehat{B(\lambda)}})$
generated by the Nakayama automorphism
$\nu_{\widehat{B(\lambda)}}$ of $\widehat{B(\lambda)}$.
One checks directly  that $\widehat{B(\lambda)}$
contains the full convex subcategory
$B(\lambda)^{(2)}$
given by the quiver
\[
  \xymatrix@C=4.5pc@R=3pc{
    1 &
    3 \ar[l]_{\alpha} \ar[ld]^(.2){\sigma} &
    5 \ar[l]_{\xi} \ar[ld]^(.2){\eta} &
    1' \ar[l]_{\delta} \ar[ld]^(.2){\nu} &
    3' \ar[l]_{\alpha'} \ar[ld]^(.2){\sigma'} &
    5' \ar[l]_{\xi'} \ar[ld]^(.2){\eta'}
    \\
    2 &
    4 \ar[l]^{\beta} \ar[lu]_(.2){\gamma} &
    6 \ar[l]^{\omega} \ar[lu]_(.2){\mu} &
    2' \ar[l]^{\varrho} \ar[lu]_(.2){\varepsilon} &
    4' \ar[l]^{\beta'} \ar[lu]_(.2){\gamma'} &
    6' \ar[l]^{\omega'} \ar[lu]_(.2){\mu'}
  }
\]
and the relations
\begin{align*}
 &&
 \eta \gamma &= \xi \alpha,
 &
 \xi \sigma &= \lambda \eta \beta,
 &
 \mu \alpha &= \omega \gamma,
 &
 \omega \beta &= \mu \sigma,
 \\
 &&
 \nu \mu &= \delta \xi,
 &
 \delta \eta &= \nu \omega,
 &
 \varepsilon \xi &= \varrho \mu,
 &
 \varrho \omega &= \lambda \varepsilon \eta,
 \\
 &&
 \sigma' \varepsilon &= \alpha' \delta,
 &
 \alpha' \nu &= \sigma' \varrho,
 &
 \gamma' \delta &= \lambda \beta' \varepsilon,
 &
 \beta' \varrho &= \gamma' \nu,
 \\
 &&
 \eta' \gamma' &= \xi' \alpha',
 &
 \xi' \sigma' &= \lambda \eta' \beta',
 &
 \mu' \alpha' &= \omega' \gamma',
 &
 \omega' \beta' &= \mu' \sigma',
 \\
 \delta \eta \beta &= 0,
 &
 \varrho \omega \gamma &= 0,
 &
 \sigma' \varepsilon \eta &= 0,
 &
 \beta' \varrho \mu &= 0,
 &
 \eta' \gamma' \nu &= 0,
 &
 \omega' \beta' \varepsilon &= 0,
\end{align*}
where  
$\nu_{\widehat{B(\lambda)}}(i) = i'$ for any vertex $i \in \{1,2,3,4,5,6\}$
and 
$\nu_{\widehat{B(\lambda)}}(\theta) = \theta'$ for any arrow 
$\theta \in \{\alpha,\beta,\gamma,\sigma,\xi,\omega,\eta,\mu\}$.

We conclude that $\T(B(\lambda))$ is isomorphic
to the algebra
$\Lambda(\bS,\lambda) = \Lambda(\bS,\lambda,1,1,1)$.
\end{proof}

We note that the algebra (category) $B(\lambda)^{(2)}$
is isomorphic to the duplicated algebra
\[
   \begin{bmatrix}
     B(\lambda) & 0 \\  D(B(\lambda)) & B(\lambda)
   \end{bmatrix}
   =
  \left\{
    \begin{bmatrix}
      b_1 & 0 \\ f & b_2
    \end{bmatrix}
     \ \Big|\
     b_1, b_2 \in B(\lambda),
     f \in D(B(\lambda))
    \right\}
\]
of $B(\lambda)$.

The next two propositions describe some distinguished
properties of the tetrahedral algebras $\Lambda(\bS,\lambda)$,
$\lambda \in K^{*}$.

\begin{proposition}
\label{prop:6.3}
For any $\lambda \in K \setminus \{ 0,1 \}$
the following statements hold:
\begin{enumerate}[(i)]
 \item
  $\Lambda(\bS,\lambda)$ is an algebra of polynomial growth.
 \item
  $\Lambda(\bS,\lambda)$ is a periodic algebra of period $4$.
 \item
  The simple modules in $\mod \Lambda(\bS,\lambda)$
  are periodic of period $4$.
 \item
  The simple modules in $\mod \Lambda(\bS,\lambda)$
  lie in six pairwise different stable tubes of rank $2$
  of $\Gamma_{\Lambda(\bS(\lambda))}$.
\end{enumerate}
\end{proposition}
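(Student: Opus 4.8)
The plan is to route everything through the isomorphism $\Lambda(\bS,\lambda) \cong \T(B(\lambda))$ of Lemma~\ref{lem:6.2} and to use that, for $\lambda \in K \setminus \{0,1\}$, the algebra $B(\lambda)$ is a tubular algebra of tubular type $(2,2,2,2)$. Statement (i) is then immediate: the trivial extension of a tubular algebra is tame of polynomial growth by the Nehring--Skowro\'nski analysis \cite{NeS}, so $\Lambda(\bS,\lambda)$ is of polynomial growth.

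The backbone of (iii) and (iv) is the identity $\tau_{\Lambda} = \Omega^2_{\Lambda}$ for symmetric algebras together with a direct syzygy computation. By Corollary~\ref{cor:5.6} each indecomposable projective $P_i = e_i \Lambda$ is $6$-dimensional, since all weights equal $1$ and all four $g$-orbits have length $3$, so $\dim_K P_i = m_\alpha n_\alpha + m_{\bar\alpha} n_{\bar\alpha} = 3+3$. Hence one can compute $\Omega S_i = \rad P_i$ and then $\Omega^2 S_i$ explicitly from the commutativity and zero relations of Example~\ref{ex:6.1}, verifying that $\Omega^2 S_i \not\cong S_i$ and $\Omega^4 S_i \cong S_i$. (Equivalently, $\Lambda(\bS,\lambda)$ is manifestly not Nakayama, so $\Omega^2$ cannot permute the simples, and the $S_4$-symmetry of the tetrahedral quiver, transitive on the six vertices, propagates the check from one vertex to all six.) Given $\Omega^2 S_i \not\cong S_i$ and $\Omega^4 S_i \cong S_i$, the period of $S_i$ divides $4$ but is neither $1$ nor $2$, which is (iii); moreover $\tau S_i = \Omega^2 S_i \not\cong S_i$ while $\tau^2 S_i \cong S_i$ places each $S_i$ in a stable tube of rank exactly $2$. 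Since $\Lambda(\bS,\lambda)$ is representation-infinite, symmetric and (by the next paragraph) periodic, $\Gamma^s_{\Lambda(\bS,\lambda)}$ consists of stable tubes by Proposition~\ref{prop:2.3}.

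For the ``six pairwise different tubes'' assertion in (iv) I would argue by quasi-length. Any module of quasi-length $\geq 2$ in a stable tube is filtered by at least two quasi-simple (mouth) modules, hence has $K$-dimension $\geq 2$; as $\dim_K S_i = 1$, every $S_i$ is quasi-simple and so lies at the mouth of its tube. If two distinct simples $S_i, S_j$ were in a common rank-$2$ tube, both would lie in the mouth $\tau$-orbit $\{S_i, \Omega^2 S_i\}$; but $\Omega^2 S_i$ has dimension $>1$, hence is not simple, forcing $S_j = S_i$, a contradiction. Therefore the six simples occupy six distinct stable tubes of rank $2$, which is (iv).

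Finally, for (ii) I would obtain the bimodule-level periodicity $\Omega^4_{\Lambda^e}(\Lambda) \cong \Lambda$ from the classification of periodic tame symmetric algebras of polynomial growth \cite{BES4}, applicable to $\T(B)$ with $B$ tubular; alternatively one can run the explicit bimodule-resolution machinery of Section~\ref{sec:bimodule} as in Section~\ref{sec:periodicity}. This gives period dividing $4$, and (iii) forbids period $1$ or $2$ (an algebra of period $n$ forces $\Omega^n S_i \cong S_i$, so $4 \mid n$), pinning the period to exactly $4$. The main obstacle is precisely this bimodule statement: Theorem~\ref{th:2.4} applied with $d=4$ yields only $\Omega^4_{\Lambda^e}(\Lambda) \cong {}_1\Lambda_{\sigma}$ for some automorphism $\sigma$, and removing this twist to get a genuine period-$4$ bimodule isomorphism is the delicate step, handled either by the explicit resolution or by invoking \cite{BES4}. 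A secondary technical point is ensuring the syzygy computation is uniform across all six simples, for which the transitive tetrahedral symmetry is the convenient device.
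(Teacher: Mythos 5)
Your proposal is sound in outline but follows a genuinely different route from the paper for parts (ii)--(iv). For (iii) and (iv) the paper never computes syzygies of simples directly: it identifies $\Lambda(\bS,\lambda)$ with $\T(B(\lambda))=\widehat{B(\lambda)}/(\nu_{\widehat{B(\lambda)}})$ and imports from \cite{NeS} the fact that the six projective-injectives lie in six distinct components whose stable parts are rank-$2$ tubes containing no simples; applying $\Omega_{\T(B(\lambda))}$ to $P_i/S_i$ then places the six simples in six further pairwise distinct rank-$2$ tubes, and $\tau=\Omega^2$ gives period $4$. Your alternative --- computing $\Omega^2 S_i$ and $\Omega^4 S_i$ by hand and then using the $1$-dimensionality of $S_i$ to force it to the mouth and the non-simplicity of $\Omega^2 S_i$ ($7$-dimensional here) to separate the tubes --- is a legitimate and arguably more self-contained argument; the paper even remarks after Proposition~\ref{prop:7.1} that its exact sequence ``holds also for any non-singular tetrahedral algebra, which can be checked directly.'' For (ii) the paper uses a trick you did not anticipate: it passes to the orbit algebra $\Lambda(\bS,\lambda)/H$ under a cyclic automorphism group $H$ of order $3$, identifies it with the algebra $\Lambda'_3(\lambda)$ of \cite{BES4} (periodic of period $4$ by \cite[Proposition~7.1]{BES4}), and lifts periodicity back via Dugas's theorem \cite[Theorem~3.7]{Du1} since $\gcd(3,4)=1$. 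Your direct appeal to \cite{BES4} for $\T(B)$ with $B$ tubular reaches the same place, and you correctly isolate the real difficulty (removing the twist $\sigma$ in $\Omega^4_{\Lambda^e}(\Lambda)\cong{}_1\Lambda_\sigma$).

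Two caveats. First, the ``$S_4$-symmetry transitive on the six vertices'' is not literally a group of automorphisms of $\Lambda(\bS,\lambda)=\Lambda(\bS,\lambda,1,1,1)$: the parameter function singles out one $g$-orbit (one face of the tetrahedron), so only an orientation-preserving symmetry fixing that face --- a cyclic group of order $3$, with two vertex orbits --- acts directly. Transitivity can be restored by composing each quiver symmetry with the arrow-rescaling isomorphism $\Lambda(\bS,a,b,c,d)\cong\Lambda(\bS,abcd,1,1,1)$, but this needs to be said. Second, and more importantly, the deferred syzygy verification is exactly where the hypothesis $\lambda\neq 1$ must enter: for $\lambda=1$ the analogous computation fails (one gets $\dim_K(\varphi\Lambda\cap\psi\Lambda)=4$ rather than $3$, and by Proposition~\ref{prop:6.4} the singular tetrahedral algebra has no periodic simple modules at all). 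Your write-up treats the computation as routine without flagging that it is precisely the non-singularity that makes $\Omega^4 S_i\cong S_i$ come out; any complete version of your argument must exhibit this dependence explicitly.
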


\begin{proof}
It follows from Lemma~\ref{lem:6.2} that
$\Lambda(\bS,\lambda)$ is isomorphic to
the trivial extension algebra $\T(B(\lambda))$.
We identify $\Lambda(\bS,\lambda)$ and $\T(B(\lambda))$.
Since $\lambda \in K \setminus \{ 0,1 \}$,
the algebra $B(\lambda)$ is a tubular algebra
of tubular type $(2,2,2,2)$, and  $\T(B(\lambda))$ is the orbit algebra
$\widehat{B(\lambda)} / (\nu_{\widehat{B(\lambda)}})$.
Then, applying the results of \cite[Section~3]{NeS},
we conclude that $\T(B(\lambda))$
is an algebra of polynomial growth and the six
pairwise nonisomorphic indecomposable
projective-injective
$\T(B(\lambda))$-modules
$P_1, P_2, P_3, P_4, P_5, P_6$,
at the vertices
$1,2,3,4,5,6$,
lie in six pairwise different components
$\cC_1,\cC_2,\cC_3,\cC_4,\cC_5,\cC_6$
of $\Gamma_{\T(B(\lambda))}$
such that their stable parts
$\cC^s_1,\cC^s_2,\cC^s_3,\cC^s_4,\cC^s_5,\cC^s_6$
are stable tubes of rank $2$
and do not contain simple modules.
Further, since $\T(B(\lambda))$ is a symmetric
algebra, the six pairwise nonisomorphic
simple $\T(B(\lambda))$-modules
$S_1, S_2, S_3, S_4, S_5, S_6$,
at the vertices
$1,2,3,4,5,6$,
are the socles of the modules
$P_1, P_2, P_3, P_4, P_5, P_6$,
respectively.
Observe also that $P_i/S_i$ belongs
to $\cC_i$,
for any $i\in\{1,2,3,4,5,6\}$.
Then
$S_i = \Omega_{\T(B(\lambda))}(P_i/S_i)$
belongs to a component $\cT_i$ such that
$\cT^s_i = \Omega_{\T(B(\lambda))}(\cC^s_i)$,
for any $i\in\{1,2,3,4,5,6\}$.
Hence, we obtain that
$\cT_1,\cT_2,\cT_3,\cT_4,\cT_5,\cT_6$
are pairwise different stable tubes
of rank $2$ containing the simple modules
$S_1, S_2, S_3, S_4, S_5, S_6$,
respectively.
We also note that
$\tau_{\T(B(\lambda))} = \Omega^2_{\T(B(\lambda))}$,
because $\T(B(\lambda))$ is a symmetric algebra.
Therefore, the simple modules
$S_1, S_2, S_3, S_4, S_5, S_6$
are periodic modules of periodic $4$.

We will prove now that $T(B(\lambda))$
is  periodic as an algebra,  of period $4$.
Consider the cyclic group $H$ of automorphisms
of the algebra
$\T(B(\lambda)) = \Lambda(\bS,\lambda)$
generated by the automorphism $h$ given
by the following cyclic rotations
of the vertices and arrows of the quiver
$Q(\bS,\vec{T})$ from Example~\ref{ex:6.1}
\begin{align*}
 (1 \ 6 \ 3),
 &&
 (4 \ 2 \ 5),
 &&
 (\nu \ \mu \ \alpha),
 &&
 (\beta \ \varepsilon \ \eta),
 &&
 (\gamma \ \varrho \ \xi),
 &&
 (\delta \ \omega \ \sigma).
\end{align*}
Then $H$ is of order $3$ and acts freely on the set
of primitive idempotents of $\Lambda(\bS,\lambda)$
corresponding to the vertices of $Q(\bS,\vec{T})$.
Further, the orbit algebra
$\Lambda(\bS,\lambda)/H = \T(B(\lambda))/H$
is isomorphic to the algebra
$\Lambda'_3(\lambda)$
from \cite[Section~6]{BES4},
given by the quiver
\[
 \xymatrix{
  1 \ar@(dl,ul)[]^{\alpha} \ar@<+.5ex>[r]^{\sigma}
   & 2 \ar@<+.5ex>[l]^{\gamma} \ar@(ur,dr)[]^{\beta}
 }
\]
and the relations
\begin{align*}
\alpha^2 &= \sigma \gamma, &
\gamma \sigma &= \lambda \beta^2, &
\gamma \alpha &= \beta \gamma, &
\alpha \sigma &= \sigma \beta.
\end{align*}
We note that the above relations imply the zero relations
\begin{align*}
 \gamma \alpha^2 &= 0,
 &
 \alpha^2 \sigma &= 0,
 &
 \sigma \beta^2 &= 0,
 &
 \beta^2 \gamma &= 0,
 \\
 \alpha \sigma \beta &= 0,
 &
 \beta \gamma \alpha &= 0,
 &
 \sigma \gamma \sigma &= 0,
 &
 \gamma \sigma \gamma &= 0,
\end{align*}
because $\lambda \in K \setminus \{0,1\}$.
It has been proved in \cite[Proposition~7.1]{BES4}
that $\Lambda'_3(\lambda)$ is a periodic algebra
of period $4$.
Since the order of $H$ is coprime to $4$,
it follows from \cite[Theorem~3.7]{Du1} that
$\Lambda(\bS,\lambda) = \T(B(\lambda))$
is also a periodic algebra of period $4$.
\end{proof}

\begin{proposition}
\label{prop:6.4}
The algebra $\Lambda(\bS,1)$
is a tame algebra
of non-polynomial growth
and there exist three pairwise different components
$\cC_1,\cC_3,\cC_5$ in $\Gamma_{\Lambda(\bS,1)}$
having the following properties:
\begin{enumerate}[(i)]
 \item
  For each $r \in \{1,3,5\}$, $\cC_r$ is isomorphic to the stable
  translation quiver $\bZ \bD_{\infty}$.
 \item
  For each $r \in \{1,3,5\}$, the component $\cC_r$ contains a full
  translation subquiver of the form
  \[
     \xymatrix@R=1.2pc{
        \!\!\!\!\!\!\! \tau_{\Lambda(\bS,1)} S_r \ar[dr] && S_r \\
         & M_r \ar[ur] \ar[dr] \\
        \!\!\!\!\!\!\! \tau_{\Lambda(\bS,1)} S_{r+1} \ar[ur] && S_{r+1}
     }
  \]
  where $S_r$ and $S_{r+1}$ are the simple $\Lambda(\bS,1)$-modules
  at the vertices $r$ and $r+1$.
\end{enumerate}
In particular,  $\Lambda(\bS,1)$ does not have a simple periodic
module, 
and hence $\Lambda(\bS,1)$ is not a periodic algebra.
\end{proposition}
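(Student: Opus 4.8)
The plan is to leverage the identification $\Lambda(\bS,1)\cong\T(B(1))$ of Lemma~\ref{lem:6.2}, together with the observation recorded after that lemma that $B(1)$ is the tame minimal non-polynomial growth algebra $(30)$ of \cite{NoS}. Tameness of $\Lambda(\bS,1)$ is already in hand from Proposition~\ref{prop:5.8}(iv), so the first point is to rule out polynomial growth, i.e.\ to separate $\Lambda(\bS,1)$ from the tubular situation of Proposition~\ref{prop:6.3}. Here the decisive feature is that at $\lambda=1$ the two modules $R_\lambda$ and $R_1$ used to construct $B(\lambda)$ coincide, so $B(1)$ is \emph{not} a tubular algebra of type $(2,2,2,2)$ but rather minimal of non-polynomial growth; since trivial extensions of polynomial growth arise (in this range, via \cite{NeS}) precisely from tubular algebras, $\T(B(1))$ must be of non-polynomial growth. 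Alternatively, non-polynomial growth follows a posteriori from the existence of the $\bZ\bD_\infty$-components constructed below, since a tame self-injective algebra of polynomial growth has no such component in its stable Auslander--Reiten quiver.

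The substantial part is the description of the components $\cC_1,\cC_3,\cC_5$. I would produce them via the repetitive category: by the standard theory (see \cite{Sk2}) one has $\T(B(1))\cong\widehat{B(1)}/(\nu_{\widehat{B(1)}})$, and the attached Galois covering $\widehat{B(1)}\to\T(B(1))$ with infinite cyclic group $(\nu_{\widehat{B(1)}})$ allows Auslander--Reiten components of $\widehat{B(1)}$ to be pushed down to components of $\T(B(1))$. Using the known structure of the Auslander--Reiten quiver of the minimal non-polynomial growth algebra $B(1)$ and of $\widehat{B(1)}$, I would locate, for each $r\in\{1,3,5\}$, a component of shape $\bZ\bD_\infty$ whose push-down is $\cC_r$, and verify that the simple modules $S_r,S_{r+1}$, their translates $\tau S_r,\tau S_{r+1}$, and a common middle term $M_r$ occupy the branch of the $\bD_\infty$ exactly as displayed in (ii). Concretely, the cleanest check is to compute the almost split sequences ending at $S_r$ and at $S_{r+1}$ and to show that they share the single indecomposable middle term $M_r$; this shared middle term is precisely what forces a fork, hence a $\bZ\bD_\infty$-component rather than a stable tube.

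For the closing assertions, note that in a component of type $\bZ\bD_\infty$ every $\tau$-orbit is infinite; in particular each simple $S_r$ has infinite $\tau_{\Lambda(\bS,1)}$-orbit. Since $\Lambda(\bS,1)$ is symmetric, $\tau_{\Lambda(\bS,1)}=\Omega^2_{\Lambda(\bS,1)}$, so no simple module is $\Omega$-periodic; as the simples are non-projective (the algebra is self-injective but not semisimple), $\Lambda(\bS,1)$ cannot be periodic, because a periodic algebra of period $n$ satisfies $\Omega^n M\cong M$ for every indecomposable non-projective module $M$, in particular for every simple module.

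The main obstacle is the middle step: confirming rigorously that the relevant components are $\bZ\bD_\infty$ — and not $\bZ\bA_\infty^\infty$ or a tube — with the simples sitting at the fork. This requires either a careful covering-theoretic transport of the Auslander--Reiten quiver of $\widehat{B(1)}$ (controlling that the push-down functor behaves well here) or a direct almost-split-sequence analysis around the six simple modules; in both routes the coincidence $R_\lambda=R_1$ at $\lambda=1$ is exactly what degenerates the rank-$2$ tubes of the tubular case into the $\bD_\infty$-shaped components.
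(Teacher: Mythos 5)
Your overall strategy is the paper's: identify $\Lambda(\bS,1)\cong\T(B(1))=\widehat{B(1)}/(\nu_{\widehat{B(1)}})$ via Lemma~\ref{lem:6.2}, use that $B(1)$ is the pg-critical algebra $(30)$ of \cite{NoS}, and transport Auslander--Reiten components through the push-down functor. Your closing argument (infinite $\tau$-orbits in $\bZ\bD_\infty$, $\tau=\Omega^2$ for symmetric algebras, hence no periodic simples and no periodicity) is correct. For non-polynomial growth the paper's route is more economical than yours: $B(1)$ is itself tame of non-polynomial growth by \cite[Proposition~3.1]{NoS} and is a quotient of $\T(B(1))$, so $\T(B(1))$ inherits non-polynomial growth directly; your appeal to ``trivial extensions of polynomial growth arise precisely from tubular algebras'' invokes a classification that is heavier than needed here.

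The genuine gap is in the middle step, exactly where you flag the obstacle. The components of $\Gamma_{\widehat{B(1)}}$ that \cite[Theorem~6.1]{NoS} actually describes are the three components $\cD_2,\cD_4,\cD_6$ containing the indecomposable projectives $P'_r$, $\rad P'_r$, $P'_r/S_r^{*}$ and a common middle term $H'_r$; these components contain \emph{no} simple modules. So your plan to ``locate a component whose push-down is $\cC_r$ and verify that $S_r,S_{r+1}$ sit at the fork'' cannot be carried out as stated: the simples do not lie in any pushed-down component one controls from \cite{NoS}, and for a non-polynomial growth trivial extension the push-down functor need not be dense, so one cannot assume every component of $\Gamma_{\T(B(1))}$ arises as a push-down. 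The missing device is the syzygy shift: after pushing down to get $\cC_2,\cC_4,\cC_6$ (with $\rad P_r$, $P_r/S_r$, $H_r$ at the fork), one defines $\cC_r$ for $r\in\{1,3,5\}$ by $\cC_r^s=\Omega_{\T(B(1))}(\cC_{r+1}^s)$. Since $\Omega$ is an auto-equivalence of the stable category commuting with $\tau=\Omega^2$, it carries the displayed mesh to the one in (ii), with $\Omega(P_r/S_r)=S_r$ and $M_r=\Omega(H_r)$; this is also why $\cC_1,\cC_3,\cC_5$ are entirely stable and of shape $\bZ\bD_\infty$. Your alternative of computing the almost split sequences ending at the six simples directly would in principle close the gap, but it is a substantial computation that the $\Omega$-shift renders unnecessary.
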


\begin{proof}
We identify
$\Lambda(\bS,1) = \T(B(1)) = \widehat{B(1)} / (\nu_{\widehat{B(1)}})$
using  Lemma~\ref{lem:6.2}.
Consider the Galois covering
$F : \widehat{B(1)} \to \widehat{B(1)} / (\nu_{\widehat{B(1)}}) = \T(B(1))$
and the push-down functor
$F_{\lambda} : \mod \widehat{B(1)} \to \mod \T(B(1))$
induced by $F$.
It follows from \cite[Theorem~3.6]{Ga} that $F_{\lambda}$
preserves the projective modules and almost split sequences.
Recall that $B(1)$ is the pg-critical algebra
$(30)$ from \cite[Theorem~3.2]{NoS},
and hence is a tame algebra of non-polynomial growth,
by \cite[Proposition~3.1]{NoS}.
Then the trivial extension algebra $\T(B(1))$ is of non-polynomial growth,
because $B(1)$ is a quotient algebra of $\T(B(1))$.
Then, applying Proposition~\ref{prop:5.8},
we conclude that $\Lambda(\bS,1) = \T(B(1))$
is a tame algebra of non-polynomial growth.

Consider now the full convex subcategory $B(1)^{(2)}$
of $\widehat{B(1)}$
presented in the proof of Lemma~\ref{lem:6.2}.
For each $i\in\{1,2,3,4,5,6\}$,
we denote
by $S_i^{*}$ the simple $\widehat{B(1)}$-module at the vertex $i$,
and
by $P'_i$ the indecomposable projective $\widehat{B(1)}$-module at the vertex $i'$.
Then, for each $i\in\{1,2,3,4,5,6\}$,
$S_i = F_{\lambda} (S_i^{*})$ is the simple $\T(B(1))$-module and
$P_i = F_{\lambda} (P'_i)$ the indecomposable projective $\T(B(1))$-module
the vertex $i$.
Applying \cite[Theorem~6.1]{NoS},
we conclude that the
Auslander-Reiten quiver $\Gamma_{\widehat{B(1)}}$ of $\widehat{B(1)}$
admits three pairwise different components $\cD_2,\cD_4,\cD_6$
having the following properties:
\begin{itemize}
 \item
  For each $r \in \{1,3,5\}$, the stable part $\cD_{r+1}^s$ of $\cD_{r+1}$
  is isomorphic to the translation quiver $\bZ \bD_{\infty}$.
 \item
  For each $r \in \{1,3,5\}$, the component $\cD_{r+1}$
  does not contain a simple module.
 \item
  For each $r \in \{1,3,5\}$, the component $\cD_{r+1}$ contains a full
  translation subquiver of the form
  \[
     \xymatrix@R=1.2pc{
         & P'_r \ar[dr] \\
        \rad P'_r \ar[ur] \ar[dr] &&  P'_r/S_r^{*} \\
         & H'_r \ar[ur] \ar[dr] \\
        \rad P'_{r+1} \ar[ur] \ar[dr] &&  P'_{r+1}/S_{r+1}^{*} \\
         & P'_{r+1} \ar[ur]
     }
  \]
  where
  $\rad P'_r/S_r^{*} = H_r = \rad P'_{r+1}/S_{r+1}^{*}$.
\end{itemize}

Then, applying the push-down functor
$F_{\lambda} : \mod \widehat{B(1)} \to \mod \T(B(1))$,
we conclude that the
Auslander-Reiten quiver $\Gamma_{\T(B(1))}$ of $\T(B(1))$
admits three pairwise different components
$\cC_2=F_{\lambda} (\cD_2)$,
$\cC_4=F_{\lambda} (\cD_4)$,
$\cC_6=F_{\lambda} (\cD_6)$,
having the following properties:
\begin{itemize}
 \item
  For each $r \in \{1,3,5\}$, the stable part $\cC_{r+1}^s$ of $\cC_{r+1}$
  is isomorphic to the translation quiver $\bZ \bD_{\infty}$.
 \item
  For each $r \in \{1,3,5\}$, the component $\cC_{r+1}$
  does not contain a simple module.
 \item
  For each $r \in \{1,3,5\}$, the component $\cC_{r+1}$ contains a full
  translation subquiver of the form
  \[
     \xymatrix@R=1.2pc{
         & P_r \ar[dr] \\
        \rad P_r \ar[ur] \ar[dr] &&  P_r/S_r \\
         & H_r \ar[ur] \ar[dr] \\
        \rad P_{r+1} \ar[ur] \ar[dr] &&  P_{r+1}/S_{r+1} \\
         & P_{r+1} \ar[ur]
     }
  \]
  where
  $\rad P_r/S_r = H_r = \rad P_{r+1}/S_{r+1}$.
\end{itemize}
Observe that  $\cC_2,\cC_4,\cC_6$ are all components
of $\Gamma_{\T(B(1))}$ containing projective modules,
and do not contain simple modules.
For each $r \in \{1,3,5\}$, let $\cC_r$ be the component
of $\Gamma_{\T(B(1))}$
such that $\cC_r^s = \Omega_{\T(B(1))}(\cC_{r+1}^s)$.
Then, for each $r \in \{1,3,5\}$, $\cC_r^s$
is isomorphic to the translation quiver $\bZ \bD_{\infty}$,
and $\cC_{r}$
contains a full translation subquiver of the form
  \[
     \xymatrix@R=1.2pc{
        \!\!\!\!\!\!\! \tau_{\T(B(1))} S_r \ar[dr] && S_r \\
         & M_r \ar[ur] \ar[dr] \\
        \!\!\!\!\!\!\! \tau_{\T(B(1))} S_{r+1} \ar[ur] && S_{r+1}
     }
  \]
where $M_r = \Omega_{\T(B(1))}(H_r)$.
Clearly, $\cC_1,\cC_3,\cC_5$ are pairwise different components
of $\Gamma_{\T(B(1))}$,
and different from the components $\cC_2,\cC_4,\cC_6$.
In particular, we conclude that
$\cC_1=\cC_1^s$,
$\cC_3=\cC_3^s$,
$\cC_5=\cC_5^s$.
\end{proof}

We note also the following common property of all algebras
$\Lambda(\bS,\lambda)$, $\lambda \in K^{*}$.

\begin{proposition}
\label{prop:6.5}
Let $\lambda \in K^{*}$.
Then all uniserial modules of length $2$ in
$\mod \Lambda(\bS,\lambda)$ are periodic of period $4$
and form the mouth of six pairwise different
stable tubes of rank $2$ in $\Gamma_{\Lambda(\bS,\lambda)}$.
\end{proposition}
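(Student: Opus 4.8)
The plan is to work directly with the explicit module structure of the indecomposable projectives of $\Lambda=\Lambda(\bS,\lambda)$, computing syzygies and almost split sequences by hand, and deliberately \emph{not} to appeal to any global periodicity of the algebra, since that is unavailable when $\lambda=1$ (Proposition~\ref{prop:6.4}). First I would observe that the uniserial modules of length $2$ are exactly the modules $U_{\alpha}$, one for each of the twelve arrows $\alpha$ of the tetrahedral quiver $Q(\bS,\vec{T})$: here $U_{\alpha}$ has top $S_{s(\alpha)}$ and socle $S_{t(\alpha)}$, with the second arrow $\bar{\alpha}$ out of $s(\alpha)$ acting as zero. Since every path of length $\geq 2$ annihilates $U_{\alpha}$, all defining relations hold automatically, so these modules are well defined; as $Q(\bS,\vec{T})$ has no loops or multiple arrows, there is precisely one for each arrow. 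By Corollary~\ref{cor:5.6} (with all $m_{\bullet}=1$ and all $n_{\bullet}=3$) each $P_i=e_i\Lambda$ is six-dimensional, of ``diamond'' shape, with two uniserial arms of length $3$ (described by Lemma~\ref{lem:5.5}) meeting in the socle $S_i$.

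Next I would compute the syzygies. Taking the projective cover $P_{s(\alpha)}\twoheadrightarrow U_{\alpha}$, the commutativity relations of type~(1) (via $\bar{\alpha}f(\bar{\alpha})=c_{\alpha}\alpha g(\alpha)$, Lemma~\ref{lem:5.3}) force the kernel to be the cyclic submodule $\bar{\alpha}\Lambda$, which is again a four-dimensional diamond with simple top $S_{t(\bar{\alpha})}$ and socle $S_{s(\alpha)}$; thus $\Omega_{\Lambda}(U_{\alpha})\cong\bar{\alpha}\Lambda$. Iterating, the projective cover $P_{t(\bar{\alpha})}\twoheadrightarrow\bar{\alpha}\Lambda$ has two-dimensional kernel, and a short calculation with the relations identifies it with a length-$2$ uniserial module $U_{\alpha'}$ for an explicitly determined arrow $\alpha'$ (for instance one finds $\Omega_{\Lambda}^2(U_{\delta})\cong U_{\varrho}$ and $\Omega_{\Lambda}^2(U_{\varrho})\cong U_{\delta}$). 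Since $\Lambda$ is symmetric, $\tau_{\Lambda}=\Omega_{\Lambda}^2$, so $\tau_{\Lambda}U_{\alpha}\cong U_{\alpha'}$. Performing this for all twelve arrows shows that $\Omega_{\Lambda}^2$ permutes $\{U_{\alpha}\}$ as a fixed-point-free involution, grouping them into six $\tau_{\Lambda}$-orbits of length $2$; in particular $\tau_{\Lambda}^2U_{\alpha}\cong U_{\alpha}$ while $\tau_{\Lambda}U_{\alpha}\not\cong U_{\alpha}$, so each $U_{\alpha}$ is $\Omega_{\Lambda}$-periodic of period exactly $4$. None of these computations involve the actual value of the nonzero parameter, so they hold uniformly for every $\lambda\in K^{*}$.

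It remains to place the $U_{\alpha}$ on mouths of rank-$2$ tubes. For this I would compute the almost split sequence $0\to\tau_{\Lambda}U_{\alpha}\to E_{\alpha}\to U_{\alpha}\to 0$: its middle term has, as composition factors, one copy each of $S_{s(\alpha)},S_{t(\alpha)},S_{s(\alpha')},S_{t(\alpha')}$, hence is supported on four vertices, and one verifies that the unique non-split such extension is a brick (with $\End_{\Lambda}(E_{\alpha})=K$), so indecomposable. By the structure theory of stable tubes, a module whose terminating almost split sequence has indecomposable middle term is quasi-simple, i.e. lies on the mouth of its tube; combined with $\tau_{\Lambda}^2U_{\alpha}\cong U_{\alpha}\not\cong\tau_{\Lambda}U_{\alpha}$ this forces that tube to have rank $2$. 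Finally, since the mouth of a rank-$2$ tube is a single $\tau_{\Lambda}$-orbit of two modules and our six orbits are pairwise disjoint, the six pairs $\{U_{\alpha},\tau_{\Lambda}U_{\alpha}\}$ constitute the mouths of six pairwise different stable tubes of rank $2$.

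The main obstacle is precisely this uniformity in $\lambda$. For $\lambda\neq 1$ one could abbreviate part of the argument through Propositions~\ref{prop:6.3} and~\ref{prop:2.3}: the algebra is then periodic and $\Gamma^{s}_{\Lambda}$ consists entirely of stable tubes. For $\lambda=1$, however, $\Lambda(\bS,1)$ is not periodic and $\Gamma^{s}_{\Lambda}$ contains components of type $\bZ\bD_{\infty}$ carrying the simple modules (Proposition~\ref{prop:6.4}), so neither the periodicity nor the tube-membership of the length-$2$ uniserials can be read off from a global property of the algebra. Establishing that these small modules avoid the $\bZ\bD_{\infty}$ components and genuinely sit on tube mouths is the delicate point, and it is exactly the explicit identification of $\Omega_{\Lambda}^2(U_{\alpha})$ and of the middle terms $E_{\alpha}$ that does the real work.
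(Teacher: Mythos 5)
Your proposal is correct and follows essentially the same route as the paper: both identify the twelve length-two uniserial modules with the arrows of $Q(\bS,\vec{T})$ and compute directly that $\Omega^2_{\Lambda}(U_{\theta}) \cong U_{fgf(\theta)}$, so that $\Omega^2_{\Lambda}=\tau_{\Lambda}$ acts on them as a fixed-point-free involution giving six disjoint $\tau_{\Lambda}$-orbits of size two, uniformly in $\lambda\in K^{*}$. The only difference is that you justify the mouth position by checking indecomposability of the middle term of the almost split sequence, whereas the paper infers it directly from the $\tau_{\Lambda}$-period being $2$; your extra step is a harmless (indeed welcome) strengthening rather than a different method.
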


\begin{proof}
For each arrow $\theta$ in the quiver $Q(\bS,\vec{T})$
of $\Lambda(\bS,\lambda)$ we denote by $U_{\theta}$
the uniserial module of length $2$ in
$\mod \Lambda(\bS,\lambda)$ whose top is the simple
module $S_{s(\theta)}$ at $s(\theta)$ and the socle
is the simple module $S_{t(\theta)}$ at $t(\theta)$.
One checks directly that
$\Omega^2_{\Lambda(\bS,\lambda)}(U_{\theta}) = U_{f g f \theta}$.
Moreover, we have $f g f \theta \neq \theta$,
and $f g f^2 g f \theta = \theta$.
Hence, the uniserial modules
$U_{\theta}$ and $U_{f g f \theta}$ are periodic of period $4$
and form the mouth of a stable tube of rank $2$ in
$\Gamma_{\Lambda(\bS,\lambda)}$ (recall that 
$\tau_{\Lambda(\bS,\lambda)}=\Omega^2_{\Lambda(\bS,\lambda)}$).
Observe also that every uniserial module of length $2$ in
$\mod \Lambda(\bS,\lambda)$ is of the form $U_{\theta}$
for some arrow $\theta$ in $Q(\bS,\vec{T})$.
In fact, for each arrows $\theta$ in $Q(\bS,\vec{T})$
the uniserial module $U_{\theta}$ is isomorphic
to the module $\pi g(\pi) \Lambda(\bS,\lambda)$
with $\pi = g^{-2}(\theta)$,
described in Lemma~\ref{lem:5.5}.
\end{proof}

The Gabriel quiver
of a tetrahedral algebra has the following characterization:

\begin{lemma}
\label{lem:6.6}
Let $(Q,f)$ be a triangulation quiver with at least
three vertices.
The following statements are equivalent:
\begin{enumerate}[(i)]
 \item
  $(Q,f)$ is the tetrahedral triangulation quiver.
 \item
  For any arrow $\alpha$ in $Q_1$, we have $n_{\alpha} = 3$.
 \item
  $g^3$ is the identity on $Q_1$.
 \item
  There is an arrow $\beta$ in $Q_1$ such that
  $n_{\beta} = 3$,
  $n_{\bar{\beta}} = 3$,
  $n_{f(\beta)} = 3$,
  $n_{f(\bar{\beta})} = 3$.
\end{enumerate}
\end{lemma}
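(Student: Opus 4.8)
The plan is to route all four equivalences through statement (ii): I would prove $(\mathrm{i})\Rightarrow(\mathrm{ii})$, $(\mathrm{ii})\Rightarrow(\mathrm{iii})$, $(\mathrm{iii})\Rightarrow(\mathrm{ii})$, $(\mathrm{ii})\Rightarrow(\mathrm{iv})$, $(\mathrm{iv})\Rightarrow(\mathrm{ii})$ and finally $(\mathrm{ii})\Rightarrow(\mathrm{i})$. Three of these are immediate. For $(\mathrm{i})\Rightarrow(\mathrm{ii})$ one simply reads off Example~\ref{ex:6.1}, where the four $g$-orbits all have length $3$. For $(\mathrm{ii})\Rightarrow(\mathrm{iii})$, if $n_\alpha=3$ for every $\alpha$ then $g^{3}$ fixes each arrow. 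For $(\mathrm{ii})\Rightarrow(\mathrm{iv})$ any arrow $\beta$ works.

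Next I would prove $(\mathrm{iii})\Rightarrow(\mathrm{ii})$. From $g^{3}=\mathrm{id}$ every $g$-orbit has length $1$ or $3$, so it is enough to exclude fixed points. If $g(\alpha)=\alpha$ then $\overline{f(\alpha)}=\alpha$, i.e.\ $f(\alpha)=\bar\alpha$; combined with $s(f(\alpha))=t(\alpha)$ this forces $s(\alpha)=t(\alpha)$, so $\alpha$ is a loop lying in a self-folded $f$-orbit $\alpha,\bar\alpha,\gamma$ with $\bar\alpha\colon v\to w$ and $\gamma\colon w\to v$. Computing $g$ on the two arrows issuing from $w$ and imposing $g^{3}=\mathrm{id}$ forces the second arrow at $w$ to be a loop as well, so the connected quiver collapses onto $\{v,w\}$, contradicting $|Q_0|\ge 3$. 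Hence no $g$-orbit has length $1$, and all have length $3$.

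The first substantial step is $(\mathrm{iv})\Rightarrow(\mathrm{ii})$. Put $U=\cO(\beta)\cup\cO(\bar\beta)\cup\cO(f\beta)\cup\cO(f\bar\beta)$; since each of these four orbits has length $3$ by hypothesis and $n$ is constant on $g$-orbits, it suffices to show $U=Q_1$. As $U$ is visibly $g$-invariant and $f(\alpha)=\overline{g(\alpha)}$, the set $U$ is closed under $f$ as soon as it is closed under the involution $\alpha\mapsto\bar\alpha$; it is then invariant under the group generated by $f$ and this involution, which acts transitively on $Q_1$ because $Q$ is connected, forcing $U=Q_1$. Closure under the involution is the heart of the matter: Lemma~\ref{lem:5.3}(i) specialized to the seeds (using $n_\beta=n_{\bar\beta}=3$) yields $f^{2}\beta=g^{2}\bar\beta$ and $f^{2}\bar\beta=g^{2}\beta$, whence $g(f\bar\beta)=\overline{f^{2}\bar\beta}=\overline{g^{2}\beta}=f(g\beta)$. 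This identity places the only unclear member $\overline{g^{2}\beta}$ of $\overline{\cO(\beta)}$ into $\cO(f\bar\beta)$, while $\bar\beta$ and $\overline{g\beta}=f\beta$ lie in $U$ trivially; applying the same identity to $\bar\beta$ and to the arrow $f\beta$ (which again satisfies the hypotheses of (iv) with all four partners already in $U$) closes the remaining orbits.

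Finally, for $(\mathrm{ii})\Rightarrow(\mathrm{i})$ I would realize $(Q,f)=Q(S,\vec T)$ by Theorem~\ref{th:4.11} and count. Condition (ii) rules out all loops — a self-folded loop has $n=1$, and a boundary loop collapses a component to two vertices exactly as above — so every triangle is unfolded, every edge borders two triangles, and every vertex of $T$ has valency $3$. Writing $V,E,F$ for the numbers of vertices, edges and triangles of $T$ (so $E=|Q_0|$, $F$ the number of $f$-orbits, $V$ the number of $g$-orbits), this gives $3F=2E$ and $3V=2E$, hence $\chi(S)=V-E+F=E/3$. Since $\chi(S)\le 2$ and $3\mid E$, either $E=3$ (a three-vertex quiver, excluded by inspecting Examples~\ref{ex:4.3}--\ref{ex:4.5}) or $E=6$, $\chi(S)=2$, the sphere with $V=F=4$; the unique such triangulation is the tetrahedron, so $(Q,f)$ is the tetrahedral triangulation quiver. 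I expect the main obstacle to be the involution-closure bookkeeping in $(\mathrm{iv})\Rightarrow(\mathrm{ii})$ together with making precise the dictionary (number of $g$-orbits $=$ number of vertices of $T$, orbit length $=$ valency) that feeds the Euler count; a self-contained alternative to this last step is to note that $f$ has order $3$, the bar involution order $2$, and $g=\bar f$ order $3$, so the group they generate is a quotient of the $(2,3,3)$ triangle group $A_4$ acting freely and transitively, again forcing $|Q_1|=12$.
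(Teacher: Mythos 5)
Your reduction of everything to (ii) and your proof of $(\mathrm{iv})\Rightarrow(\mathrm{ii})$ is a genuinely different route from the paper, and that part is sound: the paper instead proves $(\mathrm{iv})\Rightarrow(\mathrm{i})$ directly by a local analysis (showing $\beta,\bar\beta,f(\beta),f(\bar\beta)$ are not loops, lie on no $2$-cycles and are not double arrows, whence $2$-regularity forces the tetrahedral quiver), whereas you use Lemma~\ref{lem:5.3}(i) to show the union $U$ of the four $g$-orbits is closed under $\bar{\ }$, hence under $f=\bar{\ }\circ g$, and then invoke transitivity of $\langle f,\bar{\ }\rangle$ on $Q_1$ (which does hold: a connected quiver with in-degree $=$ out-degree $2$ at every vertex is strongly connected, so the forward-closure argument applies). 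Your $(\mathrm{iii})\Rightarrow(\mathrm{ii})$ is essentially the paper's argument. The trade-off is that your route buys a slicker $(\mathrm{iv})\Rightarrow(\mathrm{ii})$ at the cost of having to prove $(\mathrm{ii})\Rightarrow(\mathrm{i})$, which the paper never needs in this form --- and that is where your proof has a genuine gap.

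The Euler-characteristic argument rests on the dictionary ``number of $g$-orbits $=$ number of vertices of $T$, orbit length $=$ valency,'' and this is simply false for the directed triangulated surface produced by Theorem~\ref{th:4.11}: that theorem makes no claim of coherence of the orientation, and the dictionary fails for non-coherent orientations. Example~\ref{ex:4.4} is an explicit counterexample (the sphere triangulation with $V=3$ vertices of valency $2$ yields, for one choice of $\vec T$, a single $g$-orbit of length $6$), as is Example~\ref{ex:6.7} (the tetrahedral triangulation with one triangle reversed gives $g$-orbits of lengths $9$ and $3$). So $V-E+F$ computed with $V=|\cO(g)|$ is not $\chi(S)$ for the surface you obtained; to salvage this you would have to build a \emph{new} closed surface from $(Q,f,g)$ in which the $g$-orbits are the vertices by construction (a ribbon-graph type argument not available in the paper), and even then the final step ``the unique triangulation of $\bS$ with $V=F=4$, $E=6$ and all valencies $3$ is the tetrahedron'' needs proof, since the paper's triangulations admit multiple edges, so the $1$-skeleton is not automatically $K_4$. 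Your $A_4$ alternative has the same incompleteness: freeness of the action is asserted but only follows \emph{after} one knows $|Q_1|=12$ (transitivity alone does give $|Q_1|\mid 12$, hence $|Q_0|\in\{3,6\}$, which is a good start), and identifying the resulting quiver with the tetrahedral one from the pair $(f,\bar{\ })\subset A_4$ is not carried out. Either route can probably be completed, but as written $(\mathrm{ii})\Rightarrow(\mathrm{i})$ is not proved, and with it the cycle of implications does not close.
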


\begin{proof}
The implications
(i) $\Rightarrow$ (ii) $\Rightarrow$ (iii)
and (ii) $\Rightarrow$ (iv)
are obvious.
We will prove first that (iii) implies (ii).

Assume that $g^3$ is the identity on $Q_1$.
Suppose that $Q_1$ contains a loop
\[
   \xymatrix{ a \ar@(dl,ul)[]^{\alpha}}
   .
\]
Since $Q$ is a $2$-regular connected quiver with at least
three vertices, and since $\alpha$ belongs to a 3-cycle of
either $f$ or $g$,  it contains a subquiver
\[
   \xymatrix{ a \ar@(dl,ul)[]^{\alpha} \ar@/^1.5ex/[r]^{\beta} & b \ar@/^1.5ex/[l]^{\gamma}}
\]
and one of the two cases hold:
\begin{enumerate}[(1)]
 \item
  $f(\alpha) = \alpha$,
  $g(\alpha) = \beta$,
  $g(\beta) = \gamma$,
  $g(\gamma) = \alpha$;
 \item
  $f(\alpha) = \beta$,
  $f(\beta) = \gamma$,
  $f(\gamma) = \alpha$,
  $g(\alpha) = \alpha$.
\end{enumerate}
In  case (1), we obtain $f(\gamma) = \beta$, and hence
$f(\beta) = f^2(\gamma)$, so this is a loop at $b$ since
$f^3(\gamma) = \gamma$.
In  case (2), we obtain $g(\gamma) = \beta$, and hence
$g(\beta) = g^2(\gamma)$ which is again   a loop at $b$ since
$g^3(\gamma) = \gamma$.
Thus, in the both cases,
$Q$ is a quiver with two vertices, a contradiction.
Hence, $Q$ has no loops, and the statement (ii) holds.

It remains to show
that (iv) implies (i).
Assume that $\beta$ is an arrow in $Q_1$ such that
$n_{\beta} = 3$,
$n_{\bar{\beta}} = 3$,
$n_{f(\beta)} = 3$,
$n_{f(\bar{\beta})} = 3$.
We prove  statement (i) in several steps.

We first claim that
$\beta$, 
$\bar{\beta}$,
$f(\beta)$,
$f(\bar{\beta})$
are not loops.
Suppose that $\beta$ is a loop. 
Then $Q$ contains
a subquiver of the form
\[
   \xymatrix{ a \ar@(dl,ul)[]^{\beta} \ar@/^1.5ex/[r]^{\gamma} & b \ar@/^1.5ex/[l]^{\sigma}}
\]
with $a \neq b$,
$g(\beta) = \gamma$,
$g(\gamma) = \sigma$,
$g(\sigma) = \beta$.
Then
$f(\beta) = \beta$
and
$f(\sigma) = \gamma$. Since
$f^3(\sigma) = \sigma$
we have
$f(\gamma) = f^2(\sigma)$
and this is a loop at $b$, and consequently
$Q$ has only two vertices, a contradiction.
Similarly, we conclude that
$\bar{\beta}$,
$f(\beta)$,
$f(\bar{\beta})$
are not loops.

We claim now that
$\beta$, 
$\bar{\beta}$,
$f(\beta)$,
$f(\bar{\beta})$
do not belong to $2$-cycles.
Suppose that $\beta$ belongs to a $2$-cycle
\[
   \xymatrix{ a \ar@/^1.5ex/[r]^{\beta} & b \ar@/^1.5ex/[l]^{\gamma}} .
\]
Then
$\gamma= f(\beta)$
or
$\gamma= g(\beta)$.
Since
$f^3(\beta) = \beta$
and
$g^3(\beta) = \beta$
we infer that 
$f(\gamma) = f^2(\beta)$
or
$g(\gamma) = g^2(\beta)$
 and hence  is a loop.
This is a contradiction 
because such a loop 
is equal to $\bar{\beta}$.
We note also that
$\overbar{f(\beta)} = g(\beta)$
and
$\overbar{f(\bar{\beta})} = g(\bar{\beta})$,
and hence 
$n_{\overbar{f(\beta)}} = n_{\beta} = 3$,
$n_{\overbar{f(\bar{\beta})}} = n_{\bar{\beta}} = 3$.
Then we conclude that
$\bar{\beta}$,
$f(\beta)$,
$f(\bar{\beta})$
do not belong to $2$-cycles.

In the next step, we prove that
$\beta$ ,
$\bar{\beta}$,
$f(\beta)$,
$f(\bar{\beta})$
are not part of double arrows.
Suppose that  $Q$ has  double arrow
\[
  \xymatrix@C=.8pc{
    a
    \ar@<.5ex>[rr]^{\beta}
    \ar@<-.5ex>[rr]_{\bar{\beta}}
    && b
  } .
\]
Note that
$f(\beta)\neq f(\bar{\beta})$ and therefore they are the arrows starting at $b$, 
and similarly $f^2(\beta)$ and  $f^2(\bar{\beta})$ are the arrows
ending at $a$. 

Now $g(\beta)\neq g(\bar{\beta})$, and these also start at $b$. Since
 $g(\beta) \neq f(\beta)$, we must have $f(\beta)=g(\bar{\beta})$ and $f(\bar{\beta}) = g(\beta)$. 

Since $n_{\beta}=3$, the arrow $g^2(\beta)$ ends at $a$ and therefore
it must be one of $f^2(\beta)$ or $f^2(\bar{\beta})$. 
Similarly $g^2(\bar{\beta})$ ends at $a$. Now $g^2(\beta) \neq g^2(\bar{\beta})$ and therefore
$\{ f^2(\beta), f^2(\bar{\beta})\} = \{ g^2(\beta), g^2(\bar{\beta})\}.
$
If $g^2(\beta) = f^2(\beta)$ then 
$f(g^2(\beta)) = f^3(\beta) = \beta$.
But then 
$$g(g^2(\beta)) = \overline{f(g^2(\beta))} = \bar{\beta}
$$
and $n_{\beta} > 3$, a contradiction.
So we can only have $g^2(\beta) = f^2(\bar{\beta})$. This means that if 
 $\gamma: = f(\bar{\beta}) = g(\beta)$, 
then we have $g(\gamma) = f(\gamma)$ which also is a contradiction.
Similarly one shows that $f(\beta)$ and $f\bar{\beta})$ are not double arrows.

Summing up, we conclude that $Q$ contains
a subquiver of the form
\[
\begin{tikzpicture}
[scale=.9]
\coordinate (1) at (0,1.72);
\coordinate (2) at (0,-1.72);
\coordinate (3) at (2,-1.72);
\coordinate (4) at (-1,0);
\coordinate (5) at (1,0);
\coordinate (6) at (-2,-1.72);

\fill[fill=gray!20]
    (1) -- (4) -- (5) -- cycle;
\fill[fill=gray!20]
    (2) -- (4) -- (6) -- cycle;
\fill[fill=gray!20]
    (2) -- (3) -- (5) -- cycle;

\node (1) at (0,1.72) {$1$};
\node (2) at (0,-1.72) {$2$};
\node (3) at (2,-1.72) {$3$};
\node (4) at (-1,0) {$4$};
\node (5) at (1,0) {$5$};
\node (6) at (-2,-1.72) {$6$};
\draw[->,thick]
(1) edge node [right] {$\delta$} (5)
(2) edge node [left] {$\varepsilon$} (5)
(2) edge node [below] {$\varrho$} (6)
(3) edge node [below] {$\sigma$} (2)
(4) edge node [left] {$\gamma$} (1)
(4) edge node [right] {$\beta$} (2)
(5) edge node [right] {$\xi$} (3)
(5) edge node [below] {$\eta$} (4)
(6) edge node [left] {$\omega$} (4)
;
\end{tikzpicture}
\]
where
$\varepsilon = g(\beta)$,
$\eta = g(\varepsilon)$,
$\beta = g(\eta)$,
and the shaded triangles denote the $f$-orbits
of the arrows $\beta, \varepsilon, \eta$.
Observe that
$\xi = g(\delta)$,
$\gamma = g(\omega)$,
$\varrho = g(\sigma)$.
Moreover,
we have
$\gamma = \bar{\beta}$,
$\varrho = f(\beta)$,
$\delta = f(\bar{\beta})$.
Hence,
by the  imposed assumption,  
there exist arrows
$\alpha, \nu, \mu$ in $Q_1$ with
$t(\alpha) = 1 = s(\nu)$,
$t(\nu) = 6 = s(\mu)$,
$t(\mu) = 3 = s(\alpha)$
such that
$g(\alpha) = \delta$,
$g(\nu) = \omega$,
$g(\mu) = \sigma$.
Obviously, then
$f(\alpha) = \nu$,
$f(\nu) = \mu$,
$f(\mu) = \alpha$.
Therefore, $(Q,f)$ is the required
tetrahedral triangulation quiver.
\end{proof}

An algebra
$\Lambda(\bS,a,b,c,d)$
for $a,b,c,d \in K^*$ with $a b c d = 1$
is said to be a \emph{singular tetrahedral algebra}.
It follows from
Lemma~\ref{lem:6.2}
and
Proposition~\ref{prop:6.4}
that the singular tetrahedral algebras
do not have periodic simple modules,
and hence are not periodic algebras.
We will prove in the next section
that all other weighted surface algebras
are periodic algebras.
We also mention that the tetrahedral algebras
$\Lambda(\bS,a,b,c,d)$
with
$a b c d \neq 1$
are all weighted surface algebras
of polynomial growth.

We would like to stress that, starting from the triangulation quiver
$Q(\bS,\vec{T})$ defined in Example~\ref{ex:6.1} and taking
weight functions with value different from $1$ on some $g$-orbits,
we may create infinitely many weighted surface algebras which are
not isomorphic to the tetrahedral algebras, discussed above.
Similarly, we may create infinitely many new weighted surface algebras
by changing the orientation of triangles in the tetrahedral triangulation
of the sphere.
The following example shows that we obtain new algebras even
if the weight function takes value $1$ on all $g$-orbits.

\begin{example}
\label{ex:6.7}
Let $T$ be the tetrahedral triangulation
of the sphere $\bS$
\[
\begin{tikzpicture}
[scale=1]
\node (A) at (-2,0) {$\bullet$};
\node (B) at (2,0) {$\bullet$};
\node (C) at (0,.85) {$\bullet$};
\node (D) at (0,2.8) {$\bullet$};
\coordinate (A) at (-2,0) ;
\coordinate (B) at (2,0) ;
\coordinate (C) at (0,.85) ;
\coordinate (D) at (0,2.8) ;
\draw[thick]
(A) edge node [left] {3} (D)
(D) edge node [right] {6} (B)
(D) edge node [right] {2} (C)
(A) edge node [above] {5} (C)
(C) edge node [above] {4} (B)
(A) edge node [below] {1} (B) ;
\end{tikzpicture}
\]
and $\vec{T}$ the orientation
\[
\mbox{
  (1 4 5), (2 5 3), (2 6 4), (1 6 3)
}
\]
of triangles in $T$, obtained from the coherent orientation
of triangles in $T$ considered in  Example~\ref{ex:6.1}
by changing the orientation of one triangle on the opposite
orientation, and keeping the orientations of all other
triangles unchanged.
Then the associated triangulation quiver
$Q(\bS,\vec{T})$ is 
of the form
\[
\begin{tikzpicture}
[scale=.85]
\node (1) at (0,1.72) {$1$};
\node (2) at (0,-1.72) {$2$};
\node (3) at (2,-1.72) {$3$};
\node (4) at (-1,0) {$4$};
\node (5) at (1,0) {$5$};
\node (6) at (-2,-1.72) {$6$};
\coordinate (1) at (0,1.72);
\coordinate (2) at (0,-1.72);
\coordinate (3) at (2,-1.72);
\coordinate (4) at (-1,0);
\coordinate (5) at (1,0);
\coordinate (6) at (-2,-1.72);
\fill[fill=gray!20]
      (0,2.22cm) arc [start angle=90, delta angle=-360, x radius=4cm, y radius=2.8cm]
 --  (0,1.72cm) arc [start angle=90, delta angle=360, radius=2.3cm]
     -- cycle;
\fill[fill=gray!20]
    (1) -- (4) -- (5) -- cycle;
\fill[fill=gray!20]
    (2) -- (4) -- (6) -- cycle;
\fill[fill=gray!20]
    (2) -- (3) -- (5) -- cycle;

\node (1) at (0,1.72) {$1$};
\node (2) at (0,-1.72) {$2$};
\node (3) at (2,-1.72) {$3$};
\node (4) at (-1,0) {$4$};
\node (5) at (1,0) {$5$};
\node (6) at (-2,-1.72) {$6$};
\draw[->,thick] (-.23,1.7) arc [start angle=96, delta angle=108, radius=2.3cm] node[midway,right] {$\nu$};
\draw[->,thick] (-1.87,-1.93) arc [start angle=-144, delta angle=108, radius=2.3cm] node[midway,above] {$\mu$};
\draw[->,thick] (2.11,-1.52) arc [start angle=-24, delta angle=108, radius=2.3cm] node[midway,left] {$\alpha$};
\draw[->,thick]
 (5) edge node [right] {$\delta$} (1)
(2) edge node [left] {$\varepsilon$} (5)
(2) edge node [below] {$\varrho$} (6)
(3) edge node [below] {$\sigma$} (2)
 (1) edge node [left] {$\gamma$} (4)
(4) edge node [right] {$\beta$} (2)
(5) edge node [right] {$\xi$} (3)
 (4) edge node [below] {$\eta$} (5)
(6) edge node [left] {$\omega$} (4)
;
\end{tikzpicture}
\]
Then we have only two $g$-orbits of arrows in $Q(\bS,\vec{T})$
\begin{align*}
 \cO(\beta)  &= \{ \beta, \varepsilon = g \beta, \delta = g^2 \beta, \nu = g^3 \beta,
                  \omega = g^4 \beta, \eta = g^5 \beta, \xi = g^6 \beta, \alpha = g^7 \beta,
                  \gamma = g^8 \beta \}, \\
 \cO(\varrho)&= \{ \varrho, \mu = g \varrho, \sigma = g^2 \varrho \}.
\end{align*}
Moreover, let
$m_{\bullet} : \cO(g) \to \bN^*$
be the weight function
taking the value $1$
on each $g$-orbit in $\cO(g)$,
$c_{\bullet} : \cO(g) \to K^*$
a parameter function,
and
$a = c_{\cO(\beta)}$,
$b = c_{\cO(\varrho)}$.
Then the associated algebra
$\Lambda(\bS,\vec{T},m_{\bullet},c_{\bullet})$
is given by the above quiver
$Q(\bS,\vec{T})$ and
the relations
\begin{align*}
  \eta\delta &= a \beta\varepsilon\delta\nu\omega\eta\xi\alpha,
  &
  \delta\gamma &= a \xi\alpha\gamma\beta\varepsilon\delta\nu\omega,
  &
  \gamma\eta &= a \nu\omega\eta\xi\alpha\gamma\beta\varepsilon,
    \\
  \varrho\omega &= a \varepsilon\delta\nu\omega\eta\xi\alpha\gamma,
  &
  \omega\beta &= b  \mu\sigma,
  &
  \beta\varrho &= a \eta\xi\alpha\gamma\beta\varepsilon\delta\nu,
    \\
  \sigma\varepsilon &= a \alpha\gamma\beta\varepsilon\delta\nu\omega\eta,
  &
  \varepsilon\xi &= b \varrho\mu,
  &
  \xi\sigma &= a \delta\nu\omega\eta\xi\alpha\gamma\beta,
    \\
  \alpha\nu &= b \sigma\varrho,
  &
  \nu\mu &= a \gamma\beta\varepsilon\delta\nu\omega\eta\xi,
  &
  \mu\alpha &= a \omega\eta\xi\alpha\gamma\beta\varepsilon\delta,
    \\
  \eta\delta\nu &= 0,
  &
  \delta\gamma\beta &= 0,
  &
  \gamma\eta\xi &= 0,
    \\
  \varrho\omega\eta &= 0,
  &
  \omega\beta\varepsilon &= 0,
  &
  \beta\varrho\mu &= 0,
    \\
  \sigma\varepsilon\delta &= 0,
  &
  \varepsilon\xi\alpha &= 0,
  &
  \xi\sigma\varrho &= 0,
    \\
  \alpha\nu\omega &= 0,
  &
  \nu\mu\sigma &= 0,
  &
  \mu\alpha\gamma &= 0.
\end{align*}
Observe that this algebra
$\Lambda(\bS,\vec{T},m_{\bullet},c_{\bullet})$
is not isomorphic to a tetrahedral algebra.
We will prove in Section~\ref{sec:reptype} that
$\Lambda(\bS,\vec{T},m_{\bullet},c_{\bullet})$
is a tame algebra of non-polynomial growth.
It is also known that  derived equivalence of self-injective
algebras preserves the representation type (see \cite{K,KZ,Ric1}).
Hence it follows from Proposition~\ref{prop:6.3} that
$\Lambda(\bS,\vec{T},m_{\bullet},c_{\bullet})$
is not derived equivalent to a non-singular tetrahedral algebra.
We will show in Section~\ref{sec:periodicity} that
$\Lambda(\bS,\vec{T},m_{\bullet},c_{\bullet})$
is a periodic algebra.
Then, applying 
Proposition~\ref{prop:6.4},
we conclude that
$\Lambda(\bS,\vec{T},m_{\bullet},c_{\bullet})$
is not derived equivalent to a singular tetrahedral algebra,
because  periodicity of algebras is invariant
under derived equivalence
(see \cite{ESk3,Ric2}).
This shows that changing orientation of one
triangle in a directed triangulated surface may lead
to a non-derived equivalent weighted surface algebra.
\end{example}

\section{Periodicity of weighted surface algebras}\label{sec:periodicity}

In this section we will prove that every weighted surface algebra
with at least  three simple modules,
not isomorphic to a tetrahedral algebra, is a periodic algebra
of period $4$.
We note that, by
Propositions \ref{prop:6.3} and \ref{prop:6.4},
a tetrahedral algebra $\Lambda(\bS,\lambda)$, $\lambda \in K^*$,
is a periodic algebra if and only if $\Lambda(\bS,\lambda)$
is nonsingular ($\lambda \neq 1$).
Moreover, for $\lambda \in K \setminus \{ 0,1\}$ the algebra
has period $4$.

Throughout this section, we fix 
$\Lambda = \Lambda(Q,f,m_{\bullet},c_{\bullet})$
for a triangulation quiver $(Q,f)$
with at least  three vertices,
a weight function
$m_{\bullet} : \cO(g) \to \bN^*$
and
a parameter function
$c_{\bullet} : \cO(g) \to K^*$.
Moreover, we assume that 
$\Lambda$ is not a tetrahedral algebra.

We start by describing minimal
projective resolutions of simple modules in $\mod \Lambda$.

\begin{proposition}
\label{prop:7.1}
Let $i$ be a vertex of $Q$ and $\alpha$, $\bar{\alpha}$
the arrows of $Q$ starting at $i$.
Then there is an exact sequence in $\mod \Lambda$
\[
  0 \rightarrow
  S_i \rightarrow
  P_i \xrightarrow{\pi_3}
  P_{t(f(\alpha))} \oplus P_{t(f(\bar{\alpha}))} \xrightarrow{\pi_2}
  P_{t(\alpha)} \oplus P_{t(\bar{\alpha})} \xrightarrow{\pi_1}
  P_i \rightarrow
  S_i \rightarrow
  0,
\]
which give rise to a minimal projective resolution of $S_i$ in $\mod \Lambda$.
In particular, $S_i$ is a periodic module of period $4$.
\end{proposition}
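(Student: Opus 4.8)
The plan is to write down explicitly the first four terms of a minimal projective resolution of $S_i$, with each differential given by left multiplication by an element of $\Lambda$ assembled from the permutations $f$ and $g$, and then to verify exactness. Throughout I identify $P_j=e_j\Lambda$ and use $\Hom_\Lambda(P_j,P_k)\cong e_k\Lambda e_j$, a homomorphism being left multiplication by the corresponding element. Since $\Lambda$ is symmetric (Proposition~\ref{prop:5.8}(ii)), $P_i$ has simple top and simple socle $S_i$, and $\rad P_i=\Omega_\Lambda(S_i)$ is generated by the two arrows $\alpha,\bar{\alpha}$ out of $i$; hence $\pi_1\colon P_{t(\alpha)}\oplus P_{t(\bar{\alpha})}\to P_i$, $(x,y)\mapsto \alpha x+\bar{\alpha}y$, is a projective cover of $\Omega_\Lambda(S_i)$, minimal because $\rad P_i/\rad^2 P_i\cong S_{t(\alpha)}\oplus S_{t(\bar{\alpha})}$.

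The defining relations $\alpha f(\alpha)=c_{\bar{\alpha}}A_{\bar{\alpha}}=c_{\bar{\alpha}}\bar{\alpha}A'_{\bar{\alpha}}$ and $\bar{\alpha}f(\bar{\alpha})=c_{\alpha}A_{\alpha}=c_{\alpha}\alpha A'_{\alpha}$ exhibit two elements
\[
  u_1=\big(f(\alpha),\,-c_{\bar{\alpha}}A'_{\bar{\alpha}}\big),\qquad
  u_2=\big(-c_{\alpha}A'_{\alpha},\,f(\bar{\alpha})\big)
\]
of $\ker\pi_1=\Omega^2_\Lambda(S_i)$. Using Lemma~\ref{lem:5.3}(i) one checks $t(f(\alpha))=t(g^{n_{\bar{\alpha}}-2}(\bar{\alpha}))$ and $t(f(\bar{\alpha}))=t(g^{n_{\alpha}-2}(\alpha))$, so $u_1,u_2$ define a map $\pi_2\colon P_{t(f(\alpha))}\oplus P_{t(f(\bar{\alpha}))}\to P_{t(\alpha)}\oplus P_{t(\bar{\alpha})}$ with matrix $\left(\begin{smallmatrix} f(\alpha)&-c_{\alpha}A'_{\alpha}\\ -c_{\bar{\alpha}}A'_{\bar{\alpha}}&f(\bar{\alpha})\end{smallmatrix}\right)$. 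Applying relation~(1) to the arrows $f(\alpha),f(\bar{\alpha})$ gives $f(\alpha)f^2(\alpha)=c_{\alpha}A_{g(\alpha)}$ and $f(\bar{\alpha})f^2(\bar{\alpha})=c_{\bar{\alpha}}A_{g(\bar{\alpha})}$, while Lemma~\ref{lem:5.3}(v) gives $A'_{\alpha}f^2(\bar{\alpha})=A_{g(\alpha)}$ and $A'_{\bar{\alpha}}f^2(\alpha)=A_{g(\bar{\alpha})}$; these identities place $(f^2(\alpha),f^2(\bar{\alpha}))$ in $\ker\pi_2$ and define $\pi_3\colon P_i\to P_{t(f(\alpha))}\oplus P_{t(f(\bar{\alpha}))}$, $z\mapsto(f^2(\alpha)z,f^2(\bar{\alpha})z)$. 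The same computations, together with the displayed relations, show $\pi_1\pi_2=0$ and $\pi_2\pi_3=0$, so we have a complex; and the zero relation $\beta f(\beta)g(f(\beta))=0$ with $\beta=f^2(\alpha)$ yields $f^2(\alpha)B_{\alpha}=0$ and likewise $f^2(\bar{\alpha})B_{\alpha}=0$, so the socle generator $c_{\alpha}B_{\alpha}$ lies in $\ker\pi_3$.

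The decisive quantitative point is a dimension count. Write $d_j=\dim_K P_j=m_{\gamma}n_{\gamma}+m_{\bar{\gamma}}n_{\bar{\gamma}}$ for the two arrows $\gamma,\bar{\gamma}$ out of $j$ (Corollary~\ref{cor:5.6}). Listing the arrows out of $t(\alpha),t(\bar{\alpha}),t(f(\alpha)),t(f(\bar{\alpha}))$ and using Lemma~\ref{lem:5.3}(i) — which places $f^2(\alpha)$ in the $g$-orbit of $\bar{\alpha}$ and $f^2(\bar{\alpha})$ in the $g$-orbit of $\alpha$, so that $m_{f^2(\alpha)}n_{f^2(\alpha)}=m_{\bar{\alpha}}n_{\bar{\alpha}}$ and $m_{f^2(\bar{\alpha})}n_{f^2(\bar{\alpha})}=m_{\alpha}n_{\alpha}$ — one obtains the identity $d_{t(\alpha)}+d_{t(\bar{\alpha})}=d_{t(f(\alpha))}+d_{t(f(\bar{\alpha}))}$. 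Granting the surjectivities discussed below, the syzygy dimensions compute to $\dim_K\Omega^1_\Lambda(S_i)=\dim_K\Omega^3_\Lambda(S_i)=d_i-1$ and $\dim_K\Omega^4_\Lambda(S_i)=d_{t(\alpha)}+d_{t(\bar{\alpha})}-d_{t(f(\alpha))}-d_{t(f(\bar{\alpha}))}+1=1$. Hence $\ker\pi_3$ is one-dimensional and, containing $c_{\alpha}B_{\alpha}$, equals $\soc P_i\cong S_i$, closing the sequence on the left. Since $\Omega^1_\Lambda(S_i)$ has dimension $\geq 2$ and $\topp\Omega^2_\Lambda(S_i)=S_{t(f(\alpha))}\oplus S_{t(f(\bar{\alpha}))}$ has two summands, $S_i\not\cong\Omega^1_\Lambda(S_i),\Omega^2_\Lambda(S_i)$, so the period is exactly $4$.

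The main obstacle is upgrading ``complex with matching Euler characteristic'' to genuine exactness, i.e. proving that $u_1,u_2$ generate the whole of $\ker\pi_1$ and that $(f^2(\alpha),f^2(\bar{\alpha}))$ generates $\ker\pi_2$; so far I have only the inclusions $\Im\pi_{k+1}\subseteq\ker\pi_k$. I would settle this using the explicit monomial basis of $\Lambda$ from Corollary~\ref{cor:5.6} and the uniseriality of $\alpha g(\alpha)\Lambda$ from Lemma~\ref{lem:5.5}: an element $(x,y)$ with $\alpha x+\bar{\alpha}y=0$ is normalised, via the only relations linking $\alpha$-paths to $\bar{\alpha}$-paths (namely $\alpha f(\alpha)=c_{\bar{\alpha}}\bar{\alpha}A'_{\bar{\alpha}}$ and the socle identity $c_{\alpha}B_{\alpha}=c_{\bar{\alpha}}B_{\bar{\alpha}}$ from Lemma~\ref{lem:5.3}(iv)), into a $\Lambda$-combination of $u_1,u_2$, and one argues analogously at the next step. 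This is exactly where the standing hypothesis that $\Lambda$ is not a tetrahedral algebra is used: for the singular tetrahedral algebra the simple modules are not periodic (Proposition~\ref{prop:6.4}), so the above inclusions are necessarily strict there and no such four-periodic resolution can exist; the normalisation argument carries through precisely because, outside the tetrahedral case, the paths $A_{\alpha},A'_{\alpha},B_{\alpha}$ are long enough to forbid the extra coincidences that occur when every $g$-orbit has length $3$.
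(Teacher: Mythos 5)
Your set-up is correct and matches the paper's: the same $\pi_1$, the same two kernel elements (your $u_1,u_2$ are the paper's $\varphi,\psi$), the same $\theta=(f^2(\alpha),f^2(\bar\alpha))$, and a correct dimension bookkeeping (your direct observation that $f^2(\alpha)$ lies in the $g$-orbit of $\bar\alpha$, giving $m_{f^2(\alpha)}n_{f^2(\alpha)}+m_{f^2(\bar\alpha)}n_{f^2(\bar\alpha)}=\dim_K P_i$, is a slightly cleaner route to $\dim_K\Omega^3_\Lambda(S_i)=\dim_K P_i-1$ than the paper's detour through $\Lambda^{\op}$ and symmetry). However, there is a genuine gap, and you have named it yourself: you only establish that the maps form a complex whose Euler characteristic is consistent with exactness, and you defer the proof that $u_1\Lambda+u_2\Lambda$ is \emph{all} of $\Ker\pi_1$ and that $\theta\Lambda$ is all of $\Ker\pi_2$ to a ``normalisation argument'' that is never carried out. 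A complex with the right alternating sum of dimensions need not be exact, so this deferred step is the entire mathematical content of the proposition, not a technicality.

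The paper closes this gap by a concrete dimension comparison rather than by normalising elements of the kernel: it computes $\dim_K u_1\Lambda=m_{f(\alpha)}n_{f(\alpha)}+2$ and $\dim_K u_2\Lambda=m_{f(\bar\alpha)}n_{f(\bar\alpha)}+2$ (via the uniseriality of Lemma~\ref{lem:5.5} plus the fact that each $u_j\Lambda$ contains the socle of the \emph{other} projective summand), and then shows $\dim_K(u_1\Lambda\cap u_2\Lambda)=3$. That last equality is exactly where the non-tetrahedral hypothesis enters, and in a sharper way than your sketch suggests: if the intersection had dimension $\geq 4$, then $A'_\alpha$ and $A'_{\bar\alpha}$ would both have to be single arrows, forcing $n_\alpha=n_{\bar\alpha}=3$, and a further computation forces $n_{f(\alpha)}=n_{f(\bar\alpha)}=3$, whence Lemma~\ref{lem:6.6} says $(Q,f)$ is the tetrahedral triangulation quiver — contradiction. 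The inclusion--exclusion then gives $\dim_K(u_1\Lambda+u_2\Lambda)=m_{f(\alpha)}n_{f(\alpha)}+m_{f(\bar\alpha)}n_{f(\bar\alpha)}+1=\dim_K\Ker\pi_1$, which is what forces equality; exactness at the next stage then follows from your (correct) count $\dim_K\Ker\pi_2=\dim_K P_i-1=\dim_K\theta\Lambda$. Your closing remark that the hypothesis is needed ``because for the singular tetrahedral algebra the simple modules are not periodic'' explains why \emph{some} hypothesis is necessary, but it is not a proof that your normalisation succeeds outside the tetrahedral case; note also that the obstruction $\dim_K(u_1\Lambda\cap u_2\Lambda)=4$ occurs for \emph{all} tetrahedral algebras, including the non-singular ones for which the proposition nevertheless holds by a separate direct check. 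To complete your argument you would need to either carry out the normalisation in full or replace it by the intersection-dimension computation above.
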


\begin{proof}
We take for $S_i$ the simple quotient of $P_i = e_i \Lambda$,
and then $\Omega_{\Lambda}(S_i)$ can be identified with
$\rad P_i = \alpha \Lambda + \bar{\alpha} \Lambda$.
We define the homomorphism of right $\Lambda$-modules
\[
  \pi_1 : P_{t(\alpha)} \oplus P_{t(\bar{\alpha})} \to P_i
\]
by $\pi_1 (x,y) = \alpha x + \bar{\alpha} y$
for $x \in P_{t(\alpha)}$ and $y \in P_{t(\bar{\alpha})}$.
Clearly, $\pi_1$ induces a projective cover of
$\rad P_i= \Omega_{\Lambda} (S_i)$ and its
kernel is isomorphic to $\Omega_{\Lambda}^2(S_i)$.
We know the dimension of  $\Omega_{\Lambda}^2(S_i)$.
Namely, using the projective cover $\pi_1$ and
Corollary~\ref{cor:5.6}, we obtain the equalities
\begin{align*}
 \dim_K \Omega_{\Lambda}^2(S_i)
  &= \dim_K P_{t(\alpha)} + \dim_K  P_{t(\bar{\alpha})} - (\dim_K P_i - 1)
\\
  &= m_{f(\alpha)} n_{f(\alpha)}
    + m_{g(\alpha)} n_{g(\alpha)}
    + m_{f(\bar{\alpha})} n_{f(\bar{\alpha})}
    + m_{g(\bar{\alpha})} n_{g(\bar{\alpha})}
    - m_{\alpha} n_{\alpha}
\\
  & \quad 
    - m_{\bar{\alpha}} n_{\bar{\alpha}}
    + 1
\\
  &= m_{f(\alpha)} n_{f(\alpha)}
    + m_{f(\bar{\alpha})} n_{f(\bar{\alpha})}
    + 1
    ,
\end{align*}
because
$m_{g({\alpha})}=m_{\alpha}$,
$n_{g({\alpha})}=n_{\alpha}$ ,
$m_{g(\bar{\alpha})}=m_{\bar{\alpha}}$,
$n_{g(\bar{\alpha})}=n_{\bar{\alpha}}$.

Consider the elements in $P_{t(\alpha)} \oplus P_{t(\bar{\alpha})}$
\[
  \varphi = \big( f(\alpha) ,- c_{\bar{\alpha}} A'_{\bar{\alpha}} \big)
\qquad
  \mbox{and}
\qquad
  \psi = \big(- c_{{\alpha}} A'_{{\alpha}},  f(\bar{\alpha}) \big)
  .
\]
Observe that
\begin{align*}
 \pi_1(\varphi)
   &= \alpha f(\alpha) - c_{\bar{\alpha}} \bar{\alpha} A'_{\bar{\alpha}}
    = \alpha f(\alpha) - c_{\bar{\alpha}} A_{\bar{\alpha}}
    = 0,
 \\
 \pi_1(\psi)
   &= - c_{{\alpha}} {\alpha} A'_{{\alpha}} + \bar{\alpha} f(\bar{\alpha})
    = - c_{{\alpha}} A_{{\alpha}} + \bar{\alpha} f(\bar{\alpha})
    = 0,
\end{align*}
and hence $\varphi, \psi$ belong to
$\Ker \pi_1 = \Omega_{\Lambda}^2(S_i)$.
We note that $\varphi$ and $\psi$ are independent modulo the radical,
even in the case when $A'_{\bar{\alpha}}$ or $A'_{{\alpha}}$
is an arrow.
Indeed, if $A'_{\bar{\alpha}}$ (respectively, $A'_{{\alpha}}$)
is an arrow then $A'_{\bar{\alpha}} = g(\bar{\alpha})$
(respectively, $A'_{{\alpha}} = g({\alpha})$),
and is linearly independent from
$f(\bar{\alpha})$ (respectively, $f({\alpha})$).
We find the intersection of
$\varphi \Lambda$
and
$\psi \Lambda$.
Note that
\begin{align*}
 \varphi f^2(\alpha)
   &= \big( f(\alpha) f^2(\alpha) , - c_{\bar{\alpha}} A'_{\bar{\alpha}} f^2(\alpha) \big)
    = \big( f(\alpha) f^2(\alpha) , - c_{\bar{\alpha}} A_{g(\bar{\alpha})} \big)
  ,
 \\
 \psi f^2(\bar{\alpha})
   &= \big(- c_{{\alpha}} A'_{\alpha} f^2(\bar{\alpha}) ,  f(\bar{\alpha}) f^2(\bar{\alpha}) \big)
    = \big(- c_{{\alpha}} A_{g(\alpha)} ,  f(\bar{\alpha}) f^2(\bar{\alpha}) \big)
  ,
\end{align*}
by Lemma~\ref{lem:5.3}~(v).
Moreover, we have
$g(\alpha) = \overbar{f(\alpha)}$,
$g(\bar{\alpha}) = \overbar{f(\bar{\alpha})}$,
$c_{\alpha} = c_{g(\alpha)}$
$c_{\bar{\alpha}} = c_{g(\bar{\alpha})}$.
Hence we conclude that
$\varphi f^2(\alpha) = - \psi f^2(\bar{\alpha})$.
It follows from
Lemmas \ref{lem:5.3} and \ref{lem:5.4}
that
$f(\alpha) f^2(\alpha) f^3(\alpha) = c_{f(\alpha)} B_{f(\alpha)}$
is a non-zero element of the socle of
$P_{t(\alpha)} = P_{s(f(\alpha))}$ and
$f(\bar{\alpha}) f^2(\bar{\alpha}) f^3(\bar{\alpha}) = c_{f(\bar{\alpha})} B_{f(\bar{\alpha})}$
is a non-zero element of the socle of
$P_{t(\bar{\alpha})} = P_{s(f(\bar{\alpha}))}$.
On the other hand, we have
$- c_{{\alpha}} A_{g(\alpha)} g(f^2({\alpha}))
 = f(\alpha) f^2(\alpha) g(f^2(\alpha)) = 0$,
$- c_{\bar{\alpha}} A_{g(\bar{\alpha})} g(f^2(\bar{\alpha}))
  =
   f(\bar{\alpha}) f^2(\bar{\alpha}) g(f^2(\bar{\alpha})) = 0$,
and
$g(f^2(\alpha)) = f^3(\bar{\alpha})$,
$g(f^2(\bar{\alpha})) = f^3(\alpha)$.
Hence, the socle of
$P_{t(\alpha)} \oplus P_{t(\bar{\alpha})}$
is contained in $\varphi \Lambda \cap \psi \Lambda$.
In particular, we have that
$\dim_K(\varphi \Lambda \cap \psi \Lambda) \geq 3$,
because
$\varphi f^2(\alpha) = - \psi f^2(\bar{\alpha})$
is not in the socle of
$P_{t(\alpha)} \oplus P_{t(\bar{\alpha})}$.
We claim that
$\dim_K(\varphi \Lambda \cap \psi \Lambda) = 3$.
Suppose that
$\dim_K(\varphi \Lambda \cap \psi \Lambda) \geq 4$.
Observe that
if $A'_{{\alpha}}$ (respectively, $A'_{\bar{\alpha}}$)
is not an arrow, then
it follows from Lemma~\ref{lem:5.5}~(i)
that
$A'_{{\alpha}} g(f(\bar{\alpha})) = 0$,
(respectively, $A'_{\bar{\alpha}} g(f({\alpha})) = 0$),
and consequently
$\dim_K(\varphi \Lambda \cap \psi \Lambda) = 3$.
Suppose that
$\dim_K(\varphi \Lambda \cap \psi \Lambda) \geq 4$.
Then
$A'_{{\alpha}}$ and $A'_{\bar{\alpha}}$
are arrows,
and hence
$A'_{{\alpha}} = g({\alpha})$
and
$A'_{\bar{\alpha}} = g(\bar{\alpha})$.
Observe that then
$n_{\alpha} = 3$,
$n_{\bar{\alpha}} = 3$,
$f(g(\alpha)) = g(f(\bar{\alpha}))$,
$f(g(\bar{\alpha})) = g(f({\alpha}))$.
Moreover, there exists an element
$a \in K^*$ such that
$\varphi g(f(\alpha)) = a \psi g(f(\bar{\alpha}))$.
Then we obtain the equalities
\begin{align*}
 f(\alpha) g\big(f(\alpha) \big)
   &= -a  c_{{\alpha}} A'_{{\alpha}} g\big(f(\bar{\alpha})\big)
    = -a c_{{\alpha}} g({\alpha}) g\big(f(\bar{\alpha})\big)
    = -a c_{{\alpha}} g({\alpha}) f\big(g({\alpha})\big)
  ,
  \\
 a f(\bar{\alpha}) g\big(f(\bar{\alpha}) \big)
   &= - c_{\bar{\alpha}} A'_{\bar{\alpha}} g\big(f({\alpha})\big)
    = - c_{\bar{\alpha}} g(\bar{\alpha}) g\big(f({\alpha})\big)
    = - c_{\bar{\alpha}} g(\bar{\alpha}) f\big(g(\bar{\alpha})\big)
  .
\end{align*}
In particular, we conclude
$t(f(g(\alpha))) = t(g(f({\alpha})))$
and
$t(f(g(\bar{\alpha}))) = t(g(f(\bar{\alpha})))$,
and so $n_{f(\alpha)} = 3$ and $n_{f(\bar{\alpha})} = 3$.
Then we conclude that
$n_{\alpha} = 3$,
$n_{\bar{\alpha}} = 3$,
$n_{f(\alpha)} = 3$,
$n_{f(\bar{\alpha})} = 3$.
Hence, applying Lemma~\ref{lem:6.6},
we conclude that $(Q,f)$
is the tetrahedral triangulation quiver,
a contradiction.
Therefore, indeed
$\dim_K(\varphi \Lambda \cap \psi \Lambda) = 3$.
Further, we have the equalities
\begin{align*}
 \dim_K \varphi \Lambda
   &= \dim_K f(\alpha) \Lambda + \dim_K \soc (P_{t(\bar{\alpha})})
    = m_{f(\alpha)} n_{f(\alpha)} + 2 ,
  \\
 \dim_K \psi \Lambda
   &= \dim_K f(\bar{\alpha}) \Lambda + \dim_K \soc (P_{t({\alpha})})
    = m_{f(\bar{\alpha})} n_{f(\bar{\alpha})} + 2 .
\end{align*}
Then we conclude that
\begin{align*}
  \dim_K ( \varphi \Lambda + \psi \Lambda )
   &= \dim_K \varphi \Lambda + \dim_K \psi \Lambda
      - \dim_K(\varphi \Lambda \cap \psi \Lambda)
  \\
   &= m_{f(\alpha)} n_{f(\alpha)}
      + m_{f(\bar{\alpha})} n_{f(\bar{\alpha})} + 1 .
\end{align*}
Since $\varphi \Lambda + \psi \Lambda$ is contained
in $\Ker \pi_1 = \Omega_{\Lambda}^2(S_i)$,
comparing the dimensions, we conclude that
$\Omega_{\Lambda}^2(S_i) = \varphi \Lambda + \psi \Lambda$.
Hence we have found generators of
$\Omega_{\Lambda}^2(S_i)$.
In particular, we conclude that a projective cover of
$\Omega_{\Lambda}^2(S_i)$ in $\mod \Lambda$ is
induced by the homomorphism of right
$\Lambda$-modules
\[
  \pi_2 : P_{t(f(\alpha))} \oplus P_{t(f(\bar{\alpha}))}
  \to P_{t(\alpha)} \oplus P_{t(\bar{\alpha})}
\]
given by $\pi_2(u,v) = \varphi u + \psi v$
for $u \in P_{t(f(\alpha))}$ and $v \in P_{t(f(\bar{\alpha}))}$.
We have seen that
$\varphi f^2(\alpha) = - \psi f^2(\bar{\alpha})$.
This shows that the element in
$P_{t(f(\alpha))} \oplus P_{t(f(\bar{\alpha}))}
 = P_{s(f^2(\alpha))} \oplus P_{s(f^2(\bar{\alpha}))}$
\[
  \theta = \big( f^2(\alpha), f^2(\bar{\alpha}) \big)
\]
lies in $\Ker \pi_2 = \Omega_{\Lambda}^3(S_i)$.
We may calculate the dimension of $\Omega_{\Lambda}^3(S_i)$
as follows
\begin{align*}
  \dim_K \Omega_{\Lambda}^3(S_i)
   &= \dim_K P_{s(f^2(\alpha))} + \dim_K P_{s(f^2(\bar{\alpha}))}
      - \dim_K \Omega_{\Lambda}^2(S_i)
  \\
   &= m_{f^2(\alpha)} n_{f^2(\alpha)}
      + m_{g(f({\alpha}))} n_{g(f({\alpha}))}
      + m_{f^2(\bar{\alpha})} n_{f^2(\bar{\alpha})}
      + m_{g(f(\bar{\alpha}))} n_{g(f(\bar{\alpha}))}
 \\ & \,\quad
      - m_{f(\alpha)} n_{f(\alpha)}
      - m_{f(\bar{\alpha})} n_{f(\bar{\alpha})}
      -1
 \\
   &= m_{f^2(\alpha)} n_{f^2(\alpha)}
      + m_{f^2(\bar{\alpha})} n_{f^2(\bar{\alpha})}
      - 1 ,
\end{align*}
because
$m_{g(f({\alpha}))} = m_{f({\alpha})}$,
$n_{g(f({\alpha}))} = n_{f({\alpha})}$,
$m_{g(f(\bar{\alpha}))} = m_{f(\bar{\alpha})}$,
$n_{g(f(\bar{\alpha}))} = n_{f(\bar{\alpha})}$.
Applying Corollary~\ref{cor:5.6}
to the opposite algebra $\Lambda^{\op}$
we conclude that
$\dim_K \Lambda e_i
 = m_{f^2(\alpha)} n_{f^2(\alpha)} +
 m_{f^2(\bar{\alpha})} n_{f^2(\bar{\alpha})}$.
Since $\Lambda$ is a symmetric algebra, we have
$P_i \cong D (\Lambda e_i)$ in $\mod \Lambda$,
and hence
$\dim_K P_i
 = m_{f^2(\alpha)} n_{f^2(\alpha)} + m_{f^2(\bar{\alpha})} n_{f^2(\bar{\alpha})}$.
Hence we obtain that
$\dim_K \Omega_{\Lambda}^3(S_i) = \dim_K P_i - 1$.
Consider now the homomorphism of right $\Lambda$-modules
\[
  \pi_3 : P_i \to P_{t(f(\alpha))} \oplus P_{t(f(\bar{\alpha}))}
\]
given by $\pi_3(z) = \theta z$ for any $z \in P_i$.
Clearly, $\pi_3$ induces a projective cover of
$\Omega_{\Lambda}^3(S_i)$ in $\mod \Lambda$.
Moreover, $\Ker \pi_3 = S_i = \soc(P_i)$,
because
$\dim_K \Omega_{\Lambda}^3(S_i) = \dim_K (P_i/S_i)$.
In particular, we have
$\Omega_{\Lambda}^4(S_i) \cong S_i$ and
$\Omega_{\Lambda}^j(S_i) \ncong S_i$ for any $j \in \{ 1,2,3 \}$.
This finishes the proof.
\end{proof}

We would like to mention that
Proposition~\ref{prop:7.1}
holds also for any non-singular tetrahedral algebra
$\Lambda(\bS,a,b,c,d)$, which can be checked directly.
On the other hand, for a singular tetrahedral algebra
$\Lambda = \Lambda(\bS,a,b,c,d)$,
the proof given above is incorrect because
we have $\dim_K(\varphi \Lambda \cap \psi \Lambda) = 4$
(instead of $3$).
Clearly, it is also impossible by Proposition~\ref{prop:6.4}.

\medskip
The next aim is to construct the first steps of a minimal
projective bimodule resolution of $\Lambda$.
Then we will show that
$\Omega_{\Lambda}^4(\Lambda) \cong \Lambda$
in $\mod \Lambda^e$.
We shall use the notation introduced in Section~\ref{sec:bimodule}.
Recall the first few steps of a minimal projective resolution
of $\Lambda$ in $\mod \Lambda^e$,
\[
  \bP_3 \xrightarrow{S}
  \bP_2 \xrightarrow{R}
  \bP_1 \xrightarrow{d}
  \bP_0 \xrightarrow{d_0}
  \Lambda \to 0
\]
where
\begin{align*}
  \bP_0
     &= \bigoplus_{i \in Q_0} P(i,i)
      = \bigoplus_{i \in Q_0} \Lambda e_i \otimes e_i \Lambda ,
  \\
  \bP_1
     &= \bigoplus_{\alpha \in Q_1} P\big(s(\alpha),t(\alpha)\big)
      = \bigoplus_{\alpha \in Q_1} \Lambda e_{s(\alpha)} \otimes e_{t(\alpha)} \Lambda ,
\end{align*}
the homomorphism $d_0$
is defined by
$d_0 ( e_i \otimes e_i ) = e_i$ for all $i \in Q_0$,
and the homomorphism $d : \bP_1 \to \bP_0$
is defined by
\[
  d \big( e_{s(\alpha)} \otimes e_{t(\alpha)} \big)
    = \alpha \otimes e_{t(\alpha)} - e_{s(\alpha)} \otimes \alpha
\]
for any arrow $\alpha$ in $Q_1$
(see Lemma~\ref{lem:3.3}).
In particular, we have
$\Omega_{\Lambda}^1(\Lambda) = \Ker d_0$
and
$\Omega_{\Lambda}^2(\Lambda) = \Ker d$.
We define now the homomorphism $R : \bP_2 \to \bP_1$.
For each arrow $\alpha$, consider the  element in $K Q$
\[
  \mu_{\alpha}: = \alpha f(\alpha) - c_{\bar{\alpha}} A_{\bar{\alpha}} .
\]
Note that $\mu_{\alpha} = e_{s(\alpha)} \mu_{\alpha} e_{t(f(\alpha))}$.
It follows from Propositions \ref{prop:3.1} and \ref{prop:7.1}
that $\bP_2$ is of the form
\[
  \bP_2
      = \bigoplus_{\alpha \in Q_1} P\big(s(\alpha),t(f(\alpha))\big)
      = \bigoplus_{\alpha \in Q_1} \Lambda e_{s(\alpha)} \otimes e_{t(f(\alpha))} \Lambda .
\]
We define the homomorphism $R : \bP_2 \to \bP_1$
in $\mod \Lambda^e$ by
\[
  R\big( e_{s(\alpha)} \otimes e_{t(f(\alpha))}\big) = \varrho (\mu_{\alpha})
\]
for any arrow $\alpha$ in $Q_1$,
where $\varrho : K Q \to \bP_1$
is the $K$-linear homomorphism defined in Section~\ref{sec:bimodule}.
It follows from Lemma~\ref{lem:3.4} that
$\Im R \subseteq \Ker d$.

\begin{lemma}
\label{lem:7.2}
The homomorphism $R : \bP_2 \to \bP_1$
induces a projective cover
$\Omega_{\Lambda^e}^2(\Lambda)$
in $\mod \Lambda^e$.
In particular, we have $\Omega_{\Lambda^e}^3(\Lambda) = \Ker R$.
\end{lemma}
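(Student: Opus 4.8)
The plan is to show that the explicit homomorphism $R$ realizes the minimal projective cover of $\Omega_{\Lambda^e}^2(\Lambda)=\Ker d$ predicted by Propositions~\ref{prop:3.1} and \ref{prop:7.1}; once $R$ is known to be a projective cover of $\Omega_{\Lambda^e}^2(\Lambda)$, the identification $\Omega_{\Lambda^e}^3(\Lambda)=\Ker R$ is immediate from the definition of the syzygy.

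First I would record what is already available. By Lemma~\ref{lem:3.4} each $\varrho(\mu_{\alpha})$ lies in $\Ker d=\Omega_{\Lambda^e}^2(\Lambda)$, so $R$ factors as a composite of a homomorphism $\bar{R}\colon \bP_2\to\Omega_{\Lambda^e}^2(\Lambda)$ with the inclusion into $\bP_1$. Combining Proposition~\ref{prop:3.1} with the values of $\dim_K\Ext_{\Lambda}^2(S_i,S_j)$ read off from the minimal projective resolutions of the simple modules in Proposition~\ref{prop:7.1}, the bimodule $\bP_2$ is exactly the projective cover of $\Omega_{\Lambda^e}^2(\Lambda)$ in a minimal resolution; in particular $\dim_K\topp_{\Lambda^e}\Omega_{\Lambda^e}^2(\Lambda)=|Q_1|$, with one simple summand of bidegree $(s(\alpha),t(f(\alpha)))$ for each arrow $\alpha$. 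Since $\bP_2$ is the projective cover of $\Omega_{\Lambda^e}^2(\Lambda)$, any surjection $\bP_2\to\Omega_{\Lambda^e}^2(\Lambda)$ is automatically a projective cover, so it suffices to prove that $\bar{R}$ is surjective, equivalently that the elements $\varrho(\mu_{\alpha})$, $\alpha\in Q_1$, generate $\Omega_{\Lambda^e}^2(\Lambda)$ as a $\Lambda$-bimodule.

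Because there are exactly $|Q_1|=\dim_K\topp_{\Lambda^e}\Omega_{\Lambda^e}^2(\Lambda)$ such generators, Nakayama's lemma reduces the task to showing that the classes of $\varrho(\mu_{\alpha})$ are $K$-linearly independent in $\topp_{\Lambda^e}\Omega_{\Lambda^e}^2(\Lambda)$. Since $\Omega_{\Lambda^e}^2(\Lambda)\subseteq\rad_{\Lambda^e}\bP_1$, one has $\rad_{\Lambda^e}\Omega_{\Lambda^e}^2(\Lambda)\subseteq\rad_{\Lambda^e}^2\bP_1$, so I would test the independence after projecting to the degree-one quotient $\rad_{\Lambda^e}\bP_1/\rad_{\Lambda^e}^2\bP_1$, whose summand indexed by an arrow $\beta$ is spanned by the vectors $\gamma\otimes e_{t(\beta)}$ (for arrows $\gamma$ with $t(\gamma)=s(\beta)$) and $e_{s(\beta)}\otimes\delta$ (for arrows $\delta$ with $s(\delta)=t(\beta)$). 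Expanding $\varrho(\mu_{\alpha})=\varrho(\alpha f(\alpha))-c_{\bar{\alpha}}\varrho(A_{\bar{\alpha}})$, the part $\varrho(\alpha f(\alpha))$ contributes the term $e_{s(\alpha)}\otimes f(\alpha)$ to the summand indexed by $\alpha$, and I would check that in the sum $\sum_{\alpha}\lambda_{\alpha}\varrho(\mu_{\alpha})$ the coefficient of the basis vector $e_{s(\beta)}\otimes f(\beta)$ equals precisely $\lambda_{\beta}$. The decisive point here is that $f(\beta)$ and $g(\beta)=\overline{f(\beta)}$ are the two distinct arrows starting at $t(\beta)$, so that this diagonal term cannot be cancelled by the $e_{s(\beta)}\otimes g(\beta)$-term that $\mu_{\bar{\beta}}$ may also contribute to the summand $\beta$. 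Hence $\sum_{\alpha}\lambda_{\alpha}\varrho(\mu_{\alpha})\in\rad_{\Lambda^e}^2\bP_1$ forces all $\lambda_{\beta}=0$, giving the required independence, whence $\bar{R}$ is surjective and is a projective cover.

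The main obstacle I anticipate is precisely this last bookkeeping: controlling the image of the long path $A_{\bar{\alpha}}$ in $\rad_{\Lambda^e}\bP_1/\rad_{\Lambda^e}^2\bP_1$. Every term of $\varrho(A_{\bar{\alpha}})$ has total degree $m_{\bar{\alpha}}n_{\bar{\alpha}}-2$, so $A_{\bar{\alpha}}$ influences the degree-one quotient only in the degenerate case $m_{\bar{\alpha}}n_{\bar{\alpha}}=3$, where $A'_{\bar{\alpha}}$ is a single arrow $g(\bar{\alpha})$ and $\varrho(A_{\bar{\alpha}})$ acquires the extra degree-one contributions $e_{s(\bar{\alpha})}\otimes g(\bar{\alpha})$ and $\bar{\alpha}\otimes e_{t(g(\bar{\alpha}))}$. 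One must verify that all such terms land either in the ``$\otimes g(\beta)$'' slot (distinct from ``$\otimes f(\beta)$'') or in a summand indexed by $\bar{\alpha}$ rather than $\alpha$, so that they never interfere with the diagonal coefficients. Granting this, the independence and hence the surjectivity of $\bar{R}$ follow from the dimension match, $\bar{R}$ is a projective cover of $\Omega_{\Lambda^e}^2(\Lambda)$, and therefore $\Omega_{\Lambda^e}^3(\Lambda)=\Ker R$, as asserted.
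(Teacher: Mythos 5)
Your proposal is correct and follows essentially the same route as the paper's proof: containment of the $\varrho(\mu_{\alpha})$ in $\Ker d$ via Lemma~\ref{lem:3.4}, identification of $\bP_2$ as the projective cover of $\Omega_{\Lambda^e}^2(\Lambda)$ via Propositions~\ref{prop:3.1} and~\ref{prop:7.1}, and linear independence of the classes of the $\varrho(\mu_{\alpha})$ in $\rad \bP_1/\rad^2 \bP_1$ to conclude that they form a minimal generating set. The only difference is that you spell out the independence bookkeeping (the diagonal coefficient of $e_{s(\beta)}\otimes f(\beta)$ and the non-interference of the degree-one terms of $\varrho(A_{\bar{\alpha}})$ when $m_{\bar{\alpha}}n_{\bar{\alpha}}=3$, which lie in the $e\otimes g(\beta)$ and arrow-$\otimes\, e$ slots), a verification the paper leaves implicit; your treatment of it is accurate.
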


\begin{proof}
We know that
$\rad \Lambda^e
 = \rad \Lambda^{\op} \otimes \Lambda
   + \Lambda^{\op} \otimes \rad \Lambda$
(see \cite[Corollary~IV.11.4]{SY}).
It follows from the definition that the generators
$\varrho(\mu_{\alpha})$, $\alpha \in Q_1$,
of the image $R$ are elements of $\rad \bP_1$
which are linearly independent in
$\rad \bP_1 / \rad^2 \bP_1$.
Moreover, the form of $\bP_2$ tells us
where the generators of
$\Omega_{\Lambda^e}^2(\Lambda) = \Ker d$
must be.
Then we conclude that
$\varrho(\mu_{\alpha})$, $\alpha \in Q_1$,
form a minimal set of generators of the right
$\Lambda^e$-module
$\Omega_{\Lambda^e}^2(\Lambda)$.
Summing up, we obtain that $R : \bP_2 \to \Omega_{\Lambda^e}^2(\Lambda)$
is a projective cover of
$\Omega_{\Lambda^e}^2(\Lambda)$
in $\mod \Lambda^e$.
\end{proof}

By Propositions \ref{prop:3.1} and \ref{prop:7.1}
we have that $\bP_3$ is of the form
\[
  \bP_3
      = \bigoplus_{i \in Q_0} P(i,i)
      = \bigoplus_{i \in Q_0} \Lambda e_i \otimes e_i \Lambda .
\]
For each vertex $i \in Q_0$, consider the following element of $\bP_2$
\begin{align*}
  \psi_i
   &= \big( e_i \otimes e_{t(f(\alpha))} \big) f^2(\alpha)
     + \big( e_i \otimes e_{t(f(\bar{\alpha}))} \big) f^2(\bar{\alpha})
     - \alpha \big( e_{t(\alpha)} \otimes e_i \big)
     - \bar{\alpha} \big( e_{t(\bar{\alpha})} \otimes e_i \big)
  \\&
    = \big(e_{s(\alpha)} \otimes e_{t(f(\alpha))}\big) f^2(\alpha)
      + \big(e_{s(\bar{\alpha})} \otimes e_{t(f(\bar{\alpha}))}\big) f^2(\bar{\alpha})
      - \alpha \big(e_{s(f(\alpha))} \otimes e_{t(f^2(\alpha))}\big)
  \\& \quad\,
      - \bar{\alpha} \big(e_{s(f(\bar{\alpha}))} \otimes e_{t(f^2(\bar{\alpha}))}\big)
      ,
\end{align*}
where $\alpha$ and $\bar{\alpha}$ are the arrows starting at vertex $i$.
Then we define the homomorphism $S : \bP_3 \to \bP_2$
in $\mod \Lambda^e$ by
\[
  S( e_i \otimes e_i ) = \psi_i
\]
for any vertex $i \in Q_0$.

\begin{lemma}
\label{lem:7.3}
The homomorphism $S : \bP_3 \to \bP_2$
induces a projective cover of
$\Omega_{\Lambda^e}^3(\Lambda)$
in $\mod \Lambda^e$.
In particular, we have
$\Omega_{\Lambda^e}^4(\Lambda) = \Ker S$.
\end{lemma}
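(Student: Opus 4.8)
The plan is to show that $R \circ S = 0$, so that $\Im S \subseteq \Ker R = \Omega_{\Lambda^e}^3(\Lambda)$, and then to verify that the elements $\psi_i$ form a minimal system of generators of $\Omega_{\Lambda^e}^3(\Lambda)$, matching the shape of $\bP_3$ forced by Propositions~\ref{prop:3.1} and~\ref{prop:7.1}. Once $S$ is seen to be a projective cover of $\Omega_{\Lambda^e}^3(\Lambda)$, the equality $\Omega_{\Lambda^e}^4(\Lambda) = \Ker S$ is immediate.

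First I would compute $R(\psi_i)$ directly. Writing $\alpha, \bar{\alpha}$ for the two arrows starting at $i$ and using that $R$ is a homomorphism of bimodules, one finds
\[
  R(\psi_i) = \varrho(\mu_{\alpha}) f^2(\alpha) + \varrho(\mu_{\bar{\alpha}}) f^2(\bar{\alpha}) - \alpha \varrho(\mu_{f(\alpha)}) - \bar{\alpha} \varrho(\mu_{f(\bar{\alpha})}) .
\]
The main work is to evaluate the pair $\varrho(\mu_{\alpha}) f^2(\alpha) - \alpha \varrho(\mu_{f(\alpha)})$. Expanding $\mu_{\alpha} = \alpha f(\alpha) - c_{\bar{\alpha}} A_{\bar{\alpha}}$ and $\mu_{f(\alpha)} = f(\alpha) f^2(\alpha) - c_{g(\alpha)} A_{g(\alpha)}$ (using $\overline{f(\alpha)} = g(\alpha)$ and that $c_{\bullet}$ is constant on $g$-orbits), the two ``middle'' contributions $\alpha \otimes f^2(\alpha)$ cancel. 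Applying the defining relation $\alpha f(\alpha) = c_{\bar{\alpha}} A_{\bar{\alpha}}$, the identity $f(\alpha) f^2(\alpha) = c_{g(\alpha)} A_{g(\alpha)}$, parts (ii) and (iii) of Lemma~\ref{lem:5.3}, and the two forms of the product rule $\varrho(\beta_1 \cdots \beta_m) = e_{s(\beta_1)} \otimes \beta_2 \cdots \beta_m + \beta_1 \varrho(\beta_2 \cdots \beta_m) = \varrho(\beta_1 \cdots \beta_{m-1}) \beta_m + \beta_1 \cdots \beta_{m-1} \otimes e_{t(\beta_m)}$, this pair collapses to $c_{\alpha} \varrho(B_{\alpha}) - c_{\bar{\alpha}} \varrho(B_{\bar{\alpha}})$. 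The symmetric computation with $\alpha$ and $\bar{\alpha}$ interchanged (note $\overline{\bar{\alpha}} = \alpha$) yields $c_{\bar{\alpha}} \varrho(B_{\bar{\alpha}}) - c_{\alpha} \varrho(B_{\alpha})$ for the remaining pair. The two contributions cancel, so $R(\psi_i) = 0$ and $\Im S \subseteq \Omega_{\Lambda^e}^3(\Lambda)$. It is worth stressing that the cancellation happens \emph{between} the $\alpha$- and $\bar{\alpha}$-halves, not within a single half: the residual cycle terms $\varrho(B_{\alpha})$ and $\varrho(B_{\bar{\alpha}})$ are generally distinct elements of $\bP_1$.

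It remains to check that $S$ is a projective cover. Each of the four summands of $\psi_i$ is a canonical generator of $\bP_2$ multiplied on one side by an arrow, so $\psi_i \in \rad \bP_2$. Moreover $\psi_i = e_i \psi_i e_i$, and for distinct vertices these elements correspond to the orthogonal generators $e_i \otimes e_i$ of $\bP_3$; the leading terms $(e_i \otimes e_{t(f(\alpha))}) f^2(\alpha)$ and $(e_i \otimes e_{t(f(\bar{\alpha}))}) f^2(\bar{\alpha})$ lie in the $\alpha$- and $\bar{\alpha}$-summands of $\bP_2$ and are nonzero in $\rad \bP_2 / \rad^2 \bP_2$, so the $\psi_i$ are linearly independent there. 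By Propositions~\ref{prop:3.1} and~\ref{prop:7.1} we have $\bP_3 = \bigoplus_{i \in Q_0} P(i,i)$, so $\Omega_{\Lambda^e}^3(\Lambda)$ requires exactly one minimal generator for each vertex $i$, lying in $e_i \Omega_{\Lambda^e}^3(\Lambda) e_i$. Since the $|Q_0|$ elements $\psi_i$ are such minimal generators, they generate $\Omega_{\Lambda^e}^3(\Lambda) = \Ker R$, and hence $S : \bP_3 \to \Omega_{\Lambda^e}^3(\Lambda)$ is a projective cover, giving $\Omega_{\Lambda^e}^4(\Lambda) = \Ker S$.

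I expect the bookkeeping in the identity $R(\psi_i) = 0$ to be the main obstacle. One must track carefully in which summand of $\bP_1$ each term of each $\varrho(\mu_{\bullet})$ lands and apply the correct instance of Lemma~\ref{lem:5.3}; the delicate point is precisely the cross-cancellation of the cycle terms $c_{\alpha} \varrho(B_{\alpha})$ and $c_{\bar{\alpha}} \varrho(B_{\bar{\alpha}})$ described above, which is what makes the construction of $\psi_i$ (combining two right-multiplication and two left-multiplication pieces) the right choice.
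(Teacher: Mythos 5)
Your proposal is correct and follows essentially the same route as the paper: a direct computation of $R(\psi_i)=0$ via the product rule for $\varrho$, the defining relations, and Lemma~\ref{lem:5.3}, followed by the minimal-generators argument based on the form of $\bP_3$ forced by Propositions~\ref{prop:3.1} and~\ref{prop:7.1}. The only (cosmetic) difference is the bookkeeping of the cancellation: you reduce each of the two pairs to $c_{\alpha}\varrho(B_{\alpha})-c_{\bar{\alpha}}\varrho(B_{\bar{\alpha}})$ and cancel across the halves, whereas the paper regroups by the coefficients $c_{\alpha}$, $c_{\bar{\alpha}}$ so that each bracket becomes $\varrho(\alpha A_{g(\alpha)})-\varrho(A_{\alpha}f^{2}(\bar{\alpha}))=\varrho(B_{\alpha})-\varrho(B_{\alpha})=0$ --- both organizations rely on exactly the same identities $f^{2}(\bar{\alpha})=g^{n_{\alpha}-1}(\alpha)$ and $f^{2}(\alpha)=g^{n_{\bar{\alpha}}-1}(\bar{\alpha})$, and your observation that the cross-cancellation is essential is accurate.
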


\begin{proof}
We will prove first that
$R(\psi_i) = 0$ for any $i \in Q_0$.
Fix a vertex $i \in Q_0$.
Then we have the equalities in $\bP_1$
\begin{align*}
  R(\psi_i)
   &= \varrho(\mu_{\alpha}) f^2(\alpha)
        + \varrho(\mu_{\bar{\alpha}}) f^2(\bar{\alpha})
        - \alpha \varrho \big(\mu_{f(\alpha)}\big)
        - \bar{\alpha} \varrho\big(\mu_{f(\bar{\alpha})}\big)
   \\&
      = \Big(\varrho\big(\alpha f(\alpha)\big)  - c_{\bar{\alpha}}\varrho(A_{\bar{\alpha}}) \Big) f^2(\alpha)
       + \Big(\varrho\big(\bar{\alpha} f(\bar{\alpha})\big)  - c_{\alpha}\varrho(A_{\alpha}) \Big) f^2(\bar{\alpha})
  \\& \quad\,
        - \alpha \Big(\varrho\big(f(\alpha) f^2(\alpha)\big)
                 - c_{\overbar{f(\alpha)}}\varrho\big(A_{\overbar{f(\alpha)}}\big) \Big)
        - \bar{\alpha} \Big(\varrho\big(f(\bar{\alpha}) f^2(\bar{\alpha})\big)
                 - c_{\overbar{f(\bar{\alpha})}}\varrho\big(A_{\overbar{f(\bar{\alpha})}}\big) \Big)
   \\&
      = \varrho\big(\alpha f(\alpha)\big) f^2(\alpha)
        + \varrho\big(\bar{\alpha} f(\bar{\alpha})\big)  f^2(\bar{\alpha})
        - \alpha \varrho\big(f(\alpha) f^2(\alpha)\big)
        - \bar{\alpha}\varrho\big(f(\bar{\alpha}) f^2(\bar{\alpha})\big)
  \\& \quad\,
        - c_{\bar{\alpha}}\varrho(A_{\bar{\alpha}}) f^2(\alpha)
        - c_{\alpha}\varrho(A_{\alpha}) f^2(\bar{\alpha})
        +  c_{\alpha} \alpha \varrho\big(A_{g(\alpha)}\big)
        +  c_{\bar{\alpha}} \bar{\alpha} \varrho\big(A_{g(\bar{\alpha})}\big)
   \\&
      =  e_i \otimes f(\alpha) f^2(\alpha) +\alpha \otimes  f^2(\alpha)
        +  e_i \otimes f(\bar{\alpha}) f^2(\bar{\alpha}) +\bar{\alpha} \otimes  f^2(\bar{\alpha})
  \\& \quad\,
        - \alpha \otimes  f^2(\alpha) - \alpha f(\alpha) \otimes e_i
        - \bar{\alpha} \otimes  f^2(\bar{\alpha})
        - \bar{\alpha} f(\bar{\alpha}) \otimes e_i
  \\& \quad\,
        - c_{\bar{\alpha}}\varrho(A_{\bar{\alpha}}) f^2(\alpha)
        - c_{\alpha}\varrho(A_{\alpha}) f^2(\bar{\alpha})
        +  c_{\alpha} \alpha \varrho\big(A_{g(\alpha)}\big)
        +  c_{\bar{\alpha}} \bar{\alpha} \varrho\big(A_{g(\bar{\alpha})}\big)
   \\&
      = c_{\alpha} \Big( e_i \otimes A_{g(\alpha)} + \alpha \varrho\big(A_{g(\alpha)}\big)
                    - \varrho(A_{\alpha}) f^2(\bar{\alpha}) - A_{\alpha} \otimes e_i \Big)
  \\& \quad\,
         + c_{\bar{\alpha}} \Big( e_i \otimes A_{g(\bar{\alpha})} + \bar{\alpha} \varrho\big(A_{g(\bar{\alpha})}\big)
                    - \varrho(A_{\bar{\alpha}}) f^2(\alpha) - A_{\bar{\alpha}} \otimes e_i \Big)
   \\&
      = 0
      ,
\end{align*}
because
$f^2(\bar{\alpha}) =g^{n_{\alpha} - 1}(\alpha)$
and
$f^2(\alpha) =g^{n_{\bar{\alpha}} - 1}(\bar{\alpha})$.
Hence  $\Im S \subseteq \Ker R$.
Further, it follows from the definition
that the generators $\psi_i$, $i \in Q_0$,
of the image of $S$ are elements of $\rad \bP_2$
which are linearly independent in
$\rad \bP_2 / \rad^2 \bP_2$.
Then we conclude from the form of $\bP_2$ that
these elements form a minimal set of generators of
$\Ker R = \Omega_{\Lambda^e}^3(\Lambda)$.
Hence $S : \bP_3 \to \Omega_{\Lambda^e}^3(\Lambda)$
is a projective cover of $\Omega_{\Lambda^e}^3(\Lambda)$
in $\mod \Lambda^e$.
\end{proof}

\begin{theorem}
\label{th:7.4}
There is an isomorphism
$\Omega_{\Lambda^e}^4(\Lambda) \cong \Lambda$
in $\mod \Lambda^e$.
In particular, $\Lambda$ is a periodic algebra
of period $4$.
\end{theorem}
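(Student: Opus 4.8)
The plan is to close the partial projective bimodule resolution of Lemmas~\ref{lem:7.2} and~\ref{lem:7.3} into a genuinely $4$-periodic one, which amounts to identifying $\Ker S = \Omega_{\Lambda^e}^4(\Lambda)$ with $\Lambda$ in $\mod\Lambda^e$. First I would record the dimension of $\Ker S$. The differentials fit into an exact sequence $0\to\Ker S\to\bP_3\xrightarrow{S}\bP_2\xrightarrow{R}\bP_1\xrightarrow{d}\bP_0\xrightarrow{d_0}\Lambda\to 0$, so by the Euler characteristic $\dim_K\Ker S=\dim_K\Lambda-\dim_K\bP_0+\dim_K\bP_1-\dim_K\bP_2+\dim_K\bP_3$. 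Writing $p_j=\dim_K e_j\Lambda=\dim_K\Lambda e_j$, the summands of $\bP_0$ and $\bP_3$ coincide, giving $\dim_K\bP_0=\dim_K\bP_3=\sum_j p_j^2$, while $\dim_K\bP_1=\sum_{\alpha}p_{s(\alpha)}p_{t(\alpha)}$ and $\dim_K\bP_2=\sum_{\alpha}p_{s(\alpha)}p_{t(f(\alpha))}$. Summing the last two over a single $f$-orbit $(\alpha\ f(\alpha)\ f^2(\alpha))$ and using $s(f(\alpha))=t(\alpha)$ together with $f^3=\mathrm{id}$ shows both orbit-sums equal $p_ap_b+p_bp_c+p_cp_a$ for the three vertices $a,b,c$ of that orbit; loops fixed by $f$ contribute equally to both. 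Hence $\dim_K\bP_1=\dim_K\bP_2$ and $\dim_K\Ker S=\dim_K\Lambda$.

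It then remains to produce a fourth differential closing the resolution. Expecting $\bP_4\cong\bP_0=\bigoplus_{i\in Q_0}\Lambda e_i\otimes e_i\Lambda$, I would construct an explicit homomorphism $d_4:\bP_0\to\bP_3$ in $\mod\Lambda^e$, modelled on the palindromic shape $\bP_3\cong\bP_0$, $\bP_2\leftrightarrow\bP_1$ of the resolution (which reflects that $\Lambda$ is symmetric), and verify two things: that $S\circ d_4=0$, so that $\Im d_4\subseteq\Ker S$; and that $\Ker d_4=\Ker d_0=\Omega_{\Lambda^e}^1(\Lambda)$. Granting these, $\Im d_4\cong\bP_0/\Ker d_0\cong\Lambda$ via $d_0$, and since $\Im d_4\subseteq\Ker S$ with $\dim_K\Ker S=\dim_K\Lambda=\dim_K\Im d_4$ we conclude $\Im d_4=\Ker S$, that is $\Omega_{\Lambda^e}^4(\Lambda)\cong\Lambda$. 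Alternatively, Theorem~\ref{th:2.4} applied to Proposition~\ref{prop:7.1} already gives $\Omega_{\Lambda^e}^4(\Lambda)\cong{}_1\Lambda_\sigma$ for an automorphism $\sigma$ fixing each $e_i\Lambda$ up to isomorphism, and the task becomes showing $\sigma$ is inner; the explicit $d_4$ is precisely what makes this untwisting canonical.

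Finally, the period is exactly $4$: an isomorphism $\Omega_{\Lambda^e}^k(\Lambda)\cong\Lambda$ induces $\Omega_\Lambda^k(M)\cong M$ for every non-projective $M$ (by Lemma~\ref{lem:3.2} the bimodule syzygies are one-sided projective, so tensoring is the syzygy), so $k<4$ would force $\Omega_\Lambda^k(S_i)\cong S_i$ for some $k\in\{1,2,3\}$, contradicting Proposition~\ref{prop:7.1}. The main obstacle is the middle step: writing down $d_4$ and checking $S\circ d_4=0$ and $\Ker d_4=\Ker d_0$. This is where the combinatorics of $f$ and $g$, the socle identities $c_\alpha B_\alpha=c_{\bar{\alpha}}B_{\bar{\alpha}}$ of Lemmas~\ref{lem:5.3} and~\ref{lem:5.4}, and the explicit form of $\psi_i$ from Lemma~\ref{lem:7.3} must all be used; the dimension equality already established leaves no slack, so the computation reduces to verifying that the chosen generators of $\Ker S$ obey the defining relations of $\Lambda$.
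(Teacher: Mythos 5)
Your reductions are all sound: the Euler-characteristic computation showing $\dim_K\Ker S=\dim_K\Lambda$ is correct (and is a slightly more elementary route to the dimension equality than the paper's, which instead invokes Theorem~\ref{th:2.4} together with Proposition~\ref{prop:7.1} to conclude $\Omega_{\Lambda^e}^4(\Lambda)\cong{}_1\Lambda_\sigma$ and hence equality of dimensions), and your argument that the period is exactly $4$, via Proposition~\ref{prop:7.1} and Lemma~\ref{lem:3.2}, is the standard one. The difficulty is that everything you have written reduces the theorem to the one step you do not carry out, and that step is the entire content of the proof: exhibiting an explicit bimodule monomorphism from $\Lambda$ into $\Ker S$. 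Saying that $d_4$ should be ``modelled on the palindromic shape of the resolution'' does not produce the map, and there is no soft argument that does; a priori one only knows $\Ker S\cong{}_1\Lambda_\sigma$ for some algebra automorphism $\sigma$, and deciding whether the twist is trivial genuinely requires a computation with the relations.

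The construction you would need to supply is the following (it is what the paper does). Choose the monomial basis $\cB=\bigcup_{i\in Q_0}\cB_i$ of $\Lambda$, let $(-,-)$ be the symmetrizing form from Proposition~\ref{prop:5.8}, let $\cB^*$ be the dual basis, and set $\xi_i=\sum_{b\in\cB_i}b\otimes b^*\in\bP_3$. The assignment $\theta(e_i)=\xi_i$ defines a bimodule homomorphism $\theta:\Lambda\to\bP_3$ because $a\xi_i=\xi_ja$ for $a\in e_i(\rad\Lambda)e_j\setminus e_i(\rad\Lambda)^2e_j$; it is injective because $\theta(\omega_i)=\omega_i\otimes\omega_i\neq0$ on the socle generators; and $S(\xi_i)=0$ follows from the identity $\sum_{b\in\cB_i}b\alpha\otimes b^*=\sum_{b\in\cB_i}b\otimes\alpha b^*$ combined with the observation that each arrow of $Q$ occurs exactly once as a left tensor factor of some $\psi_j$ (with sign $-1$) and exactly once as a right tensor factor of some $\psi_k$ (with sign $+1$), since $\beta=f^2(\alpha)$ for a unique arrow $\alpha$. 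Only after this does your dimension count (or the paper's appeal to Theorem~\ref{th:2.4}) close the argument. In your terms $d_4=\theta\circ d_0$, so the two formulations coincide once $\theta$ is written down; without it the proof is incomplete.
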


\begin{proof}
This  is very similar to the proof of \cite[Theorem~5.9]{ESk2}.
For each vertex $i \in Q_0$, we denote by
$\cB_i$ the basis of $e_i \Lambda$ consisting of $e_i$,
all initial subwords of $A_{\alpha}$ and $A_{\bar{\alpha}}$,
and
$\omega_i = c_{\alpha} \cB_{\alpha} = c_{\bar{\alpha}} \cB_{\bar{\alpha}}$
(see Lemma~\ref{lem:5.3} and Corollary~\ref{cor:5.6}).
We note that $\omega_i$ generates the socle of $e_i \Lambda$.
Then $\cB = \bigcup_{i \in Q_0} \cB_i$ is a $K$-linear basis of $\Lambda$.
In the proof of
Proposition~\ref{prop:5.8},
we have defined the symmetrizing $K$-linear form
$\varphi : \Lambda \to K$ which assigns
to the coset $u + I$ of a path $u$ in $Q$ the element in $K$
\[
   \varphi(u+I) = \left\{ \begin{array}{cl}
      c_{\alpha}^{-1} & \mbox{if $u = \cB_{\alpha}$ for an arrow $\alpha \in Q_1$}, \\
      0 & \mbox{otherwise},
   \end{array} \right.
\]
where $I = I(Q,f,m_{\bullet},c_{\bullet})$.
Then, by general theory, we have
the 
symmetrizing form
$(-,-) : \Lambda \times \Lambda \to K$ such that
$(x,y) = \varphi(x y)$ for any $x,y \in \Lambda$.
Observe that, for any elements $x \in \cB_i$ and $y \in \cB$,
we have
\[
  (x,y) = \mbox{ the coefficient of $\omega_i$ in $x y$},
\]
when $x y$ is expressed
as a linear combination of the elements of $e_i \cB$ over $K$.
Consider also the dual basis
$\cB^* = \{ b^* \,|\, b \in \cB \}$ of $\Lambda$
such that
$(b,c^*) = \delta_{b c}$ for $b,c \in \cB$.
Observe that, for $x \in e_i \cB$ and $y \in \cB$, the element
$(x,y)$ can only be non-zero if $y = y e_i$.
In particular, if $b \in e_i \cB e_j$ then
$b^* \in e_j \cB e_i$.

For each vertex $i \in Q_0$, we define the element of $\bP_3$
\[
  \xi_i = \sum_{b \in \cB_i} b \otimes b^* .
\]
We note that $\xi_i$ is independent of the basis
of $\Lambda$
(see \cite[part (2a) on the page 119]{ESk2}).
It follows from \cite[part (2b) on the page 119]{ESk2}
that, for any element
$a \in e_i (\rad \Lambda) e_j \setminus e_i (\rad \Lambda)^2 e_j$,
we have
\[
  a \xi_i = \xi_j a .
\]
Consider now the homomorphism
\[
  \theta : \Lambda \to \bP_3
\]
in $\mod \Lambda^e$
such that
$\theta (e_i) = \xi_i$
for any $i \in Q_0$.
Then
$\theta (1_{\Lambda}) = \sum_{i \in Q_0} \xi_i$,
and consequently we have
\[
  a \Big( \sum_{i \in Q_0} \xi_i \Big)
  = \theta(a)
  = \Big( \sum_{i \in Q_0} \xi_i \Big) a
\]
for any element $a \in \Lambda$.
We claim that $\theta$ is a monomorphism.
It is enough to show that $\theta$ is a monomorphism
of right $\Lambda$-modules.
We know that $\Lambda = \bigoplus_{i \in Q_0} e_i \Lambda$
and each $e_i \Lambda$ has simple socle
generated by $\omega_i$.
For each $i \in Q_0$, we have
\begin{align*}
  \theta(\omega_i)
     &= \Big( \sum_{j \in Q_0} \xi_j \Big) \omega_i
     = \xi_i \omega_i
     = \sum_{b \in \cB_i} ( b \otimes b^* ) \omega_i
     = \sum_{b \in \cB_i} b \otimes b^* \omega_i
     = \omega_i \otimes \omega_i
     \neq 0
    .
\end{align*}
Hence the claim follows.
Our next aim is to show that
$S(\xi_i) = 0$ for any $i \in Q_0$,
or equivalently, that
$\Im \theta \subseteq \Ker S = \Omega_{\Lambda^e}^4(\Lambda)$.
Applying arguments from
\cite[part (3) on the pages 119 and 120]{ESk2},
we obtain that
\[
  \sum_{b \in \cB} b ( a^r \otimes a^s ) b^*
     = \sum_{b \in \cB} b \otimes a^{r+s} b^*
\]
for all integers $r,s \geq 0$ and any element
$a = e_p a e_q$ in $\rad \Lambda$, with $p,q \in Q_0$.
In particular, for each arrow $\alpha$ in $Q_1$, we have
\[
  \sum_{b \in \cB} b \alpha \otimes  b^*
     = \sum_{b \in \cB} b \otimes \alpha b^* ,
\]
and hence
\[
  \sum_{b \in \cB_i} b \alpha \otimes  b^*
     = \sum_{b \in \cB_i} b \otimes \alpha b^*
\]
for any $i \in Q_0$.
We note that every arrow $\beta$ in $Q$
occurs once as a left factor of some $\psi_j$
(with negative sign) and once a right factor
of some $\psi_k$ (with positive sign),
because $\beta = f^2 (\alpha)$ for a unique
arrow $\alpha$.
Then, for any $i \in Q_0$,
the following equalities hold
\begin{align*}
  S(\xi_i)
     &= \sum_{b \in \cB_i} S ( b \otimes b^* )
     = \sum_{b \in \cB_i} \sum_{j \in Q_0} S ( b e_j \otimes e_j b^* )
     = \sum_{b \in \cB_i} \sum_{j \in Q_0} b S ( e_j \otimes e_j ) b^*
  \\&
     = \sum_{b \in \cB_i} \sum_{j \in Q_0} b \psi_j b^*
     = \sum_{\alpha \in Q_1} \Bigg[
             \sum_{b \in \cB_i} - ( b \alpha \otimes  b^* )
            + \sum_{b \in \cB_i} b \otimes \alpha b^*
        \Bigg]
     = 0
     .
\end{align*}
Hence, indeed
$\Im \theta \subseteq \Ker S = \Omega_{\Lambda^e}^4(\Lambda)$,
and we obtain a monomorphism
$\theta : \Lambda \to \Omega_{\Lambda^e}^4(\Lambda)$
in $\mod \Lambda^e$.

Finally, it follows from
Theorem~\ref{th:2.4}
and
Proposition~\ref{prop:3.1}
that
$\Omega_{\Lambda^e}^4(\Lambda) \cong {}_1 \Lambda_{\sigma}$
in $\mod \Lambda^e$
for some $K$-algebra automorphism $\sigma$ of $\Lambda$.
Then
$\dim_K \Lambda = \dim_K \Omega_{\Lambda^e}^4(\Lambda)$,
and consequently $\theta$ is an isomorphism.
Therefore, we have
$\Omega_{\Lambda^e}^4(\Lambda) \cong \Lambda$
in $\mod \Lambda^e$.
Clearly, then $\Lambda$ is a periodic algebra
of period $4$.
\end{proof}

\begin{corollary}
\label{cor:7.5}
Let $(Q,f)$ be a triangulation quiver with at least four vertices,
let $m_{\bullet}$ and $c_{\bullet}$ be weight and parameter
functions of $(Q,f)$, 
and let $\Lambda = \Lambda(Q,f,m_{\bullet},c_{\bullet})$
be the associated weighted triangulation algebra.
Then the Cartan matrix $C_{\Lambda}$ of $\Lambda$
is singular.
\end{corollary}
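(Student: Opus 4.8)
The plan is to read the statement as the contrapositive of Theorem~\ref{th:2.5}. Since a triangulation quiver is connected, $\Lambda$ is an indecomposable bound quiver algebra, and by Proposition~\ref{prop:5.8} it is tame and symmetric; its number of pairwise non-isomorphic simple modules is $|Q_0| = \operatorname{rank} K_0(\Lambda) \geq 4$. If I can certify that every indecomposable non-projective module in $\mod\Lambda$ is periodic of period dividing $4$ and that $\Lambda$ is representation-infinite, then Theorem~\ref{th:2.5} applies: a non-singular $C_\Lambda$ would force at most three simple modules, contradicting $|Q_0|\geq 4$, so $C_\Lambda$ must be singular.

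First I would handle the case in which $\Lambda$ is not a singular tetrahedral algebra. Here representation-infiniteness is given by Main Theorem~\ref{th:main1}(i), and periodicity of period $4$ is given by Theorem~\ref{th:7.4} (non-tetrahedral case) together with Proposition~\ref{prop:6.3} (non-singular tetrahedral case); as recalled in the introduction, a period-$4$ algebra satisfies $\Omega^4_\Lambda(M)\cong M$ for every indecomposable non-projective $M$, so every such $M$ is periodic of period dividing $4$. All hypotheses of Theorem~\ref{th:2.5} except possibly non-singularity of $C_\Lambda$ now hold, and its contrapositive yields the singularity of $C_\Lambda$.

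The remaining case is that of a singular tetrahedral algebra; it has exactly six vertices, but Theorem~\ref{th:2.5} is unavailable because its simple modules are not periodic (Proposition~\ref{prop:6.4}), so I would prove singularity by a direct rank computation. The Cartan entries $\dim_K e_i\Lambda e_j$ depend only on the tetrahedral quiver of Example~\ref{ex:6.1} and on the weight function, which is identically $1$; hence $C_\Lambda$ is the same for all parameter values. Grouping the vertices into the pairs $\{1,2\},\{3,4\},\{5,6\}$ writes $C_\Lambda$ in $2\times2$-block form with diagonal blocks $2I$ and all off-diagonal blocks equal to $J=\bigl(\begin{smallmatrix}1&1\\1&1\end{smallmatrix}\bigr)$. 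Passing to the symmetric and antisymmetric vectors of each pair block-diagonalises $C_\Lambda$ into the $3\times3$ matrix $2\,E$, where $E$ is the all-ones matrix (coming from the symmetric parts, of rank $1$), and the $3\times3$ matrix $2I_3$ (from the antisymmetric parts); thus $\det C_\Lambda = 0$. Equivalently, by Lemma~\ref{lem:6.2} one has $C_\Lambda = C_{B(\lambda)}+C_{B(\lambda)}^{\mathrm T}$ with $B(\lambda)$ of finite global dimension, and this matrix is singular because $1$ is an eigenvalue of the Coxeter transformation of $B(\lambda)$.

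The main point requiring care is the representation-infiniteness used to enter Theorem~\ref{th:2.5}: I would take it from Main Theorem~\ref{th:main1}(i), and must check that no circularity arises, since the only periodicity input (Theorem~\ref{th:7.4}) is already proved independently of this corollary. The genuinely separate work is the singular tetrahedral subcase, where periodicity fails and singularity of $C_\Lambda$ must be obtained by the explicit corank-$2$ computation rather than through Theorem~\ref{th:2.5}.
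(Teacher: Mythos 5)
Your main line of argument is exactly the paper's: the proof of Corollary~\ref{cor:7.5} there is the one-line deduction ``this follows from Theorems~\ref{th:2.5} and~\ref{th:7.4}'', i.e.\ the contrapositive of Theorem~\ref{th:2.5} applied once periodicity of period $4$ (hence $\Omega_\Lambda^4(M)\cong M$ for all indecomposable non-projective $M$) and representation-infiniteness and tameness are in hand. Your proposal is correct and, if anything, more careful than the paper on two points. First, you explicitly verify the representation-infiniteness hypothesis of Theorem~\ref{th:2.5} (via Proposition~\ref{prop:10.2}(i)/Theorem~\ref{th:main1}(i), which rests only on the combinatorial Lemma~\ref{lem:10.1}, so there is indeed no circularity). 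Second, you treat the tetrahedral cases separately: the paper's Section~\ref{sec:periodicity} carries the standing assumption that $\Lambda$ is not tetrahedral, so its one-line proof silently excludes them, whereas the Corollary in the introduction does not. Your handling of the non-singular tetrahedral case via Proposition~\ref{prop:6.3} and of the singular one by the explicit block computation is correct: the Cartan matrix of a tetrahedral algebra does decompose with diagonal blocks $2I_2$ and off-diagonal blocks the all-ones $2\times 2$ matrix, giving corank $2$. So the only genuine divergence from the paper is this extra, and welcome, coverage of the tetrahedral subcase.
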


\begin{proof}
This follows from
Theorems \ref{th:2.5}
and \ref{th:7.4}.
\end{proof}

\section{Socle deformed weighted surface algebras}\label{sec:socldeform}

In this section we introduce socle deformations of weighted surface algebras
of surfaces with boundary, and describe their basic properties.
We will show in the next section that these algebras
are periodic algebras of period $4$.

Let $(Q,f)$ be a triangulation quiver with at least
three vertices.
A vertex $i \in Q_0$ is said to be a \emph{border vertex}
of $(Q,f)$ if there is a loop $\alpha$ at $i$ with $f(\alpha) = \alpha$.
If so, then 
$\bar{\alpha} = g(\alpha)$,
$\alpha = f^2(\alpha) = g^{n_{\bar{\alpha}} - 1} (\bar{\alpha})$,
and
$f^2(\bar{\alpha}) = g^{-1} (\alpha)$.
In particular, we have
$n_{\alpha} = n_{\bar{\alpha}} \geq 3$,
because $|Q_0| \geq 3$.
Hence the loop $\alpha$ is uniquely determined by the vertex $i$,
and we call it a \emph{border loop} of $(Q,f)$.
We also note that
the following equalities hold
(see before Definition
\ref{def:WTalgebra}):
$
  \alpha A_{\bar{\alpha}}
    = B_{\alpha}
    = A_{\alpha} f^2(\bar{\alpha})$
and
$
  \bar{\alpha} A_{g(\bar{\alpha})}
   = B_{\bar{\alpha}}
    = A_{\bar{\alpha}} \alpha
$.
We denote by $\partial(Q,f)$
the set of all border vertices of $(Q,f)$,
and call it the \emph{border} of $(Q,f)$.
Observe that, if $(S,\vec{T})$
is a directed triangulated surface with
$(Q(S,\vec{T}), f) = (Q, f)$,
then the border vertices of $(Q,f)$
correspond bijectively to the boundary edges
of the triangulation $T$ of $S$.
Hence, the border $\partial(Q,f)$ of $(Q,f)$
is non-empty if and only if the boundary
$\partial S$ of $S$ is not empty.
A function
\[
  b_{\bullet} : \partial(Q,f) \to K
\]
is said to be a \emph{border function} of $(Q,f)$.
Assume that $\partial(Q,f)$ is not empty.
Then, for
a weight function
$m_{\bullet} : \cO(g) \to \bN^*$,
a parameter function
$c_{\bullet} : \cO(g) \to K^*$,
and
a border function
$b_{\bullet} : \partial(Q,f) \to K$,
we may consider the bound quiver algebra
\[
  \Lambda(Q,f,m_{\bullet},c_{\bullet},b_{\bullet})
   = K Q / I (Q,f,m_{\bullet},c_{\bullet},b_{\bullet}),
\]
where $I (Q,f,m_{\bullet},c_{\bullet},b_{\bullet})$
is the admissible ideal in the path algebra $KQ$ of $Q$ over $K$
generated by the elements:
\begin{enumerate}[(1)]
 \item
  ${\alpha} f({\alpha}) - c_{\bar{\alpha}} A_{\bar{\alpha}}$,
  for all arrows $\alpha \in Q_1$ which are not border loops,
 \item
  $\alpha^2 - c_{\bar{\alpha}} A_{\bar{\alpha}} - b_{s(\alpha)} B_{\bar{\alpha}}$,
  for all border loops $\alpha \in Q_1$,
 \item
  $\beta f(\beta) g(f(\beta))$,
  for all arrows $\beta \in Q_1$.
\end{enumerate}
Then $\Lambda(Q,f,m_{\bullet},c_{\bullet},b_{\bullet})$ is
said to be a
\emph{socle deformed weighted triangulation algebra}.
We note that if $b_{\bullet}$ is a zero border function
($b_i = 0$ for all $i \in \partial(Q,f)$) then
$\Lambda(Q,f,m_{\bullet},c_{\bullet},b_{\bullet})
  =
    \Lambda(Q,f,m_{\bullet},c_{\bullet})$.
Moreover, if $(Q,f)= Q(S,\vec{T})$
for a directed triangulated surface $(S,\vec{T})$
with non-empty boundary, then
$\Lambda(Q(S,\vec{T}),m_{\bullet},c_{\bullet},b_{\bullet})$
is said to be a
\emph{socle deformed weighted surface algebra}.

\begin{proposition}
\label{prop:8.1}
Let $(Q,f)$ be a triangulation quiver
with at least three vertices and $\partial(Q,f)$
not empty,
$m_{\bullet}$,
$c_{\bullet}$,
$b_{\bullet}$
weight,
parameter,
border functions of $(Q,f)$,
$\bar{\Lambda} = \Lambda(Q,f,m_{\bullet},c_{\bullet},b_{\bullet})$,
and
$\Lambda = \Lambda(Q,f,m_{\bullet},c_{\bullet})$.
Then the following hold:
\begin{enumerate}[(i)]
 \item
  $\bar{\Lambda}$ is a finite-dimensional algebra
  with $\dim_K \bar{\Lambda} = \sum_{\cO \in \cO(g)} m_{\cO} n_{\cO}^2$.
 \item
  $\bar{\Lambda}$ is socle equivalent to $\Lambda$.
 \item
  $\bar{\Lambda}$ degenerates to $\Lambda$.
 \item
  $\bar{\Lambda}$ is a tame algebra.
 \item
  $\bar{\Lambda}$ is a symmetric algebra.
 \item
  The Cartan matrix $C_{\bar{\Lambda}}$ of $\bar{\Lambda}$ is singular.
\end{enumerate}
\end{proposition}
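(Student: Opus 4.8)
The plan is to reduce everything to the single observation that, for each border loop $\alpha$ at a border vertex $i=s(\alpha)$, the defining relation of $\bar{\Lambda}$ differs from that of $\Lambda$ only by the summand $b_i B_{\bar\alpha}$, and that $B_{\bar\alpha}$ lies in the socle of $e_i\bar{\Lambda}$. Using $B_{\bar\alpha}=A_{\bar\alpha}\alpha$, $\alpha f(\alpha)=\alpha^2$, and the fact that $A_{\bar\alpha}^2$ vanishes for length reasons, one checks that the only product of two standard-basis monomials of $\Lambda$ that is altered on passing to $\bar{\Lambda}$ is $\alpha\cdot\alpha$, which acquires the socle term $b_iB_{\bar\alpha}$. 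First I would rerun the computations of Section~\ref{sec:weightsurfalg} verbatim for $\bar{\Lambda}$: since the extra term is absorbed into the socle, the arguments of Lemma~\ref{lem:5.5} and Corollary~\ref{cor:5.6} go through unchanged, so $e_i\bar{\Lambda}$ has the same basis (initial subwords of $A_\alpha,A_{\bar\alpha}$ together with $B_\alpha$) and dimension $m_\alpha n_\alpha+m_{\bar\alpha}n_{\bar\alpha}$; summing over vertices gives (i), and this also shows $\soc(e_i\bar{\Lambda})$ is one-dimensional, spanned by the class of $B_\alpha\in e_i\bar{\Lambda}e_i$.

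For (v) I would take the same linear form as in Proposition~\ref{prop:5.8}(ii), $\varphi(u+I)=c_\alpha^{-1}$ if $u=B_\alpha$ and $0$ otherwise. Associativity and symmetry of $(x,y):=\varphi(xy)$ need only be checked on basis monomials, and since the sole affected product is $\alpha\cdot\alpha$ (symmetric in its two equal arguments), the verification is identical to the undeformed case; non-degeneracy follows as before, as $\varphi$ still pairs each basis monomial with the socle completion of its row. Part (ii) is then immediate: modulo the socle the term $b_iB_{\bar\alpha}$ disappears, so the images in $KQ/\soc$ of the defining relations of $\bar{\Lambda}$ and of $\Lambda$ coincide, and the identity on $KQ$ descends to an isomorphism $\bar{\Lambda}/\soc(\bar{\Lambda})\xrightarrow{\ \sim\ }\Lambda/\soc(\Lambda)$ fixing the vertices. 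With (ii) and (v) in hand, (vi) drops out: the socle equivalence respects the primitive idempotents, and by the socle analysis each $\soc(e_i\Lambda)$ and $\soc(e_i\bar{\Lambda})$ is one-dimensional and sits in the diagonal block $e_i(-)e_i$. Hence $\dim_K e_i\bar{\Lambda}e_j=\dim_K e_i\Lambda e_j$ for all $i,j$, so $C_{\bar{\Lambda}}=C_{\Lambda}$; and $C_{\Lambda}$ is singular by Corollary~\ref{cor:7.5} when $|Q_0|\ge 4$, the remaining possibility ($|Q_0|=3$ with a border loop) being the single quiver of Example~\ref{ex:4.3}, whose Cartan matrix is visibly singular.

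The main work, and the main obstacle, is (iii), from which (iv) is then immediate via Geiss' theorem in the form of Proposition~\ref{prop:2.2}: $\Lambda$ has non-empty border, so it is not tetrahedral and is tame by Proposition~\ref{prop:5.8}(iv). Here I would introduce the algebraic family $\bar{\Lambda}(t)=\Lambda(Q,f,m_\bullet,c_\bullet,tb_\bullet)$, $t\in K$, whose structure constants are linear in $t$, with $\bar{\Lambda}(0)=\Lambda$ and $\bar{\Lambda}(1)=\bar{\Lambda}$, and apply Proposition~\ref{prop:2.2}; this requires exhibiting isomorphisms $\bar{\Lambda}(t)\cong\bar{\Lambda}$ for all $t\neq 0$. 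In characteristic $\neq 2$ this is clean and in fact shows the deformation is trivial: the substitution $\alpha\mapsto\alpha-\tfrac{1}{2}b_{s(\alpha)}A_{\bar\alpha}$ at each border loop (all correction terms being of larger length, hence annihilated) completes the square, replacing $\alpha^2=c_{\bar\alpha}A_{\bar\alpha}+b_{s(\alpha)}B_{\bar\alpha}$ by $\alpha^2=c_{\bar\alpha}A_{\bar\alpha}$ and leaving the other relations intact, so that $\bar{\Lambda}\cong\Lambda$ outright.

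The crux is characteristic $2$, where this square-completion is unavailable; there the isomorphisms $\bar{\Lambda}(t)\cong\bar{\Lambda}$ must instead be produced by an explicit rescaling of the arrows along the $g$-orbits meeting the border, the delicate point being to absorb the scalar $t$ into the border function $b_\bullet$ while preserving the parameter function $c_\bullet$ and the monomial (zero) relations. I expect this to be the hardest step, and the one to carry out most carefully, since it is exactly the phenomenon responsible for the characteristic-$2$ hypothesis in Theorem~\ref{th:main4}; the construction of the analogous degeneration family in Proposition~\ref{prop:5.8}(iii) is the model to imitate. Once the generic fibres of $\bar{\Lambda}(t)$ have been identified with $\bar{\Lambda}$, Proposition~\ref{prop:2.2} delivers simultaneously the degeneration in (iii) and, because $\Lambda$ is tame, the tameness in (iv), completing the proof.
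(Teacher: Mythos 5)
Your overall route coincides with the paper's: parts (i), (ii), (v) via the explicit monomial basis and the symmetrizing form of Proposition~\ref{prop:5.8}, part (vi) from the socle equivalence together with Corollary~\ref{cor:7.5} and the three-vertex case of Example~\ref{ex:4.3}, and parts (iii), (iv) via the one-parameter family $\bar{\Lambda}(t)=\Lambda(Q,f,m_{\bullet},c_{\bullet},t b_{\bullet})$ and Proposition~\ref{prop:2.2}. One small correction to (v): the form cannot be ``$c_{\alpha}^{-1}$ on $B_{\alpha}$ and $0$ on every other path,'' because in $\bar{\Lambda}$ the path $\alpha^2$ at a border vertex $i$ equals $c_{\bar{\alpha}}A_{\bar{\alpha}}+b_iB_{\bar{\alpha}}$, so linearity forces $\bar{\varphi}(\alpha^2)=b_ic_{\alpha}^{-1}$; the paper's definition includes this extra clause explicitly. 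Your remark that one should really define $\varphi$ on the monomial basis and only check the product $\alpha\cdot\alpha$ repairs this, but as literally stated the prescription is inconsistent.

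The genuine gap is the one you yourself flag in (iii): in characteristic $2$ you never produce the isomorphisms $\bar{\Lambda}(t)\cong\bar{\Lambda}$ for $t\neq 0$, and the mechanism you propose --- ``an explicit rescaling of the arrows along the $g$-orbits meeting the border'' --- demonstrably fails. Already for the quiver of Example~\ref{ex:4.3} (see Example~\ref{ex:8.4}), the border relation $\varepsilon^2=c A_{\alpha}+tb_1B_{\alpha}$ forces the loop scalars to satisfy $\lambda_{\varepsilon}=\lambda_{\eta}=\lambda_{\mu}=t$, while the relation $\alpha\beta=\varepsilon\alpha\eta\beta\mu$ forces $\lambda_{\varepsilon}\lambda_{\eta}\lambda_{\mu}=1$, i.e.\ $t^3=1$; and the obvious higher-order perturbations of the arrows (e.g.\ $\beta\mapsto\beta+a\,\eta\beta$) produce corrections to the border relations that cancel in pairs in characteristic $2$ once the commutativity relations are imposed. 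So the isomorphism, if it is to be had by a change of generators, requires a more careful construction than a rescaling. To be fair, the paper itself asserts ``$\bar{\Lambda}(t)\cong\bar{\Lambda}(1)$ for all $t\in K^*$'' in a single line without proof, so you have reproduced the published argument including its least-documented step; but since you explicitly identify this as the crux and then leave it open with a proposed fix that does not work, the proposal does not constitute a complete proof of (iii) (and hence of (iv) insofar as it is derived from (iii) rather than from the socle equivalence with the tame algebra $\Lambda$).
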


\begin{proof}
We abbreviate $\bar{I} = I(Q,f,m_{\bullet},c_{\bullet},b_{\bullet})$.

(i)
Let $i$ be a vertex of $Q$, and let
$\alpha$, $\bar{\alpha}$ be
the arrows in $Q$
with source $i$.
Then the indecomposable projective right
$\bar{\Lambda}$-module
$P_i = e_i \bar{\Lambda}$
has  basis given by $e_i$,
all initial subwords of $A_{\alpha}$ and $A_{\bar{\alpha}}$,
and
$c_{\alpha} B_{\alpha} = c_{\bar{\alpha}} B_{\bar{\alpha}}$,
and hence
$\dim_K P_i = m_{\alpha} n_{\alpha}  +  m_{\bar{\alpha}} n_{\bar{\alpha}}$.
Then we obtain
\[
  \dim_K \bar{\Lambda} = \sum_{\cO \in \cO(g)} m_{\cO} n_{\cO}^2 .
\]

(ii)
We note that $\soc(\bar{\Lambda})$ and $\soc(\Lambda)$
are generated by the elements
$c_{\alpha} B_{\alpha} = c_{\bar{\alpha}} B_{\bar{\alpha}}$
for all arrows $\alpha$ in $Q_1$, and
$B_{\alpha} = B_{\bar{\alpha}}$,
$c_{\alpha} = c_{\bar{\alpha}} = c_{g(\alpha)}$
for all loops $\alpha$ in $Q_1$ with
$s(\alpha) \in \partial(Q,f)$.
Therefore the algebras
$\bar{\Lambda}/\soc(\bar{\Lambda})$
and
$\Lambda/\soc(\Lambda)$
are isomorphic.
Hence $\bar{\Lambda}$ is socle equivalent
to $\Lambda$.

(iii)
For each $t \in K$,
consider the bound quiver algebra
$\bar{\Lambda}(t) = KQ/\bar{I}^{(t)}$,
where $\bar{I}^{(t)}$ is the admissible ideal
in the path algebra $K Q$ of $Q$ over $K$
generated by the elements:
\begin{enumerate}[(1)]
 \item
  ${\alpha} f({\alpha}) - c_{\bar{\alpha}} A_{\bar{\alpha}}$,
  for all arrows $\alpha \in Q_1$ which are not border loops,
 \item
  $\alpha^2 - c_{\bar{\alpha}} A_{\bar{\alpha}} - t b_{s(\alpha)} B_{\bar{\alpha}}$,
  for all border loops $\alpha \in Q_1$,
 \item
  $\beta f(\beta) g(f(\beta))$,
  for all arrows $\beta \in Q_1$.
\end{enumerate}
Then $\bar{\Lambda}(t)$, $t \in K$,
is an algebraic family in the variety $\alg_{d}(K)$,
with $d = \dim_K \bar{\Lambda}$,
such that
$\bar{\Lambda}(t) \cong \bar{\Lambda}(1) = \bar{\Lambda}$
for all $t \in K^*$ and
$\bar{\Lambda}(0) \cong \Lambda = \Lambda(Q,f,m_{\bullet},c_{\bullet})$.
It follows from Proposition~\ref{prop:3.1}
that $\bar{\Lambda}$ degenerates to $\Lambda$.

(iv)
$\bar{\Lambda}$ is a tame algebra because
$\bar{\Lambda}/\soc(\bar{\Lambda}) \cong \Lambda/\soc(\Lambda)$
and
$\Lambda$ is  tame, by Proposition~\ref{prop:5.8}.
This also follows from
Propositions \ref{prop:2.2} and \ref{prop:5.8}.

(v)
We define a symmetrizing form
$\bar{\varphi} : \bar{\Lambda} \to K$
of
$\bar{\Lambda} = KQ/\bar{I}$
by assigning to the coset $u + \bar{I}$ of a path
$u$ in $Q$ the following element of $K$
\[
   \bar{\varphi}(u+\bar{I}) = \left\{ \begin{array}{cl}
      c_{\alpha}^{-1} & \mbox{if $u = B_{\alpha}$ for an arrow $\alpha \in Q_1$}, \\
      b_i c_{\alpha}^{-1} & \mbox{if $u = \alpha^2$ for some border loop $\alpha \in Q_1$}, \\
      0 & \mbox{otherwise}.
   \end{array} \right.
\]
We note that for a border loop $\alpha$ of $(Q,f)$ we have
$B_{\alpha} = B_{\bar{\alpha}}$
and
$c_{\alpha} = c_{\bar{\alpha}}$.
Moreover, for any arrow $\beta$ in $Q_1$,
we have $\beta f(\beta) g(f(\beta)) = 0$
and
$\beta g(\beta) f(g(\beta)) = 0$
in $\bar{\Lambda}$
(see Lemma~\ref{lem:5.5}).
Hence, if $\alpha$ is a border loop, then
$\alpha^2 \bar{\alpha} = \alpha^2 g(\alpha) = 0$
and
$f^2 (\bar{\alpha}) \alpha^2
 = g^{-1}(\alpha) \alpha^2
 =
g^{-1}(\alpha) g(g^{-1}(\alpha)) f(g(g^{-1}(\alpha))) = 0$.

(vi)
This follows from (ii), (v),
Corollary~\ref{cor:7.5},
and the fact that all
weighted triangulation algebras given by the triangulation quivers
with three vertices and non-empty border have
singular Cartan matrices
(see Examples
\ref{ex:4.3},
\ref{ex:4.4},
\ref{ex:4.5},
and
\ref{ex:5.10}).
\end{proof}

We note that in general a selfinjective algebra
which is socle equivalent to a tame symmetric algebra, need not be
symmetric
(see \cite[Theorems~6.4, 6.7, and Proposition~6.8]{BES4}).

\begin{proposition}
\label{prop:8.2}
Let $(Q,f)$ be a triangulation quiver
with at least three vertices and $\partial(Q,f)$
not empty, and
$m_{\bullet}$,
$c_{\bullet}$,
$b_{\bullet}$
weight,
parameter,
border functions of $(Q,f)$.
Assume that $K$ has characteristic different from $2$.
Then the algebras
$\Lambda(Q,f,m_{\bullet},c_{\bullet},b_{\bullet})$
and
$\Lambda(Q,f,m_{\bullet},c_{\bullet})$
are isomorphic.
\end{proposition}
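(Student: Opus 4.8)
The plan is to exhibit an explicit isomorphism by a change of variables at the border loops, completing the square (which is where the hypothesis $\mathrm{char}\,K\neq2$ enters). Write $\bar{\Lambda}=\Lambda(Q,f,m_{\bullet},c_{\bullet},b_{\bullet})$ and $\Lambda=\Lambda(Q,f,m_{\bullet},c_{\bullet})$; by construction the two defining ideals $\bar{I}$ and $I$ share all their generators except those attached to the border loops. For a border loop $\alpha$ at a vertex $i=s(\alpha)\in\partial(Q,f)$ the generator of $I$ is $\alpha^2-c_{\bar{\alpha}}A_{\bar{\alpha}}$, whereas that of $\bar{I}$ is $\alpha^2-c_{\bar{\alpha}}A_{\bar{\alpha}}-b_iB_{\bar{\alpha}}$. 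The first step is to recall, for such a border loop, the socle identities $\alpha A_{\bar{\alpha}}=A_{\bar{\alpha}}\alpha=B_{\bar{\alpha}}$ (these follow from $B_\alpha=B_{\bar{\alpha}}$, which is Lemma~\ref{lem:5.3}(iv) since $c_\alpha=c_{\bar{\alpha}}$ on the common $g$-orbit). Thus the deforming term $b_iB_{\bar{\alpha}}$ is exactly the socle element $b_i\,\alpha A_{\bar{\alpha}}$, which a square completion can absorb.

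First I would define a $K$-algebra endomorphism $\varphi$ of the path algebra $KQ$ fixing every vertex and every arrow that is not a border loop, and sending each border loop $\alpha$ to $\varphi(\alpha)=\alpha+\tfrac12\,b_{s(\alpha)}A_{\bar{\alpha}}$, where $A_{\bar{\alpha}}$ is the corresponding path in $KQ$, a cycle at $i$ of length $m_{\bar{\alpha}}n_{\bar{\alpha}}-1\geq2$. Since $A_{\bar{\alpha}}\in R_Q^2$, the map $\varphi$ is the identity modulo $R_Q^2$, hence unipotent, and so is an automorphism of $KQ$. The goal is then to prove $\varphi(\bar{I})\subseteq I$: this yields a surjective homomorphism $\bar{\Lambda}\to\Lambda$, which must be an isomorphism because $\dim_K\bar{\Lambda}=\dim_K\Lambda$ by Proposition~\ref{prop:8.1}(i) and Proposition~\ref{prop:5.8}(i).

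The heart of the verification is the border-loop generator. Computing in $\Lambda$, and using $A_{\bar{\alpha}}^2=0$ (its length $2m_{\bar{\alpha}}n_{\bar{\alpha}}-2$ exceeds the socle length $m_{\bar{\alpha}}n_{\bar{\alpha}}$) together with $\alpha A_{\bar{\alpha}}=A_{\bar{\alpha}}\alpha=B_{\bar{\alpha}}$, one obtains
\[
  \varphi(\alpha)^2=\alpha^2+\tfrac12 b_i\bigl(\alpha A_{\bar{\alpha}}+A_{\bar{\alpha}}\alpha\bigr)+\tfrac14 b_i^2 A_{\bar{\alpha}}^2=\alpha^2+b_iB_{\bar{\alpha}}.
\]
As the corrections to $\varphi(A_{\bar{\alpha}})$ and $\varphi(B_{\bar{\alpha}})$ are strictly longer paths that vanish, this gives $\varphi(\alpha^2-c_{\bar{\alpha}}A_{\bar{\alpha}}-b_iB_{\bar{\alpha}})=\alpha^2-c_{\bar{\alpha}}A_{\bar{\alpha}}=0$ in $\Lambda$, as required. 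The coefficient $\tfrac12$ is the only place where $\mathrm{char}\,K\neq2$ is used.

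The main obstacle is to check that $\varphi$ also carries the remaining generators — the relations $\gamma f(\gamma)-c_{\bar{\gamma}}A_{\bar{\gamma}}$ for non-border-loops $\gamma$, and the zero relations $\beta f(\beta)g(f(\beta))$ — into $I$. Because a border loop may occur inside $A_{\bar{\gamma}}$, applying $\varphi$ introduces correction terms obtained by replacing such an occurrence of a border loop $\alpha'$ by $A_{\bar{\alpha'}}$, which strictly increases the length by $m_{\bar{\alpha'}}n_{\bar{\alpha'}}-2\geq1$. I expect to dispose of these uniformly: a correction term is either longer than the relevant socle degree, hence zero, or it has exactly socle length, in which case it contains a subword $A_{\bar{\alpha'}}\delta$ with $\delta$ the arrow at $s(\alpha')$ other than the one completing $B_{\bar{\alpha'}}$; such a subword equals some $\beta f(\beta)g(f(\beta))$ after the identifications of Lemma~\ref{lem:5.3}, and so vanishes by Lemma~\ref{lem:5.5}(i) (equivalently, by the zero relations of $\Lambda$). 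Once every correction term is shown to lie in $I$, I conclude $\varphi(\bar{I})\subseteq I$ and finish by the dimension comparison above, obtaining $\bar{\Lambda}\cong\Lambda$.
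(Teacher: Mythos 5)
Your proof is correct and follows essentially the same route as the paper: the paper likewise produces the isomorphism by completing the square at each border loop, via the substitution $\alpha \mapsto \alpha - a_{s(\alpha)}\alpha^2$ (with $b_i = 2a_i$) fixing all other arrows, and disposes of the correction terms using $\alpha^2 g(\alpha) = 0$ and $g^{-1}(\alpha)\alpha^2 = 0$. Your variant, written with $\tfrac12 b_i A_{\bar{\alpha}}$ in place of a multiple of $\alpha^2$, is the same change of variables up to the unit $c_{\bar{\alpha}}$ (since $\alpha^2 = c_{\bar{\alpha}} A_{\bar{\alpha}}$ in $\Lambda$), just run in the opposite direction and with the verification on the remaining relations carried out in more detail.
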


\begin{proof}
Since $K$ has characteristic different from $2$,
for any vertex $i \in \partial(Q,f)$ there exists
a unique element $a_i \in K$ such that $b_i = 2 a_i$.
Then we have an isomorphism of $K$-algebras
$h :\Lambda(Q,f,m_{\bullet},c_{\bullet}) \to  \Lambda(Q,f,m_{\bullet},c_{\bullet},b_{\bullet})$
such that
\[
   h(\alpha) = \left\{ \begin{array}{cl}
      \alpha & \mbox{for any arrow $\alpha \in Q_1$ which is not a border loop}, \\
      \alpha - a_{s(\alpha)}\alpha^2 & \mbox{for any border loop $\alpha \in Q_1$}.
   \end{array} \right.
\]
We note that, if $\alpha$ is a border loop in $Q_1$, then
$\alpha^2 g(\alpha) = 0$ and $g^{-1}(\alpha) \alpha^2 = 0$.
\end{proof}

\begin{proposition}
\label{prop:8.3}
Let $A$ be a basic, indecomposable.
symmetric algebra with the Grothendieck group $K_0(A)$
of rank at least $3$ which is socle equivalent to
a weighted triangulated algebra
$\Lambda(Q,f,m_{\bullet},c_{\bullet})$.
Then $A$ is isomorphic to an algebra\linebreak
$\Lambda(Q,f,m_{\bullet},c_{\bullet},b_{\bullet})$
for some
border function $b_{\bullet}$ of $(Q,f)$.
\end{proposition}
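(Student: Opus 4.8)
The plan is to present $A$ on the same quiver as $\Lambda$ and to show that, after correcting the arrows by a suitable algebra automorphism, its relations are exactly those defining a socle deformed algebra $\Lambda(Q,f,m_\bullet,c_\bullet,b_\bullet)$. First I would fix the combinatorial skeleton: since the generators of $\soc\Lambda$ lie in $(\rad\Lambda)^2$ (they are the elements $B_\alpha$ of length $m_\alpha n_\alpha\ge 3$, see Lemma~\ref{lem:5.4}), the algebra $\Lambda/\soc\Lambda$ has Gabriel quiver $Q$; as $A/\soc A\cong\Lambda/\soc\Lambda$, and the socle of the symmetric algebra $A$ likewise lies in $(\rad A)^2$, the algebra $A$ also has Gabriel quiver $Q$, so $A\cong KQ/I$ for an admissible ideal $I$. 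Being symmetric, $A$ is self-injective, and socle equivalence with $\Lambda$ (whose projectives have one-dimensional socle by Corollary~\ref{cor:5.6}) forces $\soc A=\bigoplus_{i\in Q_0}\soc(e_iA)$ with each $\soc(e_iA)$ one-dimensional and contained in $e_iAe_i$; fix a generator $\omega_i$ of $\soc(e_iA)$. Lifting a chosen isomorphism $A/\soc A\cong\Lambda/\soc\Lambda$ so that it identifies the arrows, every defining relation of $\Lambda$ holds in $A$ modulo $\soc A$, so in $A$, for each arrow $\alpha$,
\[
 \alpha f(\alpha)-c_{\bar\alpha}A_{\bar\alpha}=\lambda_\alpha\,\omega_{s(\alpha)},
 \qquad
 \beta f(\beta)g(f(\beta))=\mu_\beta\,\omega_{s(\beta)},
\]
for scalars $\lambda_\alpha,\mu_\beta\in K$, and the task is to normalize these correction terms.

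Next I would pin down which corrections can be nonzero. Since $\soc(e_iA)\subseteq e_iAe_i$, the term $\lambda_\alpha\omega_{s(\alpha)}$ vanishes unless $\alpha f(\alpha)$ is a cyclic path, that is $t(f(\alpha))=s(\alpha)$, and similarly for $\mu_\beta$. A short analysis of the $f$-orbits (of length $1$ or $3$) shows that $\alpha f(\alpha)$ is cyclic precisely in two situations: either $\alpha$ is a border loop, with $f(\alpha)=\alpha$, or $\alpha$ is the middle arrow of a self-folded triangle, meaning that $f^2(\alpha)$ is a loop which is \emph{not} a border loop. For the zero-relations I would substitute the deformed type-(1) relation and use $\omega_i\cdot\rad A=0$ together with Lemma~\ref{lem:5.5}(i) to see that each product $\beta f(\beta)g(f(\beta))$ is already determined modulo $\soc A$ by the $\lambda$'s and is forced to vanish, so the $\mu_\beta$ carry no further information.

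The main step, and the point where I expect the real work to lie, is to remove every correction $\lambda_\alpha$ at a non-border cyclic relation by changing arrows, so that only the border-loop corrections remain. If $\alpha$ is a middle arrow of a self-folded triangle with loop $\ell=f^2(\alpha)$ at $s(\alpha)$, then $n_\ell=1$ and the relation reads $\alpha f(\alpha)=c_{\bar\alpha}\ell^{\,m_\ell-1}(1+\lambda_\alpha\ell)$; replacing $\alpha$ by the unit multiple $(1+\lambda_\alpha\ell)^{-1}\alpha$ (legitimate because $\ell$ is a loop at $s(\alpha)$, so $1+\lambda_\alpha\ell$ is invertible) cancels $\lambda_\alpha$. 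For a genuine border loop $f(\alpha)=\alpha$ no such loop is available to absorb the correction, and it persists as the value of the border function. The hard part will be the bookkeeping: carrying out all these changes of arrows simultaneously without reintroducing corrections into the neighbouring relations (notably the one attached to $\bar\alpha$), where the inserted units perturb the long paths $A_{\bar\alpha}$. I would control this by taking the correcting automorphisms to be unipotent and strictly increasing in path length, so that the perturbations land in deeper radical layers and an induction on the radical filtration terminates.

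Finally I would read off the border data: setting $b_i:=\lambda_{\alpha_i}c_{\bar\alpha_i}$ at each border vertex $i$ with border loop $\alpha_i$, the normalized relations of $A$ become $\alpha_i^2-c_{\bar\alpha_i}A_{\bar\alpha_i}-b_iB_{\bar\alpha_i}$ at border loops and the undeformed relations elsewhere, which is exactly the presentation of $\Lambda(Q,f,m_\bullet,c_\bullet,b_\bullet)$. That $b_\bullet$ may be arbitrary is consistent with Proposition~\ref{prop:8.1}, which guarantees the target is again symmetric, and the whole analysis collapses to $b_\bullet=0$ when $\operatorname{char}K\neq 2$, in agreement with Proposition~\ref{prop:8.2}.
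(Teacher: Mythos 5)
Your proposal follows the same skeleton as the paper's own proof: present $A$ on the quiver $Q$, lift the defining relations of $\Lambda$ modulo the socle, observe that each correction term is a scalar multiple of the socle generator $\omega_i \in e_iAe_i$ and therefore vanishes unless $\alpha f(\alpha)$ is a cycle at $i = s(\alpha)$, and read off the surviving scalars as a border function. Where you genuinely diverge is in deciding \emph{when} $t(f(\alpha)) = s(\alpha)$ can happen. The paper's proof asserts that $i \notin \partial(Q,f)$ forces $i \neq t(f(\alpha))$ and so keeps only the border-loop corrections; you correctly identify a second configuration, namely when $f^2(\alpha)$ is a loop $\ell$ with $f(\ell) \neq \ell$ (the self-folded triangle of Example~\ref{ex:4.5}; in Example~\ref{ex:5.13} the relation $\beta\gamma = a\alpha^{p-1}$ is a cycle at the non-border vertex $1$). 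Your proposed remedy — absorbing the correction by the unit substitution $\alpha \mapsto (1+\mu\ell)^{-1}\alpha$ — is the right idea and yields an argument that covers all triangulation quivers rather than only those without self-folded triangles; your treatment of the zero-relations via $\omega_i\rad A = 0$ and Lemma~\ref{lem:5.5}(i) is likewise more robust than the target-vertex argument in the paper, which can also fail in the presence of double arrows.

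The one place where your write-up falls short of a proof is the deferred bookkeeping for that substitution. "The perturbations land in deeper radical layers" is not by itself sufficient: deeper radical layers of $e_{s(\delta)}A$ need not lie in the socle, so you must actually show that replacing the modified arrow inside each path $A_{\bar\delta}$ that contains it changes that relation only by an element of $e_{s(\delta)}\,\soc(A)\,e_{t(f(\delta))}$, which is zero unless $\delta$ is again in one of the two cyclic configurations. This does follow from the uniseriality statement of Lemma~\ref{lem:5.5}(ii) — any element of $\bar\delta g(\bar\delta)(\rad A)^k$ is a scalar multiple of an initial subword of $B_{\bar\delta}$, so a perturbation of length at least $m_{\bar\delta}n_{\bar\delta}$ is a multiple of $B_{\bar\delta}$ or zero — but that verification needs to be written out for the argument to close; as it stands the induction you invoke is only a plausible plan. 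With that supplied, your proof is complete and, on the self-folded case, more careful than the one in the paper.
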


\begin{proof}
Let
$\Lambda = \Lambda(Q,f,m_{\bullet},c_{\bullet})$,
$I = I(Q,f,m_{\bullet},c_{\bullet})$,
and so
${\Lambda} = K Q / {I}$.
Since $A$ is socle equivalent to $\Lambda$,
there is a $K$-algebra isomorphism
${\varphi} : A/\soc(A) \to \Lambda/\soc(\Lambda)$.
Then $A$ is isomorphic to a bound quiver algebra
$K Q/J$
for an admissible ideal $J$ of $K Q$,
because $A$ is a basic algebra.
Moreover, we may assume that
$\varphi(\alpha) = \alpha$ for any arrow
$\alpha$ in $Q_1$.
Because $A$ is a symmetric algebra, each
indecomposable
projective right $A$-module $e_i A$
has one-dimensional socle generated by an element
$\omega_i \in e_i A e_i$ such that
$\omega_i \rad A = 0$.
We have the following relations in $A$:
\begin{enumerate}[(1)]
 \item
  ${\alpha} f({\alpha}) + \soc(A) = c_{\bar{\alpha}} A_{\bar{\alpha}} + \soc(A)$,
  for all arrows $\alpha \in Q_1$,
 \item
  $\beta f(\beta) g(f(\beta)) \in \soc(A)$,
  for all arrows $\beta \in Q_1$.
\end{enumerate}
Let $\beta$ be an arrow in $Q_1$ and $i = s(\beta)$.
Then $i = t(f^2(\beta)) \neq t(g(f(\beta)))$.
Since $\beta f(\beta) g(f(\beta))
= e_i\beta f(\beta) g(f(\beta))$,
we conclude that $\beta f(\beta) g(f(\beta)) \in \soc(e_i A)$,
and hence
$\beta f(\beta) g(f(\beta)) = \lambda \omega_i$
for an element $\lambda \in K$.
But then $\beta f(\beta) g(f(\beta)) = 0$
because $\omega_i \in e_i A e_i$.

Take now an arrow $\alpha \in Q_1$, and let $i = s(\alpha)$.
We know that $\alpha f(\alpha)$ and $A_{\bar{\alpha}}$
are paths in $Q$ from $i$ to
$t(f(\alpha)) = s(f^2(\alpha)) = g^{n_{\bar{\alpha}} -1} (\bar{\alpha})$.
Hence, we deduce that
${\alpha} f({\alpha}) + \soc(e_i A) = c_{\bar{\alpha}} A_{\bar{\alpha}}
 + \soc(e_i A)$,
and consequently
$\alpha f(\alpha) - c_{\bar{\alpha}} A_{\bar{\alpha}} = b_i \omega_i$
for some element $b_i \in K$.
We also note that if $i \notin \partial(Q,f)$ then $i \neq t(f(\alpha))$,
and then we conclude as above that
${\alpha} f({\alpha}) = c_{\bar{\alpha}} A_{\bar{\alpha}}$.
Clearly, for $i \in \partial(Q,f)$,
we have
$\alpha f(\alpha) = c_{\bar{\alpha}} A_{\bar{\alpha}} + b_i \omega_i$.
Moreover, in this case have
$B_{{\alpha}} = B_{\bar{\alpha}}$
and
$c_{{\alpha}} = c_{\bar{\alpha}}$,
because $\bar{\alpha} = g(\alpha)$,
so we may take $\omega_i = B_{{\alpha}}$.
Hence, we have the
border function $b_{\bullet} :  \partial(Q,f) \to K$
such that $A$ is isomorphic to the algebra
$\Lambda(Q,f,m_{\bullet},c_{\bullet},b_{\bullet})$.
\end{proof}

The above results show that it is worthwile to distinguish the weighted
surface (triangulation) algebras from the socle deformed weighted
surface (triangulation) algebras, occuring only in characteristic $2$.

It may seem at the first sight that the notion of a socle deformed 
weighted surface (triangulation) algebra is a special case
of the notion of a triangulation algebra defined in 
\cite[Definition~5.16 and Proposition~7.4]{L4}.
But it follows from Theorem~\ref{th:4.11},
\cite[Main Theorem]{ESk5},
and \cite[Theorem~7.1 and Proposition~7.4]{L4} that
these two notions actually coincide.

We end this section with an example showing that
there exist socle deformed weighted surface algebras
which are not isomorphic to a weighted surface algebra.

\begin{example}
\label{ex:8.4}
Let $(Q(S,\vec{T}),f)$ be
the triangulation quiver
\[
  \xymatrix@C=.8pc@R=1.5pc
  {
     1 \ar@(dl,ul)[]^{\varepsilon} \ar[rr]^{\alpha} &&
     2 \ar@(ur,dr)[]^{\eta} \ar[ld]^{\beta} \\
     & 3 \ar@(dr,dl)[]^{\mu} \ar[lu]^{\gamma}}
\]
with the $f$-orbits
$(\alpha\ \beta\ \gamma)$,
$(\varepsilon)$,
$(\eta)$,
$(\mu)$,
considered in Examples \ref{ex:4.3} and \ref{ex:5.10}.
Then $\cO(g)$ consists of one $g$-orbit
$(\alpha\ \eta\ \beta\ \mu\ \gamma\ \varepsilon)$.
Let
$m_{\bullet} : \cO(g) \to \bN$
be the weight function
with
$m_{\cO(g)} = 1$
and
$c_{\bullet} : \cO(g) \to K^*$
the  parameter function
with
$c_{\cO(g)} = 1$.
Then the associated weighted surface algebra
$\Lambda = \Lambda(Q(S,\vec{T}),f,m_{\bullet},c_{\bullet})$
is given by the above quiver and the relations
\begin{align*}
 \alpha\beta &= \varepsilon\alpha\eta\beta\mu ,
 &
 \varepsilon^2 &= \alpha\eta\beta\mu\gamma,
 &
 \alpha\beta\mu &= 0,
 &
 \varepsilon^2 \alpha &= 0,
\\
 \beta\gamma &= \eta\beta\mu\gamma\varepsilon ,
 &
 \eta^2 &= \beta\mu\gamma\varepsilon\alpha,
 &
 \beta\gamma\varepsilon &= 0,
 &
 \eta^2 \beta &= 0,
\\
 \gamma\alpha &= \mu\gamma\varepsilon\alpha\eta,
 &
 \mu^2 &= \gamma\varepsilon\alpha\eta\beta,
 &
 \gamma\alpha\eta &= 0,
 &
 \mu^2 \gamma &= 0.
\end{align*}
Observe that the border
$\partial(Q(S,\vec{T}),f)$
of $(Q(S,\vec{T}),f)$
is the set $Q_0 = \{1,2,3\}$ of vertices of $Q$,
and $\varepsilon$, $\eta$, $\mu$
are the border loops.
Take now a border function
$b_{\bullet} : \partial(Q(S,\vec{T}),f) \to K$.
Then the associated socle  deformed
weighted surface algebra
$\bar{\Lambda} = \Lambda(Q(S,\vec{T}),f,m_{\bullet},c_{\bullet},b_{\bullet})$
is given by the above quiver and the relations
\begin{align*}
 \alpha\beta &= \varepsilon\alpha\eta\beta\mu ,
 &
 \varepsilon^2 &=  \alpha\eta\beta\mu\gamma + b_1 \alpha\eta\beta\mu\gamma\varepsilon,
 &
 \alpha\beta\mu &= 0,
 &
 \varepsilon^2 \alpha &= 0,
\\
 \beta\gamma &= \eta\beta\mu\gamma\varepsilon ,
 &
 \eta^2 &=\beta\mu\gamma\varepsilon\alpha + b_2 \beta\mu\gamma\varepsilon\alpha\eta ,
 &
 \beta\gamma\varepsilon &= 0,
 &
 \eta^2 \beta &= 0,
\\
 \gamma\alpha &= \mu\gamma\varepsilon\alpha\eta,
 &
 \mu^2 &= \gamma\varepsilon\alpha\eta\beta + b_3 \gamma\varepsilon\alpha\eta\beta\mu ,
 &
 \gamma\alpha\eta &= 0,
 &
 \mu^2 \gamma &= 0.
\end{align*}

Assume that $K$ has characteristic $2$ and $b_{\bullet}$
is non-zero, say $b_1 \neq 0$.
We claim that the algebras
${\Lambda}$ and $\bar{\Lambda}$ are not isomorphic.
Suppose that there is an isomorphism
$h : {\Lambda} \to \bar{\Lambda}$ of $K$-algebras.
Then there exist elements
$r_1, s_1, t_1, u_1, v_1, w_1 \in K^*$
and $r_i, s_i, t_i, u_j, v_j, w_j \in K$,
$i \in \{2,3,4\}$,
$j \in \{2,3\}$,
such that
\begin{align*}
 h(\alpha)
    &= r_1 \alpha
      + r_2 \varepsilon\alpha
      + r_3 \alpha\eta
      + r_4 \varepsilon\alpha\eta ,
 &
 h(\varepsilon)
    &= u_1 \varepsilon
      + u_2 \varepsilon^2
      + u_3 \varepsilon^3 ,
 \\
 h(\beta)
    &= s_1 \beta
      + s_2 \eta\beta
      + s_3 \beta\mu
      + s_4 \eta\beta\mu ,
 &
 h(\eta)
    &= v_1 \eta
      + v_2 \eta^2
      + v_3 \eta^3 ,
 \\
 h(\gamma)
    &= t_1 \gamma
      + t_2 \mu\gamma
      + t_3 \gamma\varepsilon
      + t_4 \mu\gamma\varepsilon ,
 &
 h(\mu)
    &= w_1 \mu
      + w_2 \mu^2
      + w_3 \mu^3 .
\end{align*}
Observe that we have in $\bar{\Lambda}$ the equalities
\begin{align*}
 \varepsilon^3
   &= \varepsilon (\alpha\eta\beta\mu\gamma + b_1 \alpha\eta\beta\mu\gamma \varepsilon)
   = \varepsilon \alpha\eta\beta\mu\gamma ,
 \\
 \varepsilon^3
   &= (\alpha\eta\beta\mu\gamma + b_1 \alpha\eta\beta\mu\gamma \varepsilon) \varepsilon
   = \alpha\eta\beta\mu\gamma \varepsilon .
\end{align*}
Since $K$ has characteristic $2$,
we conclude that the following equalities
hold in $\bar{\Lambda}$
\begin{align*}
 u_1^2  \alpha\eta\beta\mu\gamma
 &
   + u_1^2 b_1 \alpha\eta\beta\mu\gamma \varepsilon
  = u_1^2 \varepsilon^2
  = h(\varepsilon)^2
\\&
  = h(\varepsilon^2)
  = h(\alpha\eta\beta\mu\gamma)
  = h(\alpha) h(\eta) h(\beta) h(\mu) h(\gamma)
\\&
  = r_1 v_1 s_1 w_1 t_1 \alpha\eta\beta\mu\gamma
    + r_2 v_1 s_1 w_1 t_1 \varepsilon \alpha\eta\beta\mu\gamma
    + r_1 v_1 s_1 w_1 t_3 \alpha\eta\beta\mu\gamma \varepsilon
\\&
  = r_1 v_1 s_1 w_1 t_1 \alpha\eta\beta\mu\gamma
    + v_1 s_1 w_1 (r_2 t_1 + r_1  t_3) \alpha\eta\beta\mu\gamma \varepsilon
  ,
\end{align*}
and hence
$u_1^2 = r_1 v_1 s_1 w_1 t_1$
and
$u_1^2 b_1 = v_1 s_1 w_1 (r_2 t_1 + r_1  t_3)$.
In particular, we obtain that $r_2 t_1 + r_1 t_3 \neq  0$,
because
$u_1, b_1, v_1, s_1, w_1 \in K^*$,
On the other hand, we have the following equalities
in $\bar{\Lambda}/(\rad \bar{\Lambda})^4$
\begin{align*}
 0 + (\rad \bar{\Lambda})^4
   &= h (\mu\gamma\varepsilon\alpha\eta) + (\rad \bar{\Lambda})^4
    = h (\gamma\alpha) + (\rad \bar{\Lambda})^4
   \\&
    = h (\gamma) h (\alpha) + (\rad \bar{\Lambda})^4
    = (r_2 t_1 + r_1 t_3) \gamma\varepsilon\alpha + (\rad \bar{\Lambda})^4
   ,
\end{align*}
and hence $r_2 t_1 + r_1 t_3 = 0$, a contradiction.
This proves that  the algebras
${\Lambda}$ and $\bar{\Lambda}$ are not isomorphic.
We note that then, by
Proposition~\ref{prop:8.3},
the algebra $\bar{\Lambda}$ is not isomorphic
to any weighted surface algebra.
\end{example}

It would be interesting to know when,
for $K$ of characteristic $2$, a socle deformed weighted surface algebra
is isomorphic to a weighted surface algebra.

\section{Periodicity of socle deformed weighted surface algebras}\label{sec:deformperidic}

In this section we prove that all socle deformed weighted surface algebras
introduced in the previous section are periodic algebras of period $4$.

Assume that $K$ has characteristic $2$.
Let $(Q,f)$ be a triangulation quiver with at least
three vertices and non-empty border $\partial(Q,f)$.
Moreover, let
$m_{\bullet} : \cO(g) \to \bN^*$
be a weight function,
$c_{\bullet} : \cO(g) \to K^*$
a parameter function
and
$b_{\bullet} : \partial(Q(S,\vec{T}),f) \to K$
 a border function,
which we assume to be non-zero.
Moreover, let\linebreak
$\Lambda = \Lambda(Q(S,\vec{T}), f,m_{\bullet},c_{\bullet})$
be the associated weighted triangulation algebra
and
$\bar{\Lambda} = \Lambda(Q(S,\vec{T}),f,m_{\bullet},c_{\bullet},b_{\bullet})$
the associated socle  deformed
weighted triangulation algebra.
We note that $(Q,f)$ is not the tetrahedral
triangulation quiver, because $\partial(Q,f)$
is not empty.

We have the following analogue of Proposition~\ref{prop:7.1}.

\begin{proposition}
\label{prop:9.1}
Let $i$ be a vertex of $Q$ and $\alpha$, $\bar{\alpha}$
the arrows of $Q$ starting at $i$.
Then there is in $\mod \bar{\Lambda}$ a short exact sequence
\[
  0 \rightarrow
  S_i \rightarrow
  P_i \xrightarrow{\pi_3}
  P_{t(f(\alpha))} \oplus P_{t(f(\bar{\alpha}))} \xrightarrow{\pi_2}
  P_{t(\alpha)} \oplus P_{t(\bar{\alpha})} \xrightarrow{\pi_1}
  P_i \rightarrow
  S_i \rightarrow
  0,
\]
which give rise to a minimal projective resolution of $S_i$ in $\mod \bar{\Lambda}$.
In particular, $S_i$ is a periodic module of period $4$.
\end{proposition}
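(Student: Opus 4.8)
The plan is to follow the proof of Proposition~\ref{prop:7.1} essentially verbatim at all non-border vertices, and to modify the three comparison maps only by socle-level correction terms at the border loops, where the defining relations of $\bar{\Lambda}$ differ from those of $\Lambda$. The crucial reduction is that, by Proposition~\ref{prop:8.1}(i)--(ii), $\bar{\Lambda}$ is socle equivalent to $\Lambda$ and has the same projective dimensions $\dim_K P_i = m_{\alpha}n_{\alpha}+m_{\bar{\alpha}}n_{\bar{\alpha}}$. Consequently every dimension count occurring in the proof of Proposition~\ref{prop:7.1} --- the dimensions of $\Omega_{\bar{\Lambda}}^2(S_i)$ and $\Omega_{\bar{\Lambda}}^3(S_i)$, and the final comparison $\dim_K\Omega_{\bar{\Lambda}}^3(S_i)=\dim_K(P_i/S_i)$ --- carries over unchanged. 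So only the construction of the generators of the syzygies, and the verification that they lie in the appropriate kernels, require attention.

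First I would take $\pi_1\colon P_{t(\alpha)}\oplus P_{t(\bar{\alpha})}\to P_i$, $\pi_1(x,y)=\alpha x+\bar{\alpha}y$, exactly as before. At a vertex $i$ that is not a border vertex, the relation elements
\[
  \varphi=\big(f(\alpha),-c_{\bar{\alpha}}A'_{\bar{\alpha}}\big),
  \qquad
  \psi=\big(-c_{\alpha}A'_{\alpha},f(\bar{\alpha})\big)
\]
still satisfy $\pi_1(\varphi)=\pi_1(\psi)=0$, since the defining relations of $\bar{\Lambda}$ at non-border arrows coincide with those of $\Lambda$. At a border vertex $i$ with border loop $\alpha$ (so $f(\alpha)=\alpha$ and $\bar{\alpha}=g(\alpha)$ is the unique non-loop arrow at $i$), the relation reads $\alpha^2=c_{\bar{\alpha}}A_{\bar{\alpha}}+b_i B_{\bar{\alpha}}$, whence $\pi_1(\varphi)=b_i B_{\bar{\alpha}}\neq0$. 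Here I would correct $\varphi$ to
\[
  \tilde{\varphi}=\big(\alpha,\,-c_{\bar{\alpha}}A'_{\bar{\alpha}}-b_i A_{g(\bar{\alpha})}\big),
\]
so that, using $\bar{\alpha}A_{g(\bar{\alpha})}=B_{\bar{\alpha}}$ from Lemma~\ref{lem:5.3}(iii), one obtains $\pi_1(\tilde{\varphi})=b_i B_{\bar{\alpha}}-b_i B_{\bar{\alpha}}=0$; the companion $\psi$ needs no correction because $\bar{\alpha}$ is not a border loop. Since the correction lies in the socle, $\tilde{\varphi}$ and $\psi$ remain linearly independent modulo $\rad\bar{\Lambda}$, so $\pi_2(u,v)=\tilde{\varphi}u+\psi v$ is again a projective cover of $\Omega_{\bar{\Lambda}}^2(S_i)$.

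The key dimensional step, that $\dim_K(\tilde{\varphi}\bar{\Lambda}\cap\psi\bar{\Lambda})=3$, goes through unchanged: the argument of Proposition~\ref{prop:7.1} forces $(Q,f)$ to be the tetrahedral triangulation quiver in the excluded case, and by Lemma~\ref{lem:6.6} (or directly, since the tetrahedral quiver is loop-free) a quiver with non-empty border $\partial(Q,f)$ is never tetrahedral. I would then put $\theta=\big(f^2(\alpha),f^2(\bar{\alpha})\big)$, again adjusted by a socle term at the border loop so that $\pi_2(\theta)=0$, giving $\pi_3(z)=\theta z$ with $\Ker\pi_3=\soc(P_i)=S_i$ from the final dimension equality. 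This produces the asserted exact sequence and yields $\Omega_{\bar{\Lambda}}^4(S_i)\cong S_i$, hence periodicity of period $4$.

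The main obstacle will be the bookkeeping of the border corrections across all three maps: one must check that the chosen socle-level terms are mutually consistent, i.e.\ that the identity $\tilde{\varphi}f^2(\alpha)=-\psi f^2(\bar{\alpha})$ underlying the definition of $\theta$ still holds after correction, and that $\pi_2(\theta)=0$. This is exactly where the hypothesis that $K$ has characteristic $2$ is used: the deformation term $b_i B_{\bar{\alpha}}$ in $\alpha^2$ re-enters these compatibility identities doubled, and the relation $2=0$ is what forces the correction terms to cancel. Once this consistency is verified, every remaining verification is identical to the corresponding step in the proof of Proposition~\ref{prop:7.1}.
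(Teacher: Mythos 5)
Your overall architecture is the paper's: keep $\pi_1$ as in Proposition~\ref{prop:7.1} and correct the comparison elements at a border loop. Your corrected $\tilde{\varphi}=\big(\alpha,-c_{\bar{\alpha}}A'_{\bar{\alpha}}-b_iA_{g(\bar{\alpha})}\big)$ is exactly the paper's $\bar{\varphi}$ (since $B'_{\bar{\alpha}}=A_{g(\bar{\alpha})}$). The gap is in your claim that ``$\psi$ needs no correction.'' The paper replaces $\psi$ by $\bar{\psi}=\big(-c_{\alpha}A'_{\alpha}-b_iA_{\alpha},\,f(\bar{\alpha})\big)$; the extra term $-b_iA_{\alpha}$ is invisible to $\pi_1$ (because $\alpha A_{\alpha}=\alpha^2A'_{\alpha}=0$), but it is precisely what makes the comparison identity $\bar{\varphi}f^2(\alpha)=-\bar{\psi}f^2(\bar{\alpha})$ hold: one needs the first coordinate of $\bar{\psi}f^2(\bar{\alpha})$ to equal $-c_{\alpha}A_{g(\alpha)}-b_iB_{\alpha}=-\alpha^2$, matching the first coordinate $\alpha^2$ of $\bar{\varphi}\alpha$. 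With your uncorrected $\psi$ the identity fails by $(b_iB_{\bar{\alpha}},0)$, so $\theta=(f^2(\alpha),f^2(\bar{\alpha}))$ is \emph{not} in $\Ker\pi_2$. Your proposed repair --- adjust $\theta$ ``by a socle term'' --- does not work as stated: the correction one would need is $-b_ic_{\alpha}^{-1}\alpha^2$ in the first coordinate (since $\tilde{\varphi}\alpha^2=(c_{\bar{\alpha}}B_{\bar{\alpha}},0)$), and $\alpha^2$ is not in the socle, as $\alpha^3=c_{\bar{\alpha}}B_{\bar{\alpha}}\neq0$. The same imprecision occurs earlier: the correction terms lie in $(\rad\bar{\Lambda})^2$, not in $\soc(\bar{\Lambda})$; being in $\rad^2$ is what you actually need for independence modulo the radical. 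A route that keeps $\psi$ and modifies $\theta$ could in principle be salvaged, but you have neither computed the modification nor checked that $\pi_3$ remains a projective cover with kernel $S_i$, and this is exactly the nontrivial content of the border case.

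Separately, your attribution of the hypothesis $\operatorname{char}K=2$ is wrong for this proposition. No step of the module-level resolution uses $2=0$: all cancellations in the paper's proof of Proposition~\ref{prop:9.1} are exact identities valid over any field (indeed they must be, since by Proposition~\ref{prop:8.2} the algebra $\bar{\Lambda}$ is isomorphic to $\Lambda$ when $\operatorname{char}K\neq2$, so the statement cannot depend on characteristic). The characteristic~$2$ assumption is only needed later, for the bimodule computations in Proposition~\ref{prop:9.3} and Theorem~\ref{th:9.4}, where terms genuinely appear doubled. If your bookkeeping required $2=0$ to make the correction terms cancel at this stage, that would be a symptom of having chosen the wrong correction terms.
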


\begin{proof}
If $i$ is not a border vertex,
the claim follows by arguments as in the proof
of  Proposition~\ref{prop:7.1}.
Therefore, assume that $i \in \partial(Q,f)$.
In this case, we have
$\alpha = f(\alpha)$,
$\bar{\alpha} = g(\alpha)$,
and hence
$c_{\alpha} = c_{\bar{\alpha}}$,
$B_{\alpha} = B_{\bar{\alpha}}$.
We take for $S_i$ the simple quotient of
$P_i = e_i \bar{\Lambda}$,
and hence $\Omega_{\bar{\Lambda}}(S_i)$
is identified with
$\rad P_i = \alpha \bar{\Lambda} + \bar{\alpha} \bar{\Lambda}$.
We define, as in the proof of
Proposition~\ref{prop:7.1},
the homomorphism of right $\bar{\Lambda}$-modules
\[
  \pi_1 : P_{t(\alpha)} \oplus P_{t(\bar{\alpha})} \to P_i
\]
by $\pi_1 (x,y) = \alpha x + \bar{\alpha} y$
for $x \in P_{t(\alpha)}$, $y \in P_{t(\bar{\alpha})}$,
and show that it induces a projective cover of
$\rad P_i = \Omega_{\bar{\Lambda}}(S_i)$
in $\mod \bar{\Lambda}$.
In particular, we obtain that
$\Omega_{\bar{\Lambda}}^2(S_i) = \Ker \pi_1$.

Consider now the following elements in
$P_{t(\alpha)} \oplus P_{t(\bar{\alpha})} = P_i \oplus P_{t(\bar{\alpha})}$
\[
  \bar{\varphi} = \big( f(\alpha) ,- c_{\bar{\alpha}} A'_{\bar{\alpha}} - b_i B'_{\bar{\alpha}} \big)
\qquad
  \mbox{and}
\qquad
  \bar{\psi} = \big(- c_{{\alpha}} A'_{{\alpha}} - b_i A_{\alpha} ,  f(\bar{\alpha}) \big)
  ,
\]
where $B'_{\bar{\alpha}}$ is the subpath of $B_{\bar{\alpha}}$
from $t(\bar{\alpha})$ to $i$ of length
$m_{\bar{\alpha}} n_{\bar{\alpha}} - 1 = m_{\alpha} n_{\alpha} - 1$
such that $\bar{\alpha} B'_{\bar{\alpha}} = B_{\bar{\alpha}}$.
Then we have the equalities
\begin{align*}
 \pi_1(\bar{\varphi})
   &= \alpha^2 - c_{\bar{\alpha}} \bar{\alpha} A'_{\bar{\alpha}} - b_i \bar{\alpha} B'_{\bar{\alpha}}
    = \alpha^2 - c_{\bar{\alpha}} A_{\bar{\alpha}} - b_i B_{\bar{\alpha}}
    = 0,
 \\
 \pi_1(\bar{\psi})
   &= - c_{{\alpha}} {\alpha} A'_{{\alpha}} - b_i {\alpha} A_{{\alpha}} + \bar{\alpha} f(\bar{\alpha})
    = - c_{{\alpha}} A_{{\alpha}} + \bar{\alpha} f(\bar{\alpha})
    = 0,
\end{align*}
because $\alpha A_{\alpha} = \alpha^2 A'_{\alpha} = 0$
due to
$\alpha^2 g(\alpha) = \alpha f(\alpha) g(f(\alpha)) = 0$,
and hence $\bar{\varphi}$, $\bar{\psi}$  belong to
$\Ker \pi_1 = \Omega_{\bar{\Lambda}}^2(S_i)$.
We have also the equalities
\begin{align*}
 \bar{\varphi} f^2(\alpha)
   &= \bar{\varphi} \alpha
    = \big( \alpha^2, - c_{\bar{\alpha}} A'_{\bar{\alpha}} \alpha - b_i B'_{\bar{\alpha}} \alpha \big)
    = \big( \alpha^2, - c_{\bar{\alpha}} B'_{\bar{\alpha}} \big)
  ,
 \\
 \bar{\psi} f^2(\bar{\alpha})
   &= \big(- c_{{\alpha}} A'_{\alpha} f^2(\bar{\alpha}) - b_i A_{{\alpha}} f^2(\bar{\alpha}) ,
           f(\bar{\alpha}) f^2(\bar{\alpha}) \big)
 \\&          
    = \big(- c_{{\alpha}} A_{g(\alpha)} - b_i B_{{\alpha}} ,  f(\bar{\alpha}) f^2(\bar{\alpha}) \big)
  ,
\end{align*}
because
$B'_{\bar{\alpha}} \alpha
 = A_{\overbar{f(\bar{\alpha})}} \alpha
 = c_{f(\bar{\alpha})}^{-1} f(\bar{\alpha}) f^2(\bar{\alpha}) \alpha
 = 0$
due to the equality
$\alpha = g(f^2(\bar{\alpha}))$.
Moreover, we have the equalities
$c_{\alpha} = c_{\bar{\alpha}}$,
$A_{g(\alpha)} = A_{\bar{\alpha}}$,
$B_{\alpha} = B_{\bar{\alpha}}$,
$c_{\bar{\alpha}} = c_{g(\bar{\alpha})}$,
$B'_{\bar{\alpha}} = A_{g(\bar{\alpha})}$,
and
$g(\bar{\alpha}) = \overbar{f(\bar{\alpha})}$.
Hence we conclude that
$\bar{\varphi} f^2(\alpha) = - \bar{\psi} f^2(\bar{\alpha})$.
Recall also that $(Q,f)$
is not the tetrahedral triangulation quiver.
Then, as in the proof of
Proposition~\ref{prop:7.1},
we conclude that
$\dim_K(\bar{\varphi} \bar{\Lambda} \cap \bar{\psi} \bar{\Lambda}) = 3$,
$\bar{\varphi} \bar{\Lambda} + \bar{\psi} \bar{\Lambda} = \Omega_{\bar{\Lambda}}^2(S_i)$,
and the homomorphism of right
$\bar{\Lambda}$-modules
\[
  \pi_2 : P_{t(f(\alpha))} \oplus P_{t(f(\bar{\alpha}))}
  \to P_{t(\alpha)} \oplus P_{t(\bar{\alpha})}
\]
given by $\pi_2(u,v) = \bar{\varphi} u + \bar{\psi} v$
for $u \in P_{t(f(\alpha))}$, and $v \in P_{t(f(\bar{\alpha}))}$
induces a projective cover $\Omega_{\bar{\Lambda}}^2(S_i)$
in $\mod \bar{\Lambda}$.
In particular, we obtain that
$\Omega_{\bar{\Lambda}}^3(S_i) = \Ker \pi_2$.
Further, since
$\bar{\varphi} f^2(\alpha) = - \bar{\psi} f^2(\bar{\alpha})$,
the element
\[
  \bar{\theta} = \big( f^2(\alpha), f^2(\bar{\alpha}) \big)
\]
of $P_{t(f(\alpha))} \oplus P_{t(f(\bar{\alpha}))}
 = P_{s(f^2(\alpha))} \oplus P_{s(f^2(\bar{\alpha}))}$
lies in $\Ker \pi_2 = \Omega_{\bar{\Lambda}}^3(S_i)$.
We may then consider the homomorphism of right $\bar{\Lambda}$-modules
\[
  \pi_3 : P_i \to P_{t(f(\alpha))} \oplus P_{t(f(\bar{\alpha}))}
\]
given by $\pi_3(z) = \bar{\theta} z$ for $z \in P_i$.
Applying arguments as in the final part of the proof of
Proposition~\ref{prop:7.1},
we conclude that $\Ker \pi_3 = S_i$
and that $\pi_3$ induces a projective cover of
$\Omega_{\bar{\Lambda}}^3(S_i)$ in $\mod \bar{\Lambda}$.
Hence
$\Omega_{\bar{\Lambda}}^4(S_i) = \Ker \pi_3 =  S_i$.
Moreover, we have
$\Omega_{\bar{\Lambda}}^j(S_i) \ncong S_i$ for any $j \in \{ 1,2,3 \}$.
This finishes the proof.
\end{proof}

We recall now the notation for the first few steps of a minimal
projective resolution of $\bar{\Lambda}$
in $\mod \bar{\Lambda}^e$
\[
  \bP_3 \xrightarrow{S}
  \bP_2 \xrightarrow{R}
  \bP_1 \xrightarrow{d}
  \bP_0 \xrightarrow{d_0}
  \bar{\Lambda} \to 0
  ,
\]
where
\begin{align*}
  \bP_0
     &= \bigoplus_{i \in Q_0} P(i,i)
      = \bigoplus_{i \in Q_0} \bar{\Lambda} e_i \otimes e_i \bar{\Lambda} ,
  \\
  \bP_1
     &= \bigoplus_{\alpha \in Q_1} P\big(s(\alpha),t(\alpha)\big)
      = \bigoplus_{\alpha \in Q_1} \bar{\Lambda} e_{s(\alpha)} \otimes e_{t(\alpha)} \bar{\Lambda} ,
\end{align*}
the homomorphism $d_0 : \bP_0 \to \bar{\Lambda}$
in $\mod \bar{\Lambda}^e$
is defined by
$d_0 ( e_i \otimes e_i ) = e_i$ for all $i \in Q_0$,
and the homomorphism $d : \bP_1 \to \bP_0$
in $\mod \bar{\Lambda}^e$
is defined by
\[
  d \big( e_{s(\alpha)} \otimes e_{t(\alpha)} \big)
    = \alpha \otimes e_{t(\alpha)} - e_{s(\alpha)} \otimes \alpha
\]
for any arrow $\alpha$ in $Q_1$.
In particular, we have
$\Omega_{\bar{\Lambda}^e}^1(\bar{\Lambda}) = \Ker d_0$
and
$\Omega_{\bar{\Lambda}^e}^2(\bar{\Lambda}) = \Ker d$.
It follows from Propositions \ref{prop:3.1} and \ref{prop:9.1}
that $\bP_2$ is of the form
\[
  \bP_2
      = \bigoplus_{\alpha \in Q_1} P\big(s(\alpha),t(f(\alpha))\big)
      = \bigoplus_{\alpha \in Q_1} \bar{\Lambda} e_{s(\alpha)} \otimes e_{t(f(\alpha))} \bar{\Lambda} .
\]
For each arrow $\alpha$ in $Q_1$, we define the element
$\bar{\mu}_{\alpha} = e_{s(\alpha)} \bar{\mu}_{\alpha} e_{t(f(\alpha))}$
as follows
\begin{align*}
  \bar{\mu}_{\alpha} &= \alpha f(\alpha) - c_{\bar{\alpha}} A_{\bar{\alpha}}
  &&
            \mbox{if $\alpha$ is not a border loop},
  \!\!\!\!\!\!\!\!\!\!\!
  \\
  \bar{\mu}_{\alpha} &= \alpha^2 - c_{\bar{\alpha}} A_{\bar{\alpha}} - b_i B_{\bar{\alpha}}
  \!\!\!\!\!\!\!\!\!\!\!\!\!\!\!\!\!\!\!\!\!\!\!\!
  &&
            \mbox{if $\alpha$ is a border loop}.
\end{align*}
Then we define the homomorphism $R : \bP_2 \to \bP_1$
in $\mod \bar{\Lambda}^e$ by
\[
  R\big( e_{s(\alpha)} \otimes e_{t(f(\alpha))}\big) = \varrho (\bar{\mu}_{\alpha})
\]
for any arrow $\alpha$ in $Q_1$,
where $\varrho : K Q \to \bP_1$
is the $K$-linear homomorphism defined in Section~\ref{sec:bimodule}.
It follows from Lemma~\ref{lem:3.4} that
$\Im R \subseteq \Ker d$.

\begin{lemma}
\label{lem:9.2}
The homomorphism $R : \bP_2 \to \bP_1$
induces a projective cover
$\Omega_{\bar{\Lambda}^e}^2(\bar{\Lambda})$
in $\mod \bar{\Lambda}^e$.
In particular, we have $\Omega_{\bar{\Lambda}^e}^3(\bar{\Lambda}) = \Ker R$.
\end{lemma}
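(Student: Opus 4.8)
The plan is to follow the proof of Lemma~\ref{lem:7.2} line by line, the only new ingredient being the modified relations $\bar{\mu}_{\alpha}$ attached to border loops. First I would record, as in the undeformed case, the bimodule radical decomposition
\[
  \rad \bar{\Lambda}^e
    = \rad \bar{\Lambda}^{\op} \otimes \bar{\Lambda}
      + \bar{\Lambda}^{\op} \otimes \rad \bar{\Lambda}
\]
(Corollary~IV.11.4 of \cite{SY}), so that a tensor $x \otimes y$ in a summand of $\bP_1$ lies in $\rad^n \bP_1$ exactly when the total length of its two factors is at least $n$. Since every term of $\varrho$ applied to a path of length $\ell$ has its two tensor factors of total length $\ell - 1$, the map $\varrho$ carries such a path into $\rad^{\ell-1} \bP_1$. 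The decisive consequence is that $B_{\bar{\alpha}}$, being a path of length $m_{\bar{\alpha}} n_{\bar{\alpha}} \geq 3$, satisfies $\varrho(B_{\bar{\alpha}}) \in \rad^2 \bP_1$.

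With this length estimate in hand, the main point is the congruence $\varrho(\bar{\mu}_{\alpha}) \equiv \varrho(\mu_{\alpha}) \pmod{\rad^2 \bP_1}$ for every arrow $\alpha$. Indeed $\bar{\mu}_{\alpha} = \mu_{\alpha}$ whenever $\alpha$ is not a border loop, while for a border loop $\alpha$ at a vertex $i$ one has $\bar{\mu}_{\alpha} = \mu_{\alpha} - b_{i} B_{\bar{\alpha}}$, so the difference is $-b_{i}\varrho(B_{\bar{\alpha}}) \in \rad^2 \bP_1$ by the previous paragraph. In the proof of Lemma~\ref{lem:7.2} the classes of the $\varrho(\mu_{\alpha})$, $\alpha \in Q_1$, were shown to be linearly independent in $\rad \bP_1 / \rad^2 \bP_1$; since the residues of $\varrho(\bar{\mu}_{\alpha})$ and $\varrho(\mu_{\alpha})$ coincide there, the classes $\varrho(\bar{\mu}_{\alpha})$ are linearly independent modulo $\rad^2 \bP_1$ as well. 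Thus the deformation is invisible at the level of leading terms, which is precisely the feature that makes the undeformed computation transfer.

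It then remains to recognise these $|Q_1|$ elements as a minimal generating set of $\Omega_{\bar{\Lambda}^e}^2(\bar{\Lambda}) = \Ker d$. Containment $\Im R \subseteq \Ker d$ is already in place by Lemma~\ref{lem:3.4}, as each $\bar{\mu}_{\alpha}$ lies in $\bar{I}$. On the other hand, Proposition~\ref{prop:3.1} combined with the second-syzygy computation of Proposition~\ref{prop:9.1} forces
\[
  \bP_2 = \bigoplus_{\alpha \in Q_1} P\big(s(\alpha), t(f(\alpha))\big),
\]
so the minimal projective cover of $\Omega_{\bar{\Lambda}^e}^2(\bar{\Lambda})$ has exactly one indecomposable summand per arrow, i.e.\ rank $|Q_1|$. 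Because $\Omega_{\bar{\Lambda}^e}^2(\bar{\Lambda}) \subseteq \rad \bP_1$ we have $\rad \Omega_{\bar{\Lambda}^e}^2(\bar{\Lambda}) \subseteq \rad^2 \bP_1$, so the linear independence modulo $\rad^2 \bP_1$ established above shows the $\varrho(\bar{\mu}_{\alpha})$ are linearly independent in the top of $\Omega_{\bar{\Lambda}^e}^2(\bar{\Lambda})$; as there are $|Q_1|$ of them, they form a minimal generating set, whence $R$ is surjective with superfluous kernel and so a projective cover. This gives $\Omega_{\bar{\Lambda}^e}^3(\bar{\Lambda}) = \Ker R$. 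I expect the only genuinely delicate step to be the congruence of the second paragraph, i.e.\ checking that the border correction term contributes nothing modulo $\rad^2 \bP_1$; this rests entirely on the standing hypothesis $m_{\bar{\alpha}} n_{\bar{\alpha}} \geq 3$, and once it is granted everything else is inherited verbatim from Lemma~\ref{lem:7.2}.
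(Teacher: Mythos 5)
Your proposal is correct and follows essentially the same route as the paper, which simply defers to the argument of Lemma~\ref{lem:7.2}. You make explicit the one point the paper leaves implicit -- that the border correction $b_i\varrho(B_{\bar{\alpha}})$ lies in $\rad^2 \bP_1$ because $B_{\bar{\alpha}}$ has length $m_{\bar{\alpha}} n_{\bar{\alpha}} \geq 3$, so the classes of $\varrho(\bar{\mu}_{\alpha})$ and $\varrho(\mu_{\alpha})$ agree in $\rad \bP_1/\rad^2 \bP_1$ -- and this is exactly the right justification for why the undeformed computation transfers.
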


\begin{proof}
This follows by the arguments  in the proof of
Lemma~\ref{lem:7.2}.
\end{proof}

By Propositions \ref{prop:3.1} and \ref{prop:9.1}
the module $\bP_3$ is of the form
\[
  \bP_3
      = \bigoplus_{i \in Q_0} P(i,i)
      = \bigoplus_{i \in Q_0} \bar{\Lambda} e_i \otimes e_i \bar{\Lambda} .
\]
For each vertex $i \in Q_0$, we consider the element in $\bP_2$
\[
  \psi_i =
     \big(e_i \otimes e_{t(f(\alpha))}\big) f^2(\alpha)
     + \big(e_i \otimes e_{t(f(\bar{\alpha}))}\big) f^2(\bar{\alpha})
     - \alpha \big(e_{t(\alpha)} \otimes e_i\big)
     - \bar{\alpha} \big(e_{t(\bar{\alpha})} \otimes e_i\big)
     .
\]
Moreover, for each vertex  $i \in \partial(Q,f)$
and the border loop $\alpha$ at $i$,
we consider the elements in $\bP_2$
\begin{align*}
 \psi_i^{(1)} &= (b_i c_{\alpha}^{-1}) (\alpha \otimes \alpha + e_i \otimes \alpha^2),
\\
  \psi_i^{(2)} &= (b_i c_{\alpha}^{-1})^2 (\alpha \otimes \alpha^2 + e_i \otimes \alpha^3),
\\
  \psi_i^{(3)} &= (b_i c_{\alpha}^{-1})^3 (\alpha \otimes \alpha^3).
\end{align*}
Then, for each vertex  $i \in Q$, we define the element
$\bar{\psi}_i$ in $\bP_2$ as follows
\begin{align*}
  \bar{\psi}_i &= \psi_i
  &&
            \mbox{if $i \notin \partial(Q,f)$},
  \\
  \bar{\psi}_i &= \psi_i + \psi_i^{(1)} + \psi_i^{(2)} + \psi_i^{(3)}
  \!\!\!\!\!\!\!\!\!\!\!\!\!\!\!\!\!\!\!\!\!\!\!\!
  &&
            \mbox{if $i \in \partial(Q,f)$}.
\end{align*}

We define the homomorphism
$S : \bP_3 \to \bP_2$
in $\mod \bar{\Lambda}^e$ by
\[
  S( e_i \otimes e_i ) = \bar{\psi}_i
\]
for any vertex $i \in Q_0$.
Then we have the following analogue of
Lemma~\ref{lem:7.3}.

\begin{proposition}
\label{prop:9.3}
The homomorphism $S : \bP_3 \to \bP_2$
induces a projective cover of
$\Omega_{\bar{\Lambda}^e}^3(\bar{\Lambda})$
in $\mod \bar{\Lambda}^e$.
In particular, we have
$\Omega_{\bar{\Lambda}^e}^4(\bar{\Lambda}) = \Ker S$.
\end{proposition}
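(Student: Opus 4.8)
The plan is to mirror the proof of Lemma~\ref{lem:7.3}, adapting it to account for the border-loop corrections encoded in the extra terms $\psi_i^{(1)},\psi_i^{(2)},\psi_i^{(3)}$. The statement has two parts: first that $\Im S \subseteq \Ker R = \Omega_{\bar{\Lambda}^e}^3(\bar{\Lambda})$, i.e.\ $R(\bar{\psi}_i)=0$ for every $i\in Q_0$, and second that the $\bar{\psi}_i$ form a \emph{minimal} generating set, whence $S$ is a projective cover. The minimality half is the easy part: exactly as in Lemma~\ref{lem:7.3}, the generators $\bar{\psi}_i$ lie in $\rad\bP_2$ and are linearly independent modulo $\rad^2\bP_2$ (the leading terms $\psi_i$ already are, and the correction terms $\psi_i^{(j)}$ live in strictly higher radical degree since they involve $\alpha^2,\alpha^3$), so by the shape of $\bP_2$ coming from Propositions~\ref{prop:3.1} and \ref{prop:9.1} they are a minimal set of generators of $\Ker R=\Omega_{\bar{\Lambda}^e}^3(\bar{\Lambda})$. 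The conclusion $\Omega_{\bar{\Lambda}^e}^4(\bar{\Lambda})=\Ker S$ is then immediate.

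First I would dispose of the non-border vertices. For $i\notin\partial(Q,f)$ we have $\bar{\psi}_i=\psi_i$ and $\bar{\mu}_\alpha=\mu_\alpha$ for all arrows involved, so the computation $R(\bar{\psi}_i)=0$ is word-for-word the telescoping cancellation carried out in the proof of Lemma~\ref{lem:7.3}; one invokes the same identities $f^2(\bar{\alpha})=g^{n_\alpha-1}(\alpha)$ and $f^2(\alpha)=g^{n_{\bar{\alpha}}-1}(\bar{\alpha})$ from Lemma~\ref{lem:5.3}(i). Nothing new happens here, so I would state that this case follows verbatim and move on.

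The substantive work is the border case $i\in\partial(Q,f)$, where $\alpha=f(\alpha)$ is the border loop, $\bar{\alpha}=g(\alpha)$, and $\bar{\mu}_\alpha=\alpha^2-c_{\bar{\alpha}}A_{\bar{\alpha}}-b_iB_{\bar{\alpha}}$ carries the deformation term. I would compute $R(\bar{\psi}_i)=R(\psi_i)+R(\psi_i^{(1)})+R(\psi_i^{(2)})+R(\psi_i^{(3)})$ term by term. Applying $R$ to $\psi_i$ as before now produces, in addition to the cancelling terms, a residual contribution coming from the extra summand $-b_iB_{\bar{\alpha}}$ inside $\bar{\mu}_\alpha$ (and inside $\bar{\mu}_{f(\alpha)}=\bar{\mu}_\alpha$, since $f(\alpha)=\alpha$); the role of the correction terms $\psi_i^{(j)}$, whose images $R(\psi_i^{(j)})=(b_ic_\alpha^{-1})^j\varrho(\alpha^2\cdots)$ telescope against one another, is precisely to absorb this residue. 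The key algebraic inputs are the relations $\alpha^2g(\alpha)=\alpha f(\alpha)g(f(\alpha))=0$ and $g^{-1}(\alpha)\alpha^2=0$ established in Proposition~\ref{prop:8.1}(v), together with the border identities $\alpha A_{\bar{\alpha}}=B_\alpha=B_{\bar{\alpha}}$, $c_\alpha=c_{\bar{\alpha}}$, and the chain $\varrho(\alpha^{k})=\sum_{j=0}^{k-1}\alpha^{j}\otimes\alpha^{k-1-j}$ which makes the geometric-series structure of the coefficients $(b_ic_\alpha^{-1})^j$ transparent. I expect the main obstacle to be exactly this bookkeeping: verifying that the three graded pieces $\psi_i^{(1)},\psi_i^{(2)},\psi_i^{(3)}$ cancel the deformation residue at radical degrees matching $\alpha^2,\alpha^3,\alpha^4$, using $\mathrm{char}\,K=2$ where the proof of Lemma~\ref{lem:6.2}-style sign issues would otherwise obstruct (the form of $B'_{\bar{\alpha}}$ and the vanishing $B'_{\bar{\alpha}}\alpha=0$ from the proof of Proposition~\ref{prop:9.1} are what truncate the correction series at $j=3$). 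Once every graded component of $R(\bar{\psi}_i)$ is shown to vanish, the inclusion $\Im S\subseteq\Ker R$ holds, and combined with the minimality argument above this proves $S$ is a projective cover and $\Omega_{\bar{\Lambda}^e}^4(\bar{\Lambda})=\Ker S$.
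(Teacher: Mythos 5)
Your plan coincides with the paper's own proof: the non-border vertices are handled verbatim as in Lemma~\ref{lem:7.3}, the border case is the term-by-term computation of $R(\psi_i)+R\big(\psi_i^{(1)}\big)+R\big(\psi_i^{(2)}\big)+R\big(\psi_i^{(3)}\big)$ with telescoping cancellation in characteristic $2$, and minimality follows because the $\bar{\psi}_i$ lie in $\rad \bP_2$ and are independent modulo $\rad^2 \bP_2$. Two small corrections to your bookkeeping: the correction series stops at $j=3$ because $\alpha^4=0$ (indeed $\alpha^3=c_{\alpha}B_{\alpha}$ lies in the socle), not because $B'_{\bar{\alpha}}\alpha=0$, and the two auxiliary identities that make the cancellation go through are $\alpha \varrho(A_{\bar{\alpha}}) \alpha = B_{\bar{\alpha}} \otimes e_i + \alpha \varrho(B_{\bar{\alpha}})$ and $\varrho(B_{\bar{\alpha}}) \alpha + \varrho(A_{\bar{\alpha}}) \alpha^2 = A_{\bar{\alpha}} \otimes \alpha$, both consequences of $\bar{\alpha}=g(\alpha)$ and $g^{n_{\bar{\alpha}}-1}(\bar{\alpha})=\alpha$.
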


\begin{proof}
We will prove in several steps that
$R(\bar{\psi}_i) = 0$ for any vertex $i \in Q_0$.
Fix a vertex $i \in Q_0$.
If $i \notin \partial(Q,f)$
then $R(\bar{\psi}_i) = R(\psi_i) = 0$
by the identities as in the proof of
Lemma~\ref{lem:7.3}.
Assume that $i \in \partial(Q,f)$,
and let $\alpha \in Q_1$ be the border loop at $i$.
Then we have
$\alpha = f(\alpha)$,
$\bar{\alpha} = g(\alpha)$,
$\alpha = f^2(\alpha) = g^{n_{\bar{\alpha}} - 1}(\bar{\alpha})$,
$f^2(\bar{\alpha}) =g^{n_{\alpha} - 1}(\alpha)$,
$c_{\alpha} = c_{\bar{\alpha}}$,
$B_{\alpha} = B_{\bar{\alpha}}$.
We abbreviate
$c = c_{\alpha} = c_{\bar{\alpha}}$
and $b = b_i$.
We have in $\bP_1$ the following equalities describing
$R(\psi_i)$
\begin{align*}
  R(\psi_i)
   &= \varrho(\bar{\mu}_{\alpha}) f^2(\alpha)
        + \varrho(\bar{\mu}_{\bar{\alpha}}) f^2(\bar{\alpha})
        - \alpha \varrho \big(\bar{\mu}_{f(\alpha)}\big)
        - \bar{\alpha} \varrho\big(\bar{\mu}_{f(\bar{\alpha})}\big)
   \\&
     = \varrho(\bar{\mu}_{\alpha}) \alpha
        + \varrho(\bar{\mu}_{\bar{\alpha}}) f^2(\bar{\alpha})
        - \alpha \varrho ({\mu}_{\alpha})
        - \bar{\alpha} \varrho\big(\bar{\mu}_{f(\bar{\alpha})}\big)
   \\&
      = \Big(\varrho(\alpha^2) - c \varrho(A_{\bar{\alpha}}) - b \varrho(B_{\bar{\alpha}}) \Big) \alpha
       + \Big(\varrho\big(\bar{\alpha} f(\bar{\alpha})\big) - c \varrho(A_{\alpha}) \Big) f^2(\bar{\alpha})
  \\& \quad\,
        - \alpha  \Big(\varrho(\alpha^2) - c \varrho(A_{\bar{\alpha}}) - b \varrho(B_{\bar{\alpha}}) \Big)
        - \bar{\alpha} \Big(\varrho\big(f(\bar{\alpha}) f^2(\bar{\alpha})\big) - c  \varrho\big(A_{g(\bar{\alpha})}\big) \Big)
   \\&
      = e_i \otimes \alpha^2 + \alpha \otimes \alpha
        +  e_i \otimes f(\bar{\alpha}) f^2(\bar{\alpha}) +\bar{\alpha} \otimes  f^2(\bar{\alpha})
  \\& \quad\,
        - \alpha \otimes \alpha - \alpha^2 \otimes e_i
        - \bar{\alpha} \otimes  f^2(\bar{\alpha})
        - \bar{\alpha} f(\bar{\alpha}) \otimes e_i
  \\& \quad\,
        - c \varrho(A_{\bar{\alpha}}) \alpha
        - b \varrho(B_{\bar{\alpha}}) \alpha
        - c \varrho(A_{\alpha}) f^2(\bar{\alpha})
  \\& \quad\,
        +  c \alpha \varrho\big(A_{\bar{\alpha}}\big)
        +  b \alpha \varrho\big(B_{\bar{\alpha}}\big)
        +  c \bar{\alpha} \varrho\big(A_{g(\bar{\alpha})}\big)
   \\&
      = e_i \otimes c A_{\bar{\alpha}} + e_i \otimes b B_{\bar{\alpha}}
        + e_i \otimes c A_{g(\bar{\alpha})}
        - c A_{\bar{\alpha}} \otimes e_i - b B_{\bar{\alpha}} \otimes e_i
        - c A_{\alpha} \otimes e_i
  \\& \quad\,
        - c \varrho(A_{\bar{\alpha}}) \alpha
        - b \varrho(B_{\bar{\alpha}}) \alpha
        - c \varrho(A_{\alpha}) f^2(\bar{\alpha})
  \\& \quad\,
        +  c \alpha \varrho\big(A_{\bar{\alpha}}\big)
        +  b \alpha \varrho\big(B_{\bar{\alpha}}\big)
        +  c \bar{\alpha} \varrho\big(A_{g(\bar{\alpha})}\big)
   \\&
      = c \Big( e_i \otimes A_{g(\alpha)} + \alpha \varrho\big(A_{g(\alpha)}\big)
                    - \varrho(A_{\alpha}) f^2(\bar{\alpha}) - A_{\alpha} \otimes e_i \Big)
  \\& \quad\,
      + c \Big( e_i \otimes A_{g(\bar{\alpha})} + \bar{\alpha} \varrho\big(A_{g(\bar{\alpha})}\big)
                - \varrho(A_{\bar{\alpha}}) \alpha - A_{\bar{\alpha}} \otimes e_i \Big)
  \\& \quad\,
      + b \Big( e_i \otimes B_{\bar{\alpha}} - B_{\bar{\alpha}} \otimes e_i
             + \alpha \varrho(B_{\bar{\alpha}}) - \varrho(B_{\bar{\alpha}}) \alpha \Big)
   \\&
      =
        b \Big( e_i \otimes B_{\bar{\alpha}} + B_{\bar{\alpha}} \otimes e_i
             + \alpha \varrho(B_{\bar{\alpha}}) + \varrho(B_{\bar{\alpha}}) \alpha \Big)
      ,
\end{align*}
since $K$ has characteristic $2$.
We note that, if $b = b_i = 0$,
then
then $\bar{\psi}_i = \psi_i$
and $R(\bar{\psi}_i) = R(\psi_i) = 0$.
Hence we may assume that $b \neq 0$.

In order to calculate
$R(\psi_i^{(1)})$,
$R(\psi_i^{(2)})$,
$R(\psi_i^{(3)})$,
we use the following identities in $\bP_1$
\begin{enumerate}[(1)]
 \item
  $\alpha \varrho(A_{\bar{\alpha}}) \alpha = B_{\bar{\alpha}} \otimes e_i  + \alpha \varrho(B_{\bar{\alpha}})$,
 \item
  $\varrho(B_{\bar{\alpha}}) \alpha + \varrho(A_{\bar{\alpha}}) \alpha^2 = A_{\bar{\alpha}} \otimes \alpha$,
\end{enumerate}
which follow from the equalities
$\bar{\alpha} = g(\alpha)$
and
$g^{n_{\bar{\alpha}} - 1}(\bar{\alpha}) = g^{n_{\alpha}}(\alpha) = \alpha$.

We have the following equalities in $\bP_1$ describing $R(\psi_i^{(1)})$
\begin{align*}
  R\big(\psi_i^{(1)}\big)
   &= b c^{-1} \Big( R(\alpha \otimes \alpha) + R(e_i \otimes \alpha^2) \Big)
  \\&
    = b c^{-1} \Big( \alpha R(e_i \otimes e_i) \alpha + R(e_i \otimes e_i) \alpha^2 \Big)
    = b c^{-1} \Big( \alpha \varrho(\mu_{\alpha}) \alpha + \varrho(\mu_{\alpha}) \alpha^2 \Big)
  \\&
    = b c^{-1} \Big( \alpha \varrho\big(\alpha^2 + c A_{\bar{\alpha}} + b B_{\bar{\alpha}}\big) \alpha
             + \varrho\big(\alpha^2 + c A_{\bar{\alpha}} + b B_{\bar{\alpha}}\big) \alpha^2 \Big)
  \\&
    = b c^{-1} \Big( \alpha \otimes \alpha^2 + \alpha^2 \otimes \alpha
              + c \alpha \varrho(A_{\bar{\alpha}}) \alpha
              + b \alpha \varrho(B_{\bar{\alpha}}) \alpha
  \\& \qquad\qquad
              + e_i \otimes \alpha^3 + \alpha \otimes \alpha^2
              + c \varrho(A_{\bar{\alpha}}) \alpha^2
              + b \varrho(B_{\bar{\alpha}}) \alpha^2 \Big)
  \\&
    = b \Big(
              c^{-1} (\alpha^2 \otimes \alpha)
              + B_{\bar{\alpha}} \otimes e_i
              + \alpha \varrho(B_{\bar{\alpha}})
              + b c^{-1} \alpha \varrho(B_{\bar{\alpha}}) \alpha
  \\&
         \qquad\quad
              + e_i \otimes B_{\bar{\alpha}}
              + \varrho(A_{\bar{\alpha}}) \alpha^2
              + b c^{-1} \varrho(B_{\bar{\alpha}}) \alpha^2
          \Big)
      .
\end{align*}
Then we obtain the equalities
\begin{align*}
  R(\psi_i) +  R\big(\psi_i^{(1)}\big)
   &
      =
              b \varrho(B_{\bar{\alpha}}) \alpha
              + b c^{-1} (\alpha^2 \otimes \alpha)
              + b^2 c^{-1} \alpha \varrho(B_{\bar{\alpha}}) \alpha
              + b \varrho(A_{\bar{\alpha}}) \alpha^2
  \\& \quad\,
              + b^2 c^{-1} \varrho(B_{\bar{\alpha}}) \alpha^2
  \\& =
              b  \Big( \varrho(B_{\bar{\alpha}}) \alpha
                      + \varrho(A_{\bar{\alpha}}) \alpha^2 \Big)
              + b c^{-1} \Big( c A_{\bar{\alpha}} \otimes \alpha
                             + b B_{\bar{\alpha}} \otimes \alpha \Big)
  \\& \quad\,
              + b^2 c^{-1} \Big( \alpha \varrho(B_{\bar{\alpha}}) \alpha
                            + \varrho(B_{\bar{\alpha}}) \alpha^2 \Big)
  \\& =
              b A_{\bar{\alpha}} \otimes \alpha
              + b A_{\bar{\alpha}} \otimes \alpha
              + b^2 c^{-1} \Big( B_{\bar{\alpha}} \otimes \alpha
                            + \alpha \varrho(B_{{\alpha}}) \alpha
                            + \varrho(B_{\bar{\alpha}}) \alpha^2 \Big)
  \\& =
              b^2 c^{-1} \Big( B_{\bar{\alpha}} \otimes \alpha
                            + \alpha \varrho(B_{{\alpha}}) \alpha
                            + \varrho(B_{\bar{\alpha}}) \alpha^2 \Big)
  \\& =
              b^2 c^{-1} \Big( \alpha \varrho(A_{\bar{\alpha}}) \alpha^2
                            + A_{\bar{\alpha}} \otimes \alpha^2
                            + \varrho(A_{\bar{\alpha}}) \alpha^3 \Big)
  \\& =
              b^2 c^{-1} \Big( \alpha \varrho(A_{\bar{\alpha}}) \alpha^2
                            + A_{\bar{\alpha}} \otimes \alpha^2
                            + A''_{\bar{\alpha}} \alpha^3 \Big)
      ,
\end{align*}
where $A''_{\bar{\alpha}}$ is the subpath of  $A_{\bar{\alpha}}$
such that $A''_{\bar{\alpha}} g^{n_{\bar{\alpha}} - 2}(\bar{\alpha}) = A_{\bar{\alpha}}$.

We have the following expressions of  $R(\psi_i^{(2)})$
\begin{align*}
  R\big(\psi_i^{(2)}\big)
   &= \big(b c^{-1}\big)^2 \Big( R(e_i \otimes \alpha^3) + R(\alpha \otimes \alpha^2) \Big)
  \\&
    = \big(b c^{-1}\big)^2 \Big( \varrho(\mu_{\alpha}) \alpha^3 + \alpha \varrho(\mu_{\alpha}) \alpha^2 \Big)
  \\&
    = \big(b c^{-1}\big)^2
         \Big(
               \big(\varrho(\alpha^2) + c \varrho(A_{\bar{\alpha}}) + b \varrho(B_{\bar{\alpha}}) \big) \alpha^3
               + \alpha \big(\varrho(\alpha^2) + c \varrho(A_{\bar{\alpha}}) + b \varrho(B_{\bar{\alpha}}) \big) \alpha^2
               \Big)
  \\&
    = \big(b c^{-1}\big)^2
        \Big( \alpha \otimes \alpha^3
              + c \varrho(A_{\bar{\alpha}}) \alpha^3
              + b \varrho(B_{\bar{\alpha}}) \alpha^3
              + \alpha \otimes \alpha^3
              + \alpha^2 \otimes \alpha^2
  \\& \qquad\qquad\quad
              + c \alpha \varrho(A_{\bar{\alpha}}) \alpha^2
              + b \alpha \varrho(B_{\bar{\alpha}}) \alpha^2
               \Big)
  \\&
    = \big(b c^{-1}\big)^2
        \Big( \alpha^2 \otimes \alpha^2
              + c \varrho(A_{\bar{\alpha}}) \alpha^3
              + c \alpha \varrho(A_{\bar{\alpha}}) \alpha^2
              + b \varrho(B_{\bar{\alpha}}) \alpha^3
              + b \alpha \varrho(B_{\bar{\alpha}}) \alpha^2
               \Big)
      .
\end{align*}
Moreover, we have
$\alpha^2 \otimes \alpha^2 = c A_{\bar{\alpha}} \otimes \alpha^2 + b B_{\bar{\alpha}} \otimes \alpha^2$,
and $A''_{\bar{\alpha}} \otimes \alpha^3 = \varrho(A_{\bar{\alpha}}) \alpha^3$.
Then we obtain the equalities
\begin{align*}
  R(\psi_i) + R\big(\psi_i^{(1)}\big) + R\big(\psi_i^{(2)}\big)
   & =
              b^3 c^{-2} \Big(
                   \varrho(B_{\bar{\alpha}}) \alpha^3
                   + \alpha \varrho(B_{\bar{\alpha}}) \alpha^2
                   + B_{\bar{\alpha}} \otimes \alpha^2
                 \Big)
  \\& =
              b^3 c^{-2} \Big(
                   A_{\bar{\alpha}} \otimes \alpha^3
                   + \alpha \varrho(A_{\bar{\alpha}}) \alpha^3
                   + \alpha A_{\bar{\alpha}} \otimes \alpha^2
                   + B_{\bar{\alpha}} \otimes \alpha^2
                 \Big)
  \\& =
              b^3 c^{-2} \Big(
                   A_{\bar{\alpha}} \otimes \alpha^3
                   + A_{{\alpha}} \otimes \alpha^3
                   + B_{{\alpha}} \otimes \alpha^2
                   + B_{\bar{\alpha}} \otimes \alpha^2
                 \Big)
  \\& =
              b^3 c^{-2} \Big(
                   A_{\bar{\alpha}} \otimes \alpha^3
                   + A_{{\alpha}} \otimes \alpha^3
                 \Big)
      ,
\end{align*}
because $B_{\alpha} = B_{\bar{\alpha}}$.
We have also the following equalities
\begin{align*}
  R\big(\psi_i^{(3)}\big)
   &= \big(b c^{-1}\big)^3 R(\alpha \otimes \alpha^3)
    = \big(b c^{-1}\big)^3 \alpha \varrho(\mu_{\alpha}) \alpha^3
  \\&
    = \big(b c^{-1}\big)^3
         \Big(
               \alpha
               \big(\varrho(\alpha^2) + c \varrho(A_{\bar{\alpha}}) + b \varrho(B_{\bar{\alpha}}) \big)
               \alpha^3
               \Big)
  \\&
    = \big(b c^{-1}\big)^3
        \Big( \alpha^2 \otimes \alpha^3
              + c A_{{\alpha}} \otimes \alpha^3
              + b B_{\bar{\alpha}} \otimes \alpha^3
               \Big)
  \\&
    = \big(b c^{-1}\big)^3
        \Big(
              c A_{\bar{\alpha}} \otimes \alpha^3
              + b B_{\bar{\alpha}} \otimes \alpha^3
              + c A_{{\alpha}} \otimes \alpha^3
              + b B_{{\alpha}} \otimes \alpha^3
               \Big)
  \\&
    = b^3 c^{-2}
        \Big(
              A_{\bar{\alpha}} \otimes \alpha^3
              + A_{{\alpha}} \otimes \alpha^3
               \Big)
      .
\end{align*}
Summing up, we obtain the required vanishing equality
\begin{align*}
  R(\bar{\psi}_i)
  = R(\psi_i) + R\big(\psi_i^{(1)}\big) + R\big(\psi_i^{(2)}\big)  + R\big(\psi_i^{(3)}\big)
  = 0
     .
\end{align*}
Therefore we have
$\Im S \subseteq \Ker R$.
Further, it follows from the definition that the generators
$\bar{\psi}_i$, $i \in Q_0$,
of the image of $S$ are elements of $\rad \bP_2$
which are linearly independent in $\rad \bP_2 / \rad^2 \bP_2$.
Then we conclude from the form of $\bP_2$
that these elements form a minimal set of generators
of $\Ker R = \Omega_{\bar{\Lambda}^e}^3(\bar{\Lambda})$.
Hence $S : \bP_3 \to \Omega_{\bar{\Lambda}^e}^3(\bar{\Lambda})$
is a projective cover of $\Omega_{\bar{\Lambda}^e}^3(\bar{\Lambda})$
in $\mod \bar{\Lambda}^e$.
\end{proof}

\begin{theorem}
\label{th:9.4}
There is an isomorphism
$\Omega_{\bar{\Lambda}^e}^4(\bar{\Lambda}) \cong \bar{\Lambda}$
in $\mod \bar{\Lambda}^e$.
In particular, $\bar{\Lambda}$ is a periodic algebra
of period $4$.
\end{theorem}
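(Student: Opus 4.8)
The plan is to imitate the proof of Theorem~\ref{th:7.4}, constructing an explicit bimodule homomorphism $\theta : \bar{\Lambda} \to \bP_3$ whose image lies in $\Ker S = \Omega_{\bar{\Lambda}^e}^4(\bar{\Lambda})$ (which equals $\Ker S$ by Proposition~\ref{prop:9.3}), and then upgrading $\theta$ to an isomorphism by a dimension count. First I would fix, for each vertex $i$, the basis $\cB_i$ of $e_i \bar{\Lambda}$ consisting of $e_i$, the initial subwords of $A_{\alpha}$ and $A_{\bar{\alpha}}$, and the socle generator $\omega_i$, so that $\cB = \bigcup_{i \in Q_0} \cB_i$ is a $K$-basis of $\bar{\Lambda}$. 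Using the symmetrizing form $\bar{\varphi} : \bar{\Lambda} \to K$ of Proposition~\ref{prop:8.1}(v), I would take the dual basis $\cB^{*} = \{ b^{*} \mid b \in \cB \}$ and set $\xi_i = \sum_{b \in \cB_i} b \otimes b^{*} \in \bP_3$, defining $\theta$ by $\theta(e_i) = \xi_i$. As in the symmetric case, $\theta$ is a homomorphism of $\bar{\Lambda}$-bimodules, and it is a monomorphism because $\theta(\omega_i) = \omega_i \otimes \omega_i \neq 0$ detects the simple socle of each $e_i \bar{\Lambda}$.

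The heart of the argument is to verify that $S(\xi_i) = 0$ for every $i \in Q_0$. Writing $S(e_j \otimes e_j) = \bar{\psi}_j$ and expanding, one obtains
\[
  S(\xi_i) = \sum_{b \in \cB_i} \sum_{j \in Q_0} b\, \bar{\psi}_j\, b^{*} ,
\]
and the general identity $\sum_{b \in \cB} b\,(a^{r} \otimes a^{s})\,b^{*} = \sum_{b \in \cB} b \otimes a^{r+s} b^{*}$, valid in any symmetric algebra (cf.\ \cite{ESk2}), applies verbatim with respect to $\bar{\varphi}$. For the undeformed part $\psi_j$ the computation is identical to the one in Theorem~\ref{th:7.4}: each arrow occurs once as a left factor (with negative sign) and once as a right factor (with positive sign), so these contributions cancel in pairs. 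It then remains to treat the correction terms $\psi_j^{(1)}, \psi_j^{(2)}, \psi_j^{(3)}$ attached to the border vertices $j \in \partial(Q,f)$.

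The main obstacle is precisely this border contribution. For a border loop $\alpha$ at $j$ the extra pieces $\psi_j^{(k)}$ carry the coefficients $(b_j c_{\alpha}^{-1})^{k}$ and involve tensors built from the powers $\alpha, \alpha^{2}, \alpha^{3}$; I expect that, after applying the displayed contraction identity with $a = \alpha$, the three families telescope and cancel against one another, exactly dual to the telescoping already seen in the verification $R(\bar{\psi}_i) = 0$ of Proposition~\ref{prop:9.3}, with the hypothesis that $K$ has characteristic $2$ again collapsing the surviving terms. This is the one place where the deformation genuinely enters, and the bookkeeping of the idempotents $e_i b = b = b e_j$ together with the relations $\alpha^{2} = c_{\bar{\alpha}} A_{\bar{\alpha}} + b_j B_{\bar{\alpha}}$ must be carried out with care.

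Finally, once $\Im \theta \subseteq \Omega_{\bar{\Lambda}^e}^4(\bar{\Lambda})$ is established, I would close the argument as in Theorem~\ref{th:7.4}: Proposition~\ref{prop:9.1} shows that every simple $\bar{\Lambda}$-module is $\Omega$-periodic of period $4$, so Theorem~\ref{th:2.4} together with Proposition~\ref{prop:3.1} gives $\Omega_{\bar{\Lambda}^e}^4(\bar{\Lambda}) \cong {}_1\bar{\Lambda}_{\sigma}$ for some $K$-algebra automorphism $\sigma$, whence $\dim_K \Omega_{\bar{\Lambda}^e}^4(\bar{\Lambda}) = \dim_K \bar{\Lambda}$. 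The monomorphism $\theta$ is then forced to be an isomorphism, giving $\Omega_{\bar{\Lambda}^e}^4(\bar{\Lambda}) \cong \bar{\Lambda}$ in $\mod \bar{\Lambda}^e$; and since the simple modules are not periodic of period $1$, $2$ or $3$, the period is exactly $4$.
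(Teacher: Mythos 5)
Your proposal follows the paper's proof of Theorem~\ref{th:9.4} essentially verbatim: the same dual-basis elements $\xi_i = \sum_{b \in \cB_i} b \otimes b^{*}$, the same monomorphism $\theta$ detected on the socle, and the same closing dimension count via Theorem~\ref{th:2.4} and Proposition~\ref{prop:9.1}, with the only substantive work being the vanishing $S(\xi_t) = 0$. The one step you leave as an expectation --- the border correction terms --- is in fact simpler than the telescoping you anticipate: after applying the contraction identity with $a = \alpha$, each of $\psi_j^{(1)}$ and $\psi_j^{(2)}$ vanishes on its own because its two summands $\alpha^{r} \otimes \alpha^{s}$ and $e_j \otimes \alpha^{r+s}$ conjugate to the same element of $\bP_2$ and $K$ has characteristic $2$, while $\psi_j^{(3)}$ vanishes because $\alpha^{4} = 0$, so no cross-cancellation between the three families is required.
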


\begin{proof}
We proceed as in the proof of
Theorem~\ref{th:7.4},
and use
\cite[part (3) on the pages 119 and 120]{ESk2}.
In particular, we fix some
basis
$\cB = \bigcup_{i \in Q_0} \cB_i$ of $\bar{\Lambda}$ over $K$,
the socle elements $\omega_i$ of $e_i \bar{\Lambda}$,
and consider
the symmetrizing form 
$(-,-) : \bar{\Lambda} \times \bar{\Lambda} \to K$ such that,
for any two elements $x \in \cB_i$ and $y \in \cB$,
we have
\[
  (x,y) = \mbox{\,the coefficient of $\omega_i$ in $x y$},
\]
when $x y$ is expressed
as a linear combination of the elements of $e_i \cB = \cB_i$ over $K$.
Moreover, we consider the dual basis
$\cB^*$ of $\cB$ with respect to $(-,-)$.
Then, for each vertex $i \in Q_0$, we define the element of $\bP_3$
\[
  \xi_i = \sum_{b \in \cB_i} b \otimes b^* .
\]
Then we conclude as in the proof of
Theorem~\ref{th:7.4},
that there is a monomorphism
in $\mod \bar{\Lambda}^e$
\[
  \theta : \bar{\Lambda} \to \bP_3
\]
such that
$\theta (e_i) = \xi_i$
for any $i \in Q_0$.
It follows also from
Theorem~\ref{th:2.4}
and
Proposition~\ref{prop:9.1}
that
$\Omega_{\bar{\Lambda}^e}^4(\bar{\Lambda}) \cong {}_1 \bar{\Lambda}_{\sigma}$
in $\mod \bar{\Lambda}^e$
for some $K$-algebra automorphism $\sigma$ of $\bar{\Lambda}$.
Hence, we conclude that
$\dim_K \bar{\Lambda} = \dim_K \Omega_{\bar{\Lambda}^e}^4(\bar{\Lambda})$.
Moreover, by
Proposition~\ref{prop:9.3},
we have
$\Omega_{\bar{\Lambda}^e}^4(\bar{\Lambda}) = \Ker S$.
Therefore,
in order to show that $\theta$ induces an isomorphism
$\theta : \bar{\Lambda} \to \Omega_{\bar{\Lambda}^e}^4(\bar{\Lambda})$
in $\mod \bar{\Lambda}^e$,
it remains to prove that
$S(\xi_t) = 0$ for any $t \in Q_0$.
Since $K$ has characteristic $2$,
applying
\cite[part (3) on the pages 119 and 120]{ESk2},
we conclude that
for any vertex $i \in \partial(Q,f)$
and the border loop $\alpha$ at $i$,
the following equalities hold in $\bP_2$
\begin{align*}
    \sum_{b \in \cB_t e_i} b (\alpha \otimes \alpha + e_i \otimes \alpha^2) b^* &= 0 , \\
    \sum_{b \in \cB_t e_i} b (\alpha \otimes \alpha^2 + e_i \otimes \alpha^3) b^* &= 0 , \\
    \sum_{b \in \cB_t e_i} b (\alpha \otimes \alpha^3) b^* &= 0 ,
\end{align*}
because $\alpha^4 = 0$.
Then, for any $t \in Q_0$,
we obtain the equalities
\begin{align*}
  S(\xi_t)
     &= \sum_{b \in \cB_t} S ( b \otimes b^* )
     = \sum_{b \in \cB_t} \sum_{j \in Q_0} S ( b e_j \otimes e_j b^* )
  \\&
     = \sum_{b \in \cB_t} \sum_{j \in Q_0} b S ( e_j \otimes e_j ) b^*
     = \sum_{b \in \cB_t} \sum_{j \in Q_0} b \bar{\psi}_j b^*
     = 0
     .
\end{align*}
This completes the proof that $\bar{\Lambda}$ is a periodic algebra
of period $4$.
\end{proof}

\section{The representation type}\label{sec:reptype}

In this section we discuss the representation type of weighted
surface algebras and their socle deformations.
In particular, we complete the proofs of
Theorems \ref{th:main1} and \ref{th:main4}.

Let $A = K Q/I$ be a string algebra.
For a given arrow $\alpha \in Q_1$,
we denote by $\alpha^{-1}$ the formal inverse of $\alpha$
and set
$s(\alpha^{-1}) = t(\alpha)$
and
$t(\alpha^{-1}) = s(\alpha)$.
By a \emph{walk} in $(Q,I)$ we mean
a sequence $w = \alpha_1 \dots \alpha_n$,
where each $\alpha_i$ is an arrow or the inverse
of an arrow in $Q$, satisfying the following conditions:
\begin{enumerate}[(i)]
 \item
  $t(\alpha_i) = s(\alpha_{i+1})$ for any $i \in \{1,\dots,n-1\}$;
 \item
  $\alpha_{i+1} \neq \alpha_i^{-1}$ for any $i \in \{1,\dots,n-1\}$;
 \item
  $w$ does not contain a subpath $v$ such that
  $v$ or $v^{-1}$ belongs to $I$.
\end{enumerate}
Moreover,
$w$ is said to be a \emph{bipartite walk}
if, for any  $i \in \{1,\dots,n-1\}$,
exactly one of $\alpha_{i}$ and $\alpha_{i+1}$
is an arrow.
A walk $w = \alpha_1 \dots \alpha_n$ in $(Q,I)$
with $s(\alpha_1) = t(\alpha_n)$
is called a \emph{closed walk}.
Following \cite{SW,WW},
we say that a closed walk $w$ in $(Q,I)$
is a \emph{primitive walk}
if the following conditions are satisfied:
\begin{enumerate}[(i)]
 \item
  $w^m$ is a walk in $(Q,I)$ for any
  positive integer $m$;
 \item
  $w \neq v^r$ for any closed walk $v$ in $(Q,I)$ and
  positive integer $r$.
\end{enumerate}
It is known that a string algebra $A = K Q/I$
is representation-infinite if and only if $(Q,I)$ admits
a primitive walk (see \cite[Theorem~1]{SW}).
Moreover, if $A = K Q/I$ is a  representation-infinite
string algebra then the primitive walks in $(Q,I)$
create one-parameter families of stable tubes of rank $1$
in the Auslander-Reiten quiver $\Gamma_A$
(see \cite{BR,WW}).

We need the following combinatorial lemma.

\begin{lemma}
\label{lem:10.1}
Let $A = K Q/I$ be a string algebra
with $Q$ a $2$-regular quiver.
Then, for any arrow $\alpha \in Q_1$,
there is a bipartite primitive walk $w(\alpha)$ containing
the arrow $\alpha$.
\end{lemma}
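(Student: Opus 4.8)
The plan is to build $w(\alpha)$ explicitly as a closed bipartite walk whose ``forward'' letters run through a single orbit of a suitable permutation of $Q_1$, and to exploit the fact that a bipartite walk never traverses two arrows of $Q$ consecutively in the same direction. Since $Q$ is $2$-regular, every vertex is the source of exactly two distinct arrows and the target of exactly two distinct arrows. Hence, besides the fixed-point-free involution $\alpha \mapsto \bar\alpha$ (the other arrow with $s(\bar\alpha)=s(\alpha)$), there is a second fixed-point-free involution $\tau \colon Q_1 \to Q_1$ sending $\alpha$ to the unique arrow $\tau(\alpha)\neq\alpha$ with $t(\tau(\alpha))=t(\alpha)$. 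I would set $\rho(\alpha)=\overline{\tau(\alpha)}$, a permutation of the finite set $Q_1$, and for the given $\alpha$ let $k\ge 1$ be minimal with $\rho^{k}(\alpha)=\alpha$. Then I would define
\[
  w(\alpha) = \alpha\,\tau(\alpha)^{-1}\,\rho(\alpha)\,\tau(\rho(\alpha))^{-1}\cdots
              \rho^{k-1}(\alpha)\,\tau(\rho^{k-1}(\alpha))^{-1},
\]
a word of length $2k$ whose letters in odd position are the arrows $\rho^{i}(\alpha)$ and whose letters in even position are the inverse arrows $\tau(\rho^{i}(\alpha))^{-1}$. By construction $w(\alpha)$ contains $\alpha$ and is bipartite.

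The first task is to check that $w(\alpha)$ is a genuine walk in $(Q,I)$. Condition (i) and closedness, namely $t(\tau(\rho^{k-1}(\alpha))^{-1})=s(\tau(\rho^{k-1}(\alpha)))=s(\overline{\tau(\rho^{k-1}(\alpha))})=s(\rho^{k}(\alpha))=s(\alpha)$, follow from $t(\tau(\beta))=t(\beta)$, $s(\bar\gamma)=s(\gamma)$ and $\rho^{k}(\alpha)=\alpha$. The no-backtracking condition (ii) at each peak and valley reduces to $\tau(\beta)\neq\beta$ and to $\rho(\beta)=\overline{\tau(\beta)}\neq\tau(\beta)$, which hold because $\tau$ and $\alpha\mapsto\bar\alpha$ are fixed-point-free. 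Crucially, condition (iii) is automatic: in a bipartite walk no two consecutive letters point in the same direction, so every maximal directed subword has length $1$, whereas the admissible ideal $I$ is generated by paths of length $\ge 2$; hence no subpath $v$ of $w(\alpha)$ can have $v$ or $v^{-1}$ in $I$.

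It then remains to verify primitivity. That $w(\alpha)^{m}$ is again a walk for every $m\ge 1$ follows from the same computation at the seam, where $\tau(\rho^{k-1}(\alpha))^{-1}$ is followed by $\alpha=\overline{\tau(\rho^{k-1}(\alpha))}\neq\tau(\rho^{k-1}(\alpha))$, together with the automatic validity of (iii) for the still-bipartite power $w(\alpha)^{m}$. That $w(\alpha)$ is not a proper power $v^{r}$ follows from the minimality of $k$: a factorisation with $r\ge 2$ would force the forward-arrow sequence $\alpha,\rho(\alpha),\dots,\rho^{k-1}(\alpha)$ to be periodic of period $k/r$, contradicting $\rho^{j}(\alpha)\neq\alpha$ for $0<j<k$.

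I expect the only delicate point to be the uniform treatment of the degenerate local configurations---loops, double arrows and $2$-cycles---when confirming that $\tau$ and $\alpha\mapsto\bar\alpha$ are fixed-point-free and that no backtracking occurs; all of these are nevertheless forced by $2$-regularity (for instance a vertex carrying two loops would be an isolated component, which is excluded once $|Q_0|\ge 2$), so the verification goes through in every case. Everything else is routine bookkeeping, the genuinely new observation being that bipartiteness renders condition (iii) vacuous, which is what allows the orbit of $\rho$ to be promoted directly to a primitive walk.
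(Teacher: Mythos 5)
Your construction is exactly the one used in the paper: your $\tau$ is its involution ${}^{*}$ fixing targets, your $\rho=\overline{\tau(-)}$ is its permutation $h$, and your walk $w(\alpha)$ is the paper's $\alpha(\alpha^{*})^{-1}h(\alpha)(h(\alpha)^{*})^{-1}\cdots h^{r-1}(\alpha)(h^{r-1}(\alpha)^{*})^{-1}$. Your verifications (fixed-point-freeness from $2$-regularity, vacuity of condition (iii) for bipartite walks, minimality of the orbit length giving primitivity) are correct and simply spell out details the paper leaves implicit.
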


\begin{proof}
Since $Q$ is a $2$-regular quiver, we have two
involutions
$\bar{} : Q_1 \to Q_1$
and
${}^* : Q_1 \to Q_1$
of the set $Q_1$ of arrows of $Q$.
The first involution assigns to each arrow $\alpha \in Q_1$
the arrow $\bar{\alpha}$
with
$s(\alpha) = s(\alpha^*)$
and
$\alpha \neq \bar{\alpha}$.
The second involution assigns to each arrow $\alpha \in Q_1$
the arrow $\alpha^*$
with
$t(\alpha) = t(\alpha^*)$
and
$\alpha \neq \alpha^*$.
Consider the automorphisms
$h : Q_1 \to Q_1$
such that
$h(\alpha) = \overbar{\alpha^*}$
for any arrow $\alpha \in Q_1$.
Clearly, $h$ has finite order.
In particular, for a given arrow $\alpha \in Q_1$,
there exists a minimal positive integer $r$
such that $h^r(\alpha) = \alpha$.
Then the required bipartite primitive walk $w(\alpha)$
is of the form
\[
   \alpha (\alpha^*)^{-1}
   h(\alpha) \big(h(\alpha)^*\big)^{-1}
  \dots
   h^{r-1}(\alpha) \big(h^{r-1}(\alpha)^*\big)^{-1}
  .
\]
\end{proof}

Let $(Q,f)$ be a triangulation quiver,
$m_{\bullet} : \cO(g) \to \bN^*$
a weight function, and
$c_{\bullet} : \cO(g) \to K^*$
a parameter function.
We consider the bound quiver algebra
\[
  \Gamma(Q,f,m_{\bullet},c_{\bullet})
   = K Q / L (Q,f,m_{\bullet},c_{\bullet}),
\]
where $L (Q,f,m_{\bullet},c_{\bullet})$
is the admissible ideal in the path algebra $KQ$ of $Q$ over $K$
generated by the elements
$\alpha f(\alpha)$ and $A_{\alpha}$,
for all  arrows $\alpha \in Q_1$.
Then
$\Gamma(Q,f,m_{\bullet},c_{\bullet})$
is a string algebra, called the string algebra
of the weighted triangulation algebra
$\Lambda(Q,f,m_{\bullet},c_{\bullet})$.
We note that it is the largest string quotient algebra of
$\Lambda(Q,f,m_{\bullet},c_{\bullet})$,
with respect to dimension.
Observe also that
$\Gamma(Q,f,m_{\bullet},c_{\bullet})$
is a quotient algebra of the special biserial degeneration algebra
$B(Q,f,m_{\bullet},c_{\bullet})$ of
$\Lambda(Q,f,m_{\bullet},c_{\bullet})$.
Moreover, if the border
$\partial(Q,f)$ of $(Q,f)$ is not empty and
$b_{\bullet} : \partial(Q,f) \to K$ is a border function,
then
$\Gamma(Q,f,m_{\bullet},c_{\bullet})$
is a quotient algebra of the socle deformed weighted
triangulation algebra
$\Lambda(Q,f,m_{\bullet},c_{\bullet},b_{\bullet})$.

\begin{proposition}
\label{prop:10.2}
Let $(Q,f)$ be a triangulation quiver,
$m_{\bullet} : \cO(g) \to \bN^*$
a weight function, and
$c_{\bullet} : \cO(g) \to K^*$
a parameter function.
Then the following statements hold:
\begin{enumerate}[(i)]
 \item
  $\Gamma(Q,f,m_{\bullet},c_{\bullet})$
  is a representation-infinite tame algebra.
 \item
  If there is an arrow $\alpha \in Q_1$
  with $n_{\alpha} \geq 4$ or $m_{\alpha} \geq 2$,
  then $\Gamma(Q,f,m_{\bullet},c_{\bullet})$
  is of non-polynomial growth.
\end{enumerate}
\end{proposition}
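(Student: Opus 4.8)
The plan is to handle the two statements by different means, using throughout that $\Gamma := \Gamma(Q,f,m_\bullet,c_\bullet)$ is by construction a string algebra whose quiver $Q$ is $2$-regular. For (i), tameness is immediate, since a string algebra is special biserial and Proposition~\ref{prop:2.1} applies. To obtain representation-infiniteness I would invoke the Wald--Waschb\"usch criterion that a string algebra $KQ/I$ is representation-infinite exactly when $(Q,I)$ admits a primitive walk (\cite[Theorem~1]{SW}). Because $Q$ is $2$-regular, Lemma~\ref{lem:10.1} supplies a bipartite primitive walk $w(\alpha)$ through any prescribed arrow $\alpha$, so the criterion is satisfied and (i) follows. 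These bipartite bands already produce one-parameter families of homogeneous (rank $1$) stable tubes in $\Gamma_\Gamma$ (see \cite{BR,WW}).

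For (ii) the strategy is to show that $\Gamma$ carries \emph{exponentially many} pairwise inequivalent bands, which forces non-polynomial growth. Indeed, for a string algebra the bands parametrize the homogeneous tubes of $\Gamma_\Gamma$, a band of total dimension $d$ giving a one-parameter family whose mouth modules have dimension $d$; hence if the number of bands of dimension $d$ grows faster than any polynomial in $d$, so does the number of one-parameter families in dimension $d$, and this together with the tameness from (i) yields exactly tame non-polynomial growth. Concretely I would exhibit two bands $u,v$ through a common vertex that are \emph{freely composable}, i.e. every alternating word $u^{a_1}v^{b_1}u^{a_2}v^{b_2}\cdots$ is again a reduced walk avoiding the defining relations $\alpha f(\alpha)$ and $A_\alpha$; such words form a set of exponential size and yield exponentially many non-rotation-equivalent bands.

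The role of the hypothesis ``$n_\alpha\ge 4$ or $m_\alpha\ge 2$ for some $\alpha$'' is to guarantee a \emph{direct string of length at least two}, namely a nonzero path $\alpha g(\alpha)$. The only way $\alpha g(\alpha)$ can vanish is for it to equal a relation $A_\alpha$ of length $2$, which, under the standing assumption $m_\alpha n_\alpha\ge 3$, happens precisely when $m_\alpha=1$ and $n_\alpha=3$; by Lemma~\ref{lem:6.6} the case where this holds for \emph{every} arrow is exactly the excluded tetrahedral quiver with all weights equal to $1$. Thus, away from that case, some arrow $\beta$ has $m_\beta n_\beta\ge 4$ and a genuine length-$2$ direct string $\beta g(\beta)$. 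I would then take $u$ to be the bipartite band $w(\beta)$ of Lemma~\ref{lem:10.1}, built from single arrows, and construct $v$ as a closed walk through the same vertex that traverses the length-$2$ segment $\beta g(\beta)$ before returning along inverse arrows. The extra length-$2$ stretch creates a branch point distinguishing $v$ from $u$ and allows the two to be concatenated in either order without reduction, giving the required free composability.

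The main obstacle I anticipate is the uniform bookkeeping. One must verify that the chosen $u,v$ are reduced, primitive (not proper powers), and avoid all relations, and that every alternating concatenation stays a valid band; and one must cover separately the situations $n_\alpha\ge 4$, the case $n_\alpha=2$ (where $m_\alpha\ge 2$ is forced), and especially the self-folded loop case $n_\alpha=1$. In the last case the border or self-folded loop $\alpha$ has $\alpha f(\alpha)=0$ and, when $m_\alpha=3$, even $\alpha^2=0$, so a length-$2$ direct string need not pass through $\alpha$ itself; there one must instead locate the branch point elsewhere in the quiver or exploit the self-folded triangle directly to manufacture the second band. Once two freely composable bands are secured in each case, the exponential lower bound on the number of bands of each dimension, combined with part (i), completes the proof that $\Gamma$ is tame of non-polynomial growth.
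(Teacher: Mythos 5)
Your part (i) is correct and coincides with the paper's argument: tameness from Proposition~\ref{prop:2.1}, representation-infiniteness from Lemma~\ref{lem:10.1} together with the Wald--Waschb\"usch criterion. For part (ii) your overall strategy -- two freely composable primitive walks through a common vertex, hence super-polynomially many bands, hence non-polynomial growth -- is exactly the paper's, but your execution has two genuine gaps. First, the case analysis around the hypothesis is wrong and then left unresolved. You claim $\alpha g(\alpha)=0$ happens ``precisely when $m_\alpha=1$ and $n_\alpha=3$''; in fact $\alpha g(\alpha)=A_\alpha$ exactly when $m_\alpha n_\alpha=3$, which also covers $(m_\alpha,n_\alpha)=(3,1)$, i.e.\ a self-folded loop of weight $3$ -- a case that \emph{does} satisfy the hypothesis ``$m_\alpha\ge 2$'' yet yields no length-two direct string at $\alpha$. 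You flag this case but explicitly do not handle it (``one must instead locate the branch point elsewhere''), so the proof is incomplete there. The gap is closable: if $\alpha$ is a self-folded loop (so $n_\alpha=1$, forcing $f(\alpha)=\bar\alpha$), then with $\beta=f(\alpha)$ and $\gamma=f(\beta)$ the $g$-orbit of $\beta$ contains the three distinct arrows $\gamma,\beta,\bar\gamma$, and it cannot have length $3$ unless $\bar\gamma$ is an $f$-fixed loop, which would make $Q$ a two-vertex quiver contrary to the standing assumption $|Q_0|\ge 3$; hence $n_\beta\ge 4$ and one may always replace $\alpha$ by an arrow with $m n\ge 4$. But this reduction is not in your proposal.

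Second, the heart of the argument -- the actual second band and the verification that all alternating concatenations are valid, relation-free, reduced, primitive walks -- is only described, not constructed. ``A closed walk that traverses $\beta g(\beta)$ before returning along inverse arrows'' does not specify how to return, and it is not clear one can close up using only inverse arrows. The paper's construction is concrete: it takes proper subpaths $u$ of $A_{g(\alpha)}$ and $\bar u$ of $A_{g(\bar\alpha)}$ and forms $v=u\,f(\bar\alpha)^{-1}\,\bar u\,f(\alpha)^{-1}$, using the identities $g^{n_\alpha-1}(\alpha)=f^2(\bar\alpha)$ and $g^{n_{\bar\alpha}-1}(\bar\alpha)=f^2(\alpha)$ to see that this closes up at $t(\alpha)$; since both $v$ and the bipartite walk $w=w(g(\alpha))$ begin with $g(\alpha)$ and end with $f(\alpha)^{-1}$, every junction is $f(\alpha)^{-1}g(\alpha)$ and no new direct subpath of length $\ge 2$ is created. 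Finally, your passage from ``exponentially many bands'' to non-polynomial growth is asserted rather than proved; the paper restricts to concatenations of prime total exponent to guarantee primitivity and pairwise inequivalence, and invokes the argument of \cite[Lemma~1]{Sk0}. As it stands the proposal is a correct plan matching the paper's route, but not a proof.
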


\begin{proof}
We write
$\Gamma = \Gamma(Q,f,m_{\bullet},c_{\bullet})$
and
$L = L (Q,f,m_{\bullet},c_{\bullet})$.


(i)
Since a string algebra is special biserial, it is tame, by
Proposition~\ref{prop:2.1}.
Moreover, since $Q$ is a $2$-regular quiver,
it follows from
Lemma~\ref{lem:10.1}
that there is a  (bipartite) primitive
walk in $(Q,L)$, and consequently
$\Gamma$ is representation-infinite.


(ii)
Assume that there is an arrow $\alpha \in Q_1$
such that $n_{\alpha} \geq 4$ or $m_{\alpha} \geq 2$.
Recall that we assume 
$m_{\alpha} n_{\alpha} \geq 3$.
Hence, if $n_{\alpha} = 1$, then $m_{\alpha} \geq 3$.
Moreover,
$\overbar{f(\alpha)} = g(\alpha)$,
$m_{\alpha} = m_{g(\alpha)}$,
$n_{\alpha} = n_{g(\alpha)}$.
Then
$u = g(\alpha) g^2(\alpha) \dots g^{n_{\alpha}-2}(\alpha)$
is a path of length $\geq 2$ and is a proper subpath
of $A_{g(\alpha)}$, and consequently $u$ does not
belong to $L$.
Observe also that
$g(\bar{\alpha}) = \overbar{f(\bar{\alpha})}$
and $m_{\bar{\alpha}} n_{\bar{\alpha}} \geq 3$,
again by our general assumption.
Hence,
$\bar{u} = g(\bar{\alpha}) \dots g^{n_{\bar{\alpha}}-2}(\bar{\alpha})$
is a path of length $\geq 1$ and is a proper subpath
of $A_{g(\bar{\alpha})}$, and then $\bar{u}$ does not
belong to $L$.
Consider the following closed walk in $(Q,f)$
\[
  v = u f(\bar{\alpha})^{-1} \bar{u} f(\alpha)^{-1}
\]
and observe that it is a primitive walk.
Applying
Lemma~\ref{lem:10.1},
we may also consider the bipartite primitive walk $w = w(g(\alpha))$.
We note that $w$ is of the form
$w = g(\alpha) \dots f(\alpha)^{-1}$.
In particular, we conclude that,
for any prime number $q$
and positive integers
$r_1, s_1, \dots, r_t, s_t$
with $\sum_{i=1}^t (r_i + s_i) = q$,
the closed walks in $(Q,L)$ of the form
\[
  v_1^{r_1}
  w_1^{s_1}
  \dots
  v_t^{r_t}
  w_t^{s_t}
\]
are primitive walks.
Then it follows by the arguments applied
in the proof of \cite[Lemma~1]{Sk0} that
the string algebra $\Gamma$ is not
of polynomial growth.
\end{proof}

Let $\Lambda = \Lambda(S,\vec{T},m_{\bullet},c_{\bullet})$
be a weighted surface algebra, and
$(Q,f) = (Q(S,\vec{T}),f)$
its triangulation quiver.
It follows from
Proposition~\ref{prop:5.8}
that $\Lambda$ is a tame algebra.
Further, the associated string algebra
$\Gamma = \Gamma(Q,f,m_{\bullet},c_{\bullet})$
is a quotient algebra of $\Lambda$ and
is representation-infinite, by
Proposition~\ref{prop:10.2}~(i).
Hence, $\Lambda$ is representation-infinite.
Applying
Lemma~\ref{lem:6.6},
we conclude that $\Lambda$
is a tetrahedral algebra if and only if
$n_{\alpha} = 3$ and $m_{\alpha} = 1$
for any arrow $\alpha \in Q_1$.
Moreover, if this is the case, then $\Lambda$
is of polynomial growth if and only if $\Lambda$
is a non-singular tetrahedral algebra, by
Propositions \ref{prop:6.3} and \ref{prop:6.4}.
Assume now that $\Lambda$ is not a tetrahedral algebra.
Then it follows from
Proposition~\ref{prop:10.2}~(ii)
that the string quotient algebra $\Gamma$ of $\Lambda$
is not of polynomial growth.
Hence, $\Lambda$ is of non-polynomial growth.

Let $A$ be a basic, indecomposable, symmetric algebra
which is socle equivalent to $\Lambda$.
We may assume that $A$ is not isomorphic to $\Lambda$.
Then it follows from
Propositions \ref{prop:8.2} and \ref{prop:8.3}
that the border $\partial(Q,f)$ of $(Q,f)$
is not empty, $K$ has characteristic $2$,
and $A$ is isomorphic to an algebra
$\bar{\Lambda} = \Lambda(Q,f,m_{\bullet},c_{\bullet},b_{\bullet})$,
for some border function $b_{\bullet}$ of $(Q,f)$.
In particular, we know that $\Lambda$ is not a tetrahedral algebra.
Then the string algebra $\Gamma$ of $\Lambda$
is an algebra of non-polynomial growth
and a quotient algebra of $\bar{\Lambda}$.
Therefore, $A$ is a tame algebra of non-polynomial growth.

We mention that in the special case when $\Lambda$ is the 
Jacobian algebra of an orientable surface with empty boundary
and non-empty collection of punctures, different from a sphere
with at most four punctures, the fact that $\Lambda$ is of
non-polynomial growth was proved in \cite[Theorem]{VD}.

\section*{Acknowledgements}

We are grateful to S. Ladkani for pointing out to us (September~2013)
that there are tame symmetric algebras whose indecomposable 
non-projective modules are periodic of period dividing $4$,
coming from cluster theory,
which we had not been aware of.

The research was done during the visits of the second named author
at the Mathematical Institute in Oxford (March 2014) and
the first named author at the Faculty of Mathematics and Computer Sciences
in Toru\'n (October 2014).
The results of the paper were partially presented 
at the conferences 
``Advances of Representation Theory of Algebras''
(Montreal, June 2014),
``Cluster Algebras and Geometry'' (M\"unster, March 2016),
``Workshop on Brauer Graph Algebras''
(Stuttgart, March 2016).

There is an overlap of the arXiv update 1608.00321 of \cite{La3}
from August~2016 with our paper,
which we incidentally submitted the first time early~2016.
We think it makes the paper harder to read if we were
to cite the details of the overlap and we have
therefore decided not to do this.




\begin{thebibliography}{99}


\bibitem{ASS}
  {I.~Assem, D.~Simson, A.~Skowro\'nski},
  {Elements of the Representation Theory of Associative Algebras 1:
  Techniques of Representation Theory},
  {London Mathematical Society Student Texts, vol. {65}},
  Cambridge University Press, Cambridge, 2006.

\bibitem{AR1}
{M.~Auslander, I.~Reiten},
McKay quivers and extended Dynkin diagrams,
{Trans. Amer. Math. Soc.}  {293}  (1986)  293--301.

\bibitem{AR2}
{M.~Auslander, I.~Reiten},
Almost split sequences for rational double points,
{Trans. Amer. Math. Soc.}  {302}  (1987)  87--97.

\bibitem{AR3}
{M.~Auslander, I.~Reiten},
$D\Tr$-periodic modules and functors,
{Representation Theory of Algebras},
in: Canad. Math. Soc. Conf. Proc., vol. {18},
Amer. Math. Soc., 1996, pp. 39--50.

\bibitem{BES1}
{J.~Bia\l kowski, K.~Erdmann, A.~Skowro\'nski},
Deformed preprojective algebras of generalized Dynkin type,
{Trans. Amer. Math. Soc.}  {359}  (2007) 2625--2650.

\bibitem{BES2}
{J.~Bia\l kowski, K.~Erdmann, A.~Skowro\'nski},
Deformed preprojective algebras of generalized Dynkin type $\mathbb{L}_n$:
classification and symmetricity,
{J. Algebra}  {345}  (2011) 150--170.

\bibitem{BES3}
{J.~Bia\l kowski, K.~Erdmann, A.~Skowro\'nski},
Deformed mesh algebras of Dynkin type $\mathbb{C}_n$,
{Colloq. Math.}  {126}  (2012)  217--230.

\bibitem{BES4}
{J.~Bia\l kowski, K.~Erdmann, A.~Skowro\'nski},
Periodicity of self-injective algebras of polynomial growth,
{J. Algebra} {443} (2015) 200--269.

\bibitem{BHS}
{J.~Bia\l kowski, T.~Holm, A.~Skowro\'nski},
On nonstandard tame selfinjective algebras having only periodic modules,
{Colloq. Math.} {97}  (2003)  33--47.
%
\bibitem{BS1}
{J.~Bia\l kowski, A.~Skowro\'nski},
On tame weakly symmetric algebras having only periodic modules,
{Arch. Math. (Basel)} {81}  (2003)  142--154.
%
\bibitem{BS2}
{J.~Bia\l kowski, A.~Skowro\'nski},
Socle deformations of selfinjective algebras of tubular type,
{J. Math. Soc. Japan} {56}  (2004)  687--716.

\bibitem{BBK}
{S.~Brenner, M.~C.~R.~Butler, A.~D.~King},
Periodic algebras which are almost Koszul,
{Algebras Repr. Theory} {5}  (2002)  331--367.

\bibitem{Bu}
{R.~O.~Buchweitz},
Finite representation type and periodic Hochschild (co-)homology,
in {Trends in the Representation Theory of Finite-Dimensional Algebras},
in: Contemp. Math., vol. {229},
Amer. Math. Soc., Providence, RI, 1998, pp.  81--109.

\bibitem{BIKR}
{I.~Burban, O.~Iyama, B.~Keller, I.~Reiten},
Cluster tilting for one-dimensional hypersurface singularities,
{Adv. Math.}  {217}  (2008)  2443--2484.

\bibitem{BR}
{M.~C.~R.~Butler, C.~M.~Ringel},
Auslander-Reiten sequences with few middle terms and applications to string algebras,
{Comm. Algebra}  {15}  (1987)  145--179.

\bibitem{Ca}
{S.~C.~Carlson},
{Topology of Surfaces,
Knots and Manifolds,
A First Undergraduate Course},
John Wiley \& Sons, Inc., New York, 2001.

\bibitem{CB1}
{W.~Crawley-Boevey},
On tame algebras and bocses,
{Proc. London Math. Soc.}  {56}  (1988)  451--483.

\bibitem{CB2}
{W.~Crawley-Boevey},
Tameness of biserial algebras, 
{Arch. Math. (Basel)} {65} (1995) 399--407.

\bibitem{DWZ1}
 {H.~Derksen, J.~Weyman, A.~Zelevinsky},
 Quivers with potentials and their representations I.  Mutations,
 {Selecta Math. (N.S.)}  {14}  (2008) 59--119.

\bibitem{DWZ2}
 {H.~Derksen, J.~Weyman, A.~Zelevinsky},
 Quivers with potentials and their representations II:
 Applications to cluster algebras,
 {J. Amer. Math. Soc.}  {23}  (2010)  749--790.

\bibitem{DS0}
{P.~Dowbor, A.~Skowro\'nski},
On the representation type of locally bounded categories,
{Tsukuba J. Math.}  {10}  (1986)  63--72.

\bibitem{DS}
{P.~Dowbor, A.~Skowro\'nski},
Galois coverings of representation-infinite algebras,
{Comment. Math. Helv.}  {62}  (1987)   311--337.
%
\bibitem{Dr}
{Y.~A.~Drozd},
Tame and wild matrix problems,
in: {Representation Theory II},
in: Lecture Notes in Math., vol. {832},
Springer-Verlag, Berlin-Heidelberg, 1980, 242--258.
%
\bibitem{Du1}
{A.~Dugas},
Periodic resolutions and self-injective algebras of finite type,
{J. Pure Appl. Algebra}  {214}  (2010)   990--1000.

\bibitem{Du2}
{A.~Dugas},
Periodicity of $d$-cluster-tilted algebras,
{J. Algebra}  {368}  (2012) 40--52.


\bibitem{E0}
{K.~Erdmann},
On the number of simple modules of certain tame blocks and algebras,
{Arch. Math. (Basel)} {51} (1988) 34--38.
%
\bibitem{E1}
{K.~Erdmann},
Algebras and quaternion defect groups I,
{Math. Ann.} {281} (1988) 545--560.
%
\bibitem{E2}
{K.~Erdmann},
Algebras and quaternion defect groups II,
{Math. Ann.} {281} (1988) 561--582.
%
\bibitem{E3}
{K.~Erdmann},
{Blocks of Tame Representation Type and Related Algebras},
in: Lecture Notes in Math., vol. {1428},
Springer-Verlag, Berlin-Heidelberg, 1990.
%
\bibitem{EH}
{K.~Erdmann and T.~Holm},
Twisted bimodules and Hochschild cohomology for self-injective algebras of class $\mathbb{A}_n$,
{Forum Math.}  {11}  (1999)  177--201.
%
\bibitem{EHS}
{K.~Erdmann, T.~Holm, N.~Snashall},
Twisted bimodules and Hochschild cohomology for self-injective
algebras of class $\mathbb{A}_n$ II,
{Algebras Repr. Theory}  {5}  (2002)  457--482.
%
\bibitem{ESk1}
{K.~Erdmann, A.~Skowro\'nski},
On Auslander-Reiten components of blocks and self-injective biserial algebras,
{Trans. Amer. Math. Soc.}  {330}  (1992) 165--189.
%
\bibitem{ESk2}
{K.~Erdmann, A.~Skowro\'nski},
The stable Calabi-Yau dimension of tame symmetric algebras,
{J. Math. Soc. Japan}  {58}  (2006)   97--128.
%
\bibitem{ESk3}
{K.~Erdmann, A.~Skowro\'nski},
Periodic algebras,
in: {Trends in Representation Theory of Algebras and Related Topics},
in: Europ. Math. Soc. Series Congress Reports,
European Math. Soc., Z\"urich, 2008, pp. 201--251.
%
\bibitem{ESk4}
{K.~Erdmann, A.~Skowro\'nski},
The periodicity conjecture for blocks of group algebras,
{Colloq. Math.} {138} (2015) 283--294.
%
\bibitem{ESk-HTA}
{K.~Erdmann, A.~Skowro\'nski},
Higher tetrahedral algebras,
Preprint 2017,
{http://arxiv.org/abs/1706.04477},
{arXiv:1706.04477}.
%
\bibitem{ESk5}
{K.~Erdmann, A.~Skowro\'nski},
Algebras of generalized quaternion type,
Preprint 2017,
{https://arxiv.org/abs/1710.09640},
{arXiv:1710.09640}.
%
\bibitem{ESk6}
{K.~Erdmann, A.~Skowro\'nski},
Algebras of generalized dihedral type,
Preprint 2017,
{http://arxiv.org/abs/1706.00688},
{arXiv:1706.00688}.
%
\bibitem{ES7}
{K.~Erdmann, A.~Skowro\'nski},
From Brauer graph algebras to biserial weighted surface algebras,
Preprint 2017,
{http://arxiv.org/abs/1706.07693},
{arXiv:1706.07693}.
%
\bibitem{ESn}
{K.~Erdmann, N.~Snashall},
Preprojective algebras of Dynkin type, periodicity and the second Hochschild cohomology,
in: {Algebras and Modules II},
in: Canad. Math. Soc. Conf. Proc., vol. {24},
Amer. Math. Soc., Providence, RI, 1998, pp. 183--193.

\bibitem{FeST}
{A.~Felikson, M.~Shapiro, P.~Tumarkin},
Skew-symmetric cluster algebras of finite mutation type,
{J. Eur. Math. Soc.}  {14}  (2012) 1135--1180.

\bibitem{FoST}
{S.~Fomin, M.~Shapiro, D.~Thurston},
Cluster algebras and triangulated surfaces. Part I. Cluster complexes,
{Acta Math.}  {201}  (2008) 83--146.

\bibitem{FZ}
{S.~Fomin, A.~Zelevinsky},
Cluster algebras. I. Foundations,
{J. Amer. Math. Soc.}  {15}  (2002)  497--529.

\bibitem{Ga}
{P.~Gabriel},
The universal cover of a representation-finite algebra,
in: {Representations of Algebras},
in: Lecture Notes in Math., vol. {903},
Springer-Verlag, Berlin-Heidelberg, 1981, pp. 68--105.

\bibitem{GL}
{W.~Geigle, H.~Lenzing},
A class of weighted projective curves arising
in representation theory of finite-dimensional algebras,
in: {Singularities, Representation of Algebras, and Vector Bundles},
Lecture Notes in Math., vol. {1273},
Springer-Verlag, Berlin-Heidelberg, 1987, pp. 265--297.

\bibitem{Ge}
{C.~Geiss},
On degenerations of tame and wild algebras,
{Arch. Math. (Basel)}  {64}  (1995) 11--16.

\bibitem{GLFS}
{C.~Geiss, D.~Labardini-Fragoso, J.~Schr\"oer},
The representation type of Jacobian algebras,
Adv. Math. {290} (2016) 364--452. 

\bibitem{GSS}
{E.~L.~Green, N.~Snashall, \O.~Solberg},
The Hochschild cohomology ring of a selfinjective algebra of finite representation type,
{Proc. Amer. Math. Soc.}  {131}  (2003)   3387--3393.
%
\bibitem{Ha1}
{D.~Happel},
Triangulated Categories in the Representation Theory of Finite-Dimensional Algebras,
{London Math. Soc. Lect. Note Ser.}, vol. {119},
Cambridge University Press, Cambridge, 1988.
%
\bibitem{Ha2}
{D.~Happel},
Hochschild cohomology of finite-dimensional algebras,
in: {S\'eminaire d'Alg\`ebre Paul Dubreil et Marie-Paul Malliavin},
in: Lecture Notes in Math., vol. {1404},
Springer-Verlag, Berlin-Heidelberg, 1989, pp. 108--126.

\bibitem{HR}
{D.~Happel, C.~M.~Ringel},
The derived category of a tubular algebra,
in: {Representation Theory I},
in: Lecture Notes in Math., vol. {1177}, 
Springer-Verlag, Berlin-Heidelberg, 1986, pp. 156--180.
%
\bibitem{H}
{A.~Hatcher},
{Algebraic Topology},
Cambridge University Press, Cambridge, 2002.
%
\bibitem{Ho}
{T.~Holm},
Derived equivalence classification of algebras of dihedral,
semidihedral, and quaternion type,
{J. Algebra}  {211}  (1999)  159--205.
%
\bibitem{KC}
{A.~Katok, V.~Climenhaga},
{Lectures on Surfaces.
(Almost) Everything You Wanted to Know About Them},
in: Student Math. Library, vol. {46},
Amer. Math. Soc., University Park, PA, 2008.
%
\bibitem{Ki}
{L.~C.~Kinsey},
{Topology of Surfaces},
Undergraduate Texts in Math.,
Springer-Verlag, Berlin-Heidelberg, 1993.
%
\bibitem{Kr}
{H.~Kraft},
Geometric methods in representation theory,
in: {Representations of Algebras},
in: Lecture Notes in Math., vol. {944},
Springer-Verlag, Berlin-Heidelberg, 1982, pp. 180--258.

\bibitem{K}
{H.~Krause},
Stable equivalence preserves representation type,
{Comment. Math. Helv.} {72} (1997) 266--284.

\bibitem{KZ}
{H.~Krause, G. Zwara},
Stable equivalence and generic modules,
{Bull. London Math. Soc.} {32} (2000) 615--618.

\bibitem{LF}
{D.~Labardini-Fragoso},
Quivers with potentials associated to triangulated surfaces,
{Proc. Lond. Math. Soc.}  {98}  (2009)  797--839.
%
\bibitem{La1}
{S.~Ladkani},
On Jacobian algebras from closed surfaces,
Preprint 2012, \texttt{http://arxiv.org/abs/1207.3778}.
%
\bibitem{La2}
{S.~Ladkani},
2-CY-tilted algebras that are not Jacobian,
Preprint 2014, \texttt{http://arxiv.org/abs/1403.6814}.
%
\bibitem{La3}
{S.~Ladkani},
Algebras of quasi-quaternion type,
Preprint 2014, \texttt{http://arxiv.org/abs/1404.6834}.
%
\bibitem{L4}
{S.~Ladkani},
From groups to clusters,
in: Representation Theory -- Current Trends and Perspectives, 
in: Europ. Math. Soc. Ser. Congr. Rep., 
European Math. Soc., Z\"urich, 2017, pp. 427--500.
%
\bibitem{LM}
{H.~Lenzing, H.~Meltzer},
Sheaves on a weighted projective line of genus one,
and representations of a tubular algebra,
in: {Representations of Algebras I},
in: Canad. Math. Soc. Conf. Proc., vol. {14} ,
Amer. Math. Soc., Providence, RI, 1993, pp.  313--337.
%
\bibitem{LS}
{Z.~Leszczy\'nski, A.~Skowro\'nski},
Tame generalized canonical algebras, 
{J. Algebra} {273} (2004)  412--433.
%
\bibitem{NeS}
{J.~Nehring, A.~Skowro\'nski},
Polynomial growth trivial extensions of simply connected algebras,
{Fund. Math.} {132}  (1989)  117--134.
%
\bibitem{NoS}
{R.~N\"orenberg, A.~Skowro\'nski},
Tame minimal non-polynomial growth simply connected algebras,
{Colloq. Math.}  {73}  (1997) 301--330.
%
\bibitem{PS}
{Z.~Pogorza\l y, A.~Skowro\'nski},
Selfinjective biserial standard algebras,
{J. Algebra}  {138}  (1991)   491--504.
%
\bibitem{PS2}
{Z.~Pogorza\l y, A.~Skowro\'nski},
Symmetric algebras stably equivalent to 
the trivial extensions of tubular algebras,
J. Algebra 181 (1996) 95--111. 
%
\bibitem{Ric1}
{J.~Rickard},
Derived categories and stable equivalences,
{J. Pure Appl. Algebra} {61}  (1989)  303--317.
%
\bibitem{Ric2}
{J.~Rickard},
Derived equivalences as derived functors,
{J. London Math. Soc.}  {43}  (1991)   37--48.

\bibitem{R}
{C.~M.~Ringel},
{Tame Algebras and Integral Quadratic Forms},
in: Lecture Notes in Math., vol. {1099}, 
Springer-Verlag, Berlin-Heidelberg, 1984.

\bibitem{SS}
  D.~Simson and A.~Skowro\'nski,
  {Elements of the Representation Theory of Associative Algebras 3:
  Representation-Infinite Tilted Algebras},
  London Mathematical Society Student Texts, vol. {72},
  Cambridge University Press, Cambridge, 2007.

\bibitem{Sk0}
{A.~Skowro\'nski},
Group algebras of polynomial growth,
{Manuscripta Math.}  {59}  (1987)  499--516.
%
\bibitem{Sk1}
{A.~Skowro\'nski},
Selfinjective algebras of polynomial growth,
{Math. Ann.}  {285}  (1989)   177--199.
%
\bibitem{Sk2}
{A.~Skowro\'nski},
Selfinjective algebras: finite and tame type,
in: {Trends in Representation Theory of Algebras and Related Topics},
in: Contemp. Math., vol. {406}, 
Amer. Math. Soc., Providence, RI, 2006, pp.  169--238.

\bibitem{SW}
{A.~Skowro\'nski, J.~Waschb\"usch},
Representation-finite biserial algebras,
{J. reine angew. Math.} {345} (1983) 172--181.

\bibitem{SY}
{A.~Skowro\'nski, K.~Yamagata},
{Frobenius Algebras 1: Basic Representation Theory},
European Mathematical Society Textbooks in Mathematics,
European Math. Soc. Publ. House, Z\"urich, 2011.

\bibitem{Sw}
{R.~G.~Swan},
Periodic resolutions for finite groups,
{Ann. of Math.}  {72}  (1960) 267--291.

\bibitem{VD}
{Y.~Valdivieso-D\'iaz},
Jacobian algebras with periodic module category and exponential growth, 
J. Algebra 449 (2016) 163--174.

\bibitem{WW}
{B.~Wald, J.~Waschb\"usch},
Tame biserial algebras,
J. Algebra  {95}  (1985)  480--500.


\end{thebibliography}
\end{document}